\documentclass[a4paper,10pt]{amsart}
\usepackage{fullpage}

\usepackage{listings}
\usepackage[]{mdframed}
\usepackage{xfrac}
\usepackage{thmtools, thm-restate}
\usepackage{tikz}
\usepackage[utf8]{inputenc}
\usepackage[greek,english]{babel}
\usepackage{fancyhdr}
\usepackage{alphabeta}
\usepackage{amsthm}
\usepackage{amsmath}
\usepackage{amssymb}
\usepackage{faktor}
\usepackage{kbordermatrix}
\usepackage{multirow}

\usepackage{mathtools}
\usepackage{amsfonts}
\usepackage{enumitem}
\usepackage{graphicx}
\usepackage{float}
\graphicspath{ {./images/} }

\newtheorem{theorem}{Theorem}[section]
\newtheorem{theoremanddefinition}[theorem]{Theorem - Definition}
\newtheorem{proposition}[theorem]{Proposition}
\newtheorem{lemma}[theorem]{Lemma}
\newtheorem{corollary}[theorem]{Corollary}
\theoremstyle{definition}
\newtheorem{definition}[theorem]{Definition}
\newtheorem{example}[theorem]{Example}

\newtheorem{remark}[theorem]{Remark}

\DeclareMathOperator {\Irr}{Irr}
\DeclareMathOperator {\Ann}{Ann}
\DeclareMathOperator {\Stab}{Stab}
\DeclareMathOperator {\Br}{Br}

\DeclareMathOperator {\Ind}{Ind}
\DeclareMathOperator {\Res}{Res}
\DeclareMathOperator {\Rel}{Rel}
\DeclareMathOperator {\GL}{GL}
\DeclareMathOperator {\diag}{diag}

\DeclareMathOperator {\Aug}{Aug}
\DeclareMathOperator {\Frac}{Frac}

\makeatletter
\addto\captionsenglish{}

\begin{document}

\title{The representations of the Brauer-Chen algebra}
\thanks{Research supported by the Hellenic Foundation for Research and Innovation (H.F.R.I.) under the Basic Research Financing (Horizontal support for all Sciences), National Recovery and Resilience Plan (Greece 2.0), Project Number: 15659, Project Acronym: SYMATRAL}
\author{Ilias Andreou}
\date{}
\maketitle

\begin{abstract}
In this paper, we determine the structure and representation theory of the Brauer algebra associated to a complex reflection group (here called the Brauer-Chen algebra), defined by Chen in 2011. We prove that it is semisimple and provide a construction for its simple modules for generic values of the parameters, in a uniform way for all complex reflection groups. We then apply these results to the cases of all irreducible complex reflection groups: for the groups in the infinite series, we obtain a numerical formula for the dimension of the corresponding Brauer algebra, and for all exceptional complex reflection groups, we compute the dimension of the corresponding Brauer algebra explicitly, using computational methods. We also obtain a uniformly defined basis for the Brauer algebra of any complex reflection group, defined over a field. Finally, we determine for which complex reflection groups the corresponding Brauer algebra is a free module over its ring of definition. 
\end{abstract}

\section{Introduction}

\subsection{Some history and motivation}
In 1937, Richard Brauer defined a family of algebras, now named after him, which described the centralizer of the action of the orthogonal group on a tensor power of its natural representation \cite{Br}. For the corresponding action of the general linear group, the centralizer had earlier been identified as the group algebra of the symmetric group on the number of factors of the tensor representation, acting by permuting these factors. Such centralizer relations are often referred to as Schur-Weyl dualities.

The representation theory of Brauer algebras was determined in 1988 by Wenzl \cite{W}. The techniques employed for this work were inspired by the famous earlier work of Jones connecting certain trace functions on Von Neumann algebras to invariants of links \cite{Jo, Jo2}. At the same time, Birman and Wenzl \cite{BMW} and, independently, Murakami \cite{Mur}, also inspired by Jones's work, introduced the - now also famous - BMW algebra. This is also an algebra which is closely related to link invariants, by supporting, notably, a trace that lifts to the Kauffman polynomial. The BMW algebra is a deformation of the Brauer algebra, the latter being isomorphic to special cases of the former, i.e., for certain specializations of the parameters.
In fact, it is a deformation of the Brauer algebra in much the same way as the Hecke algebra is a deformation of the group algebra of the symmetric group.  
 
In a spirit similar to how Hecke algebras have been defined and extensively studied for all complex reflection groups (yielding numerous connections to topics in representation theory, algebraic geometry, low-dimensional topology and even theoretical physics), there have been many efforts to extend the definition of Brauer and BMW algebras to complex reflection groups other than the symmetric group (which corresponds to the original case). We mention below some notable examples of these efforts.

In 2001, Häring-Oldenburg \cite{HO} introduced the cyclotomic BMW and Brauer algebras, i.e., BMW and Brauer algebras associated to complex reflection groups of type $G(m,1,n)$, providing connections of the former to link invariants in the solid torus. In 2014, Bowman and Cox \cite{BoC} associated a certain subalgebra of the cyclotomic Brauer algebra with the subgroups $G(m,p,n)$ of $G(m,1,n)$, which comprise almost all irreducible complex reflection groups.

In 2003, Cohen, Gijsbers and Wales \cite{CGW} extended BMW algebras to finite Coxeter groups of simply laced type (that is, types A, D and E, with A being the original case of the symmetric group), which led, in a new way, to earlier discovered faithful representations of the Artin group of the corresponding Coxeter types. Motivated by this, in 2007, Cohen, Frenk and Wales \cite{CFW} introduced Brauer algebras of type ADE, and completeley determined their representations for generic values of their parameters. Finally, extending this work, Cohen, Liu and Yu \cite{CLY}, and Cohen-Liu \cite{CL} defined Brauer algebras of Coxeter types B and C, respectively.

The most uniform definition of a Brauer algebra for every complex reflection group was given by Z.Chen \cite{Ch}, in 2011; we call this the Brauer-Chen algebra, as in \cite{Ma}. It was at least in part intended to serve as a stepping stone for the definition of a general BMW algebra for all complex reflection groups. In particular, it is defined in a way as to support certain formal flat connections which would dictate its deformation to a general BMW algebra. Such a relation exists between complex reflection groups and the corresponding Hecke algebras \cite{BMR}, and was shown by Marin \cite{MaQ} and, later, Chen \cite{Ch}, to exist between the Brauer and BMW algebras of types A and ADE, respectively. As shown originally by Chen, the Brauer-Chen algebra associated to Coxeter groups of type ADE is isomorphic to the corresponding Brauer algebra of Cohen, Frenk, Wales, and, in type $G(m,1,n)$, it contains the cyclotomic Brauer algebra of Häring-Oldenburg as a direct component. As stated by Cohen and Liu in the introduction of \cite{CL}, the Brauer algebras of type B and C are not isomorphic to the Brauer-Chen algebra of these types, but are connected in a way necessitating further research. 

Finally, we mention two more recent generalizations of BMW and Brauer algebras. In 2017, Chen \cite{ChBMW} defined BMW algebras associated to every Coxeter system that give rise to certain irreducible representations of the corresponding Artin group. He conjectured that one can obtain these BMW algebras as deformations of the corresponding Brauer-Chen algebras of the same type, via a flat connection supported by the latter, as the ones mentioned above. To our knowledge this question is still unanswered.

In 2018, in his PhD thesis \cite{Ne}, G. Neaime defined BMW and Brauer algebras of type $G(m,m,n)$, which are the non-real equivalent of simply laced type, and obtained irreducible representations of the corresponding braid groups.  Again, apart from the case $n=3$ and $m$ odd, for which he showed that his Brauer algebra is isomorphic to the Brauer-Chen algebra, the connections between the two algebras are unexplored.

\subsection{Overview of our results}

Motivated by the uniform way of its definition, its relation to other defined Brauer algebras and its possible applications to the generalization of BMW algebras this article is devoted to the study of the structure of the generic Brauer-Chen algebra for any complex reflection group. 

In particular, we show that, for generic values of the parameters and suitable fields of definition (e.g. algebraically closed fields of characteristic zero, see Definition \ref{def:propering}), the Brauer-Chen algebra is semisimple and we provide a complete determination of its simple modules, as well as a formula for its dimension (Theorems \ref{th:simplemodules} $\&$ \ref{th:semisimplicity}, respectively). Such results were obtained for the Brauer algebra of simply laced type (ADE) by Cohen, Frenk and Wales in their original paper. They constructed the simple modules of the corresponding Brauer algebra from pairs consisting of a suitable collection of pairwise orthogonal roots (called \textit{admissible}) and a simple module of a certain subgroup of the corresponding Coxeter group, depending on the chosen collection of orthogonal roots. Generalizing this construction was not straightforward since many of its elements (e.g. the admissibility condition) depended on the language of roots, which does not behave, for general complex reflection groups, as well as for Coxeter groups.
On the other hand, in \cite{Ma} Marin constructed a certain class of simple modules for the Brauer-Chen algebra of every complex reflection group. In the cases of Coxeter groups of simply laced type, these modules correspond to the modules constructed by Cohen, Frenk and Wales arising from collections consisting of a single root (these are always admissible).

The key element that enabled the generalization of the above results is the observation that a simple module of the Brauer-Chen algebra is determined by a certain module of the stabilizer of a certain collection (many equivalent ones, in fact) of reflecting hyperplanes. In particular, as a module of the corresponding complex reflection group, it is induced by this module of the stabilizer. Given such a structure, one can reverse the procedure and study the conditions under which a collection of reflecting hyperplanes and a module of its stabilizer induce indeed a simple module of the Brauer-Chen algebra. This generalizes the notion of admissibility of Cohen, Frenk and Wales. After constructing all simple modules, it is by computing their dimensions and finding a suitably small spanning set that we obtain semisimplicity and the dimension of the algebra. What is noteworthy is that up to this point the treatment is uniform for all complex reflection groups, i.e. without having to resort to a case-by-case analysis, a standard technique when dealing with complex reflection groups.

Applying these results to the case of irreducible complex reflection groups of the infinite series $G(m,p,n)$ suggests a more explicit structure of simple modules, closer to the constructions of Cohen, Frenk, Wales and Marin. In particular, for admissible collections of reflecting hyperplanes, the suitable modules that induce simple modules of the Brauer-Chen algebra are those that induce the trivial representation on a certain subgroup of the stabilizer of the given collection. This condition is then verified to hold for all exceptional complex reflection groups, using programming in GAP4 \cite{GAP4}, after a small modification (namely, a certain non-trivial representation replaces the trivial one on a certain subgroup of the stabilizer). This result provides a much more efficient formula for computing the dimension of the Brauer-Chen algebra, as well as a nice basis over the fields in question (Theorem \ref{th:basisoverK}).

The obtained basis specializes to the original basis of the Brauer algebra, as well as to the basis obtained by Cohen, Frenk and Wales for the Brauer algebra of simply laced types. In these situations it is, in fact, a basis over the ring of definition of the algebra. Generalizing this, we show that our obtained basis over a field is a basis over the ring of definition of the Brauer-Chen algebra for all irreducible complex reflection groups other than the exceptional groups $G_{25},G_{32}$ (Propositions \ref{prop:freenessGmpn}, \ref{cor:freenessmostexceptionals}, \ref{prop:freeness26}). For these last two groups, the Brauer-Chen algebra is not a free module over its ring of definition, which we show by noticing that for certain values of its parameters, the dimension of the algebra over the corresponding field changes (Proposition \ref{prop:freeness2532}). Again, for the study of freeness of the Brauer-Chen algebra in the exceptional groups, we used programming in GAP4 to obtain the corresponding results.

\section{Preliminaries}\label{ch:prelimcrg}

\subsection{Basics of Complex Reflection Groups}
\label{sec:basicscrg}

Although our definitions here may be slightly modified, we refer the reader to the book \cite{URG} of Lehrer and Taylor for more detailed information on reflections and complex reflection groups. 

\begin{definition}[Reflection]\label{def:ref} Let $V$ be a finite dimensional complex vector space. An element $r\in \GL(V)$ is called a \textit{pseudo-reflection} if the subspace $\ker(r-1)$ of fixed vectors of $r$ is of codimension $1$ in $V$. The subspace $\ker(r-1)$ is called the \textit{reflecting hyperplane}  of $r$ and we denote it by $H_r$.
\end{definition}

Here, we will just use the term reflection for any pseudo-reflection; over $\mathbb{R}$ the notions coincide.  If $r$ is a reflection of $V$, and $l$ is an invertible linear transformation of $V$, then $lrl^{-1}$ is again a reflection, and $H_{lrl^{-1}}=l(H_r)$.

\begin{definition}[Complex reflection group]\label{def:crg} A \textit{complex reflection group} $W$ is a finite subgroup of $\GL (V_W)$ for some complex vector space $V_W$ which is generated by reflections. The representation $V_W$ is called the \textit{natural representation} of $W$. If it is irreducible, then $W$ is said to be \textit{irreducible}. Finally, the \textit{rank} of $W$ is the dimension of $V_W$ minus the dimension of the subspace of fixed vectors of $W$. 
\end{definition}

For a complex reflection group $W\subseteq \GL(V_W)$, one can always assume that there is a positive definite hermitian form on $V_W$ which is stable under $W$ (take any positive definite hermitian form on $V_W$ and normalize it under the finite group $W$). A reflection $r$ of $V_W$ respecting that form is a \textit{unitary} or \textit{orthogonal} reflection, and it acts on the orthogonal subspace of its reflecting hyperplane $H_r^\perp$ as some root of unity; we will denote by $\mathbb{U}_m$ the group of $m$-th roots of unity. As a consequence, the reflections of $W$ sharing the same reflecting hyperplane $H$ (together with the identity) form a cyclic group isomorphic to some $\mathbb{U}_m$ according to their action on $H^\perp$. One can choose a distinguished generator of this cyclic group, which is the reflection that acts on $H^\perp$ as multiplication by the root of unity $e^{\frac{2πi}{m}}$, and is hence completely determined by $H$ and its order.  This is called the \textit{distinguished reflection} of $H$.

From here on, $W\subseteq \GL(V_W)$ will be a complex reflection group, and $V_W$ will be assumed to be equiped with a $W$-invariant positive definite hermitian form. We will denote by $\mathcal{R}$ the set of reflections of $W$ and by $\mathcal{H}$ the set of reflecting hyperplanes of $W$. To every reflection in $\mathcal{R}$ corresponds a hyperplane in $\mathcal{H}$. This correspondence is of course surjective but not injective in general, since all powers of a reflection share the same reflecting hyperplane. 

The group $W$ acts naturally on $\mathcal{H}$, mapping each hyperplane $H\subseteq V_W$ to $w(H)$, which will be written as $wH$. This action corresponds to the action of $W$ on $\mathcal{R}$ given by conjugation. Whenever the action on a set $X$ is clear we will denote conjugacy of $a,b \in X$  by $a\sim b$. Also, $\mathcal{R}_{a\to b}$ will denote the set of reflections mapping $a$ to $b$.

\subsection{Transverse hyperplanes}

A key notion for the definition of the Brauer-Chen algebra is the notion of transverse hyperplanes. We define them here and gather certain lemmas we will later be using about them.

\begin{definition}[Transverse hyperplanes]
\label{def:transverse}
Let $H_1,H_2$ be distinct reflecting hyperplanes of $W$. Then $H_1,H_2$ are called \textit{transverse} if they are the only reflecting hyperplanes of $W$ containing their intersection. If this is the case, we write $H_1\pitchfork H_2$.  A collection of pairwise transverse hyperplanes of $W$ will be called a \textit{transverse collection}.
\end{definition}

When we say that two reflecting hyperplanes are \textit{non-transverse} (as opposed to \textit{not} transverse) it will imply that they are distinct and not transverse; we will denote non-transversality of $H_1,H_2$ by $H_1 \not \pitchfork H_2$. If $B$ is a collection of reflecting hyperplanes of $W$, we say that $H$ is transverse with $B$ if it is transverse with every element of $B$, and we write $H\pitchfork B$. Correspondingly, we write $H\not \pitchfork B$ if $H$ is non-transverse with some element of $B$. Finally, if $B'$ is another collection of reflecting hyperplanes, we say that $B$ is transverse with $B'$, and write $B\pitchfork B'$, if every hyperplane of $B$ is transverse with every hyperplane of $B'$.

Throughout this work, an important role is played by transverse collections. The action of $W$ on $\mathcal{H}$ induces an action on the set of all such collections which will be denoted $\mathcal{C}$.  When we talk about the orbit or a conjugate of such a collection it will be with respect to this action, and if $B$ is a collection of transverse hyperplanes, then $\Stab(B)$ will denote the stabilizer of $B$ under this action. For reasons of uniformity of exposition, we will suppose that $\mathcal{C}$ contains the empty collection, as well. Furthermore, the empty collection will be assumed to be transverse with every $H\in \mathcal{H}$, and its stabilizer will be $W$. We denote the set of non-empty transverse collections by $\mathcal{C}^*$.

If $r$ is a unitary reflection of $V_W$, then a \textit{root} of $r$ (or its reflecting hyperplane) is any non-zero vector of the orthogonal complement of its reflecting hyperplane. The following lemma provides a convenient way to check transversality of two hyperplanes using roots. 

\begin{lemma}\label{lem:transroots}  Let $H_1,H_2$ be reflecting hyperplanes of $W$ with corresponding roots $a_1,a_2\in V_W$. Then, $H_1,H_2$ are transverse if and only if no reflecting hyperplane of $W$ other than $H_1,H_2$ has a root in the linear span of $a_1,a_2$.
\end{lemma}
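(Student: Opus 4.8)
The plan is to unwind both sides of the claimed equivalence into statements about the $2$-dimensional subspace $P := \langle a_1, a_2 \rangle \subseteq V_W$ and its orthogonal complement. First I would record the basic dictionary between hyperplanes and roots: for an orthogonal reflection $r$ with hyperplane $H$ and root $a$, one has $H = a^\perp$, and conversely a hyperplane $H'$ of $W$ contains a given subspace $U$ if and only if the root of $H'$ lies in $U^\perp$. The key geometric observation is that, since $H_1 \ne H_2$ (so $a_1, a_2$ are linearly independent and $P$ is genuinely $2$-dimensional), we have $H_1 \cap H_2 = a_1^\perp \cap a_2^\perp = P^\perp$, a subspace of codimension $2$. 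Thus a reflecting hyperplane $H' = a'^\perp$ contains $H_1 \cap H_2 = P^\perp$ if and only if $a' \in (P^\perp)^\perp = P$, using that the hermitian form is nondegenerate (indeed positive definite) and $P$ is finite-dimensional, so $(P^\perp)^\perp = P$.

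With this in hand the equivalence is almost immediate. By Definition \ref{def:transverse}, $H_1 \pitchfork H_2$ means that $H_1$ and $H_2$ are the only reflecting hyperplanes of $W$ containing $H_1 \cap H_2 = P^\perp$. By the previous paragraph, a reflecting hyperplane contains $P^\perp$ exactly when its root lies in $P = \langle a_1, a_2\rangle$. So $H_1 \pitchfork H_2$ says precisely that the only reflecting hyperplanes of $W$ with a root in $\langle a_1, a_2\rangle$ are $H_1$ and $H_2$ themselves — which is the statement ``no reflecting hyperplane of $W$ other than $H_1, H_2$ has a root in the linear span of $a_1, a_2$.'' I would then just need to note the converse direction is the same chain of equivalences read backwards, taking care to observe that $H_1$ and $H_2$ do have roots in $P$ (namely $a_1, a_2$), so the right-hand condition is consistent with their being the two transverse hyperplanes.

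The only point requiring a little care — and the one I would treat as the main (minor) obstacle — is the passage $(P^\perp)^\perp = P$, i.e. making sure we may legitimately pass back and forth between ``containing the codimension-$2$ intersection'' and ``root lying in the $2$-plane.'' This is where we use that $V_W$ carries a $W$-invariant positive definite hermitian form (as set up just before the lemma), so that orthogonality is a genuine nondegenerate pairing and double-orthogonal-complement returns the original finite-dimensional subspace; one should also double-check the degenerate-looking edge cases ($a_1, a_2$ proportional) are excluded precisely by the hypothesis $H_1 \ne H_2$ baked into Definition \ref{def:transverse}. Everything else is a direct translation through the definitions, so no real computation is involved.
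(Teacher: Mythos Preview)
Your proposal is correct and follows essentially the same approach as the paper's proof: both arguments reduce the condition ``$H$ contains $H_1\cap H_2$'' to ``the root of $H$ lies in $\langle a_1,a_2\rangle$'' via orthogonal complements, and then read off the equivalence from Definition~\ref{def:transverse}. Your version is more detailed (in particular you make explicit the step $(P^\perp)^\perp=P$ and the use of the positive definite hermitian form), but the underlying idea is identical.
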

\begin{proof} If $H$ is a reflecting hyperplane of $W$ with root $a$ then $H_1\cap H_2 \subseteq H$ is equivalent to $H^{\perp}\subseteq H_1^{\perp}\oplus H_2^{\perp}$ which, is in turn, equivalent to $a\in \langle a_1,a_2 \rangle $. The result follows.
\end{proof}

\begin{lemma}\label{lem:commrefls}
Let $r_1,r_2$ be unitary reflections of $V_W$ with respective reflecting hyperplanes $H_1, H_2$. Then the following are equivalent:
\begin{enumerate}
\item $r_1,r_2$ commute,
\item $r_1H_2=H_2$,
\item $H_1=H_2$ or $H_1^{\perp}\perp H_2^{\perp}$.
\end{enumerate}
\end{lemma}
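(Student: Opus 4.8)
The plan is to prove the three-way equivalence by establishing the cycle $(1)\Rightarrow(2)\Rightarrow(3)\Rightarrow(1)$, working throughout with the $W$-invariant positive definite hermitian form and the orthogonal decomposition $V_W = H_i \oplus H_i^\perp$, with $\dim H_i^\perp = 1$ for each reflection $r_i$. Recall that $r_i$ acts as the identity on $H_i$ and as multiplication by a root of unity $\zeta_i \neq 1$ on the line $H_i^\perp$; unitarity means $r_i$ preserves the form, so it preserves $H_i^\perp$ and $H_i$ as orthogonal complements.

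For $(1)\Rightarrow(2)$: if $r_1 r_2 = r_2 r_1$, then $r_1 r_2 r_1^{-1} = r_2$, and since conjugation by $r_1$ sends $r_2$ to the reflection with hyperplane $r_1 H_2$ (as noted after Definition \ref{def:ref}), we get $r_1 H_2 = H_2$. For $(2)\Rightarrow(3)$: suppose $r_1 H_2 = H_2$. Since $r_1$ is unitary it also fixes $H_2^\perp$, i.e.\ $r_1 H_2^\perp = H_2^\perp$. Now $H_2^\perp$ is a one-dimensional $r_1$-invariant subspace, so it is an eigenspace of $r_1$. The eigenvalues of $r_1$ are $1$ (with eigenspace $H_1$) and $\zeta_1 \neq 1$ (with eigenspace $H_1^\perp$). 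If $H_2^\perp \subseteq H_1$, then pick roots $a_i \in H_i^\perp$; we have $a_2 \in H_1 = a_1^\perp$, hence $a_1 \perp a_2$, which is $H_1^\perp \perp H_2^\perp$. If instead $H_2^\perp = H_1^\perp$ (the only other one-dimensional eigenspace), then $H_2 = H_1$ by taking orthogonal complements. Either way we land in case $(3)$. For $(3)\Rightarrow(1)$: if $H_1 = H_2$, then $r_1, r_2$ both act trivially on the common hyperplane and as scalars on the common line $H_1^\perp$, so they commute as they are simultaneously diagonalizable (indeed both lie in the cyclic group attached to that hyperplane). If $H_1^\perp \perp H_2^\perp$, decompose $V_W = H_1^\perp \oplus H_2^\perp \oplus (H_1 \cap H_2)$: each $r_i$ acts as a scalar on $H_i^\perp$ and as the identity on the other two summands, so $r_1$ and $r_2$ act as commuting diagonal operators with respect to this decomposition, hence commute.

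I do not expect a serious obstacle here; the only point requiring care is the step $(2)\Rightarrow(3)$, where one must use unitarity to pass from invariance of $H_2$ to invariance of $H_2^\perp$, and then exploit that a one-dimensional invariant subspace must coincide with one of the (at most two) eigenspaces of the diagonalizable operator $r_1$. One should also make sure the orthogonal decomposition used in $(3)\Rightarrow(1)$ is genuinely a direct sum decomposition of $V_W$: when $H_1^\perp \perp H_2^\perp$ these two lines are distinct (else $H_1 = H_2$, contradicting $H_1^\perp \perp H_2^\perp$ unless $H_1^\perp = 0$, impossible), so $H_1^\perp \oplus H_2^\perp$ is two-dimensional and is the orthogonal complement of $H_1 \cap H_2$.

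It is worth recording that Lemma \ref{lem:commrefls} will be applied mainly through the equivalence of $(1)$ and $(3)$: two reflections with distinct hyperplanes commute precisely when their roots are orthogonal, which dovetails with the root-theoretic criterion for transversality in Lemma \ref{lem:transroots}.
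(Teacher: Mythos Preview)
Your proof is correct and follows essentially the same cycle $(1)\Rightarrow(2)\Rightarrow(3)\Rightarrow(1)$ as the paper's own argument, with the same key ideas: conjugation for $(1)\Rightarrow(2)$, the observation that a one-dimensional $r_1$-invariant subspace must lie in one of the two eigenspaces $H_1$ or $H_1^\perp$ for $(2)\Rightarrow(3)$, and the orthogonal decomposition $H_1\cap H_2\oplus H_1^\perp\oplus H_2^\perp$ for $(3)\Rightarrow(1)$. The only cosmetic difference is that you phrase the middle step in terms of eigenspaces while the paper speaks of stable lines, but the content is identical.
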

\begin{proof}
For the implication (1) $\Rightarrow$ (2), if $r_1,r_2$ commute, then $r_1r_2r_1^{-1}=r_2$. Since, $r_1r_2r_1^{-1}$ is a reflection with reflecting hyperplane $r_1H_2$, this means that $r_1H_2=H_2$.

For (2) $\Rightarrow$ (3), suppose that $r_1H_2=H_2$; then $r_1H_2^\perp=H_2^\perp$. Using the decomposition $H_1\oplus H_1^{\perp}$ of $V_W$ and the action of $r_1$ on $H_1, H_1^\perp$, one can see that a line of $V_W$ is stable under $r_1$ if and only if it is contained in $H_1$ or $H_1^\perp$. The latter implies that $H_2^\perp\subseteq H_1$, which is equivalent to $H_1^\perp \perp H_2^\perp$, and the former that $H_1^\perp=H_2^\perp$, which is equivalent to $H_1=H_2$. 

Finally, for (3) $\Rightarrow$ (1), both of the conditions $H_1=H_2$ or $H_1^\perp \perp H_2^\perp$ imply that $r_1,r_2$ commute. To see that for the latter, in particular, one may use the decomposition $H_1\cap H_2 \oplus H_1 ^\perp \oplus H_2^\perp$ of $V_W$ and the actions of $r_1,r_2$ on $H_1\cap H_2, H_1^\perp, H_2^\perp$.
\end{proof}

\begin{lemma}\label{lem:transverseorthogonal} Let $H_1,H_2$ be transverse reflecting hyperplanes of $W$ with respective reflections $r_1,r_2$. Then $r_1,r_2$ commute.
\end{lemma}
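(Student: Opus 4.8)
The plan is to deduce this from the previous two lemmas. Since $H_1 \pitchfork H_2$ forces $H_1 \ne H_2$, the equivalence (1)$\Leftrightarrow$(2) in Lemma~\ref{lem:commrefls} shows that it suffices to prove $r_1 H_2 = H_2$, and I would argue this by contradiction: assuming $r_1 H_2 \ne H_2$, I would exhibit a reflecting hyperplane of $W$ other than $H_1, H_2$ with a root in the plane spanned by roots of $H_1$ and $H_2$, contradicting Lemma~\ref{lem:transroots}.

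The candidate for that third hyperplane is $r_1 H_2$ itself, which is the reflecting hyperplane of the conjugate reflection $r_1 r_2 r_1^{-1} \in \mathcal{R}$ (recall $H_{lrl^{-1}} = l(H_r)$). First I would check it is genuinely a new hyperplane: it is not $H_2$ by the assumption we are contradicting, and it is not $H_1$ because $r_1$ stabilises $H_1$ as a set, so $r_1 H_2 = H_1$ would give $H_2 = r_1^{-1}H_1 = H_1$, contrary to $H_1 \ne H_2$.

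Next, fixing roots $a_1 \in H_1^\perp$ and $a_2 \in H_2^\perp$, I would verify that $r_1(a_2)$ is a root of $r_1 H_2$ lying in $\langle a_1, a_2\rangle$. On one hand, $r_1$ is unitary for the $W$-invariant hermitian form, so it carries $H_2^\perp$ onto $(r_1H_2)^\perp$, and $r_1(a_2)$ is a nonzero vector there. On the other hand, decomposing $a_2$ along $V_W = H_1 \oplus H_1^\perp = H_1 \oplus \langle a_1\rangle$ and using that $r_1$ is the identity on $H_1$ and a scalar on $\langle a_1\rangle$, one gets $r_1(a_2) = a_2 + c\,a_1$ for some scalar $c$, hence $r_1(a_2) \in \langle a_1, a_2\rangle$. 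Thus $r_1 H_2$ is a reflecting hyperplane of $W$ different from $H_1$ and from $H_2$ but having a root in $\langle a_1, a_2\rangle$, which by Lemma~\ref{lem:transroots} contradicts $H_1 \pitchfork H_2$; hence $r_1 H_2 = H_2$ and, by Lemma~\ref{lem:commrefls}, $r_1$ and $r_2$ commute.

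I do not expect any serious obstacle: the argument is short once one thinks of conjugating $r_2$ by $r_1$. The only point that needs a moment of care is confirming, before applying Lemma~\ref{lem:transroots}, that $r_1 H_2$ is distinct from \emph{both} $H_1$ and $H_2$ (the latter being exactly the negation of the statement we are proving, the former following from $H_1 \ne H_2$).
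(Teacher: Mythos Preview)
Your proof is correct and follows essentially the same idea as the paper's: both argue by contradiction that $r_1H_2$ is a reflecting hyperplane of $W$ distinct from $H_1$ and $H_2$ which violates transversality. The only cosmetic difference is that the paper checks the violation directly from the definition (observing that $r_1$ is the identity on $H_1\cap H_2\subseteq H_2$, hence $H_1\cap H_2\subseteq r_1H_2$), whereas you pass through the root criterion of Lemma~\ref{lem:transroots}; these are dual formulations of the same fact.
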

\begin{proof} If $r_1,r_2$ do not commute, then, by the previous lemma, $r_1H_2$ is different from $H_2$, as well as $H_1$, since $r_1H_2=H_1$ would imply that $H_2=r_1^{-1}H_1=H_1$ which contradicts transversality of $H_1,H_2$. But, since $r_1$ is the identity on $H_1\cap H_2$ which is contained in $H_2$, then $r_1H_2$ contains $H_1 \cap H_2$ as well, which is impossible for $H_1,H_2$ transverse.  
\end{proof}

The next two lemmas will be used repeatedly. The first one is  \cite[Lemma 5.4]{Ch} and \cite[Lemma 3.1]{MaKram} and is stated without proof.

\begin{lemma}
\label{lem:transverse1}
Let $H_1,H_2$ be two distinct hyperplanes in some complex vector space, and $s$ a reflection with reflecting hyperplane $H$. If $sH_1=H_2$, then $H_1\cap H_2 \subseteq H$. Consequently, if $H_1,H_2$ are transverse reflecting hyperplanes of a complex reflection group $W$, then there are no reflections in $W$ mapping $H_1$ to $H_2$. 
\end{lemma}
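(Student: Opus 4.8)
The plan is to establish the first assertion by a short linear-algebra argument and then read off the second directly from the definition of transversality.

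First I would introduce the line $L:=\operatorname{im}(s-1)$; since $\ker(s-1)=H$ has codimension one, $L$ is one-dimensional, $s$ preserves $L$ (because $s(s-1)=(s-1)s$ and $s$ is invertible), and, $s$ being semisimple — automatic in the application since $W$ is finite, and in any case part of the notion of a reflection — it has no nonzero fixed vector on $L$, so $V=H\oplus L$. Under this setup the claim $H_1\cap H_2\subseteq H$ amounts to showing that every $v\in H_1\cap H_2$ is fixed by $s$. To prove this I would argue by contradiction: suppose $v\in H_1\cap H_2$ with $sv\neq v$. From $v\in H_1$ we get $sv\in sH_1=H_2$, and together with $v\in H_2$ this gives $0\neq sv-v\in L\cap H_2$, forcing $L\subseteq H_2$; running the same argument with $s^{-1}$ (using $s^{-1}H_2=H_1$ and $\operatorname{im}(s^{-1}-1)=L$) yields $0\neq s^{-1}v-v\in L\cap H_1$, hence $L\subseteq H_1$. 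Thus $L\subseteq H_1\cap H_2$. Passing to the quotient $V/L$ — on which $s$ acts as the identity since $(s-1)V=L$ — the equality $sH_1=H_2$ descends to $H_1/L=H_2/L$, and as $L$ is contained in both hyperplanes this gives $H_1=H_2$, contradicting $H_1\neq H_2$. Therefore $sv=v$, i.e. $v\in H$.

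For the ``consequently'' part, suppose $H_1\pitchfork H_2$ in $W$ (so in particular $H_1\neq H_2$) and that some reflection $s\in W$ satisfies $sH_1=H_2$; write $H=H_s\in\mathcal H$ for its reflecting hyperplane. By the first part, $H\supseteq H_1\cap H_2$, so $H$ is a reflecting hyperplane of $W$ containing $H_1\cap H_2$, and transversality forces $H=H_1$ or $H=H_2$. If $H=H_1$, then $s$ fixes $H_1$ pointwise, so $H_2=sH_1=H_1$; if $H=H_2$, then $s$ stabilises $H_2$, so $H_1=s^{-1}H_2=H_2$. Either way $H_1=H_2$, the desired contradiction, so no such $s$ can exist.

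I do not expect a genuine obstacle here. The only points needing a word of care are the splitting $V=H\oplus L$ (i.e. that $s$ is not a transvection) and the two degenerate subcases $H\in\{H_1,H_2\}$ in the last step; the actual content is the one-line observation that if the ``axis'' $L$ of $s$ were absorbed into $H_1\cap H_2$, then $s$ would act as the identity modulo $L$ and could not move $H_1$ off itself.
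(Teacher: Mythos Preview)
Your proof is correct. The paper itself does not supply a proof of this lemma—it is stated without proof, with references to Chen and to Marin—so there is nothing in the paper to compare your argument against. Your linear-algebra approach via $L=\operatorname{im}(s-1)$ is the standard one and is cleanly executed. One minor remark: you invoke semisimplicity of $s$ to obtain $V=H\oplus L$, but this decomposition is never actually used in what follows (and semisimplicity is not part of the paper's Definition~\ref{def:ref} of a pseudo-reflection); the steps $sv-v\in L$, $\dim L=1$, $s$ preserving $L$, and $s\equiv\mathrm{id}$ on $V/L$ all hold without it, so your argument already covers the general case as stated.
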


\begin{lemma}
\label{lem:transverse2}
 Let $V$ be a complex vector space, on which we fix a positive definite hermitian form. If $H_1^{\perp}, H_2^{\perp}\perp H^{\perp}$, then a unitary reflection $s$ that maps $H_1$ to $H_2$ leaves $H$ invariant. This is especially the case when $H_1,H_2, H$ are reflecting hyperplanes of $W$ and $H_1,H_2$ are transverse with $H$.
\end{lemma}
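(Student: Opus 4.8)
The plan is to translate the hypotheses into statements about roots and then to locate the root of $s$. Write $K=H_s$ for the reflecting hyperplane of $s$ and pick a root $c$ of $K$, so that $K^{\perp}=\mathbb{C}c$. Since $s$ is unitary, $K$ is $s$-stable and hence so is $K^{\perp}=\mathbb{C}c$; combined with $\ker(s-1)=K$ this shows that $s$ fixes $K$ pointwise, acts on $\mathbb{C}c$ by a scalar $\zeta\neq 1$, and in particular $(s-1)(V)\subseteq\mathbb{C}c$. The goal is to prove $sH^{\perp}=H^{\perp}$, which is equivalent to $sH=H$ since a unitary map commutes with taking orthogonal complements; and for this it suffices to show $c\in H$, because then $H^{\perp}\subseteq c^{\perp}=K$ is fixed pointwise by $s$.

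Next, fix roots $a_1,a_2$ of $H_1,H_2$. As $s$ is unitary and $sH_1=H_2$, it carries $H_1^{\perp}$ onto $H_2^{\perp}$, so $sa_1\in\mathbb{C}a_2$; on the other hand $sa_1-a_1\in(s-1)(V)\subseteq\mathbb{C}c$, so $a_1+\lambda c\in\mathbb{C}a_2$ for some scalar $\lambda$. In the relevant case $H_1\neq H_2$ we have $a_1\notin\mathbb{C}a_2$, which forces $\lambda\neq 0$ and hence $c\in\langle a_1,a_2\rangle$. Finally, the hypothesis $H_i^{\perp}\perp H^{\perp}$ says precisely that $a_i\in(H^{\perp})^{\perp}=H$ for $i=1,2$, so $c\in\langle a_1,a_2\rangle\subseteq H$, which is what was needed.

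For the last assertion, suppose $H_1,H_2,H$ are reflecting hyperplanes of $W$ with $H_1\pitchfork H$ and $H_2\pitchfork H$. By Lemma \ref{lem:transverseorthogonal} the distinguished reflection of $H_i$ commutes with that of $H$, and since $H_i\neq H$ (transversality forces distinctness), Lemma \ref{lem:commrefls} yields $H_i^{\perp}\perp H^{\perp}$ for $i=1,2$; thus the hypotheses of the first part are met and $sH=H$ follows.

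The only delicate point is the step that pins $c$ down to the plane $\langle a_1,a_2\rangle$: this is where the rank-one nature of $s-1$ and the inclusion $sa_1\in\mathbb{C}a_2$ are used together, and it genuinely needs $H_1\neq H_2$ (if $H_1=H_2$ the span collapses to a line and $\lambda$ may vanish, with $a_1\in K$). With $H_1\neq H_2$ understood --- the only case in which $s$ actually moves a hyperplane --- everything else is the short orthogonality bookkeeping indicated above.
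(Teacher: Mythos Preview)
Your proof is correct and follows essentially the same idea as the paper's: both arguments establish that the root of $s$ lies in $H_1^{\perp}+H_2^{\perp}$ and then use the orthogonality hypothesis to conclude $H_s^{\perp}\perp H^{\perp}$, whence $sH=H$. The only real difference is packaging: the paper cites Lemma~\ref{lem:transverse1} to obtain $H_1\cap H_2\subseteq H_s$ (the dual of your $c\in\langle a_1,a_2\rangle$) and then invokes Lemma~\ref{lem:commrefls} for the final step, whereas you re-derive both facts inline via the rank-one property of $s-1$. Your version is thus more self-contained, and your explicit remark that the argument needs $H_1\neq H_2$ is a point the paper leaves implicit (its appeal to Lemma~\ref{lem:transverse1} already requires distinctness).
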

\begin{proof} Let $H'$ be the reflecting hyperplane of $s$. Since $sH_1=H_2$, then, by Lemma \ref{lem:transverse1} above, we have that $H_1\cap H_2 \subseteq H'$ which is equivalent to $(H')^{\perp} \subseteq H_1^{\perp}+H_2^{\perp}.$ But, since $H_1^{\perp},H_2^{\perp}$ are perpendicular to $H^{\perp}$, then so is $(H')^{\perp}$. By Lemma \ref{lem:commrefls}, we have that $sH=H$.
\end{proof}

The following lemma will be used only in the very end of this work but we include it here as an application of the two lemmas above.

\begin{lemma}\label{lem:transversepairs} Let $B=\{H_1,H_2\}$ and $B'=\{H_1',H_2'\}$ be two disjoint collections of transverse hyperplanes of $W$. Then, there is at most one reflection mapping $H_1$ to $H_1'$ and $H_2$ to $H_2'$.
\end{lemma}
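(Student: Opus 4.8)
The plan is to argue by contradiction. Suppose $s$ and $t$ are reflections of $W$, both sending $H_1$ to $H_1'$ and $H_2$ to $H_2'$, and show $s=t$. Fix roots $a_1,a_2,a_1',a_2'$ of $H_1,H_2,H_1',H_2'$, and roots $\alpha_s,\alpha_t$ of the reflecting hyperplanes $H_s,H_t$ of $s,t$. Since $B$ and $B'$ are disjoint, the hyperplanes $H_1,H_2,H_1',H_2'$ are pairwise distinct; in particular, within each of the pairs $\{a_1,a_1'\}$, $\{a_2,a_2'\}$ and $\{a_1,a_2\}$ the two roots are linearly independent. The argument splits according to whether $H_s=H_t$.

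Suppose first that $H_s\neq H_t$. Applying Lemma \ref{lem:transverse1} to $sH_1=H_1'$ (and, in turn, to $tH_1=H_1'$, $sH_2=H_2'$, $tH_2=H_2'$) shows that $\alpha_s$ and $\alpha_t$ both lie in $\langle a_1,a_1'\rangle\cap\langle a_2,a_2'\rangle$. As $H_s\neq H_t$, the vectors $\alpha_s,\alpha_t$ are independent, so the plane $U:=\langle\alpha_s,\alpha_t\rangle$ must coincide with each of the planes $\langle a_1,a_1'\rangle$ and $\langle a_2,a_2'\rangle$. Then $a_1,a_2\in U$, and since they are independent, $U=\langle a_1,a_2\rangle$, whence $H_1\cap H_2=U^{\perp}$. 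But $\alpha_s\in U$, so $\alpha_s\perp U^{\perp}$, i.e.\ $H_1\cap H_2\subseteq H_s$; since $H_1\pitchfork H_2$, this forces $H_s\in\{H_1,H_2\}$, and likewise $H_t\in\{H_1,H_2\}$. Either way we reach a contradiction, because a reflection fixes its own reflecting hyperplane: $H_s=H_1$ would give $H_1'=sH_1=H_1$, while $H_s=H_2$ would force $H_t=H_1$ and then $H_1'=tH_1=H_1$. Hence this case does not arise.

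In the remaining case $H_s=H_t=:H$, I expect a short direct computation. Let $\alpha$ be a root of $H$ and let $\zeta,\xi$ be the roots of unity by which $s,t$ act on $H^{\perp}$. Writing $a_1=p+q\alpha$ with $p\in H$, one sees $q\neq 0$ (otherwise $s$ fixes the vector $a_1$, hence fixes $H_1$, contradicting $H_1\neq H_1'$). Then $s(a_1)=p+q\zeta\alpha$ and $t(a_1)=p+q\xi\alpha$ are both proportional to $a_1'$, so subtracting gives $q(\zeta-\xi)\alpha\in\mathbb{C}a_1'$; if $\zeta\neq\xi$ this makes $\alpha$ proportional to $a_1'$, i.e.\ $H=H_1'$, which again forces $H_1=H_1'$. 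So $\zeta=\xi$, and two unitary reflections with the same hyperplane and the same eigenvalue on its orthogonal complement must coincide, giving $s=t$.

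The crux is the first case, and the decisive observation there is that the reflecting hyperplane of any reflection with the required property must contain $H_1\cap H_2$ — at which point the transversality of $B$ immediately pins it down to $H_1$ or $H_2$, producing the contradiction. Reaching that observation is a matter of the two-dimensional bookkeeping with the spans $\langle a_i,a_i'\rangle$, for which Lemma \ref{lem:transverse1} is the essential input; it is worth noting that neither the transversality of $B'$ nor the orthogonality of transverse roots (Lemma \ref{lem:transverseorthogonal}) seems to be needed here, only that the four hyperplanes are distinct and that $B$ is transverse.
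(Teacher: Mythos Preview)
Your proof is correct and complete. The paper organises the argument differently: rather than splitting on whether $H_s=H_t$, it first shows that $H_s$ is \emph{determined} by the data $H_1,H_2,H_1',H_2'$ alone. Lemma~\ref{lem:transverse1} gives $H_1\cap H_1'\subseteq H_s$ and $H_2\cap H_2'\subseteq H_s$; the paper then checks that these two codimension-two subspaces are distinct (otherwise their common value $A$ would equal $H_1\cap H_2$ and also lie inside $H_1'$, contradicting $H_1\pitchfork H_2$), so together they span $H_s$. Having forced $H_{s'}=H_s$, the paper finishes via Lemma~\ref{lem:commrefls}: $s^{-1}s'$ is either trivial or a reflection with hyperplane $H_s$ that fixes $H_2$, whence $H_s^\perp\perp H_2^\perp$ or $H_s=H_2$, and in either case $s$ itself fixes $H_2$, a contradiction.

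Your two cases cover the same ground with different tools: the root-span bookkeeping replaces the paper's intersection argument (indeed, your equality $\langle a_1,a_1'\rangle=\langle a_2,a_2'\rangle$ is the dual of the paper's hypothetical $H_1\cap H_1'=H_2\cap H_2'$), and the direct eigenvalue computation replaces the appeal to Lemma~\ref{lem:commrefls}. Both routes rely only on Lemma~\ref{lem:transverse1} and the transversality of $B$; as you note, the transversality of $B'$ is not used in either argument --- only the fact that the four hyperplanes are distinct. One small simplification in your first case: if $H_s=H_2$ you can conclude directly that $H_2'=sH_2=H_2$, without passing through $H_t$.
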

\begin{proof} Let $s$ be a reflection mapping $B$ to $B'$ and suppose that $sH_1=H_1', sH_2=H_2'$; we will show that $H_s$ is determined by $H_1,H_2,H_1',H_2'$. Hence, if $s'H_1=sH_1$ and $s'H_2=sH_2$ for some reflection $s'$ then $H_{s'}=H_s$. This implies that both $s$ and $s'$ are powers of some reflection with hyperplane $H_s$ and hence $s^{-1}s'$ is either such a reflection as well or the identity. Since $s'H_2=sH_2$, then $s^{-1}s'H_2=H_2$ and so, if $s^{-1}s'\neq 1$, then we have $H_s^\perp \perp H_2^\perp$ or $H_s=H_2$ (Lemma \ref{lem:commrefls}) implying, in any case, that $sH_2=H_2$ which is a contradiction; so $s'=s$.

To show now that $H_s$ is uniquely determined, since $sH_1=H_1' \neq H_1$, then, by Lemma \ref{lem:transverse1}, we have $H_1\cap H_1' \subseteq H_s$ and, similarly, $H_2\cap H_2' \subseteq H_s$. We will show that $H_1\cap H_1' \neq H_2\cap H_2' $ and hence, since these are subspaces of codimension $1$ in $H_s$, we get that $H_s$ is generated by them, which yields the result. So suppose that $H_1\cap H_1' = H_2\cap H_2' =A$. Then, $A$ is contained in $H_1 \cap H_2$ and it is of codimension $2$ in  $V$, so in fact $A=H_1\cap H_2$. This yields a contradiction since $A$ is also contained, for example, in $H_1' \neq H_1, H_2$, and $H_1,H_2$ are transverse.
\end{proof}

\subsection{Classification of complex reflection groups}
\label{sec:crgclass}
The following classical result is Theorem 1.27 of the book \cite{URG} of Lehrer and Taylor.

\begin{theorem} Every complex reflection group is the product of irreducible complex reflection groups.
\end{theorem}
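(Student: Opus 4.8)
The plan is to exploit the complete reducibility of the finite group $W$ acting on $V_W$ together with the defining feature of a reflection, namely that it moves only a one-dimensional subspace. Fix the $W$-invariant positive definite hermitian form on $V_W$. First I would separate off the fixed space: write $V_W = V^W \oplus V'$, where $V' = (V^W)^{\perp}$ is $W$-stable and $W$ acts on $V'$ with no nonzero fixed vectors; since every reflection of $W$ fixes $V^W$ pointwise, the reflections of $W$ are unchanged when $W$ is viewed inside $\GL(V')$, so it suffices to treat $W \subseteq \GL(V')$ and re-adjoin the orthogonal fixed summand afterwards (as a product of one-dimensional trivial, i.e.\ rank-zero, factors). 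Using orthogonal complements of $W$-stable subspaces, decompose $V' = V_1 \oplus \cdots \oplus V_k$ into pairwise orthogonal $W$-irreducible summands.

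The key step is to show that each reflection $r \in \mathcal{R}$ acts nontrivially on exactly one summand $V_i$. Let $L_r = H_r^{\perp}$ be the line that $r$ moves: $r$ acts as a nontrivial scalar on $L_r$ and as the identity on $H_r = L_r^{\perp}$. Each $V_i$ is $r$-stable, so $V_i = (V_i \cap L_r) \oplus (V_i \cap H_r)$; if $r|_{V_i} \neq \id$ then $V_i \cap L_r \neq 0$, and since $\dim L_r = 1$ this forces $L_r \subseteq V_i$. As the $V_i$ are in direct sum and $L_r \neq 0$, the line $L_r$ lies in exactly one of them, say $V_i$; then $V_j \subseteq H_r$ for every $j \neq i$, so $r|_{V_j} = \id$, while $r|_{V_i}$ is a reflection of $V_i$.

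Now set $\mathcal{R}_i = \{\, r \in \mathcal{R} : r|_{V_i} \neq \id \,\}$ and $W_i = \langle \mathcal{R}_i \rangle$, so that $\mathcal{R} = \mathcal{R}_1 \sqcup \cdots \sqcup \mathcal{R}_k$ and hence $W = \langle \mathcal{R} \rangle = \langle W_1, \dots, W_k \rangle$. For $i \neq j$, any $r \in \mathcal{R}_i$ and $s \in \mathcal{R}_j$ act nontrivially on disjoint summands and trivially on the rest, so they commute; thus the $W_i$ pairwise commute and $W = W_1 W_2 \cdots W_k$. Moreover every element of $W_i$ is the identity on each $V_j$ with $j \neq i$, whereas every element of $\langle W_j : j \neq i \rangle$ is the identity on $V_i$, so $W_i \cap \langle W_j : j \neq i \rangle = \{1\}$; therefore $W = W_1 \times \cdots \times W_k$ as an internal, hence external, direct product. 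Finally, $W_i$ acts on $V_i$ as the group generated by the reflections $r|_{V_i}$ ($r \in \mathcal{R}_i$), so it is a complex reflection group on $V_i$, and $V_i$ is $W_i$-irreducible: any $W_i$-stable subspace of $V_i$ is automatically stable under every $W_j$ (which acts trivially on $V_i$) and hence under $W$, so it is $0$ or $V_i$. This exhibits $W$ as a product of irreducible complex reflection groups.

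The main obstacle — really the only point needing genuine care — is the observation that a reflection cannot act nontrivially on two distinct irreducible summands, which is exactly the one-dimensionality argument about $L_r$ above; everything after that is routine bookkeeping with commuting subgroups and complete reducibility. A secondary point requiring attention is the precise handling of the fixed space $V^W$, so that the statement is literally correct under the conventions of Definitions \ref{def:ref} and \ref{def:crg}.
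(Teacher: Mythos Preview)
Your argument is correct and is the standard proof: decompose $V_W$ into $W$-irreducibles using the invariant hermitian form, observe that the one-dimensionality of $L_r = H_r^\perp$ forces each reflection to live in exactly one summand, and then check that the subgroups $W_i$ generated by the reflections in each summand give the desired direct product. The paper does not actually prove this statement; it simply quotes it as Theorem~1.27 of Lehrer--Taylor \cite{URG}, so there is no ``paper's own proof'' to compare against---your write-up is essentially the textbook argument one would find there.
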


This result reduces the study of complex reflection groups to the study of irreducible ones.
The latter were classified in 1954 by Shephard and Todd \cite{Clas}; they fall into two categories. We give some very introductory information, which is necessary for our analysis later. As usual, the reader is also referred to \cite{URG}.

\subsubsection{The infinite series}\label{subsec:infiniteseries}

The infinite series is a $3$-parameter family of irreducible complex reflection groups denoted by $G(m,p,n)$, where  $m,p,n$ are positive integers, and $p|m$. The group $G(m,p,n)$ is the subgroup of $\GL(\mathbb{C}^n)$ consisting of monomial matrices, i.e. matrices with a unique non-zero entry in every row and every column, whose non-zero entries lie in $\mathbb{U}_m$, and whose product of entries
lies in $\mathbb{U}_{\frac{m}{p}}$. The case where $m=p=1$ corresponds to the symmetric group $S_n$ acting on $\mathbb{C}^n$ by permuting the standard basis. It is the only case when its rank is not $n$ but $n-1$ (the sum of the vectors of the standard basis is fixed under this action).
\vspace{10pt}

\paragraph{\bf Reflections and hyperplanes of $G(m,p,n)$.}

Let $z_1,\dots z_n$ denote the standard coordinates of $\mathbb{C}^n$ and $ζ=e^{\frac{2πi}{m}}$.
The group $G(m,p,n)$ contains at most two classes of distinguished reflections:

\begin{enumerate}
\item for $1\leq i\neq j\leq n$, and $κ\in \mathbb{Z}$,  the reflections $(ij)_κ$ with 
$$(ij)_κ (z_1,\dots, z_i, \dots, z_j, \dots ,z_n)= (z_1,\dots,ζ^κz_j ,\dots ,ζ^{-κ}z_i,\dots, z_n)$$
The reflection $(ij)_κ$ has order $2$ and its reflecting hyperplane is  $$H^κ_{ij}=\{(z_1,\dots ,z_n)\in \mathbb{C}^n\big| z_i=ζ^κz_j\}.$$ When there is no ambiguity as to the system of coordinates, we will say that $H^κ_{ij}$ has equation $z_i=ζ^κz_j$.
Note that, $(ij)_κ=(ji)_{-κ}$, and, if $κ\equiv κ'  \mod m$, then $(ij)_κ=(ij)_{κ'}$. If $κ=0$ we will just omit it, and write $H_{ij}$ and $(ij)$ for the corresponding hyperplane and reflection, respectively.

\item 
if $p \neq m$, then for $1\leq i \leq n$, the reflections $t_i$ with 
$$t_i(z_1,\dots ,z_n)=(z_1,\dots , ζ^pz_i, \dots ,z_n).$$
Each $t_i$ has order $m/p$, and its corresponding reflecting hyperplane, which we denote by $H_i$, has equation $z_i=0$.

\end{enumerate}

\paragraph{\it Transversality relations for $G(m,p,n)$}

The following lemma, the verification of which is left to the reader, sums up all the necessary relations of transversality for the groups of the infinite series.

\begin{lemma}
\label{lem:Gmpn0}
In $G(m,p,n)$,
\begin{enumerate}
\item if $i\neq j$, then $H_i$ and $H_j$ are non-transverse and the reflections that map one to the other (in any order) are the reflections $(ij)_κ, κ\in \mathbb{Z}$;

\item $H_{i'}$ and $H_{ij}^κ$ are transverse if and only if $i,j  \neq i'$; even in the case where $i'\in \{i,j\}$, there exist no reflections mapping one to the other;

\item if $j_1\neq j_2$, then $H_{ij_1}^{κ_1}$ and $H_{ij_2}^{κ_2}$ are non-transverse and the only reflection mapping one to the other (in any order) is $(j_1j_2)_{κ_2-κ_1}$;

\item if $\{i_1,j_1\}\cap \{i_2,j_2\}=\emptyset$, then $H_{i_1j_1}^{κ_1}$ and $H_{i_2j_2}^{κ_2}$ are transverse;

\item for $(m,p)\neq (2,2)$, if  $i\neq j$, then $H_{ij}^{κ_1}$ and $H_{ij}^{κ_2}$ are not transverse and the reflections mapping the former to the latter are  $(ij)_κ$, for $2κ\equiv κ_1+κ_2 \mod m$, and $t_i^{κ_1-κ_2}, t_j^{κ_2-κ_1}$, provided that $κ_2-κ_1 \equiv 0 \mod p$. 

\item if $(m,p)=(2,2)$, then $H_{ij}$ and $H_{ij}^{1}$ are transverse.  

\end{enumerate}
\end{lemma}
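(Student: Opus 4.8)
The statement to prove is Lemma~\ref{lem:Gmpn0}, which enumerates all transversality relations among the reflecting hyperplanes of $G(m,p,n)$. The plan is to verify each of the six items by combining the explicit equations of the hyperplanes $H_i$ (equation $z_i=0$) and $H_{ij}^\kappa$ (equation $z_i=\zeta^\kappa z_j$) with Lemma~\ref{lem:transroots}, which reduces transversality of two hyperplanes to checking that no third reflecting hyperplane has a root in the plane spanned by their two roots; for the non-transverse cases one additionally needs to identify, via a direct computation of the action on coordinates, precisely which reflections carry one hyperplane to the other. The roots are $e_i-\zeta^{-\kappa}e_j$ for $H_{ij}^\kappa$ (up to scalar) and $e_i$ for $H_i$, so in each case the span in question is an explicit two-dimensional coordinate(-ish) subspace, and one simply asks which of the vectors $e_k$ and $e_k-\zeta^\lambda e_l$ lie inside it.

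First I would set up notation once: fix the standard hermitian form making $\{e_1,\dots,e_n\}$ orthonormal, record that $H_{ij}^\kappa$ has root $a_{ij}^\kappa=e_i-\zeta^{-\kappa}e_j$ and $H_i$ has root $e_i$, and note that $\langle a,b\rangle$ contains $e_k$ (resp.\ $e_k-\zeta^\lambda e_l$) iff that vector, written in the basis, is supported on and compatible with the at-most-four coordinates appearing in $a,b$. Then items (1)–(4) are short: for (1), $\langle e_i,e_j\rangle$ contains no root $e_k$ with $k\notin\{i,j\}$ and contains $e_i-\zeta^\lambda e_j=a_{ij}^{-\lambda}$ for every $\lambda$, so $H_i,H_j$ are non-transverse with the $(ij)_\kappa$ as the mapping reflections (and a direct check that $(ij)_\kappa$ indeed swaps $H_i\leftrightarrow H_j$, while $t_i,t_j$ fix them); for (2), if $i,j\neq i'$ the span $\langle e_{i'},\,e_i-\zeta^{-\kappa}e_j\rangle$ meets the list of roots only in $e_{i'}$ and $a_{ij}^\kappa$ themselves, giving transversality, and if $i'\in\{i,j\}$ one observes $H_{i'}\cap H_{ij}^\kappa$ is contained in a third hyperplane (e.g.\ $H_j$ or $H_i$) so they are not transverse, yet no reflection maps one to the other because a reflection preserves orders and $t$-type versus $(ij)$-type hyperplanes are structurally different (alternatively, use Lemma~\ref{lem:transverse1}-style hyperplane-containment obstructions). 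Items (3) and (4) are the same kind of span computation: for (3) the span $\langle e_i-\zeta^{-\kappa_1}e_{j_1},\,e_i-\zeta^{-\kappa_2}e_{j_2}\rangle$ (three coordinates $i,j_1,j_2$) contains exactly the roots $a_{ij_1}^{\kappa_1}$, $a_{ij_2}^{\kappa_2}$ and $a_{j_1j_2}^{\kappa_2-\kappa_1}$, pinpointing both the non-transversality and the unique mapping reflection $(j_1j_2)_{\kappa_2-\kappa_1}$; for (4), disjoint index sets make the two roots live in complementary coordinate blocks, so their span contains none of the listed roots beyond themselves, hence transversality.

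The genuinely delicate case is item (5) together with its exception (6): when $i,j$ are fixed, $H_{ij}^{\kappa_1}$ and $H_{ij}^{\kappa_2}$ both involve only the coordinates $z_i,z_j$, so their root span is the whole plane $\langle e_i,e_j\rangle$, which for $n\ge 2$ generically contains the additional roots $e_i$, $e_j$ (when those are reflecting hyperplanes, i.e.\ when $p\neq m$) and all $a_{ij}^\kappa=e_i-\zeta^{-\kappa}e_j$; the subtlety is that when $(m,p)=(2,2)$ the reflections $t_i$ do not exist (so $H_i,H_j$ are not hyperplanes) and moreover $m=2$ forces $\kappa_1,\kappa_2\in\{0,1\}$, so the only two hyperplanes with root in $\langle e_i,e_j\rangle$ of the form $z_i=\pm z_j$ are $H_{ij}^{\kappa_1}$ and $H_{ij}^{\kappa_2}$ themselves, yielding transversality. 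So the main obstacle is bookkeeping which of $e_i,e_j$ and which $a_{ij}^\kappa$ actually correspond to hyperplanes of $G(m,p,n)$ under the constraints "$p\mid m$", "$t_i$ exists iff $p\neq m$", and "$\zeta$ has order exactly $m$", and then, in the non-transverse subcase of (5), checking by direct action on coordinates that the mapping reflections are exactly $(ij)_\kappa$ with $2\kappa\equiv\kappa_1+\kappa_2\pmod m$ and $t_i^{\kappa_1-\kappa_2},t_j^{\kappa_2-\kappa_1}$ when $p\mid(\kappa_2-\kappa_1)$ --- this last point requires computing, e.g., $(ij)_\kappa(H_{ij}^{\kappa_1})$ by substituting the transformed coordinates into $z_i=\zeta^{\kappa_1}z_j$ and simplifying, and similarly for $t_i^{a}$, and confirming no other monomial reflection of the group does the job. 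Everything else is routine linear algebra with the explicit coordinates, so I would present (5)–(6) in full detail and merely indicate the pattern for (1)–(4).
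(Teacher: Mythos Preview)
Your proposal is correct and is precisely the routine verification the paper has in mind: the paper states this lemma without proof, explicitly leaving the verification to the reader. Your approach via Lemma~\ref{lem:transroots} (reducing transversality to checking which roots lie in the span of two given roots) together with direct coordinate computations for the mapping reflections is exactly the expected one, and your identification of item~(5)--(6) as the only place requiring genuine care (namely the bookkeeping of which $e_i$, $e_j$, $a_{ij}^\kappa$ actually arise as roots in $G(m,p,n)$ depending on whether $p=m$ and on the value of $m$) is accurate.
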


\subsubsection{The exceptional groups} 

The exceptional complex reflection groups form a family of 34 exceptional groups denoted $G_4, \dots ,G_{37}$ of increasing ranks $2$ to $8$. They contain the Coxeter groups $H_3\cong G_{23}, F_4 \cong G_{28}, H_4 \cong G_{30}, E_6 \cong G_{35}, E_7 \cong G_{36}, E_8 \cong G_{37}$. Here, we present two examples of non-real exceptional complex reflection groups, to which we will return at the the very end of this article. Again, for a comprehensive exposition of all exceptional groups, the reader is referred to \cite{URG}, where these groups correspond to the \textit{primitive} unitary reflection groups.

\begin{example}[Groups $G_{26},G_{25}$]
\label{ex:g26}
The following description of $G_{26}, G_{25}$ can be found, in the form of line systems, in \cite[p.149]{URG}, where they correspond to the groups generated by the line systems $\mathcal{M}_3$ and $\mathcal{L}_3$, respectively (see also the table of page 275 of the same book).

Let $z_1,z_2,z_3$ denote the standard coordinates of $\mathbb{C}^3$ and $ζ=e^{\frac{2πi}{3}}$. Assume that $\mathbb{C}^3$ is equiped with the standard inner product. The group $G_{26}$ is the subgroup of $\GL(\mathbb{C}^3)$, generated by the following $3$ types of distinguished unitary reflections:

\begin{enumerate}
\item $t_i, i=1,2,3$, with reflecting hyperplanes $H_i$ with equation $z_i=0$, and order $3$,
\item $t_{κ,λ }$, $κ,λ = 0,1,2$,  with reflecting hyperplanes $T_{κ,λ}$ with equation $z_1 + ζ^κ z_2 + {ζ}^λ z_3=0$, and order $3$, and
\item $(ij)_{κ}$, for $1\leq i\neq j \leq 3$ and $κ=0,1,2$, with reflecting hyperplanes $H_{i,j}^{κ}: z_i=ζ^κz_j$, and order $2$. If $κ=0$, we may omit it from the notation.

\end{enumerate}

The group $G_{25}$ is the subgroup of $G_{26}$ generated by the reflections of the first two types. One may notice that reflections of type (1) and (3) generate the group $G(3,1,3)$ of the infinite series.

\end{example}

\section{The Brauer-Chen algebra}\label{ch:defalgebra}

\subsection{The definition of the Brauer-Chen algebra}

Recall that $W$ is a complex reflection group and $\mathcal{R}$, $\mathcal{H}$ denote its sets of reflections and reflecting hyperplanes, respectively.

\begin{definition}[Ring of definition]\label{def:ringofdef} Let $δ$ and $μ_s, s\in \mathcal{R}$, be indeterminates, and set $\underline{μ}:=\{μ_s\big| s\in \mathcal{R}\}$. The ring of definition of the Brauer-Chen algebra of $W$ is the ring $\mathcal{A}=\mathbb{Z}[\underline{μ}^{\pm 1},δ^{\pm 1}]\big/(μ_s-μ_{s'}\big|  s\sim s')$, where $\sim$ denotes conjugation with respect to $W$.
\end{definition}

\begin{definition}[Definition of the Brauer-Chen algebra]
\label{def:algebra}
The Brauer-Chen algebra associated to $W$ is the $\mathcal{A}$-algebra $\Br(W)$
generated by the elements of $W$ subject to the group relations, together with elements $e_H, H\in \mathcal{H}$, subject to the following relations, where $H,H_1,H_2\in \mathcal{H}$ and $w\in W$:

\begin{itemize}
\item[(B1)] $e_H^2=δe_H$, 
\item[(B2)] $we_Hw^{-1}=e_{wH}$, 
\item[(B3)] $we_H=e_H$, if $H\subseteq \ker(w-1)$,
\item[(B4)] $e_{H_1}e_{H_2}=e_{H_2}e_{H_1}$, if $H_1\pitchfork H_2$
\item[(B5)] $e_{H_1}e_{H_2}=\sum_{s\in \mathcal{R}_{H_2\to H_1}} μ_sse_{H_2}$, if $H_1\not\pitchfork H_2$
\end{itemize}
If $R$ is an $\mathcal{A}$-algebra, we denote the $R$-algebra $R\otimes_{\mathcal{A}} \Br(W)$ by $\Br^R(W)$. When there is no ambiguity as to $R$, we will denote the images of $δ$ and $μ_s,s\in \mathcal{R}$ in $R$ with the same letters.
\end{definition}

\begin{remark}\label{rem:tensoringrelations}
With the above convention in mind, $\Br^R(W)$ is isomorphic to the $R$-algebra defined by the same generators and relations as in the definition.
\end{remark}

\begin{remark} Notice that the condition $H\subseteq \ker(w-1)$ of relation (B3) is satisfied either when $w=1$ or when $w$ is a reflection with reflecting hyperplane $H$. 
\end{remark}

\begin{remark}\label{rem:symrel} Applying (B2) to the right hand side of relation (B5) of the above definition we get (B5'): $e_{H_1}e_{H_2}=e_{H_1}\sum_{s\in \mathcal{R}_{H_2\to H_1}} μ_ss$, if $H_1\not \pitchfork H_2$.
\end{remark}

\begin{remark} It is clear from the definition, that $\Br(W)$ is a quotient of the $\mathcal{A}$-algebra $\mathcal{A}\langle \mathcal{H} \rangle \ltimes W$, where $\mathcal{A}\langle \mathcal{H} \rangle$ denotes the free associative $\mathcal{A}$-algebra on $\mathcal{H}$.
\end{remark}

\begin{definition}[Proper rings of definition] \label{def:propering} Let $R$ be an $\mathcal{A}$-algebra. We say that
$R$ is \textit{proper} (for $W$) if it has all of the following properties.
\begin{itemize}
\item[(P1)] $R$ is a domain of characteristic zero,
\item[(P2)] $\Frac(R)$ has a subfield $k_R$ containing (the image in $R$ of) $\underline{μ}$, over which every subgroup of $W$ splits,
\item[(P3)] $δ$ is transcendental over $k_R$, and $\Frac(R)$ is a finite extension of $k_R(δ)$. 
\end{itemize}
\end{definition}

\begin{example}\label{ex:propering} For any field $L$ of characteristic zero over which every subgroup of $W$ splits (e.g., $\overline{\mathbb{Q}}$ or $\mathbb{C}$), if $R_0$ is a quotient of $L[\underline{μ}^{\pm 1}]\big/(μ_s-μ_{s'}\big|  s\sim s')$ which is a domain, then the ring $R=R_0[δ^{\pm 1}]$ is a proper ring of definition, where, for example, one can choose $k_R$ to be the subfield $\Frac(R_0)$ of $\Frac(R)$. Two notable choices for $R_0$ are $L[\underline{μ}^{\pm 1}]\big/(μ_s-μ_{s'}\big|  s\sim s')$ and its quotient $L[\underline{μ}^{\pm 1}]\big/(μ_s-μ_{s'}\big|  s, s' \in \mathcal{R})$.
\end{example}

\begin{remark}\label{rem:properfractionfield} Notice that if $R$ is a proper ring of definition, then so is $\Frac(R)$. Also, notice that any finite extension of a proper field is also a proper field.
\end{remark}

\section{The simple modules of $\Br^{K}(W)$}\label{sec:structure}

\subsection{Structure of simple modules}
\label{sec:structuregeneral}

Let $W$ be a complex reflection group and $K$ a proper field for $W$.
In this subsection we obtain some structural properties that simple modules of the Brauer-Chen algebra of $W$ satisfy. We show that every simple module of $\Br^K(W)$ restricted to $KW$ is isomorphic to a module induced by a simple module of the stabilizer in $W$ of a transverse collection. This spells out the construction of simple modules in the next subsection.

All mentioned modules are finite dimensional. If it is not clear why a constructed module is finite dimensional, we will give an explicit argument.

For a collection of transverse hyperplanes $B$ we will denote by $e_B$ the element $\prod_{H\in B} e_H$ of $\Br^K(W)$; this product is well defined since $e_{H},e_{H'}$ commute for transverse $H, H'$. If $B$ is empty, then we assume that $e_B=1$.

\begin{remark}\label{remark:WactiononB}
By relation (B2) of $\Br^K(W)$, it is clear that for $w\in W$ we have $we_Bw^{-1}=e_{wB}$.
\end{remark}
\begin{remark}\label{remark:ebmult}
Since we can rearrange the factors in $e_B$, we have the following multiplication property (see also Definition \ref{def:algebra}). If $H\in B$, then $e_He_B=δe_B$ (rearrange the factors of $e_B$ so that $e_H$ is first). If $H\pitchfork B$, then $B\cup \{H\}$ is a collection of transverse hyperplanes and $e_He_B=e_{B\cup\{H\}}$. Finally, if $H\not \pitchfork H'$ for some $H'\in B$, then $e_He_B=\sum_{s\in \mathcal{R}_{H'\to H}} μ_sse_B$ (rearrange the factors of $e_B$ so that $e_{H'}$ is first). With a symmetric argument we can obtain a similar result for the product $e_Be_H$, with the only difference arising in the last case, where we get $e_Be_H=e_B\sum_{s\in\mathcal{R}_{H\to H'}}μ_ss$ (see Remark \ref{rem:symrel}). 
\end{remark}

\begin{definition}
\label{def:maximal}
 Let $V$ be a $\Br^K(W)$-module and $B$ a transverse collection. The collection $B$ will be called $V$-maximal if it is maximal with the property that $e_BV\neq 0$. The subspace $e_BV$ will be denoted by $V_B$.
\end{definition} 

\begin{remark}
\label{remark:conjugatemaximals} By Remark \ref{remark:WactiononB}, one can see that for $w\in W$ we have $V_{wB}=wV_B$ and since the elements $w$ are automorphisms of $V$, any conjugate of a $V$-maximal collection is again $V$-maximal.
\end{remark} 

\begin{remark} Notice that if the empty collection is $V$-maximal, this means that $e_HV=0$ for all $H\in \mathcal{H}$.
\end{remark}

\begin{lemma}\label{equation1} Let $V$ be a $\Br^K(W)$-module and $B$ a $V$-maximal collection. Then, for all $v\in V_B$ and $H\in \mathcal{H}$ we have:

\begin{equation} 
\label{eq1}
e_Hv=\begin{cases}
δv , & \textnormal{ if }H\in B,\\
0, & \textnormal{ if }H \pitchfork B, \\
\sum_{s\in \mathcal{R}_{H'\to H}} μ_ssv, & \textnormal{ if }H \not\pitchfork H', H'\in B 
\end{cases} 
\end{equation}

\end{lemma}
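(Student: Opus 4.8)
The plan is to reduce the whole statement to the multiplication rules for the products $e_He_B$ recorded in Remark \ref{remark:ebmult}, together with the maximality hypothesis on $B$. First I would observe that, by Definition \ref{def:maximal}, $V_B=e_BV$, so every $v\in V_B$ can be written as $v=e_Bv'$ for some $v'\in V$; consequently $e_Hv=(e_He_B)v'$ and it suffices to evaluate the product $e_He_B$ in $\Br^K(W)$ according to the position of $H$ relative to $B$. Note that the three cases of the statement are genuinely exhaustive and mutually exclusive: either $H\in B$, or $H\notin B$ and $H\pitchfork B$, or $H\not\pitchfork B$ (i.e.\ $H\not\pitchfork H'$ for some $H'\in B$, necessarily with $H\neq H'$); the case $H\in B$ is incompatible with $H\pitchfork B$ since a hyperplane is never transverse to itself.

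For $H\in B$, Remark \ref{remark:ebmult} (an instance of (B1), after rearranging the commuting factors of $e_B$) gives $e_He_B=\delta e_B$, whence $e_Hv=\delta e_Bv'=\delta v$. For $H\not\pitchfork H'$ with $H'\in B$, rearranging $e_B$ so that $e_{H'}$ is the first factor and applying (B5) yields $e_He_B=\sum_{s\in\mathcal{R}_{H'\to H}}\mu_s s\,e_B$, so $e_Hv=\sum_{s\in\mathcal{R}_{H'\to H}}\mu_s s\,e_Bv'=\sum_{s\in\mathcal{R}_{H'\to H}}\mu_s s v$; in particular this value does not depend on which non-transverse $H'\in B$ is chosen, which is what the phrasing of that case asserts.

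The only place the $V$-maximality of $B$ intervenes is the middle case. If $H\pitchfork B$ (so $H\notin B$), then $B\cup\{H\}$ is again a transverse collection by Remark \ref{remark:ebmult}, and it strictly contains $B$; hence by maximality of $B$ we have $e_{B\cup\{H\}}V=0$. Since $e_He_B=e_{B\cup\{H\}}$, we conclude $e_Hv=e_{B\cup\{H\}}v'=0$. I do not expect any real obstacle here: the content is entirely packaged in Remark \ref{remark:ebmult} and Definition \ref{def:maximal}, and the proof is a short case check; the only point deserving a word of care is the exhaustiveness/exclusiveness of the trichotomy and the implicit well-definedness in the third case, both of which are handled above.
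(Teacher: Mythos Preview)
Your proof is correct and follows essentially the same approach as the paper: since $V_B=e_BV$, the formula is verified by writing $v=e_Bv'$ and applying the product rules for $e_He_B$ from Remark~\ref{remark:ebmult}, with $V$-maximality used only in the transverse case. The paper's proof is a one-line reference to this remark; you have simply spelled out the case analysis in detail.
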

\begin{proof} Since $V_B=e_BV$, the verification of the formula is immediate from Remark \ref{remark:ebmult}.
\end{proof}

We will show that if $V$ is simple, then it is the direct sum of the subspaces $V_B$ for $B$ in some orbit of $V$-maximal collections. Before that, we establish existence of a degree on a $KW$-module $V$, which will be very useful for the rest of this subsection. Recall from Definition \ref{def:propering} that $K$ contains a field $k_K$ containing the parameters $\underline{μ}$, and over which all subgroups of $W$ split. Also, $K$ is a finite extension of $k_K(δ)$.

\begin{lemma}
\label{lem:degree}
Let $V$ be a $k_K(δ)W$-module. Then, there is a degree map $\deg: V\to \mathbb{Z}\cup \{-\infty\}$ such that 
$\deg δ^{l}v = \deg v + l $ and $\deg xv \leq \deg v$ for all $v\in V$ and $x\in k_KW$. Also, $\deg v =-\infty $ if and only if $v=0$.
\end{lemma}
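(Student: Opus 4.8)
The plan is to exhibit $V$ as a scalar extension of a $k_K W$-module and then read the degree off the coefficients of a vector in a well-chosen basis. Since $K$ is a proper field for $W$, the subfield $k_K$ has characteristic zero and is a splitting field for $W$ (indeed for every subgroup of $W$), so by Maschke's theorem $k_K W\cong\prod_i M_{n_i}(k_K)$, and extending scalars gives $k_K(\delta)W\cong k_K(\delta)\otimes_{k_K}k_K W\cong\prod_i M_{n_i}(k_K(\delta))$. Consequently every finite-dimensional $k_K(\delta)W$-module is isomorphic to $k_K(\delta)\otimes_{k_K}V_0$ for a suitable $k_K W$-module $V_0$ (take $V_0$ to be the direct sum of the simple $k_K W$-modules with the same multiplicities as those occurring in $V$). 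I would fix such an isomorphism together with a $k_K$-basis $b_1,\dots,b_d$ of $V_0$; this is simultaneously a $k_K(\delta)$-basis of $V$, and the $k_K$-span of the $b_i$ is stable under the action of $k_K W\subseteq k_K(\delta)W$.

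Next I would invoke the usual degree on the field $k_K(\delta)$: for $f=p/q$ with $p,q\in k_K[\delta]$ and $q\neq 0$, put $\deg f=\deg p-\deg q$ (this is well defined and equals $-\infty$ precisely when $f=0$), so that $\deg(fg)=\deg f+\deg g$, $\deg(f+g)\le\max(\deg f,\deg g)$ and $\deg\delta^l=l$. Writing $v=\sum_{i=1}^d f_i b_i$ with $f_i\in k_K(\delta)$, I define $\deg v:=\max_i\deg f_i$. Then $\deg v=-\infty$ iff all $f_i=0$ iff $v=0$, by linear independence of the $b_i$ over $k_K(\delta)$.

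The remaining verifications are routine. Multiplying by $\delta^l$ multiplies each coefficient $f_i$ by $\delta^l$, so $\deg(\delta^l v)=\max_i(\deg f_i+l)=\deg v+l$. For $x\in k_K W$, stability of the $k_K$-span of the $b_i$ gives $x b_i=\sum_j c_{ji}b_j$ with $c_{ji}\in k_K$, hence each nonzero $c_{ji}$ has degree $0$ and each $c_{ji}$ has degree $\le 0$; therefore $xv=\sum_j\bigl(\sum_i c_{ji}f_i\bigr)b_j$ and $\deg(xv)\le\max_{i,j}(\deg c_{ji}+\deg f_i)\le\max_i\deg f_i=\deg v$. The only step that genuinely uses the hypotheses on $K$ is the descent $V\cong k_K(\delta)\otimes_{k_K}V_0$, which rests on $k_K$ being a splitting field of characteristic zero; this is the point I would be most careful about, although it is entirely standard.
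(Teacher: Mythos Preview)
Your proof is correct and follows essentially the same approach as the paper: both descend $V$ to a $k_K W$-form $V_0$, fix a $k_K$-basis of $V_0$, and define the degree of a vector as the maximum of the degrees of its coefficients in that basis. The only minor difference is that the paper simply cites a reference for the existence of a $k_K W$-form (noting it holds over any field), whereas you deduce it explicitly from $k_K$ being a splitting field of characteristic zero via semisimplicity and matching multiplicities of simples.
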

\begin{proof} We define the degree of a rational function in $δ$ to be the degree of its numerator minus the degree of its denominator; this does not change if we choose different expressions of the same rational function. It is also clear that this degree satisfies the properties described in the statement.

 Let $V$ be a $k_K(δ)W$-module. Since $W$ splits over $k_K$, there exists a $k_KW$-form of $V$, namely a $k_KW$-submodule $V'$ of $V$ such that the natural homomorphism
$k_K(δ)\otimes_{k_K}V'\to V$ is an isomorphism (in fact, this is true for any field $k_K$, for a proof see \cite[Lemma 5.1]{Ma}). Let $v_1,\dots ,v_n$ be a basis of $V'$. Then, the aforementioned degree map on $k_K(δ)$ can be extended to $k_K(δ) \otimes_{k_K} V'$ setting $\deg (\sum_i^n λ_i \otimes v_i)=\max\{\deg λ_i, i=1,\dots ,n \}$ for all $λ_1,\dots ,λ_n \in k_K(δ)$. This degree satisfies the properties of the statement. 

To verify  that $\deg xv \leq v$ for all $v\in V, x\in k_KW$, notice that multiplication on $V$ by elements of $k_K$ does not raise the degree and that the action of a $w\in W$ on $V$ with respect to the basis $1\otimes v_1, \dots , 1\otimes v_n$ is given by a matrix with coefficients in $k_K$ since $V'$ is a $k_KW$-module.
\end{proof}

\begin{definition}\label{def:orbvb} Let $V$ be a $\Br^K(W)$-module and $\mathcal{B}$ an orbit under the action of $W$ on $V$-maximal collections. The sum of the subspaces $V_B, B\in \mathcal{B}$ will be denoted $V_{\mathcal{B}}$.
\end{definition}

\begin{proposition}
\label{prop:directsum}
Let $V$ be a $\Br^K(W)$-module and $\mathcal{B}$ an orbit of $V$-maximal collections. Then,
\begin{enumerate}
\item $V_{\mathcal{B}}=\oplus_{B\in \mathcal{B}}V_B$,

\item for any $B\in \mathcal{B}$, the set $\Stab(V_B):=\{w\in W\big|  wV_B=V_B\}$ coincides with $\Stab(B)$ (recall that $\Stab(B)$ denotes the stabilizer of $B$ in $W$);

\item $V_{\mathcal{B}}$ is a $\Br^K(W)$-submodule of $V$, and $\Res_{KW}(V_\mathcal{B})\cong  \Ind_{K\Stab(B)}^{KW}(V_B)$.
\end{enumerate}
\end{proposition}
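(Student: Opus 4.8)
The plan is to prove the three claims in sequence, each feeding into the next, and to use the degree map of Lemma \ref{lem:degree} as the main technical tool for the directness of the sum in (1).

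\emph{Step 1: the sum is direct.} First I would reduce to the case $K = k_K(\delta)$ by choosing a $k_K(\delta)W$-form, or simply observe that extension of scalars from $k_K(\delta)$ to $K$ preserves directness of sums of subspaces, so it suffices to equip (a $k_K(\delta)W$-form of) $V$ with the degree map $\deg$ of Lemma \ref{lem:degree}. The key computation is the following: for a $V$-maximal collection $B$ and $v \in V_B$ nonzero, I would compute $\deg(e_H v)$ for $H \notin B$ using the trichotomy of Lemma \ref{equation1}. If $H \pitchfork B$ then $e_H v = 0$; if $H \not\pitchfork H'$ for some $H' \in B$, then $e_H v = \sum_{s \in \mathcal{R}_{H' \to H}} \mu_s s v$, which has degree $\leq \deg v$ since the $\mu_s$ lie in $k_K$ and the $s$ act by matrices over $k_K$. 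On the other hand, for $H \in B$, $e_H v = \delta v$ raises the degree by exactly $1$. So, given a would-be relation $\sum_{B \in \mathcal{B}} v_B = 0$ with $v_B \in V_B$, I would pick a collection $B_0$ in the orbit with a hyperplane $H_0 \in B_0$ that is \emph{not} in any other $B$ appearing in the relation (here I need that distinct collections in the same orbit differ — they are distinct as sets — and a pigeonhole-type argument over the finitely many collections involved to find such an $H_0$; alternatively one iterates: multiply the relation repeatedly by $e_{H_0}$ for $H_0 \in B_0$, so that the $B_0$-component's degree strictly outgrows all others, forcing $v_{B_0} = 0$, then induct on the number of summands). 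Actually the cleanest version: for fixed $B_0 \in \mathcal{B}$, apply $e_{B_0}$ to the relation; then $e_{B_0} v_{B_0} = \delta^{|B_0|} v_{B_0}$ while for $B \neq B_0$, $e_{B_0} v_B$ has degree $\leq \deg v_B < \deg v_B + |B_0|$ strictly (using that $B$, being maximal and $\neq B_0$, must be non-transverse to or miss some hyperplane of $B_0$ — this needs a small argument: two distinct maximal collections cannot be transverse to each other, since their union would then be a larger transverse collection $B$ with $e_B V \supseteq e_{B_0\cup B_1}V \neq 0$, contradicting maximality). Comparing top-degree terms forces all $v_B = 0$. This is the step I expect to be the main obstacle — pinning down exactly why, for distinct maximal $B_0, B_1 \in \mathcal{B}$, applying $e_{B_0}$ strictly lowers relative degree on $V_{B_1}$, and organizing the induction cleanly.

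\emph{Step 2: $\Stab(V_B) = \Stab(B)$.} The inclusion $\Stab(B) \subseteq \Stab(V_B)$ is immediate from Remark \ref{remark:WactiononB}: if $wB = B$ then $w e_B w^{-1} = e_B$, so $w V_B = w e_B V = e_B w V = e_B V = V_B$. Conversely, suppose $w V_B = V_B$. By Remark \ref{remark:conjugatemaximals}, $wB$ is again $V$-maximal, and $V_{wB} = w V_B = V_B \neq 0$, so $e_{wB} V_B \neq 0$. But $V_B = e_B V$, so $e_{wB} e_B V \neq 0$. If $wB \neq B$, then by the argument in Step 1 (distinct maximal collections are non-transverse or overlap-incompatibly), $wB$ and $B$ are not transverse-compatible, meaning $wB \cup B$ is not a transverse collection of size $|wB| + |B|$; tracking through Remark \ref{remark:ebmult}, the product $e_{wB} e_B$ either equals $\delta^{|B|} e_{B'}$ for a \emph{strictly larger} collection $B'$ (impossible by maximality, giving $e_{wB}e_B V = 0$ after all if $B' \supsetneq B$ is not transverse — contradiction) or rewrites with reflections landing us back in $\sum \mu_s s e_B$-type expressions; in all cases a degree comparison as in Step 1 shows $e_{wB} V_B$ has degree strictly less than that of a generic element of $V_B$, contradicting $e_{wB} V_B = V_B$. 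Hence $wB = B$. (I would lift the precise bookkeeping here from the computation already done in Step 1 rather than redo it.)

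\emph{Step 3: submodule structure and the induced-module isomorphism.} That $V_{\mathcal{B}}$ is $W$-stable is clear since $W$ permutes the $V_B$'s. That it is stable under each $e_H$ follows from Lemma \ref{equation1}: for $v \in V_B$, $e_H v$ is either $\delta v \in V_B$, or $0$, or $\sum_{s \in \mathcal{R}_{H' \to H}} \mu_s s v$ with each $s v \in V_{sB} \subseteq V_{\mathcal{B}}$ (as $sB \in \mathcal{B}$). Since the $e_H$ and $W$ generate $\Br^K(W)$, $V_{\mathcal{B}}$ is a submodule. For the final isomorphism: fix $B \in \mathcal{B}$, let $w_1 = 1, w_2, \dots, w_r$ be coset representatives for $W / \Stab(B)$, so $\mathcal{B} = \{w_1 B, \dots, w_r B\}$ and, by (1), $V_{\mathcal{B}} = \bigoplus_i w_i V_B$. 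By (2), $V_B$ is a $K\Stab(B)$-submodule, and the $W$-action permutes the summands $w_i V_B$ exactly as $W$ permutes cosets; this is precisely the description of $\mathrm{Ind}_{K\Stab(B)}^{KW}(V_B)$, so the map $KW \otimes_{K\Stab(B)} V_B \to V_{\mathcal{B}}$, $w \otimes v \mapsto wv$, is a well-defined $KW$-isomorphism (well-defined by $\Stab(B)$-equivariance from (2), surjective since the $w_i V_B$ span, injective by the direct-sum decomposition and a dimension count $\dim \mathrm{Ind} = r \dim V_B = \dim V_{\mathcal{B}}$). Only the $KW$-structure is claimed, so nothing about the $e_H$'s needs checking here.
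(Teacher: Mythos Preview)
Your overall strategy for Step 1 and Step 3 matches the paper's, but Step 2 contains a real error, and Step 1 has a wrong intermediate bound.

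\textbf{Step 1.} The paper also uses the degree map of Lemma~\ref{lem:degree}, though it applies one $e_H$ at a time rather than $e_{B_0}$ at once (your version is exactly the argument the paper later uses in the proof of Proposition~\ref{prop:irreducibility}). Two fixes are needed. First, the bound ``$\deg e_{B_0} v_B \leq \deg v_B$'' is false: if $B$ and $B_0$ share some hyperplanes, those factors of $e_{B_0}$ each multiply by $\delta$. The correct bound is $\deg e_{B_0} v_B \leq \deg v_B + |B_0| - 1$, obtained by ordering $e_{B_0}$ so that some $H \in B_0\setminus B$ comes first (this $H$ exists since $B_0 \neq B$ have the same cardinality) and noting each subsequent factor raises degree by at most $1$. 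Second, you must choose $B_0$ with $\deg v_{B_0}$ maximal among the nonzero summands; otherwise the comparison does not yield a contradiction. Your parenthetical about distinct maximal collections being ``non-transverse to each other'' is also backwards: their union may well be a transverse collection, in which case maximality gives $e_{B_0 \cup B} V = 0$, not $\neq 0$. But this case is harmless (it gives $e_{B_0} v_B = 0$ directly), and the non-transverse case is handled by the degree bound above; you do not need this dichotomy at all.

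\textbf{Step 2.} Here your argument actually fails. You assume $V_{wB} = wV_B = V_B$ with $wB \neq B$ and try to show $e_{wB}$ acting on $V_B$ lowers degree. But under your hypothesis $V_{wB} = V_B$, so $e_{wB}$ acts on $V_B$ as multiplication by $\delta^{|wB|}$, which \emph{raises} degree by $|wB|$; the inequality goes the wrong way. The paper's argument is a one-liner you overlooked: the direct sum in (1) forces the nonzero subspaces $V_B$, $B \in \mathcal{B}$, to be pairwise distinct, so $V_{wB} = V_B$ immediately gives $wB = B$. That, together with the easy inclusion $\Stab(B) \subseteq \Stab(V_B)$, is the whole proof of (2).

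\textbf{Step 3.} Your argument is correct and is the paper's.
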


\begin{proof}

On $V$, which is naturally a finite dimensional $k_K(δ)W$-module (recall that $K$ is a finite extension of $k_K(δ)$), we fix a degree as in Lemma \ref{lem:degree}. Let $v=\sum_{B\in \mathcal{B}} v_B$, with $v_B\in V_{B}$ and suppose that $v=0$. Fix a hyperplane $H$. The element $e_H$ acts as multiplication by $δ$ on $v_B$ if $H\in B$, and by some element of $k_KW$ otherwise, according to Lemma \ref{equation1}.

Since $v=0$, we have that $\sum_{B\in \mathcal{B}, H\in B}v_B=-\sum_{B\in \mathcal{B},H\not\in B} v_B$. In particular, the degrees of the two sides are equal. Suppose that both sides are non zero. On the left hand side, $e_H$ acts by $δ$ and hence raises the degree by $1$, while on the right hand side, acting by some element of $k_KW$ on every $v_B$ with $H\not\in B$, it does not raise its degree which leads to a contradiction. So the two sides should be equal to $0$ meaning that $v=\sum_{B\in \mathcal{B},H\in B}v_B=0$. Doing the same, now for all $H$ in some fixed collection $B\in \mathcal{B}$, we get that $v_B$ is zero. So, $v_B=0$ for all $B\in \mathcal{B}$.

Property (1) implies especially that the subspaces $V_B$ are distinct, which implies (2).

From (1) and (2), it is clear that the $KW$-module $V_{B\in \mathcal{B}}=\oplus_{\mathcal{B}} V_B$ (Definition \ref{def:orbvb}) is isomorphic to the induced $KW$-module $\Ind_{K\Stab(B)}^{KW}(V_B)$, for any $B\in \mathcal{B}$. Also, by Lemma \ref{equation1}, every element of $\Br^K(W)$ acts on each $V_B$ as some element of $KW$, which implies that $V_{\mathcal{B}}$ is, in fact, a $\Br^K(W)$-submodule of $V$.
\end{proof}

\begin{remark}\label{rem:vbdet} Note that the $\Br^K(W)$-submodule $V_{\mathcal{B}}$ is completely determined by the $K\Stab(B)$-module $V_B$: as a $KW$-module it is induced by $V_B$, and Lemma \ref{equation1} gives the action of the elements $e_H,H\in \mathcal{H}$. We construct all such modules in the next subsection. 
\end{remark}


\begin{lemma}\label{lem:imageofe} Let $V$ be a $\Br^K(W)$-module and $\mathcal{B}$ an orbit of $V$-maximal collections, such that $V=V_\mathcal{B}$. Then, for every transverse collection $C$, we have $e_CV=\oplus_{B\in\mathcal{B}, C\subseteq B} V_B$. In particular, $e_CV\neq 0$ if and only if $C\subseteq B$ for some $B\in \mathcal{B}$.
\end{lemma}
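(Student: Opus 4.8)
I would prove the displayed equality by establishing the two inclusions separately, using throughout that $V=\bigoplus_{B\in\mathcal{B}}V_B$ by Proposition \ref{prop:directsum}(1), that each summand $V_B$ with $B\in\mathcal{B}$ is nonzero (being $V$-maximal), and that $\mathcal{B}$ is stable under the $W$-action. The inclusion $\bigoplus_{B\in\mathcal{B},\,C\subseteq B}V_B\subseteq e_CV$ is straightforward: writing $C=\{H_1,\dots,H_k\}$, the element $e_C=e_{H_1}\cdots e_{H_k}$ is well defined and its factors commute by (B4), and if $C\subseteq B$ and $v\in V_B$, then applying the first case of \eqref{eq1} repeatedly (each partial product still landing in $V_B$) gives $e_Cv=\delta^{|C|}v$; since $\delta$ is a unit in $K$ this shows $V_B=e_CV_B\subseteq e_CV$.

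For the reverse inclusion the plan is to prove, by induction on $|C'|$, that $e_{C'}V\subseteq\bigoplus_{B\in\mathcal{B},\,C'\subseteq B}V_B$ for every transverse collection $C'$ (so that when no member of $\mathcal{B}$ contains $C'$ this simply says $e_{C'}V=0$); the case $C'=C$ is what we want. The base case $C'=\emptyset$ is exactly $V=\bigoplus_BV_B$. For the inductive step I would write $C'=C''\cup\{H\}$ with $H\notin C''$, note $e_{C'}=e_He_{C''}$, and use the inductive hypothesis $e_{C''}V\subseteq\bigoplus_{C''\subseteq B'}V_{B'}$ to reduce to showing $e_Hw\in\bigoplus_{C'\subseteq B}V_B$ for every $B'\in\mathcal{B}$ with $C''\subseteq B'$ and every $w\in V_{B'}$. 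This is handled by the three cases of \eqref{eq1}: if $H\in B'$ then $e_Hw=\delta w\in V_{B'}$ and $C'\subseteq B'$; if $H\pitchfork B'$ then $e_Hw=0$; and if $H\not\pitchfork H'$ for some $H'\in B'$ then $e_Hw=\sum_{s\in\mathcal{R}_{H'\to H}}\mu_s\,sw$ with $sw\in V_{sB'}$ and $sB'\in\mathcal{B}$.

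The crux is the last of these cases, where one must check $C'\subseteq sB'$. Since $H=sH'\in sB'$, it suffices to show that $s$ fixes each $H_\ast\in C''$. For such an $H_\ast$ one has $H_\ast\ne H$ (as $H\notin C''$), and $H_\ast,H$ both lie in the transverse collection $C'$, so $H\pitchfork H_\ast$; moreover $H_\ast\ne H'$ — otherwise $s$ would be a reflection carrying $H_\ast=H'$ to the hyperplane $H$ transverse to it, contradicting Lemma \ref{lem:transverse1} — and $H_\ast,H'$ both lie in the transverse collection $B'$, so $H_\ast\pitchfork H'$. Thus $s$ maps $H'$ to $H$ with both $H'$ and $H$ transverse to $H_\ast$, and Lemma \ref{lem:transverse2} gives $sH_\ast=H_\ast$; hence $C''\subseteq sB'$ and so $C'\subseteq sB'$, completing the induction. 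This is the step I expect to require the most care, since it is precisely where transversality of $C$ is used to prevent the reflections appearing in relation (B5) from disturbing the hyperplanes already accumulated. Finally, combining the two inclusions yields $e_CV=\bigoplus_{B\in\mathcal{B},\,C\subseteq B}V_B$, and since each $V_B$ with $B\in\mathcal{B}$ is nonzero, this space is nonzero exactly when $C\subseteq B$ for some $B\in\mathcal{B}$.
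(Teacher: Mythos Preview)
Your proof is correct, but it takes a different route from the paper's. The paper avoids your induction on $|C|$ entirely: it first shows, for a \emph{single} hyperplane $H$, that $e_HV=\bigoplus_{B\in\mathcal{B},\,H\in B}V_B$ (one inclusion from the three cases of \eqref{eq1}, the other because $e_H$ acts as $\delta$ on that sum), and then observes that for transverse $H_1,H_2$ the commutativity of $e_{H_1},e_{H_2}$ together with the fact that each acts as $\delta$ on the relevant summands forces $e_{H_1}e_{H_2}V=e_{H_1}V\cap e_{H_2}V$. Iterating gives $e_CV=\bigcap_{H\in C}e_HV=\bigoplus_{C\subseteq B}V_B$ immediately.

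The payoff of the paper's argument is that it never needs to check that the reflections $s$ appearing in case (B5) preserve the hyperplanes already accumulated in $C''$; the intersection trick absorbs that bookkeeping. Your approach trades this for an explicit geometric verification via Lemma~\ref{lem:transverse2}, which is perfectly valid and perhaps more transparent about \emph{why} transversality of $C$ matters. One small remark: your justification that $H_\ast\neq H'$ via Lemma~\ref{lem:transverse1} is a detour---you had already shown $H_\ast\pitchfork H$, and since the case hypothesis is $H'\not\pitchfork H$, equality $H_\ast=H'$ is impossible directly.
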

\begin{proof}
From Equation \ref{eq1} of Lemma \ref{equation1} one can see that for any $H$ and $B'\in \mathcal{B}$ we have $e_HV_{B'}\subseteq \oplus_{B\in \mathcal{B}, H\in B} V_B$. We also see that $e_H$ acts as multiplication by $δ$ on $\oplus_{B\in \mathcal{B},H\in B} V_B$. So, since $V=\oplus_{B'\in \mathcal{B}} V_{B'}$, then $e_H V=\oplus_{B\in \mathcal{B},H\in B} V_B$. Now, if $H_1, H_2$ are transverse, then $e_{H_1}e_{H_2}V=e_{H_1}V\cap e_{H_2}V$. To see why this is, recall that $e_{H_1}, e_{H_2}$ commute since $H_1,H_2$ are transverse (Definition \ref{def:algebra}). This implies that $e_{H_1}e_{H_2}V\subseteq e_{H_1}V\cap e_{H_2}V$. On the other hand, since both $e_{H_1}, e_{H_2}$ act on $e_{H_1}V\cap e_{H_2}V$ as scalar multiplcation by $δ$, we obtain that $e_{H_1}V\cap e_{H_2}V\subseteq e_{H_1}e_{H_2}V$, which yields equality of the two subspaces. Hence, 

\begin{equation}
\label{eq2} e_CV=\cap_{H\in C} (\oplus_{B\in \mathcal{B},H\in B}V_B)=\oplus_{B\in \mathcal{B},C\subseteq B} V_B
\end{equation}
\end{proof}

\begin{corollary}\label{cor:maxcols} Let $V$ be a $\Br^K(W)$-module and $\mathcal{B}$ an orbit of $V$-maximal collections such that $V=V_{\mathcal{B}}$. Then, $\mathcal{B}$ is the only orbit of $V$-maximal collections. 
\end{corollary}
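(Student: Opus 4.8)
The plan is to read this off directly from Lemma \ref{lem:imageofe}, which, under the standing hypothesis $V = V_{\mathcal{B}}$, already identifies exactly the transverse collections $C$ with $e_C V \neq 0$: these are precisely the collections contained in some member of $\mathcal{B}$. All the substantive work has been front-loaded into Proposition \ref{prop:directsum} and that lemma, so the argument should be essentially a one-line deduction together with a bookkeeping check on the definition of $V$-maximality.

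Concretely, I would start from an arbitrary orbit $\mathcal{B}'$ of $V$-maximal collections (assuming one exists, for otherwise there is nothing to prove) and pick a representative $B' \in \mathcal{B}'$. By Definition \ref{def:maximal}, $B'$ is a transverse collection satisfying $e_{B'} V \neq 0$, so the ``in particular'' clause of Lemma \ref{lem:imageofe} yields some $B \in \mathcal{B}$ with $B' \subseteq B$. Now $B$ itself is $V$-maximal (it lies in $\mathcal{B}$), so in particular $e_B V \neq 0$; since $B$ is a transverse collection containing $B'$ with $e_B V \neq 0$, the maximality clause in the $V$-maximality of $B'$ forces $B = B'$. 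Hence $B' \in \mathcal{B}$, and since $\mathcal{B}$ and $\mathcal{B}'$ are both orbits of the $W$-action sharing the element $B'$, they coincide: $\mathcal{B}' = \mathcal{B}$.

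I do not expect a genuine obstacle here. The only point requiring a moment's care is the degenerate case in which $\mathcal{B}$ is the orbit of the empty collection, i.e. $e_H V = 0$ for every $H \in \mathcal{H}$; but the same chain of inclusions $B' \subseteq B$ then gives $B' = \emptyset$ with no change, so a single uniform argument suffices. It may also be worth recording explicitly that this corollary, together with Proposition \ref{prop:directsum}(3), shows that a module $V$ of the form $V_{\mathcal{B}}$ has its restriction to $KW$ determined up to isomorphism by the single $K\Stab(B)$-module $V_B$ for any $B \in \mathcal{B}$, which is the structural statement that will be exploited in the construction of simple modules.
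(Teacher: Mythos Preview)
Your proof is correct and follows essentially the same route as the paper: both invoke Lemma~\ref{lem:imageofe} to see that any $V$-maximal collection must be contained in some $B\in\mathcal{B}$, and then use maximality to force equality. Your write-up simply spells out the bookkeeping (and the degenerate empty-collection case) more explicitly than the paper's two-line version.
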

\begin{proof}
By Lemma \ref{lem:imageofe}, $e_CV\neq 0$ if and only if $C\subseteq B$ for some $B\in \mathcal{B}$. Hence, if $C$ is $V$-maximal, then $C\in \mathcal{B}$.
\end{proof}

\begin{proposition}
\label{prop:irreducibility}
Let $V$ be a $\Br^K(W)$-module and $\mathcal{B}$ an orbit of $V$-maximal collections.
Then $V$ is simple if and only if $V=V_{\mathcal{B}}$ and $V_B$ is a simple $K\Stab(B)$-module for all $B\in \mathcal{B}$. In this case, it is absolutely irreducible.
\end{proposition}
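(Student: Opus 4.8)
The plan is to prove the two implications separately and then deduce absolute irreducibility, using throughout the decomposition $V_{\mathcal{B}}=\bigoplus_{B\in\mathcal{B}}V_B$ (Proposition \ref{prop:directsum}) together with the explicit action of the $e_H$ given in Lemma \ref{equation1}. For the forward direction, assume $V$ is simple. Each $B\in\mathcal{B}$ is $V$-maximal, so $e_BV\neq0$ and hence $V_{\mathcal{B}}\neq0$; being a $\Br^K(W)$-submodule of $V$ by Proposition \ref{prop:directsum}(3), it must equal $V$. To see that each $V_{B_0}$ is $K\Stab(B_0)$-simple, I would argue by contradiction: from a proper non-zero $K\Stab(B_0)$-submodule $U$, form $M:=\bigoplus_{B\in\mathcal{B}}U_B$, where $U_B:=wU$ for $B=wB_0$ (well defined, since $U$ is $\Stab(B_0)$-stable); then $M$ is a $\Br^K(W)$-submodule — it is $W$-stable because $W$ permutes the $U_B$ exactly as it permutes the $V_B$, and it is $e_H$-stable by the same computation as in Proposition \ref{prop:directsum}(3), using that for $u\in U_B$ the element $e_Hu$ is $\delta u$, $0$, or $\sum_{s\in\mathcal{R}_{H'\to H}}\mu_s su$ with $su\in sU_B=U_{sB}$. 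Since $0\subsetneq M\subsetneq V$ this contradicts simplicity, so $V_{B_0}$, and hence by $W$-conjugacy of the summands every $V_B$, is simple over $K\Stab(B)$.

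For the converse, assume $V=V_{\mathcal{B}}$ with every $V_B$ simple over $K\Stab(B)$ (the case $\mathcal{B}=\{\emptyset\}$ is immediate, as then every $e_H$ kills $V$ and $\Br^K(W)$ acts through $KW$, so assume the collections in $\mathcal{B}$ are non-empty). The key step — and the one I expect to be the main obstacle — is the identity
\[ \bigcap_{B\in\mathcal{B}}\ker\!\big(e_B|_V\big)=0. \]
For this I would use a degree on $V$ as in Lemma \ref{lem:degree}, chosen compatibly with $V=\bigoplus_BV_B$, i.e. $\deg(\sum_Bv_B)=\max_B\deg v_B$ (arrangeable by taking, in the proof of that lemma, a $k_KW$-form of $V$ that is a direct sum of forms of the $V_B$ — possible by \cite[Lemma 5.1]{Ma} applied to $V_{B_0}$ and transported by $W$); this also yields $\deg(e_Hu)\le\deg u+1$ for every $u$, the increase coming only from the components of $u$ that contain $H$. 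Given $v=\sum_Bv_B\neq0$ with all $e_Bv=0$, pick $B_0$ with $v_{B_0}\neq0$ and $\deg v_{B_0}$ maximal, and set $m=|B_0|\ge1$. All summands of $e_{B_0}v=\delta^{m}v_{B_0}+\sum_{B\neq B_0}e_{B_0}v_B$ lie in $e_{B_0}V=V_{B_0}$ (Lemma \ref{lem:imageofe}, the only collection of $\mathcal{B}$ containing $B_0$ being $B_0$ itself). For $B\neq B_0$ one has $|B|=m$, hence $B_0\not\subseteq B$; letting the factor $e_{H^*}$ of $e_{B_0}$ with $H^*\in B_0\setminus B$ act first, Lemma \ref{equation1} shows $e_{H^*}v_B$ is either $0$ or $\sum_s\mu_s sv_B$, which does not raise the degree, and the remaining $m-1$ factors raise it by at most $1$ each, so $\deg(e_{B_0}v_B)\le\deg v_B+m-1\le\deg v_{B_0}+m-1$. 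Thus the term $\delta^{m}v_{B_0}$ dominates and $\deg(e_{B_0}v)=\deg v_{B_0}+m$ is finite, contradicting $e_{B_0}v=0$.

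With this identity in hand, if $0\neq N\subseteq V$ is a $\Br^K(W)$-submodule then $e_{B_0}N\neq0$ for some $B_0$; since $e_{B_0}V=V_{B_0}$, the space $N\cap V_{B_0}\supseteq e_{B_0}N$ is a non-zero $K\Stab(B_0)$-submodule of the simple module $V_{B_0}$, hence equals it, and so $N\supseteq KW\cdot V_{B_0}=\sum_{w\in W}V_{wB_0}=V_{\mathcal{B}}=V$, proving $V$ simple. For absolute irreducibility I would check $\End_{\Br^K(W)}(V)=K$: any such $\phi$ commutes with $e_{B_0}$ and with $\Stab(B_0)$, hence restricts to a $K\Stab(B_0)$-endomorphism of $V_{B_0}$, which is a scalar $\lambda\in K$ since $V_{B_0}$ is simple and $K$ is a splitting field for $\Stab(B_0)$ (Definition \ref{def:propering}); then $\phi-\lambda$ is an endomorphism of the simple module $V$ that kills $V_{B_0}\neq0$, so $\phi=\lambda$. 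The hard part is the displayed intersection identity — specifically, arranging the degree of Lemma \ref{lem:degree} to respect $V=\bigoplus_BV_B$, and then the bookkeeping showing that the single unit of degree saved by the factor $e_{H^*}$ with $H^*\in B_0\setminus B$ is exactly what prevents the leading term $\delta^{m}v_{B_0}$ from being cancelled.
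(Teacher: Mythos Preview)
Your proof is correct and, for the two implications, follows the same line as the paper: the forward direction builds a proper submodule from a proper $K\Stab(B_0)$-submodule exactly as you do, and the converse hinges on the same degree argument showing that every non-zero $v$ has $e_{B_0}v\neq0$ for some $B_0\in\mathcal{B}$. One simplification: the extra work of arranging the degree to satisfy $\deg(\sum_B v_B)=\max_B\deg v_B$ is not needed. The paper uses an arbitrary degree from Lemma~\ref{lem:degree} and obtains the bound $\deg(e_{B_0}v_B)\le\deg v_B+|B_0|-1$ for $B\neq B_0$ directly, by tracking terms: after $e_{H^*}$ one has a finite sum $\sum_s\mu_s\,sv_B$ with each summand of degree $\le\deg v_B$, and each subsequent $e_{H'}$ applied termwise raises the degree of each summand by at most $1$; since $\deg(\sum a_i)\le\max_i\deg a_i$ always, the bound follows without any compatibility hypothesis.

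Where you genuinely diverge from the paper is in absolute irreducibility. The paper extends scalars to a finite extension $K'$ of $K$, checks that $K'$ is again proper, that $\mathcal{B}$ remains an orbit of $V'$-maximal collections with $V'=V'_{\mathcal{B}}$, and that $V'_B=K'\otimes_K V_B$ stays simple (because $\Stab(B)$ already splits over $k_K\subseteq K$), and then re-applies the simplicity criterion over $K'$. Your route—restricting an endomorphism to $V_{B_0}$, using that $\Stab(B_0)$ splits over $K$ to get a scalar, and then invoking simplicity of $V$—is shorter and avoids re-verifying the hypotheses after base change. Both are valid; yours is more direct here, while the paper's approach has the virtue of making explicit that the whole structural picture (maximal collections, the decomposition $V'=\bigoplus V'_B$) survives extension of scalars.
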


\begin{proof} The proof follows the lines of \cite[Proposition 5.4]{Ma}. 

By Definition \ref{def:maximal} the subspace $V_B$ and, hence, the $\Br^K(W)$-submodule $V_\mathcal{B}$ of $V$ are non-zero. So, if $V$ is simple, then $V=V_{\mathcal{B}}$. Furthermore, if $V_B'$ is a proper and non-zero $K\Stab(B)$-submodule of $V_B$ for some $B\in \mathcal{B}$, then, since every $e_H$ acts on every $V_B$ as an element of $KW$ (Lemma \ref{equation1}), the subspace $\oplus_{w\in W/\Stab(B)} wV_B'$ is stable under $e_H$ and is, hence, a proper and non-zero $\Br^K(W)$-submodule of $V$. So, if $V$ is a simple $\Br^K(W)$-module, then $V_B$ is a simple $K\Stab(B)$-module for all $V$-maximal $B$.

For the converse, let $V$ be a $\Br^K(W)$-module such that $V=V_{\mathcal{B}}$. Again, $V$ is naturally a finite dimensional $k_K(δ)W$-module; fix, hence, a degree on $V$ as in Lemma \ref{lem:degree}.

Suppose that $V_B$ is a simple $K\Stab(B)$-module for some $B$ and let $v\in V_B\backslash \{0\}$. Then $K\Stab(B)v=V_B$, and so $\Br^{K}(W)v$ contains $\Br^{K}(W)V^K_B=V$. We will show that for every non zero $v\in V$, there exists $B\in \mathcal{B}$ such that $e_Bv\neq 0$. As a consequence, $\Br^K(W)v$ contains an element of some $V_B$, in which case, as we just showed, $\Br^K(W)v=V$. 

Let $v\in V\backslash \{0\}$ and write  $v=\sum_{B\in \mathcal{B}} v_B$ where $v_B\in V_B$ (recall that $V=V_{\mathcal{B}}$). Let $v_{B_0}$ be a term of highest degree. If $e_{B_0}v=0$, then:

$$e_{B_0}v_{B_0}=-\sum_{B\in \mathcal{B}\backslash\{B_0\}} e_{B_0} v_B.$$

Now, since $v_{B_0}$ is nonzero, the left hand side is equal to $δ^{\lvert B_0 \rvert}v_{B_0}$ and has a degree of $\lvert B_0 \rvert + \deg v_{B_0}\neq -\infty $. Since, for $B\neq B_0$, $e_{B_0}v_B$  has a degree of at most $\deg v_B + \lvert B_0 \rvert -1$, we see that the degree of the right hand side is strictly smaller than the one on the left, leading to a contradiction. So, $e_{B_0}v\neq 0$ and, hence, $\Br(W)v=V$.

For the absolute irreducibility, suppose that $V$ is simple; hence, $V=V_\mathcal{B}$ and $V_B$ is a simple $K\Stab(B)$-module for all $B\in \mathcal{B}$.
We need to show that, for any algebraic extension $K'$ of $K$, the $\Br^{K'}(W)$-module $V':=K' \otimes_K V$ is simple; it suffices to consider finite extensions. Recall, first of all, that every finite extension of a proper field (Definition \ref{def:propering}) is again a proper field, so the part of the statement we just proved holds for $K'$ as well, i.e. if $\mathcal{B}$ is an orbit of $V'$-maximal collections such that $V'=V'_{\mathcal{B}}$ and $V'_B$ is a simple $K'\Stab(B)$-module for some $B\in \mathcal{B}$, then $V'$ is simple.

First of all, for all $e_H$ we have $e_HV'=e_H(K'\otimes_KV)=K'\otimes_Ke_HV$ and, hence, $e_BV'=K'\otimes_K e_BV$. From this it is clear that $\mathcal{B}$ is an orbit of $V'$-maximal collections as well, and that $V'_B=K' \otimes_K V_B$ for all $B\in \mathcal{B}$.
Since $V=V_\mathcal{B}=\oplus_{B\in \mathcal{B}}V_B$, then $V'$ decomposes as $V'=\oplus_{B\in \mathcal{B}} (K'\otimes_K V_B)$, and the latter is equal to $V'_\mathcal{B}$ by the previous comment. Furthermore, since $V_B$ is a simple $K\Stab(B)$-module and $K$ already contains $k_K$, over which $\Stab(B)$ splits, then $V'_B=K'\otimes_K V'_B$ is a simple $K'\Stab(B)$-module. Hence, $V'$ is a simple $\Br^{K'}(W)$-module.
\end{proof}




\begin{proposition} 
\label{prop:nonisomorphic}
Let $V,V'$ be two $\Br^K(W)$-modules with respective orbits of maximal collections $\mathcal{B},\mathcal{B}'$, such that $V=V_\mathcal{B}$ and $V'=V'_{\mathcal{B}'}$. Then $V$ and $V'$ are isomorphic if and only if $\mathcal{B}=\mathcal{B}'$ and $V_B,V_B'$ are isomorphic $K\Stab(B)$-modules for some (hence, for all) $B\in \mathcal{B}$.
\end{proposition}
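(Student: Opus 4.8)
The plan is to prove the two implications separately, with Remark \ref{rem:vbdet} as the conceptual backbone: it tells us that the $\Br^K(W)$-module $V_\mathcal{B}$ is recovered from the $K\Stab(B)$-module $V_B$ alone, since as a $KW$-module it is $\Ind_{K\Stab(B)}^{KW}(V_B)$ and the action of each $e_H$ is then forced by Lemma \ref{equation1} together with relation (B2).

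For the ``if'' direction, I would assume $\mathcal{B}=\mathcal{B}'$, fix some $B\in\mathcal{B}$, and start from a $K\Stab(B)$-isomorphism $\phi\colon V_B\to V_B'$. Using the decompositions $V=\bigoplus_{w\in W/\Stab(B)}wV_B$ and $V'=\bigoplus_{w}wV_B'$ coming from Proposition \ref{prop:directsum}(1), I would set $\Phi(wv):=w\phi(v)$; this is the standard way to extend $\phi$ to an isomorphism of induced $KW$-modules, and it visibly carries $V_B$ onto $V_B'$. The remaining point is that $\Phi$ commutes with every $e_H$. Because $e_H(wv)=w\,e_{w^{-1}H}(v)$ by (B2), it is enough to check this for $v\in V_B$ and arbitrary $H$, and there $e_Hv$ is given by the three cases of Equation \ref{eq1}; applying $\Phi$ and comparing with the same formula for $\phi(v)\in V_B'$ (legitimate since $\mathcal{B}=\mathcal{B}'$, so the case distinction is identical), the two expressions agree because $\Phi$ is $KW$-linear. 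Hence $\Phi$ is an isomorphism of $\Br^K(W)$-modules.

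For the ``only if'' direction, I would take a $\Br^K(W)$-isomorphism $\Psi\colon V\to V'$. Then $\Psi(e_CV)=e_C V'$ for every transverse collection $C$, so $e_CV\neq 0$ exactly when $e_CV'\neq 0$; since the $V$-maximal (resp. $V'$-maximal) collections are the maximal elements of these sets, the two sets of maximal collections coincide, and by Corollary \ref{cor:maxcols} this set is precisely $\mathcal{B}$ (resp. $\mathcal{B}'$), whence $\mathcal{B}=\mathcal{B}'$. Next, for a fixed $B\in\mathcal{B}$, $\Psi$ restricts to a linear isomorphism $V_B=e_BV\to e_BV'=V_B'$, and this restriction is $K\Stab(B)$-equivariant because $\Psi$ is $KW$-linear and $\Stab(B)$ stabilizes both $V_B$ and $V_B'$ by Proposition \ref{prop:directsum}(2); so $V_B\cong V_B'$ as $K\Stab(B)$-modules. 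Finally, the equivalence of the ``some $B$'' and ``all $B$'' versions is automatic: $W$ acts transitively on $\mathcal{B}$, and conjugation by $w$ identifies $V_B$ with $V_{wB}$, $V_B'$ with $V_{wB}'$, and $\Stab(B)$ with $\Stab(wB)$, so any one isomorphism propagates across $\mathcal{B}$ (and in any case the ``only if'' argument produces one for every $B$).

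The hard part will be the bookkeeping in the ``if'' direction: one must be careful that although the formula of Lemma \ref{equation1} is written using reflections $s\in W$ that need not lie in $\Stab(B)$, it only refers to the ambient $KW$-module structure and to combinatorial data (the collection $B$, its orbit, and the transversality and reflection relations) that $V$ and $V'$ share, so $\Phi$ genuinely intertwines the $e_H$-action. Everything else is routine manipulation of induced modules.
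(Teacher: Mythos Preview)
Your proof is correct and follows essentially the same approach as the paper: the ``only if'' direction uses that an isomorphism preserves $e_CV\neq 0$ to identify $\mathcal{B}=\mathcal{B}'$ (the paper cites Lemma~\ref{lem:imageofe} directly, you go via Corollary~\ref{cor:maxcols}, which amounts to the same thing) and then restricts to $V_B$; the ``if'' direction is exactly the content of Remark~\ref{rem:vbdet}, which the paper simply invokes while you spell out the construction of $\Phi$ and the verification that it intertwines the $e_H$.
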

\begin{proof} Let $B$ be a collection of transverse hyperplanes. If the $\Br^K(W)$-modules $V$ and $V'$ are isomorphic, then $e_BV\neq 0$ if and only if  $e_BV' \neq 0$, which, by Lemma \ref{lem:imageofe}, implies that $\mathcal{B}=\mathcal{B}'$.
Now if $B\in \mathcal{B}$, it is clear that a $\Br^K(W)$-isomorphism $V\to V'$ induces a $K\Stab(B)$-isomorphism $V_B=e_BV \to e_BV'=V'_B$. 

Conversely, as mentioned in Remark \ref{rem:vbdet} the $K\Stab(B)$-module $V_B$ completetely determines the $\Br^K(W)$-module $V_{\mathcal{B}}$. So, if $V_B, V'_B$ are isomorphic $K\Stab(B)$-modules, for some $B$, then $V,V'$ are isomorphic $\Br^K(W)$-modules. 
\end{proof}

The above result implies that simple $\Br^K(W)$-modules can be parametrized up to conjugacy by certain pairs $(B,V)$ where $B$ is a collection of transverse hyperplanes and $V$ is a simple $K\Stab(B)$-module. In the next subsection, we describe the pairs that give rise to simple $\Br^K(W)$-modules.

\subsection{Construction of simple modules of $\Br^K(W)$}
\label{sec:construction}

This subsection contains the construction of simple modules, as dictated by the results of the previous subsection, starting with a collection $B$ of transverse hyperplanes and a module $V$ of its stabilizer. The modules $V_{\mathcal{B}}$ are a prototype for this construction. The pairs $(B,V)$ that give rise to such modules are called ``admissible''. We also obtain a characterization of these pairs which will be useful for direct calculation.

\subsubsection{Admissibility}

For an element $y$ of some $KW$-module, let $\Ann_{KW}(y):=\{x\in KW\big| xy=0\}$ be the annihilator of $y$ inside $KW$. Recall that $\mathcal{C}$ denotes the set of collections of transverse hyperplanes.

\begin{definition}[Admissibility]
\label{def:admissible} Let $(B,V)$ be a pair consisting of a collection $B$ of transverse hyperplanes and a $K\Stab(B)$-module $V$ and let $\hat{V}$ denote the image of $V$ inside the induced module $\Ind_{K\Stab(B)}^{KW}V$. The pair $(B,V)$ will be called $K$-\textit{admissible}, or just \textit{admissible} when there is no ambiguity as to the field $K$, if $\Ann_{KW}(e_B)\cdot \hat{V}=0$ (here $e_B$ is considered as an element of the $KW$-module $\Br^K(W)$). If there is at least one admissible pair $(B,V)$  with $V\neq 0$, then the collection $B$ will be called $K$-\textit{admissible}, or just \textit{admissible}. The set of $K$-admissible collections will be denoted by $\mathcal{C}_{adm}^K$.
\end{definition}

\begin{remark}\label{rem:emptyadmissibility} One can see that if $B$ is the empty collection, then all pairs $(B,V)$ where $V$ is a $KW$-module are $K$-admissible.
\end{remark}

\begin{definition}\label{def:rel} For $H_1,H_2,H\in \mathcal{H}$, we define the following element of $\mathbb{Z}[\underline{μ}]W$:

$$σ^H_{H_1,H_2}:=\sum_{s\in \mathcal{R}_{H_1\to H}} μ_ss-\sum_{s\in \mathcal{R}_{H_2\to H}} μ_ss.$$
For $B\in \mathcal{C}$ and $H\in \mathcal{H}$, we define the set
$$Σ_B^H:=\{σ^H_{H_1,H_2}\big| H_1,H_2 \in B, H_1,H_2\not\pitchfork H\}.$$
Finally, we define the sets 
$$Σ_B:=\cup_{H\in \mathcal{H}} Σ_B^H \textnormal{  and }\Rel(B):=(\mathcal{R}_B-1)\cup Σ_B,$$
where $\mathcal{R}_B:=\{r\in \mathcal{R}\big| H_r\in B\}$ and $\mathcal{R}_B-1=\{r-1\big| r\in \mathcal{R}_B\}$.
\end{definition}

\begin{remark}
One can see that $wσ^H_{H_1,H_2}w^{-1}=σ^{wH}_{wH_1,wH_2}$ for all
$w\in W$ and $H_1,H_2,H\in\mathcal{H}$. Furthermore, $H_1,H_2 \not \pitchfork H$ if and only if $wH_1,wH_2 \not \pitchfork wH$. This implies that $wΣ_Bw^{-1}=Σ_{wB}$. It is also  straightforward to verify the same property for the set $\mathcal{R}_B-1$, which yields  $w\Rel(B)w^{-1}=\Rel(wB)$.
\end{remark}

\begin{lemma} Let $B\in \mathcal{C}$. Then, $\Rel(B)\subseteq \Ann_{KW}(e_B)$
\end{lemma}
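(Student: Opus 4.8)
The goal is to show that every element of $\Rel(B) = (\mathcal{R}_B - 1) \cup \Sigma_B$ annihilates $e_B$ in the $KW$-module $\Br^K(W)$. Since $\Rel(B)$ is defined as a union of two kinds of elements, the plan is to handle them separately.

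\emph{First, the elements $r - 1$ with $r \in \mathcal{R}_B$.} Here $H_r \in B$, so I would use relation (B3): since $H_r \subseteq \ker(r-1)$, we have $r e_{H_r} = e_{H_r}$. Rearranging the commuting factors of $e_B$ so that $e_{H_r}$ comes first, we get $r e_B = r e_{H_r} e_{B \setminus \{H_r\}} = e_{H_r} e_{B \setminus \{H_r\}} = e_B$, hence $(r-1)e_B = 0$. This is immediate.

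\emph{Second, the elements $\sigma^H_{H_1,H_2}$ with $H_1, H_2 \in B$ and $H_1, H_2 \not\pitchfork H$.} The idea is that both $\sum_{s \in \mathcal{R}_{H_1 \to H}} \mu_s s$ and $\sum_{s \in \mathcal{R}_{H_2 \to H}} \mu_s s$, when applied to $e_B$, should produce the ``same'' element, so their difference kills $e_B$. Concretely, by Remark \ref{remark:ebmult} (the non-transverse case, applied with $H$ in the role of the new hyperplane and $H_1 \in B$ the offending one), we have $e_H e_B = \sum_{s \in \mathcal{R}_{H_1 \to H}} \mu_s s e_B$; likewise $e_H e_B = \sum_{s \in \mathcal{R}_{H_2 \to H}} \mu_s s e_B$ using $H_2 \in B$ instead. (Here I should double check the direction of the indexing set in $\mathcal{R}_{H_1 \to H}$ versus the convention in Remark \ref{remark:ebmult}, which writes $e_H e_B = \sum_{s \in \mathcal{R}_{H' \to H}} \mu_s s e_B$ for $H' \in B$ with $H \not\pitchfork H'$ — this matches exactly.) Subtracting the two expressions for $e_H e_B$ gives $\sigma^H_{H_1,H_2} e_B = \big(\sum_{s \in \mathcal{R}_{H_1 \to H}} \mu_s s - \sum_{s \in \mathcal{R}_{H_2 \to H}} \mu_s s\big) e_B = 0$.

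\emph{Main obstacle.} There is essentially no deep obstacle here; the only subtlety is bookkeeping with the rearrangement of factors in $e_B$ (justified because the $e_{H}$, $H \in B$, pairwise commute by (B4)) and making sure the non-transversality hypothesis $H_1, H_2 \not\pitchfork H$ is genuinely what is needed to invoke relation (B5)/Remark \ref{remark:ebmult} — in particular that $H \notin B$ is automatic (if $H \in B$ then $H \pitchfork H_1$ or $H = H_1$, contradicting $H_1 \not\pitchfork H$, recalling that $\not\pitchfork$ by convention means distinct and non-transverse). Once these conventions are lined up, both cases are one-line computations, and $\Rel(B) \subseteq \Ann_{KW}(e_B)$ follows. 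I would also remark that the case $H_1 = H_2$ gives $\sigma^H_{H_1,H_2} = 0$ trivially, so it may be harmlessly included or excluded.
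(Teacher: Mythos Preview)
Your proposal is correct and follows essentially the same approach as the paper's proof: both split into the two pieces $\mathcal{R}_B - 1$ and $\Sigma_B$, use relation (B3) with rearrangement of the commuting factors of $e_B$ for the first, and use Remark~\ref{remark:ebmult} to obtain two expressions for $e_H e_B$ whose difference is $\sigma^H_{H_1,H_2} e_B = 0$ for the second. Your extra remarks (that $H\notin B$ is automatic, and that $H_1=H_2$ is trivial) are correct bookkeeping but not needed beyond what the paper does.
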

\begin{proof} By defining relation (B3) of $\Br^K(W)$, we have that $re_{H_r}=e_{H_r}$, for all $r\in \mathcal{R}$. So, if $r\in \mathcal{R}_B$, then rearranging the factors of $e_B$ so that $e_{H_r}$ comes first, we get that $re_B=e_B$. This gives the inclusion $\mathcal{R}_B-1 \subseteq \Ann_{KW}(e_B)$.
Now, recall that if $H'\in B$ is non-transverse with $H$, then $e_He_B=\sum_{s\in\mathcal{R}_{H'\to H}} μ_sse_B$ (Remark \ref{remark:ebmult}). Hence, for $H_1,H_2\in B$ non-transverse with $H$, we have $σ^H_{H_1,H_2}e_B=e_He_B-e_He_B=0$, i.e. $σ^H_{H_1,H_2}\in \Ann_{KW}(e_B)$. This gives the inclusion $Σ_B\subseteq \Ann_{KW}(e_B)$, which concludes the proof.
\end{proof}

\subsubsection{The construction of the $\Br^K(W)$-module  $(B\big| V)$}

For the rest of this subsection, we fix $B_0\in \mathcal{C}$ and a $K\Stab(B_0)$-module $V_0$ with the only assumption that $\Rel(B_0)\hat{V_0}=0$.

Let $\mathcal{B}$ be the orbit of $B_0$, and set $V:=\Ind_{K\Stab(B_0)}^{KW} V_0$. Then, $V$ is the direct sum of the subspaces $w\hat{V_0}, w\in W$. Since $w\hat{V_0}$ depends only on the left coset $w\Stab(B_0)$, or, equivalently, the collection $wB_0$, we denote it by $V^B$, where $B=wB_0$. In particular, $\hat{V_0}=V^{B_0}$. So, we have $V=\oplus_{B\in \mathcal{B}}V^B$.

Since  $\Rel(B_0)\hat{V_0}=0$, by conjugation, we have $\Rel(B)V^{B}=0$ for all $B\in \mathcal{B}$. Now notice that for every $H\in \mathcal{H}$, the relations $Σ_B^H\cdot V^{B}=0$ imply that we can define an operator $ε_{H,B}$ on $V^B$ by:

\begin{equation}
\label{eq:epsilondef}
ε_{H,B}v:=\begin{cases}
δv, & \textnormal{ if } H\in B,\\
0, &\textnormal{ if } H \pitchfork B, \\
\sum_{s\in \mathcal{R}_{H'\to H}} μ_ss v, &\textnormal{ if } H \not\pitchfork H', H'\in B.
\end{cases}
\end{equation}
Since $V=\oplus_{B\in \mathcal{B}} V^B$, this extends to an operator $ε_H$ on $V$, whose restriction on each $V^B$ equals $ε_{H,B}$, for every $H\in \mathcal{H}$. We will show that this makes $V$ a $\Br^K(W)$-module. Before that, we have two rather technical but very useful lemmas. For $B'\in \mathcal{B}$ let $ε_H^{(B')}$ be the projection of $ε_H$ onto $V^{B'}$, i.e. for $v\in V^B$,

\begin{equation}
\label{eq:epsilonprojdef}
ε_H^{(B')}v:=\begin{cases}
δ_{BB'}\cdot δv, &\textnormal{ if } H\in B, \\
0, &\textnormal{ if } H \pitchfork B, \\
\sum_{s\in \mathcal{R}_{H'\to H, B\to B'}} μ_ss v, &\textnormal{ if } H \not\pitchfork H', H'\in B,
\end{cases}
\end{equation}
where $δ_{BB'}$ is the Kronecker delta for $B,B'$, i.e. $0$ or $1$ if $B\neq B'$ and $B=B'$, respectively.

\begin{lemma}\label{lem:precom} Let $B,B'\in \mathcal{B}$ be such that $\mathcal{R}_{B\to B'}\neq \emptyset$, and let $H,H'$ be distinct hyperplanes belonging to $B,B'$ respectively. Then, $H\pitchfork H'$ if and only if $H$ or $H'$ belongs to $B\cap B'$. 
\end{lemma}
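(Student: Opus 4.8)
The plan is to prove the two implications separately, relying entirely on the transversality lemmas already established. For the ``if'' direction, suppose $H$ or $H'$ lies in $B\cap B'$; since $H\in B$ and $H'\in B'$ by hypothesis, this says exactly that $H\in B'$ or $H'\in B$. In the first case $H$ and $H'$ are two distinct hyperplanes both belonging to the transverse collection $B'$, hence transverse by Definition \ref{def:transverse}; in the second case they both lie in the transverse collection $B$, again forcing $H\pitchfork H'$. Note this direction uses neither the reflection $s$ nor the hypothesis $\mathcal{R}_{B\to B'}\neq\emptyset$.

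For the ``only if'' direction I would argue by contradiction: assume $H\pitchfork H'$ but $H\notin B'$ and $H'\notin B$ (again, these negations say precisely that neither $H$ nor $H'$ lies in $B\cap B'$). Fix a reflection $s\in\mathcal{R}_{B\to B'}$, so $sB=B'$; since $H\in B$, the image $sH$ lies in $B'$. First, if $sH=H'$, then $s$ is a reflection of $W$ mapping the transverse hyperplane $H$ to the transverse hyperplane $H'$, which is impossible by the second assertion of Lemma \ref{lem:transverse1}. So we may assume $sH\neq H'$. Then $sH$ and $H'$ are two distinct hyperplanes of the transverse collection $B'$, hence $sH\pitchfork H'$; combining this with $H\pitchfork H'$ and using Lemmas \ref{lem:transverseorthogonal} and \ref{lem:commrefls} (recalling $H\neq H'$ and $sH\neq H'$), we get $H^{\perp}\perp H'^{\perp}$ and $(sH)^{\perp}\perp H'^{\perp}$.

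Now $s$ is a unitary reflection mapping $H$ to $sH$, and both $H^{\perp}$ and $(sH)^{\perp}$ are orthogonal to $H'^{\perp}$, so Lemma \ref{lem:transverse2} forces $sH'=H'$, equivalently $s^{-1}H'=H'$. But $s^{-1}B'=B$, so $H'=s^{-1}H'\in B$, contradicting $H'\notin B$. This completes the argument. The key point — and the only place where any thought is required — is recognizing that Lemmas \ref{lem:transverse1} and \ref{lem:transverse2} together control the action of $s$ on $H'$: the former excludes $sH=H'$, the latter forces $sH'=H'$ once $sH\neq H'$, and in both situations we contradict the assumption that $H'\notin B$ (or, in the first case, transversality directly). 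The one routine care needed is to ensure, via the case split, that $sH$ is really an element of $B'$ distinct from $H'$ before invoking transversality of the pair $\{sH,H'\}$.
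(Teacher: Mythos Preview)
Your proof is correct and follows essentially the same approach as the paper: both arguments rule out $sH=H'$ via Lemma~\ref{lem:transverse1} and then invoke Lemma~\ref{lem:transverse2} to force $s$ to fix the remaining hyperplane. The only cosmetic difference is that the paper tracks the preimage $H''=s^{-1}H'\in B$ and concludes $sH=H$ (hence $H\in B'$), whereas you track the image $sH\in B'$ and conclude $sH'=H'$ (hence $H'\in B$); these are mirror versions of the same idea, and the paper applies the transversality clause of Lemma~\ref{lem:transverse2} directly rather than passing through Lemmas~\ref{lem:commrefls} and~\ref{lem:transverseorthogonal} as you do.
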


\begin{proof} Let $s\in \mathcal{R}_{B\to B'}$, and suppose that $H'\not \in B$.
If $H \pitchfork H'$, then by Lemma \ref{lem:transverse1}, $\mathcal{R}_{H\to H'}=\emptyset $, and hence, $sH \neq H'$. So, $H'=sH''$ for some $H'' \in B$ other than $H$. Now, since $H'', H' \pitchfork H$ and $sH''=H'$, then, by Lemma \ref{lem:transverse2}, we have $H=sH \in B' $. 

Conversely, if $H\in B'$, and $H\neq H'$, then, of course, $H\pitchfork H'$. 
\end{proof}

\begin{lemma}\label{lem:commutativity} Let $B,B'\in \mathcal{B}$, and $H'\in \mathcal{H}$, such that $H'\not \pitchfork B$. Then, for all $v\in V^{B}$, we have 
$$ε_{H'}v=\sum_{s\in \mathcal{R},H'\in sB} \frac{1}{\lvert B \backslash sB \rvert} μ_ssv,$$
and 
$$ε_{Η' }^ {(Β')}v=δ_{H'\in B'}\cdot \sum_{s\in \mathcal{R}_{B\to B'}} \frac{1}{\lvert B \backslash B' \rvert}μ_ssv,$$
where $δ_{H'\in B'}$ is $1$ if $H'\in B'$ and $0$ otherwise.
Finally,  if $H_1,H_2\in B'$ are non-transverse with $B$, then
$ε_{H_1}^{(B')}v=ε_{H_2}^{(B')}v$.

\end{lemma}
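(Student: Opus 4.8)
The plan is to extract from Lemma \ref{lem:precom} a combinatorial description of the set $B\backslash sB$, and then to deduce all three identities from it — the second by projecting the first onto the summand $V^{B'}$, and the third from the second.

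First, some bookkeeping. Since $B$ is transverse, $H'\not\pitchfork B$ forces $H'\notin B$, and the set $S:=\{\tilde H\in B\mid \tilde H\not\pitchfork H'\}$ is non-empty; put $N:=\lvert S\rvert$. The crucial claim is that for every $s\in\mathcal{R}$ and every $H''\in sB$ with $H''\notin B$,
$$B\backslash sB=\{\tilde H\in B\mid \tilde H\not\pitchfork H''\};$$
in particular $\lvert B\backslash sB\rvert$ does not depend on the choice of such $H''$, and taking $H''=H'$ (permissible whenever $H'\in sB$, since then $H'\in sB\backslash B$) gives $\lvert B\backslash sB\rvert=N$. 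I would prove this by applying Lemma \ref{lem:precom} to the pair $B$, $B':=sB$, which is legitimate because $s\in\mathcal{R}_{B\to B'}$. For the inclusion $\subseteq$: if $\tilde H\in B\backslash B'$, then $\tilde H$ and $H''$ are distinct hyperplanes belonging to $B$ and $B'$ respectively, and neither lies in $B\cap B'$, so Lemma \ref{lem:precom} yields $\tilde H\not\pitchfork H''$. For $\supseteq$: if $\tilde H\in B$ were non-transverse with $H''$ but contained in $B'$, then $\tilde H\in B\cap B'$, and since $\tilde H\ne H''$ (as $\tilde H\in B$ while $H''\notin B$) Lemma \ref{lem:precom} would force $\tilde H\pitchfork H''$, a contradiction. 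I expect this claim to be the only real obstacle; it is not deep, but it requires care with the distinctness hypotheses that make Lemma \ref{lem:precom} applicable.

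Granting the claim, the first identity goes as follows. Fix $\tilde H_0\in S$. By Equation \ref{eq:epsilondef} defining $ε_{H'}$ on $V^B$ we have $ε_{H'}v=\sum_{s\in\mathcal{R}_{\tilde H_0\to H'}}μ_ssv$, and since $Σ_B^{H'}\subseteq Σ_B\subseteq \Rel(B)$ and $\Rel(B)V^B=0$, the equality $\sum_{s\in\mathcal{R}_{\tilde H\to H'}}μ_ssv=\sum_{s\in\mathcal{R}_{\tilde H_0\to H'}}μ_ssv$ holds for every $\tilde H\in S$; summing over $\tilde H\in S$ and dividing by $N$,
$$ε_{H'}v=\frac{1}{N}\sum_{\tilde H\in S}\sum_{s\in\mathcal{R}_{\tilde H\to H'}}μ_ssv .$$
Now $\{s\in\mathcal{R}\mid H'\in sB\}$ is the disjoint union of the blocks $\mathcal{R}_{\tilde H\to H'}$, $\tilde H\in B$, indexed by the value $s^{-1}H'\in B$; by Lemma \ref{lem:transverse1} such a block is empty unless $\tilde H\not\pitchfork H'$, i.e. unless $\tilde H\in S$, so the double sum above ranges exactly over $\{s\in\mathcal{R}\mid H'\in sB\}$. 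Replacing $N$ by $\lvert B\backslash sB\rvert$ term by term — legal by the crucial claim, since every such $s$ satisfies $H'\in sB\backslash B$ and then $\lvert B\backslash sB\rvert=N\ge1$, so no division by zero occurs — gives the first displayed identity.

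Finally, I would project onto $V^{B'}$. The summand $μ_ssv$ lies in $sV^B=V^{sB}$, so it contributes to the $V^{B'}$-component only when $sB=B'$, and for those $s$ the index condition $H'\in sB$ reads $H'\in B'$. Hence the projection of $ε_{H'}v$ onto $V^{B'}$ is $0$ unless $H'\in B'$, in which case it equals $\sum_{s\in\mathcal{R}_{B\to B'}}\frac{1}{\lvert B\backslash B'\rvert}μ_ssv$ (and if this sum is non-empty then $H'\in B'\backslash B$, so $B\ne B'$ and $\lvert B\backslash B'\rvert\ge1$): this is the second identity. The third is then immediate — if $H_1,H_2\in B'$ are non-transverse with $B$, then each $H_i$ satisfies $H_i\not\pitchfork B$ and $H_i\in B'$, so applying the second identity with $H_i$ in the role of $H'$ gives $ε_{H_1}^{(B')}v=\sum_{s\in\mathcal{R}_{B\to B'}}\frac{1}{\lvert B\backslash B'\rvert}μ_ssv=ε_{H_2}^{(B')}v$.
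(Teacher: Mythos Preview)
Your proof is correct and follows essentially the same route as the paper's: average the defining expression for $ε_{H'}v$ over all $\tilde H\in B$ non-transverse with $H'$, invoke Lemma \ref{lem:precom} to identify the count $N$ with $\lvert B\backslash sB\rvert$, then project onto $V^{B'}$ for the second identity and specialize for the third. Your treatment is slightly more explicit than the paper's in spelling out both inclusions of the set identity $B\backslash sB=\{\tilde H\in B\mid \tilde H\not\pitchfork H''\}$ and in handling the edge case $B=B'$, but the argument is the same.
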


\begin{proof} 

For every $H\in B$ such that $H\not \pitchfork H'$, of which, by assumption, there is at least $1$, we have $ε_{H'}v=\sum_{s\in \mathcal{R}_{H\to H'}} μ_ss v$. So, if we denote by $N(H',B)$ the number of $H\in B, H\not \pitchfork H'$, we get 

$$ε_{H'}v=\frac{1}{N(H',B)}\cdot \sum_{s\in \mathcal{R}, H'\in sB} μ_ssv,$$
 and Lemma \ref{lem:precom} implies that $N(H',B)=\lvert B\backslash sB\rvert $, for any $s\in \mathcal{R}$ such that $sB$ contains $H'$.

So, $$ε_{H'}v=\sum_{s\in \mathcal{R},H'\in sB} \frac{1}{\lvert B\backslash sB \rvert }μ_ssv $$
Taking the projection of the two sides onto the subspace $V^{B'}$ gives:

$$ε_{H'}^{(B')}v=\sum_{s\in \mathcal{R}_{B\to B' },H'\in sB} \frac{1}{\lvert B\backslash sB \rvert }μ_ssv=δ_{H'\in B'}\cdot \sum_{s\in \mathcal{R}_{B\to B'}} \frac{1}{\lvert B\backslash B' \rvert }μ_ssv,$$
with the convention that if $B=B'$ (in which case $1/\lvert B\backslash B' \rvert$ is not defined), since $Η'\not \in B$, i.e. $δ_{H'\in B'}=0$, the last expression equals $0$.

Finally, if $H_1,H_2 \in B'$ are not transverse with $B$, then, applying the last equality for $H_1,H_2$ gives:

$$ε_{H_1}^ {(B')}v=\sum_{s\in \mathcal{R}_{B\to B'}} \frac{1}{\lvert B\backslash B' \rvert }μ_ssv=ε_{H_2}^ {(B')}v.$$
\end{proof}

\begin{theoremanddefinition}\label{thdef:construction} Mapping $e_H\mapsto ε_H, H\in \mathcal{H}$ makes $V$ a $\Br^K(W)$-module, i.e. the operators $ε_H$ satisfy the defining relations of $\Br^K(W)$:
\begin{itemize}
\item[(B1)] $ε_H^2=δε_H$,
\item[(B2)] $wε_Hw^{-1}=ε_{wH}$,
\item[(B3)] $wε_H=ε_H$, if $H \subseteq \ker(w-1)$,
\item[(B4)] $ε_{H_1}ε_{H_2}=ε_{H_2}ε_{Η_1}$, if $H_1\pitchfork H_2$,
\item[(B5)] $ε_{H_1}ε_{H_2}=\sum_{s\in \mathcal{R}_{H_2\to H_1}} μ_ss ε_{H_2}$, if $H_1\not \pitchfork H_2$,
\end{itemize}
for all $w\in W$ and $H_1,H_2,H \in \mathcal{H}$. The $\Br^K(W)$-module $V$ is denoted $(B_0\big| V_0)$ (we remind that the pair $(B_0,V_0)$ is such that $\Rel(B_0)\hat{V}_0=0$).

\end{theoremanddefinition}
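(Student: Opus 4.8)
The plan is to verify the five defining relations (B1)--(B5) for the operators $ε_H$ one at a time, reducing each to a statement about a single summand $V^B$ thanks to the decomposition $V=\oplus_{B\in\mathcal{B}}V^B$ and to the fact that the relations are ``local'' to at most two hyperplanes. Relations (B1) and (B3) are essentially immediate from the definition \eqref{eq:epsilondef}: for (B1) one checks on each $V^B$ that $ε_{H,B}^2=δε_{H,B}$, splitting into the three cases $H\in B$ (where $ε_{H,B}$ acts as $δ$), $H\pitchfork B$ (where it is $0$), and $H\not\pitchfork H'\in B$ (where one must check $\bigl(\sum_{s\in\mathcal{R}_{H'\to H}}μ_ss\bigr)^2 = δ\sum_{s\in\mathcal{R}_{H'\to H}}μ_ss$ on $V^B$; here is the first point where the hypothesis $\Rel(B_0)\hat V_0=0$ enters, via the relations $σ^{H}_{H_1,H_2}\in\Rel(B)$ together with relation (B1)-type consequences of $\mathcal{R}_B-1$). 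For (B3) the only non-trivial case is $w=r$ a reflection with $H_r=H$, and $rε_H=ε_H$ on each $V^B$ follows because $r\in\mathcal{R}_B$ whenever $H\in B$, using $\mathcal{R}_B-1\subseteq\Rel(B)$. Relation (B2) is a direct consequence of the equivariance built into the construction: $w$ maps $V^B$ to $V^{wB}$ and conjugating the formula \eqref{eq:epsilondef} by $w$ turns $ε_{H,B}$ into $ε_{wH,wB}$, because $\mathcal{R}_{H'\to H}$ conjugates to $\mathcal{R}_{wH'\to wH}$ and transversality is $W$-invariant.

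The substantive work is in (B4) and (B5), and this is where Lemmas \ref{lem:precom} and \ref{lem:commutativity} are designed to be used. For (B4), with $H_1\pitchfork H_2$, I would evaluate $ε_{H_1}ε_{H_2}$ and $ε_{H_2}ε_{H_1}$ on a vector $v\in V^B$. If both $H_1,H_2\in B$ both composites act as $δ^2$; if one of them, say $H_1$, is transverse with $B$ then by Lemma \ref{lem:precom} (applied across the relevant $\mathcal{R}_{B\to B'}$) the interaction is controlled, and one shows both sides vanish or match; the genuinely delicate case is $H_1,H_2\not\pitchfork B$, where one expands using the first formula of Lemma \ref{lem:commutativity} and compares the resulting double sums over reflections $s,s'$ with $H_2\in sB$, $H_1\in s'sB$ against the symmetric expression. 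The bookkeeping of the coefficients $1/|B\setminus sB|$ is exactly what Lemma \ref{lem:commutativity} was set up to handle, and the last assertion of that lemma (that $ε_{H_1}^{(B')}=ε_{H_2}^{(B')}$ when $H_1,H_2\in B'$ are both non-transverse with $B$) is what makes the projected sums independent of which hyperplane of $B'$ one routes through. For (B5), with $H_1\not\pitchfork H_2$, I would similarly expand $ε_{H_1}ε_{H_2}v$ for $v\in V^B$: the inner $ε_{H_2}v$ is $δv$, $0$, or $\sum_{s\in\mathcal{R}_{H'\to H_2}}μ_ss v$ according to the position of $H_2$ relative to $B$, and then applying $ε_{H_1}$ and comparing with $\sum_{s\in\mathcal{R}_{H_2\to H_1}}μ_ss\,ε_{H_2}v$ reduces, via Lemma \ref{lem:commutativity}, to an identity among sums of reflections that holds modulo $\Rel(B)$ — i.e. it holds after applying to $\hat V_0$ (equivalently to each $V^B$) precisely because the differences $σ^{H_1}_{H_3,H_4}$ and the elements $r-1$, $r\in\mathcal{R}_B$, annihilate these subspaces.

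The main obstacle I anticipate is the case analysis in (B5) when $H_2$ is transverse with $B$ but $H_1$ is not (or vice versa): then $ε_{H_2}v=0$ while the right-hand side $\sum_{s\in\mathcal{R}_{H_2\to H_1}}μ_ss\,ε_{H_2}v$ also vanishes trivially, but one must be careful that $ε_{H_1}ε_{H_2}v$ really is $0$ — here $H_2\pitchfork B$ forces $ε_{H_2}v=0$ so $ε_{H_1}ε_{H_2}v=0$ and there is nothing to prove; the true difficulty is rather when $H_2\not\pitchfork B$, so $ε_{H_2}v\in\oplus_{B':H_2\in B'}V^{B'}$, and one must verify that $ε_{H_1}$ applied to this coincides with $\sum_{s}μ_ss$ applied to it, where the sum ranges over $\mathcal{R}_{H_2\to H_1}$ — and the two natural routings of this computation (first move $H_2$ into various $B'$ then apply $ε_{H_1}$, versus conjugate the reflections directly) produce different-looking sums whose equality is exactly a $\Rel$-relation. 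I would organize the proof so that every such identity is explicitly exhibited as a consequence of a $σ^{H}_{H_1,H_2}$ lying in $\Rel(B)$, so that the hypothesis $\Rel(B_0)\hat V_0=0$ is visibly doing all the work; the coefficient juggling with $|B\setminus sB|$, while tedious, is routine once Lemmas \ref{lem:precom} and \ref{lem:commutativity} are invoked.
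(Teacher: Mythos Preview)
Your overall strategy---verify each relation on the summands $V^B$---is what the paper does. However, you miss a simple observation that the paper makes at the outset and that collapses (B1), (B3), and (B5) to near-trivialities: from the definition \eqref{eq:epsilondef} one sees that $ε_HV\subseteq\bigoplus_{B\in\mathcal{B},\,H\in B}V^B$, and since $ε_H$ acts as multiplication by $δ$ on each $V^B$ with $H\in B$, in fact $ε_HV=\bigoplus_{B\in\mathcal{B},\,H\in B}V^B$. This single line yields (B1) immediately ($ε_H$ is $δ$ on its own image). It also makes (B5) a one-liner: since $ε_{H_2}V=\bigoplus_{H_2\in B}V^B$ and $H_1\not\pitchfork H_2\in B$, the definition of $ε_{H_1}$ on each such $V^B$ is literally $\sum_{s\in\mathcal{R}_{H_2\to H_1}}μ_ss$.

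Because you do not make this observation, your treatment of (B1) is confused: you write ``$ε_{H,B}^2$'', but $ε_{H,B}$ does not map $V^B$ to itself in the non-transverse case, so this expression is ill-formed; and you claim (B1) is ``the first point where the hypothesis $\Rel(B_0)\hat V_0=0$ enters, via the relations $σ^H_{H_1,H_2}$''. It is not. The $Σ_B$-relations are spent entirely on making $ε_H$ well-defined (independence of the choice of $H'$ in the third line of \eqref{eq:epsilondef}); after that, (B1) needs nothing further. Likewise, your final paragraph anticipates that (B5) will require ``different-looking sums whose equality is exactly a $\Rel$-relation''---it does not. The only $\Rel$-relation used beyond well-definedness is $\mathcal{R}_B-1$, for (B3), exactly as you say.

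Your plans for (B2) and (B3) are correct and match the paper. Your plan for (B4) is also essentially right: the paper proves (B4) with the case split you sketch ($H_1\in B$; $H_1\pitchfork B$; separate non-transverse witnesses $H_1',H_2'\in B$; and the hard case where some $H\in B$ is non-transverse with both $H_1$ and $H_2$), and the long computation in the last case relies on Lemmas \ref{lem:precom} and \ref{lem:commutativity} in just the way you anticipate. So the genuine difficulty is located correctly; you have only made the easy relations harder than they are.
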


\begin{proof}

Notice, first of all, that the definition of $ε_H$ implies that 
$ε_HV \subseteq \oplus_{B\in \mathcal{B}, H \in B} V^B$. In fact, since 
$ε_H$ acts as multiplication by $δ$ on $V^B$ if $H\in B$, we have $ε_HV = \oplus_{B\in \mathcal{B}, H \in B} V^B$, and we also get the first relation. 

For (B2), we verify that it holds on each $V^B$; this suffices, since $V=\oplus_{B\in \mathcal{B}}V^B$. Let $B\in \mathcal{B},v\in V^B$ and $w\in W$. There are three cases to consider. For the first, let $wH\in B$, which implies that $ε_{wH}v=δv$. Then $H\in w^{-1}B$, and since $w^{-1}v\in V^{w^{-1}B}$, we have $wε_{H}w^{-1}v=wε_{H}(w^{-1}v)=wδw^{-1}v=δv=ε_{wH}v$. For the second, let $wH\pitchfork B$ implying that $ε_{wH}v=0$. Then, $H\pitchfork w^{-1}B$, and, since $w^{-1}v\in V^{w^{-1}B}$, we have $wε_{H}w^{-1}v=wε_{H}(w^{-1}v)=0=ε_{wH}v$. Finally, for the third case, let $wH$ be non-transverse with some $H' \in B$. Hence, 
$ε_{wH}v=\sum_{s\in \mathcal{R}_{H'\to wH}}μ_ssv$. Now, $H$ is non-transverse with $w^{-1}H'\in w^{-1}B$ and, computing as before, we have $wε_Hw^{-1}v=\sum_{s\in \mathcal{R}_{w^{-1}H'\to H}}μ_sw^{-1}swv$. As $s$ runs through $\mathcal{R}_{w^{-1}H\to H}$ the element $wsw^{-1}$ runs through $\mathcal{R}_{H'\to wH}$; also, by their definition $μ_s=μ_{wsw^{-1}}$. So, $wε_Hw^{-1}v=\sum_{s\in \mathcal{R}_{H'\to wH}}μ_ssv=ε_{wH}v$.

For (B3), if $H\subseteq \ker(w-1)$, and $w\neq 1$, then $w$ is a reflection with hyperplane $H$. Now $w-1 \in \mathcal{R}_{B}-1$ for every $B\in \mathcal{B}$ containing $H_w=H$, and, since $\mathcal{R}_{B}-1$ is contained in $\Rel(B)$, which, by assumption, annihilates $V^{B}$, then $(w-1)V^{B}=0$. Since 
$ε_HV=\oplus_{B\in \mathcal{B}, H\in B} V^B$, this gives $(w-1)ε_H=0$, or $wε_H=ε_H$.

For (B5), if $H_1\not \pitchfork H_2$, then, for every $B\in \mathcal{B}$ containing $H_2$ and $v\in \hat{V}^{B}$, we have $ε_{H_1}v=\sum_{s\in \mathcal{R}_{H_1\to H_2}} μ_ssv$. Since $ε_{H_2}V=\oplus_{B\in \mathcal{B}, H_2\in B} V^B$, the property follows.

We show now (B4), i.e. $ε_{H_1}ε_{H_2}=ε_{H_2}ε_{H_1}$ if $H_1\pitchfork H_2$, proving the relation on $V^B$, for every $B\in \mathcal{B}$. 
For the rest of the proof, fix a $B\in \mathcal{B}$ and a $v\in V^B$.
We distinguish four cases. The proof of the first three is relatively short.

\begin{enumerate}
\item $H_1\in B$, which implies that $ε_{H_1}v=δv$. If $H_2\pitchfork B$ or $H_2\in B$, then $ε_{H_2}v=0$ or $δv$, respectively; in any case, it commutes with $ε_{H_1}$. So suppose that $H_2$ is non-transverse with some $H\in B$, which implies that
$ε_{H_2}v=\sum_{s\in \mathcal{R}_{H\to H_2}}μ_ssv$.  Now,  since $H, H_2 \pitchfork H_1 $, then, by Lemma \ref{lem:transverse2}, for all $s\in \mathcal{R}_{H\to H_2}$, we have $H_1=sH_1\in sB$. Since $sv\in V^{sB}$, this implies that $ε_{H_1}(sv)=δsv$ for all $s\in \mathcal{R}_{H\to H_2}$, and, hence, $ε_{H_1}ε_{H_2}v=δ\sum_{s\in \mathcal{R}_{H\to H_2}}μ_ssv=ε_{H_2}δv=ε_{H_2}ε_{H_1}v$. 

\item $H_1\pitchfork B$, which implies that $ε_{H_1}v=0$. As before, if $H_2\pitchfork B$ or $H_2\in B$, then $ε_{Η_2}v=0$ or $δv$, respectively, and $ε_{H_1},ε_{H_2}$ commute.

So, suppose $H_2\not\pitchfork H\in B$, and, hence, $ε_{H_2}v=\sum_{s\in \mathcal{R}_{H\to H_2}}μ_ssv$. As before, by Lemma \ref{lem:transverse2}, since $H,H_2\pitchfork H_1$, if $s\in \mathcal{R}_{H\to H_2}$, then $sH_1=H_1$, which implies that $H_1=sH_1 \pitchfork sB$. Hence, $ε_{H_1}V^{sB}=0$, for all $s\in \mathcal{R}_{H\to H_2}$, and, so, $ε_{H_1}ε_{H_2}v=0=ε_{H_2}ε_{H_1}v$.

\item There are $H_1',H_2'\in B$, such that $H_1'$ is non-transverse with $H_1$ but transverse with $H_2$, and $H_2'$ is non-transverse with $H_2$ but transverse with $H_1$. Then, $ε_{H_1}v=\sum_{s\in \mathcal{R}_{H_1' \to H_1}}μ_ssv$ and $ε_{H_2}v=\sum_{s\in \mathcal{R}_{H_2'\to H_2}}μ_ssv$.  Again, by Lemma \ref{lem:transverse2}, since $H_1',H_2 \pitchfork H_2'$, if  $s\in \mathcal{R}_{H_1' \to H_1}$, then $Η_2'=sH_2' \in sB$, and, so, $ε_{H_2}sv=\sum_{s'\in \mathcal{R}_{H_2'\to H_2}}μ_{s'}s'sv$. Hence,
 
 $$ε_{H_2}ε_{H_1}v=\sum_{\substack{s\in \mathcal{R}_{H_1'\to H_1}\\s'\in \mathcal{R}_{H_2'\to H_2}}} μ_sμ_{s'}ss' v.$$ With the same reasoning, we obtain the same result for 
 $ε_{H_1}ε_{H_2}v$.

\end{enumerate}

So now, we can assume that there is $H\in B$ non-transverse with both $H_1$ and $H_2$. To lighten the notation, for the rest of the proof, $s$ and $s'$ will always denote reflections, and we will omit mentioning $s,s'\in \mathcal{R}$, which will be assumed for every sum. Also, for $H'\in \mathcal{H}$ let $\mathcal{B}_{H'}$ be the set of elements of $\mathcal{B}$ that contain $H'$.
We will be using the results of Lemma \ref{lem:commutativity} repeatedly. The rest of the proof is rather long but most of it is a detailed  regrouping of certain sums.
We have:

\begin{equation*}
\begin{split}
ε_{H_2}ε_{H_1}v  &  =ε_{H_2}\sum_{sB\in \mathcal{B}_{H_1}} \frac{1}{\lvert B\backslash sB \rvert} μ_ssv= \sum_{sB\in \mathcal{B}_{H_1}\cap \mathcal{B}_{H_2}}\frac{δ}{\lvert B\backslash sB \rvert} μ_ssv 
+\sum_{\substack{sB\in \mathcal{B}_{H_1}\backslash \mathcal{B}_{H_2} \\ s'sB\in \mathcal{B}_{H_1}\cap \mathcal{B}_{H_2}}} \frac{μ_{s'}μ_s}{\lvert B\backslash sB \rvert \lvert sB\backslash s'sB \rvert}s'sv.\\
& =\sum_{sB \in \mathcal{B}_{H_1}\cap \mathcal{B}_{H_2}} \frac{δ}{\lvert B\backslash sB \rvert} μ_ssv + Σ_1 v,
\end{split}
\end{equation*}
where $Σ_1:=\sum_{\substack{sB\in \mathcal{B}_{H_1}\backslash \mathcal{B}_{H_2} \\ s'sB\in \mathcal{B}_{H_1}\cap \mathcal{B}_{H_2}}} \frac{μ_{s'}μ_s}{\lvert B\backslash sB \rvert \lvert sB\backslash s'sB \rvert}s's$. Similarly, we compute,

\begin{equation*}
\begin{split}
ε_{H_1}ε_{H_2}v &=\sum_{sB \in \mathcal{B}_{H_1}\cap \mathcal{B}_{H_2}}\frac{δ}{\lvert B\backslash sB \rvert} μ_ssv + Σ_2v,
\end{split}
\end{equation*}
where $Σ_2:=\sum_{\substack{sB\in \mathcal{B}_{H_2}\backslash \mathcal{B}_{H_1} \\ s'sB\in \mathcal{B}_{H_1}\cap \mathcal{B}_{H_2}}} \frac{μ_{s'}μ_s}{\lvert B\backslash sB \rvert \lvert sB\backslash s'sB \rvert}s's.$
\vspace{20pt}
The first term in both sums being the same, we need to show that $Σ_1v=Σ_2v$. For convenience, let 
$f(s',s):=\frac{μ_{s'}μ_s}{\lvert B\backslash sB \rvert \lvert sB\backslash s'sB \rvert}s's,$
which makes sense as long as $B\neq sB$ and $sB\neq s'sB$.

The following rearrangement of $Σ_1$ just amounts to writing $\mathcal{B}_{H_1}\backslash \mathcal{B}_{H_2} =\mathcal{B}_{H_1} \backslash (\mathcal{B}_{H_1}\cap \mathcal{B}_{H_1})$ while adding the condition $s'sB\neq sB$, which is true for the set of summation of $Σ_1$, but not for the following sums. We have:

\begin{equation*}
\begin{split}
Σ_1=\sum_{\substack{sB\in \mathcal{B}_{H_1}\backslash \mathcal{B}_{H_2} \\ s'sB\in \mathcal{B}_{H_1}\cap \mathcal{B}_{H_2}\\}} f(s',s)&= \sum_{\substack{sB\in \mathcal{B}_{H_1} \\  s'sB \in \mathcal{B}_{H_1}\cap \mathcal{B}_{H_2} \\s'sB\neq sB}} f(s',s) - \sum_{\substack{sB \in \mathcal{B}_{H_1}\cap \mathcal{B}_{H_2} \\s'sB \in \mathcal{B}_{H_1}\cap \mathcal{B}_{H_2}\\s'sB\neq sB}} f(s',s) =S_1-\sum_{\substack{sB\in \mathcal{B}_{H_1}\cap \mathcal{B}_{H_2} \\ s'sB\in \mathcal{B}_{H_1}\cap \mathcal{B}_{H_2}\\s'sB\neq sB}} f(s',s),\\
\end{split}
\end{equation*}
where $S_1:=\sum_{\substack{sB\in \mathcal{B}_{H_1} \\  s'sB \in \mathcal{B}_{H_1}\cap \mathcal{B}_{H_2} \\s'sB\neq sB}} f(s',s)$,
and, in the same way $Σ_2 = S_2 - \sum_{\substack{sB \in \mathcal{B}_{H_1}\cap \mathcal{B}_{H_2}
  \\ s'sB \in \mathcal{B}_{H_1}\cap \mathcal{B}_{H_2}\\s'sB\neq sB}} f(s',s),$
where $S_2:=\sum_{\substack{sB\in \mathcal{B}_{H_2} \\ s'sB\in \mathcal{B}_{H_1}\cap \mathcal{B}_{H_2}\\s'sB\neq sB}}  f(s',s).$
\vspace{20pt}

Again, the second terms of the final expressions for $Σ_1,Σ_2$ are the same, and, so, it suffices to show that $S_1v=S_2v$. We have:

$$S_1v=\sum_{\substack{sB\in \mathcal{B}_{H_1} \\ s'sB \in \mathcal{B}_{H_1}\cap \mathcal{B}_{H_2} \\s'sB\neq sB}} \frac{μ_{s'}μ_s}{\lvert B\backslash sB \rvert \lvert sB\backslash s'sB \rvert}s's v =\sum_{\substack{sB\in \mathcal{B}_{H_1} \\ s'sB \in \mathcal{B}_{H_1}\cap \mathcal{B}_{H_2} \\s'sB\neq sB}} \frac{μ_{s'}s'}{\lvert sB\backslash s'sB\rvert}\cdot \frac{μ_ss}{\lvert B\backslash sB \rvert} v$$

Now, we regroup the reflections $s$ in the last sum with respect to the set $sB$, introducing the variable $b=sB$. We have:

\begin{equation*}
\begin{split}
S_1v&=\sum_{b\in \mathcal{B}_{H_1}}\left( \sum_{\substack{s'b \in \mathcal{B}_{H_1}\cap \mathcal{B}_{H_2}\\ s'b\neq b}} \frac{μ_{s'}s'}{\lvert b\backslash s'b\rvert} \cdot \sum_{sB=b}\frac{μ_ss}{\lvert B\backslash b \rvert}v \right)=\sum_{b\in \mathcal{B}_{H_1}} \left( \sum_{\substack{s'b \in \mathcal{B}_{H_1}\cap \mathcal{B}_{H_2}\\s'b\neq b}} \frac{μ_{s'}s'}{\lvert b \backslash s'b \rvert}\cdot \sum_{sH=H_1, sB=b} μ_ssv\right)  \\
& =\sum_{b\in \mathcal{B}_{H_1}} \sum_{\substack{sH=H_1 \\ sB=b \\ s'sB\in \mathcal{B}_{H_1}\cap \mathcal{B}_{H_2}\\s'b\neq b}}\frac{μ_{s'}s'}{\lvert b \backslash s'b \rvert}μ_ssv=\sum_{\substack{sH=H_1\\ s'sB\in \mathcal{B}_{H_1}\cap \mathcal{B}_{H_2}\\s'sB\neq sB}} \frac{μ_{s'}s'}{\lvert sB \backslash s'sB \rvert} μ_ss v \\
\end{split}
\end{equation*}

Again, we just perform two regroupings of the terms in the last sum with respect to the sets $s'sB=b'$ and $s^{-1}b'=b''$:

\begin{equation*}\label{eq:suma1}
\begin{split}
S_1v& =\sum_{b'\in \mathcal{B}_{H_1}\cap \mathcal{B}_{H_2}}\sum_{\substack{sH=H_1\\ sΒ \neq b'  \\ s'sB=b'}} \frac{μ_{s'}s'}{\lvert sB\backslash b' \rvert} μ_ss v=\sum_{b'\in \mathcal{B}_{H_1}\cap \mathcal{B}_{H_2}} \sum_{b''\in \mathcal{B}}\sum_{\substack{sH=H_1 \\sB \neq b' \\ sb''=b'\\ s'sB=b'}}\frac{μ_{s'}s'}{\lvert sB\backslash b' \rvert} μ_ssv\\
&=\sum_{b'\in \mathcal{B}_{H_1}\cap \mathcal{B}_{H_2}} \sum_{\substack{b''\in \mathcal{B}\\b''\neq B}}\sum_{\substack{sH=H_1 \\ sb''=b'\\ s'sB=b'}}\frac{μ_{s'}s'}{\lvert sB\backslash b' \rvert} μ_ssv.
\end{split}
\end{equation*}

For the passage to the last sum, since in the innermost summation, the third condition is $sb''=b'$, then the second condition, i.e. $sB\neq b'$ becomes $b'\neq B$, which we can take out to the second sum as we did in the last expression. In the same way for $S_2$, we get:

\begin{equation}\label{eq:sumb}
S_2v=\sum_{b'\in \mathcal{B}_{H_1}\cap \mathcal{B}_{H_2}} \sum_{\substack{b''\in \mathcal{B}\\ b''\neq B}}\sum_{\substack{sH=H_2 \\ sb''=b'\\ s'sB=b'}}\frac{μ_{s'}s'}{\lvert sB\backslash b' \rvert} μ_ssv. 
\end{equation}

Now, since in both expressions the first and second summations are the same, it is enough to show that the innermost sums are equal; i.e. that for all $b'\in \mathcal{B}_{H_1}\cap \mathcal{B}_{H_2}$, and $b''\in \mathcal{B}$ with $b'' \neq B$, we have

\begin{equation}\label{eq:lastsumeq}
\begin{split}
\sum_{\substack{sH=H_1\\ sb''=b' \\ s'sB=b'}} \frac{μ_{s'}s'}{\lvert sB \backslash b' \rvert} μ_ss v=\sum_{\substack{sH=H_2\\ sb''=b' \\ s'sB=b'}} \frac{μ_{s'}s'}{\lvert sB \backslash b' \rvert} μ_ss v. 
\end{split}
\end{equation}

Notice first that, on both sides, if the summation is not empty, then $H\in b''$.
Now, we take the left-hand side, and see that

\begin{equation*}
\begin{split}
\sum_{\substack{sH=H_1\\ sb''=b' \\ s'sB=b'}} \frac{1}{\lvert sB \backslash b' \rvert} μ_{s'}s'μ_ss v & =\sum_{\substack{sH=H_1\\ sb''=b' \\ s'sB=b'}} \frac{1}{\lvert sB \backslash b' \rvert} μ_{s'}μ_ss(s^{-1}s's) v =\sum_{\substack{sH=H_1\\ sb''=b' \\ aB=b''}} \frac{1}{\lvert B \backslash b'' \rvert} μ_aμ_s sav \\
&=\sum_{\substack{sH=H_1\\sb''=b'}}μ_ss\sum_{aB=b''} \frac{μ_aa}{\lvert B\backslash b'' \rvert}  v=\sum_{\substack{sH=H_1\\sb''=b'}}μ_ssv_0\\
\end{split}
\end{equation*}
where $v_0=\sum_{aB=b''} \frac{μ_aa}{\lvert B \backslash b'' \rvert }v$.
For the passage to the second line, when $s'$ runs through all reflections that take $sB$ to $b'$, then $a=s^{-1}s's$ runs through all reflections mapping $B$ to $b''$; also, $μ_a=μ_{s'}$, since $a$ and $s'$ are conjugate, and $\lvert sB\backslash b' \rvert = \lvert s^{-1}sB \backslash s^{-1}b'\rvert =\lvert B\backslash b'' \rvert $.

Correspondingly, the right-hand side of Equation \eqref{eq:lastsumeq} equals $\sum_{\substack{sH=H_2\\sb''=b'}}μ_ssv_0.$

Now, notice that $v_0\in V^{b''}$ and recall that $H\in b''$. So, by Lemma \ref{lem:commutativity}, 
$\sum_{\substack{sH=H_1\\sb''=b'}}μ_ssv_0=ε_{H_1}^{(b'')}v_0$ which, by the same lemma equals $ε_{H_2}^{(b'')}v_0=\sum_{\substack{sH=H_2\\sb''=b'}}μ_ssv_0$, since $H\in b''$ is not transverse with $H_1,H_2$. This concludes the proof.
\end{proof}

\begin{proposition}\label{prop:imageconstruction} If $V_0\neq 0$, then the transverse collection $B_0$ is a $(B_0\big| V_0)$-maximal collection, for which $(B_0\big| V_0)_B\cong_{K\Stab(B_0)} V_0$. Furthermore, $(B_0\big| V_0)=(B_0\big| V_0)_{\mathcal{B}}$ (see Definition \ref{def:orbvb}).
\end{proposition}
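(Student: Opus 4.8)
The plan is to unwind the construction of $(B_0\big|V_0)$ and read everything off the decomposition $V:=(B_0\big|V_0)=\oplus_{B\in\mathcal{B}}V^B$, where $\mathcal{B}$ is the orbit of $B_0$, $V^{B_0}=\hat{V_0}$, and $V^{wB_0}=wV^{B_0}$. The first step is to compute $e_CV$ for an arbitrary transverse collection $C$. For this I would invoke, essentially verbatim, the computation in the proof of Lemma \ref{lem:imageofe}: the only inputs that proof uses are that $e_H$ acts on $e_HV=\oplus_{B\in\mathcal{B},\,H\in B}V^B$ as multiplication by the unit $δ$, and that $e_{H_1}$ and $e_{H_2}$ commute when $H_1\pitchfork H_2$ — both of which were already established in the proof of Theorem--Definition \ref{thdef:construction}. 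This yields $e_{H_1}e_{H_2}V=e_{H_1}V\cap e_{H_2}V$ for transverse $H_1,H_2$ and hence $e_CV=\bigoplus_{B\in\mathcal{B},\,C\subseteq B}V^B$ for every transverse collection $C$.

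With this formula in hand the rest is bookkeeping on the grading. First I would observe that every member of $\mathcal{B}$ is $W$-conjugate to $B_0$, hence has cardinality $\lvert B_0\rvert$; so the only $B\in\mathcal{B}$ with $B_0\subseteq B$ is $B_0$ itself, giving $e_{B_0}V=V^{B_0}=\hat{V_0}$. Since $KW$ is free as a right $K\Stab(B_0)$-module, the canonical map $V_0\to\hat{V_0}$ is an isomorphism of $K\Stab(B_0)$-modules, so $V_0\neq0$ forces $\hat{V_0}\neq0$, i.e. $e_{B_0}V\neq0$; and since $\Stab(B_0)$ fixes $B_0$ it stabilises $V^{B_0}=\hat{V_0}$ (because $wV^{B_0}=V^{wB_0}$), so $(B_0\big|V_0)_{B_0}=e_{B_0}V\cong_{K\Stab(B_0)}V_0$. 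For maximality of $B_0$, I would note that any transverse $C$ strictly containing $B_0$ has $\lvert C\rvert>\lvert B_0\rvert$, so no $B\in\mathcal{B}$ can contain $C$, whence $e_CV=0$; thus $B_0$ is indeed maximal among transverse collections with nonzero image, i.e. $(B_0\big|V_0)$-maximal.

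For the last assertion $(B_0\big|V_0)=(B_0\big|V_0)_{\mathcal{B}}$ I would use Remark \ref{remark:conjugatemaximals} to conclude that every $B=wB_0\in\mathcal{B}$ is again $(B_0\big|V_0)$-maximal, and then compute $(B_0\big|V_0)_B=e_BV=we_{B_0}w^{-1}V=we_{B_0}V=wV^{B_0}=V^B$ using Remark \ref{remark:WactiononB} and the invertibility of $w$ on $V$; summing over $B\in\mathcal{B}$ gives $(B_0\big|V_0)_{\mathcal{B}}=\bigoplus_{B\in\mathcal{B}}V^B=V$. I do not expect a genuine obstacle here, since all the substantive work — verifying that the operators $ε_H$ satisfy the defining relations of $\Br^K(W)$ — was already carried out in Theorem--Definition \ref{thdef:construction}. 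The one point deserving a word of justification is that the Lemma \ref{lem:imageofe} computation carries over to $(B_0\big|V_0)$ even though that module is not \emph{a priori} known to be of the form $V_{\mathcal{B}}$; this is harmless precisely because that computation never uses anything beyond the scalar action of $e_H$ on $e_HV$ and the commutation of $e_H$'s attached to transverse hyperplanes, both of which hold by construction.
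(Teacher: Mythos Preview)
Your proof is correct and follows essentially the same approach as the paper: compute $e_CV=\bigoplus_{B\in\mathcal{B},\,C\subseteq B}V^B$ from the scalar action of $e_H$ on its image and the commutation of $e_H$'s for transverse hyperplanes, then read off the maximality of $B_0$, the identification $(B_0\big|V_0)_{B_0}\cong V_0$, and the equality $(B_0\big|V_0)=(B_0\big|V_0)_{\mathcal{B}}$ directly from this formula. The paper's proof is slightly more compressed (it does not invoke Remark~\ref{remark:conjugatemaximals} or spell out the cardinality argument), but the content is the same.
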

\begin{proof} The element $e_H$ acts on $(B_0\big| V_0)$ as $ε_H$, and, as we remarked in the beginning of the proof of the previous proposition, $ε_H(B_0\big| V_0)=\oplus_{B\in \mathcal{B}, H\in B}V^{B}$. Now since $e_{H_1},e_{H_2}$ commute for $H_1\pitchfork H_2$, then, for any $B'\in \mathcal{C}$, we have $e_{B'}(B_0\big|V_0)=\cap_{H\in B'}e_H(B_0\big| V_0)=\oplus_{B\in \mathcal{B}, B'\subseteq B} V^{B}$. In particular, for $B\in \mathcal{B}$ we have $e_{B}(B_0\big| V_0)=V^{B}$, and, so $e_{B_0}(B_0\big| V_0)=V^{B_0}=\hat{V_0}\cong_{K\Stab(B_0)}V_0$. Also, for all $B'\in \mathcal{B}$ such that $B_0\subset B'$ we get
$e_{B'}(B_0\big| V_0)=0$, which implies that $B_0$ is $(B_0\big| V_0)$-maximal. Finally, $(B_0\big| V_0)=\oplus_{B\in \mathcal{B}} V^B=\oplus_{B\in \mathcal{B}} (B_0\big|V_0)_B=(B_0\big|V_0)_{\mathcal{B}}$.
\end{proof}

\begin{remark}\label{rem:emptyconstruction} One can see from the above proposition and Remark \ref{rem:emptyadmissibility} that if $B_0$ is the empty collection, this construction turns all $KW$-modules into $\Br^K(W)$-modules where all $e_H, H\in \mathcal{H}$ act as zero.  
\end{remark}

\subsection{Conclusions}

In this subsection, we combine the results of Subsections \ref{sec:structuregeneral} and \ref{sec:construction} and obtain the classification of simple $\Br^K(W)$-modules.  

\begin{proposition}\label{prop:admissibility} Let $B\in \mathcal{C}$ and $V$ a $K\Stab(B)$-module. Then there exists a $\Br^K(W)$-module $V'$ with maximal collection $B$ and such that $V'_B$ is a $K\Stab(B)$-module isomorphic to $V$ if and only if $(B,V)$ is admissible. Furthermore, $(B,V)$ is admissible if and only if $\Rel(B)\hat{V}=0$.
\end{proposition}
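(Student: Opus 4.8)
The plan is to establish the two equivalences separately, relying heavily on the structural results of Subsection~\ref{sec:structuregeneral} and the construction $(B\,|\,V)$ of Subsection~\ref{sec:construction}. I would first dispose of the second equivalence, the characterization of admissibility, since it is essentially a matter of unwinding definitions together with the conjugation-equivariance already recorded. Recall that $(B,V)$ is admissible means $\Ann_{KW}(e_B)\cdot\hat V=0$, and we have the inclusion $\Rel(B)\subseteq\Ann_{KW}(e_B)$ (the lemma preceding the construction). Hence admissibility trivially implies $\Rel(B)\hat V=0$. For the converse, I would argue that $\Ann_{KW}(e_B)$ is generated as a left ideal — or rather, that its action on $\hat V$ is controlled — by $\Rel(B)$ together with the relations coming from the $W$-action; more precisely, I expect to show that $e_B$, viewed as an element of the $KW$-module $\Br^K(W)$, has the property that $\Ann_{KW}(e_B)$ is spanned by elements $w\,r$ with $r\in\Rel(B)$ (using that $we_B=e_{wB}$ and that distinct $e_{wB}$ are linearly independent inside $\Br^K(W)$, or at least that the span of $\{e_{wB}:wB\text{ in the orbit}\}$ is a $KW$-submodule isomorphic to $\Ind_{K\Stab(B)}^{KW}(Ke_B)$ with $\Ann_{KW}(e_B)$ precisely the relations defining the induced module from $Ke_B\cong K\Stab(B)/\Rel(B)$-part). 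This is the step I expect to be the main obstacle: pinning down $\Ann_{KW}(e_B)$ exactly, one needs to know that no further relations among the $e_H$ beyond those in $\Rel(B)$ are forced on $e_B$ inside $\Br^K(W)$, which presumably follows from the existence of the module $(B\,|\,V)$ itself (a bootstrapping argument) or from a faithfulness/linear-independence statement about the $e_C$'s.

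For the first equivalence I would proceed as follows. For the ``if'' direction, assume $(B,V)$ is admissible with $V\ne 0$; then $\Rel(B)\hat V=0$ by the second equivalence (just proved), so the hypothesis of Theorem--Definition~\ref{thdef:construction} is met and the module $V':=(B\,|\,V)$ is defined. By Proposition~\ref{prop:imageconstruction}, $B$ is $V'$-maximal and $V'_B\cong_{K\Stab(B)}V$, which is exactly the conclusion sought. For the ``only if'' direction, suppose $V'$ is a $\Br^K(W)$-module with $B$ a $V'$-maximal collection and $V'_B\cong_{K\Stab(B)}V$. I would then invoke Lemma~\ref{equation1}, which tells us precisely how each $e_H$ acts on $V'_B=e_BV'$: in all three cases ($H\in B$, $H\pitchfork B$, $H\not\pitchfork H'\in B$) it acts by an explicit element of $KW$. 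Consequently, for any $x=\sum_w c_w w\in\Ann_{KW}(e_B)$ and any $v\in V'_B$, we have $x\hat v$... here I must be slightly careful, since $\hat V$ lives in the \emph{induced} module $\Ind_{K\Stab(B)}^{KW}V$, not a priori inside $V'$; but Proposition~\ref{prop:directsum}(3) gives $\Res_{KW}(V'_{\mathcal B})\cong\Ind_{K\Stab(B)}^{KW}(V'_B)$, identifying $\hat V$ with $V'_B$ inside $V'_{\mathcal B}\subseteq V'$. Under this identification $e_B$ acts on $\hat V=V'_B$ as multiplication by $\delta^{|B|}$ (an invertible scalar), so $x\cdot e_B=e_B\cdot(\text{something})$ forces, via $x\,e_B v=0$ for all $v$, that $x$ annihilates $V'_B=\hat V$. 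Hence $\Ann_{KW}(e_B)\hat V=0$, i.e.\ $(B,V)$ is admissible.

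There is one subtlety to address in the ``only if'' argument: $\Ann_{KW}(e_B)$ is the annihilator of $e_B$ as an element of the left $KW$-module $\Br^K(W)$, and I need that $x\,e_B=0$ in $\Br^K(W)$ implies $x$ kills $e_BV'$ for the specific module $V'$. This is immediate because $V'$ is a $\Br^K(W)$-module: if $x\,e_B=0$ in the algebra then $x\,e_B$ acts as $0$ on $V'$, and $e_B$ acts as the projection (up to the scalar $\delta^{|B|}$) onto $V'_B$ along the other $V'_C$'s within $V'_{\mathcal B}$, together with $0$ on any complementary part — so $x(e_Bv)=0$ for all $v\in V'$, in particular for $v$ ranging so that $e_Bv$ ranges over all of $V'_B$. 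Then identifying $V'_B$ with $\hat V$ as above yields $\Ann_{KW}(e_B)\hat V=0$. I would present the two equivalences in this order (admissibility characterization first, then the existence statement, citing it), and flag the linear-independence input for $\Ann_{KW}(e_B)$ as the one place where a genuine computation — or a forward reference to the dimension/basis results — is needed.
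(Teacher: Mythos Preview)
Your treatment of the first equivalence is correct and matches the paper's: for the ``only if'' direction you observe that $x e_B = 0$ in the algebra forces $x$ to kill $e_B V' = V'_B$, and then Proposition~\ref{prop:directsum}(3) identifies $V'_B$ with $\hat V$ inside $\Ind_{K\Stab(B)}^{KW}V$, whence $\Ann_{KW}(e_B)\hat V = 0$; for the ``if'' direction you construct $(B\,|\,V)$ and cite Proposition~\ref{prop:imageconstruction}. This is exactly what the paper does.

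The problem is with your proposed handling of the second equivalence, specifically the implication $\Rel(B)\hat V = 0 \Rightarrow (B,V)$ admissible. You call this the ``main obstacle'' and suggest attacking $\Ann_{KW}(e_B)$ directly, or making a forward reference to the dimension/basis results. Neither is needed, and the direct attack would be genuinely hard. The bootstrapping idea you mention parenthetically is the entire argument, and it is not a separate step: once you have proved the ``only if'' direction of the first equivalence (existence of $V' \Rightarrow$ admissible), apply it with $V' = (B\,|\,V)$. That is, $\Rel(B)\hat V = 0$ is precisely the hypothesis needed to build $(B\,|\,V)$; by Proposition~\ref{prop:imageconstruction} this module has $B$ maximal with $(B\,|\,V)_B \cong V$; and your own ``only if'' argument then yields admissibility. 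So the logical flow is a three-cycle: admissible $\Rightarrow$ $\Rel(B)\hat V = 0$ (trivial inclusion) $\Rightarrow$ existence of $V'$ (construct $(B\,|\,V)$) $\Rightarrow$ admissible (your ``only if'' argument). Reorganize accordingly: establish the ``only if'' of the first equivalence first, then derive everything else from it. There is no obstacle.
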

\begin{proof}  If $(B,V)$ is admissible, i.e. $\Ann_{KW}(e_B)\hat{V}=0$, then, since $\Rel(B)\subseteq \Ann_{KW}(e_B)$, we get that $\Rel(B)\hat{V}=0$. In that case, the $\Br^K(W)$-module $(B\big|V)$ satisfies the properties of the statement, by Proposition \ref{prop:imageconstruction}. Conversely, suppose that a module $V'$, as in the first statement, exists. Since $V_B=e_BV$, then $\Ann_{KW}(e_B)V_B=0$, and, since $V_B\subseteq V_{\mathcal{B}}=\oplus_{w\in W/\Stab(B)}wV_B$, where $\mathcal{B}$ is the $W$-orbit of $B$ (Proposition \ref{prop:directsum}), then $\Ann_{KW}(e_B)\hat{V}_B=0$. Since $V_B$ and $V$ are isomorphic as $K\Stab(B)$-modules, we have $\Ann_{KW}(e_B)\hat{V}=0$, and, hence $(B,V)$ is admissible. 

For the second part of the statement, since $\Rel(B)\subseteq \Ann_{KW}(e_B)$, then if $(B,V)$ is admissible we have $\Rel(B)\hat{V}=0$. For the converse, if $\Rel(B)\hat{V}=0$, then the module $(B\big| V)$ satisfies the properties of the statement, and hence $(B,V)$ is admissible.
\end{proof}

Proposition \ref{prop:imageconstruction} tells us that the module $(B\big| V)$ satisfies the conditions for most of the results of Subsection \ref{sec:structuregeneral}.

\begin{proposition}\label{prop:irreduconstruction} Let $(B,V)$ be an admissible pair. Then the $\Br^K(W)$-module $(B\big| V)$ is simple (in which case it is, also, absolutely irreducible) if and only if $V$ is a simple $K\Stab(B)$-module.
\end{proposition}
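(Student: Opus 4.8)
The plan is to deduce the statement directly from Proposition \ref{prop:irreducibility}, applied to the module $(B\big| V)$ together with the $W$-orbit $\mathcal{B}$ of $B$. Note first that admissibility of $(B,V)$ is precisely what guarantees, via Proposition \ref{prop:admissibility} (that is, $\Rel(B)\hat{V}=0$), that the module $(B\big| V)$ of Theorem--Definition \ref{thdef:construction} is defined in the first place.

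I would first dispose of the degenerate case $V=0$: then $(B\big| V)=\Ind_{K\Stab(B)}^{KW}0=0$, which is not simple, and $V$ is not a simple $K\Stab(B)$-module either, so the equivalence holds vacuously. Assume from now on that $V\neq 0$.

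By Proposition \ref{prop:imageconstruction}, $B$ is a $(B\big| V)$-maximal collection, $(B\big| V)_B\cong_{K\Stab(B)}V$, and $(B\big| V)=(B\big| V)_{\mathcal{B}}$. By Remark \ref{remark:conjugatemaximals} every conjugate of $B$ is again $(B\big| V)$-maximal, so $\mathcal{B}$ is an orbit of $(B\big| V)$-maximal collections, which is exactly the hypothesis needed to apply Proposition \ref{prop:irreducibility}. That proposition then says that $(B\big| V)$ is simple (and in that case absolutely irreducible) if and only if $(B\big| V)_{B'}$ is a simple $K\Stab(B')$-module for every $B'\in\mathcal{B}$, the condition $(B\big| V)=(B\big| V)_{\mathcal{B}}$ being already in hand.

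It remains to see that this last condition is equivalent to $V$ being a simple $K\Stab(B)$-module. For $B'=wB\in\mathcal{B}$ one has $(B\big| V)_{B'}=w\,(B\big| V)_B$ (Remark \ref{remark:conjugatemaximals}) and $\Stab(B')=w\Stab(B)w^{-1}$, so the $K$-linear bijection $x\mapsto wx$ intertwines the $K\Stab(B)$-action on $(B\big| V)_B$ with the $K\Stab(B')$-action on $(B\big| V)_{B'}$; hence one is simple if and only if the other is, and since $(B\big| V)_B\cong V$ this yields the claim, with absolute irreducibility in the simple case coming for free from Proposition \ref{prop:irreducibility}. I do not expect a genuine obstacle here: the substance has been front-loaded into Proposition \ref{prop:irreducibility} and Theorem--Definition \ref{thdef:construction}, and the only points needing a little care are confirming that $\mathcal{B}$ really is an orbit of $(B\big| V)$-maximal collections (so that Proposition \ref{prop:irreducibility} is applicable) and that the conjugation map is compatible with the relevant stabilizer actions.
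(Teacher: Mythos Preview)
Your proposal is correct and follows essentially the same approach as the paper's own proof, which simply invokes Proposition \ref{prop:imageconstruction} to verify the hypotheses of Proposition \ref{prop:irreducibility} and then appeals to the latter. You have just filled in a few more details (the degenerate case $V=0$, and the conjugation argument showing that simplicity of $(B\big|V)_B$ propagates to all $(B\big|V)_{B'}$ for $B'\in\mathcal{B}$) that the paper leaves implicit.
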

\begin{proof} By Proposition \ref{prop:imageconstruction}, the $\Br^K(W)$-module $(B\big| V)$ satisfies the conditions of Proposition \ref{prop:irreducibility}, and the result follows from the latter proposition.
\end{proof}



\begin{proposition}\label{lem:constructioncontained} Let $V$ be a $\Br^K(W)$-module, and $\mathcal{B}$ an orbit of $V$-maximal collections. Then, the pair $(B,V_B)$ is admissible and  the $\Br^K(W)$-module $V_{\mathcal{B}}$ is isomorphic to $(B\big| V_B)$, for every $B\in \mathcal{B}$. As a consequence, $V$ contains a module isomorphic to $(B\big| V_B)$.
\end{proposition}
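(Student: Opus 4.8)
The plan is to leverage the two main results already established: Proposition~\ref{prop:directsum}, which says that $V_{\mathcal{B}} = \oplus_{B\in\mathcal{B}} V_B$ is a $\Br^K(W)$-submodule of $V$ whose restriction to $KW$ is induced from $V_B$, and Proposition~\ref{prop:admissibility}, which characterizes admissibility of $(B,V_B)$ by the condition $\Rel(B)\hat{V}_B = 0$ (equivalently $\Ann_{KW}(e_B)\hat{V}_B = 0$). So the task splits into two parts: first show $(B,V_B)$ is admissible, then identify $V_{\mathcal{B}}$ with the constructed module $(B\mid V_B)$.

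For admissibility, I would argue exactly as in the converse direction of Proposition~\ref{prop:admissibility}: since $V_B = e_B V$, any element $x\in\Ann_{KW}(e_B)$ kills $e_B V = V_B$, because $x$ acts on $V$ and $x\cdot e_B v = (x e_B) v = 0$ for $v\in V$. By Proposition~\ref{prop:directsum}(1) and (3), $V_B$ sits inside $V_{\mathcal{B}} = \oplus_{w\in W/\Stab(B)} wV_B$ as the summand corresponding to the trivial coset, and this decomposition realizes $V_{\mathcal{B}}$ as the induced module, under which identification $V_B$ corresponds precisely to $\hat{V}_B$. Hence $\Ann_{KW}(e_B)\hat{V}_B = 0$, i.e.\ $(B,V_B)$ is admissible. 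In particular $\Rel(B)\hat{V}_B = 0$, so the hypothesis needed to form $(B\mid V_B)$ via Theorem--Definition~\ref{thdef:construction} is met.

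For the isomorphism $V_{\mathcal{B}} \cong (B\mid V_B)$, the idea is that both modules are, by construction, determined by the same data: the $K\Stab(B)$-module $V_B$ together with the prescribed action of the $e_H$ via Equation~\eqref{eq1}/\eqref{eq:epsilondef}. Concretely, $(B\mid V_B)$ has underlying $KW$-module $\Ind_{K\Stab(B)}^{KW} V_B$, which by Proposition~\ref{prop:directsum}(3) is isomorphic as a $KW$-module to $V_{\mathcal{B}}$; fix such an isomorphism $\varphi$ sending the copy of $V_B$ in $(B\mid V_B)$ (namely $V^{B}=\hat{V}_B$) identically onto $V_B\subseteq V_{\mathcal{B}}$, and matching up the summands $wV^{B}$ with $wV_B$. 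It remains to check $\varphi$ intertwines the $e_H$-actions. On each summand $wV^{B}$, the operator $\varepsilon_H$ acts by the formula \eqref{eq:epsilondef} relative to the collection $wB$, and on $wV_B$ the operator $e_H$ acts by the formula \eqref{eq1} of Lemma~\ref{equation1} (since $wB$ is $V$-maximal and $wV_B = V_{wB}$ by Remark~\ref{remark:conjugatemaximals}); these two formulas are literally identical, so $\varphi$ commutes with $e_H$ on each summand, hence on all of $V_{\mathcal{B}}$. Therefore $V_{\mathcal{B}} \cong (B\mid V_B)$ as $\Br^K(W)$-modules. The final sentence follows since $V_{\mathcal{B}}$ is a submodule of $V$ by Proposition~\ref{prop:directsum}(3).

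The only slightly delicate point is making sure the chosen $KW$-isomorphism $\varphi$ can be taken to respect the summand decompositions \emph{and} restrict to a fixed chosen $K\Stab(B)$-isomorphism $V^{B}\to V_B$; this is automatic from the universal property of induction once one picks the same coset representatives on both sides, but it is worth stating carefully so that the verification of $e_H$-equivariance reduces cleanly to comparing \eqref{eq1} with \eqref{eq:epsilondef} summand-by-summand. No real obstacle beyond this bookkeeping; the substance has already been done in Propositions~\ref{prop:directsum}, \ref{prop:admissibility} and \ref{prop:imageconstruction}.
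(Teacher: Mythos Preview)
Your argument is correct. The admissibility step is identical to the paper's (both invoke Proposition~\ref{prop:admissibility}, and your unpacking of why $\Ann_{KW}(e_B)\hat{V}_B=0$ is exactly the reasoning behind the converse direction of that proposition).

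For the isomorphism $V_{\mathcal{B}}\cong(B\mid V_B)$, the paper takes a shorter route: rather than building an explicit $KW$-isomorphism and checking $e_H$-equivariance summand-by-summand via equations~\eqref{eq1} and~\eqref{eq:epsilondef}, it simply observes that both $V_{\mathcal{B}}$ and $(B\mid V_B)$ satisfy the hypotheses of Proposition~\ref{prop:nonisomorphic} (the latter by Proposition~\ref{prop:imageconstruction}), share the same orbit $\mathcal{B}$ of maximal collections, and have isomorphic $B$-components, so that proposition gives the isomorphism at once. Your explicit verification is what underlies Remark~\ref{rem:vbdet} and the ``if'' direction of Proposition~\ref{prop:nonisomorphic}, so you are essentially reproving a special case of that result rather than citing it. Both approaches are fine; the paper's is more economical given what has already been established, while yours makes the mechanism of the isomorphism transparent without a forward reference.
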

\begin{proof} Proposition \ref{prop:admissibility} implies that the pair $(B,V_B)$ is admissible. Now, both modules $V_{\mathcal{B}}$ and $(B\big| V_B)$ satisfy the conditions of Proposition \ref{prop:nonisomorphic} (the latter by Proposition \ref{prop:imageconstruction}), which implies the result.
\end{proof}

The next theorem may be considered as the central result of this section. When we say that two pairs $(B,V),(B',V')$ are \emph{conjugate}, we mean that there is $w\in W$ such that $wB=B'$ and also $wρ_Vw^{-1}$ is isomorphic to $ρ_{V'}$, where $ρ_V,ρ_{V'}$ are the  representations corresponding to $V,V'$, respectively.

\begin{theorem}[Simple modules of $\Br^K(W)$]\label{th:simplemodules}
The simple $\Br^K(W)$-modules are the modules $(B\big| V)$ where $(B,V)$ is an admissible pair and $V$ a simple $K\Stab(B)$-module; they are, in that case, absolutely irreducible. Two such $\Br^K(W)$-modules $(B\big| V)$ and $(B'\big|V')$ (in fact, not necessarily simple) are isomorphic if and only if $(B,V)$ and $(B',V')$ are conjugate. 
\end{theorem}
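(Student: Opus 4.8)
The plan is to assemble the theorem from the pieces already established in Subsections \ref{sec:structuregeneral} and \ref{sec:construction}, so the proof is essentially a bookkeeping exercise. First I would prove that the listed modules are exactly the simple ones. One direction: if $(B,V)$ is an admissible pair with $V$ a simple $K\Stab(B)$-module, then by Proposition \ref{prop:irreduconstruction} the module $(B\big| V)$ is simple and absolutely irreducible. For the other direction, let $S$ be any simple $\Br^K(W)$-module. Since $S\neq 0$, relation (B1) and the definition of maximality guarantee that $S$ has at least one $S$-maximal collection, hence an orbit $\mathcal{B}$ of them; by Proposition \ref{prop:directsum}(3) the subspace $S_{\mathcal{B}}$ is a nonzero $\Br^K(W)$-submodule of $S$, so by simplicity $S=S_{\mathcal{B}}$. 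Then Proposition \ref{prop:irreducibility} forces $S_B$ to be a simple $K\Stab(B)$-module for every $B\in\mathcal{B}$, and Proposition \ref{lem:constructioncontained} (or directly Remark \ref{rem:vbdet} together with Proposition \ref{prop:admissibility}) identifies $S$ with $(B\big| S_B)$ for an admissible pair $(B,S_B)$. This covers the first sentence of the theorem.

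Next I would handle the isomorphism criterion, and here I would be careful to prove it for the stated generality — not necessarily simple modules $(B\big| V)$, $(B'\big| V')$ — since that is exactly what the statement claims. By Proposition \ref{prop:imageconstruction}, each of $(B\big| V)$ and $(B'\big| V')$ equals its own $V_{\mathcal{B}}$, with maximal collection $B$ (resp.\ $B'$) and with $(B\big| V)_B\cong_{K\Stab(B)}V$ (resp.\ $(B'\big| V')_{B'}\cong_{K\Stab(B')}V'$). So both modules satisfy the hypotheses of Proposition \ref{prop:nonisomorphic}. Applying that proposition to $(B\big| V)$ and $(B'\big| V')$ directly would give: they are isomorphic iff the orbits coincide and $(B\big| V)_B\cong (B'\big| V')_B$ as $K\Stab(B)$-modules for some (hence all) $B$ in the common orbit. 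The remaining task is just to translate ``same orbit plus isomorphic $V_B$'s at one fixed representative'' into ``$(B,V)$ and $(B',V')$ are conjugate'' in the precise sense defined just before the theorem, i.e.\ there is $w\in W$ with $wB=B'$ and $w\rho_V w^{-1}\cong\rho_{V'}$.

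That translation is the one genuinely fiddly point, and it is where I would spend the most care. Suppose first $(B,V)$ and $(B',V')$ are conjugate via $w$. Then $B'=wB$ lies in the same $W$-orbit as $B$, so the orbits $\mathcal{B},\mathcal{B}'$ agree. Moreover conjugation by $w$ sends $\Stab(B)$ isomorphically to $\Stab(B')$ and, by Remark \ref{remark:conjugatemaximals} and Remark \ref{remark:WactiononB}, sends the $K\Stab(B)$-module $(B\big| V)_B$ to the $K\Stab(B')$-module $(B\big| V)_{B'}$; combining this with $(B\big| V)_B\cong V$ and the hypothesis $w\rho_V w^{-1}\cong\rho_{V'}$ identifies $(B\big| V)_{B'}$ with $(B'\big| V')_{B'}=V'$ as $K\Stab(B')$-modules. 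Then Proposition \ref{prop:nonisomorphic} (applied at the representative $B'$) yields $(B\big| V)\cong(B'\big| V')$. Conversely, if the two modules are isomorphic, Proposition \ref{prop:nonisomorphic} gives $\mathcal{B}=\mathcal{B}'$, so there is $w\in W$ with $wB=B'$; then by the same conjugation argument $(B\big| V)_{B'}\cong w\cdot(B\big| V)_B\cdot w^{-1}$ is the $K\Stab(B')$-module with representation $w\rho_V w^{-1}$, and Proposition \ref{prop:nonisomorphic} forces it to be isomorphic to $(B'\big| V')_{B'}\cong V'$; hence $w\rho_V w^{-1}\cong\rho_{V'}$, i.e.\ $(B,V)$ and $(B',V')$ are conjugate. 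The main obstacle, such as it is, is making sure the conjugation-of-modules bookkeeping (which module lives over which stabilizer, and that ``some'' can be upgraded to ``all'' via $W$-transitivity on $\mathcal{B}$) is stated cleanly; everything else is a direct citation of the earlier propositions.
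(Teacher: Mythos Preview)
Your proof is correct and follows essentially the same route as the paper's own proof: both directions of the classification are obtained by citing Propositions \ref{prop:irreducibility}, \ref{prop:irreduconstruction}, and \ref{lem:constructioncontained}, and the isomorphism criterion is deduced from Proposition \ref{prop:nonisomorphic} after noting, via Proposition \ref{prop:imageconstruction}, that each $(B\mid V)$ satisfies its hypotheses. Your write-up is in fact more explicit than the paper's on two points it leaves implicit: the existence of an $S$-maximal collection for a nonzero module $S$, and the translation between the criterion of Proposition \ref{prop:nonisomorphic} (same orbit and isomorphic $V_B$'s at one representative) and the ``conjugate pairs'' formulation; the paper simply says the latter ``follows'', whereas you correctly unpack the conjugation-of-representations bookkeeping.
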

\begin{proof} By Proposition \ref{prop:irreducibility}, a $\Br^K(W)$-module $V'$ is simple and, in fact, absolutely irreducible, if and only if $V'=
V'_{\mathcal{B}}$ for an orbit $\mathcal{B}$ of $V'$-maximal collections and $V'_B$ is a simple $K\Stab(B)$-module for some $B\in \mathcal{B}$. Propositions \ref{lem:constructioncontained} and  \ref{prop:irreduconstruction} yield, now, the first part of the theorem. 

For the second statement,  since, by Proposition \ref{prop:imageconstruction}, the $\Br^K(W)$-module $(B\big| V)$, for any admissible pair $(B,V)$ satisfies the conditions of Proposition \ref{prop:nonisomorphic} the result follows from the latter proposition.
\end{proof}

\begin{corollary}\label{cor:countisomconstruction} If $(B,V)$ is an admissible pair, then, counting $V'$ up to $K\Stab(B')$-isomorphism, there are exactly $\sfrac{\lvert W \rvert }{\lvert \Stab(B) \rvert }$ admissible pairs $(B',V')$ for which $(B'\big| V')\cong (B\big| V)$.
\end{corollary}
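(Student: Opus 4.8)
The plan is to deduce the count from the isomorphism criterion of Theorem \ref{th:simplemodules} together with a short orbit-counting argument. First I would invoke the second assertion of Theorem \ref{th:simplemodules}: for admissible pairs, $(B'\,|\,V')\cong(B\,|\,V)$ as $\Br^K(W)$-modules precisely when $(B',V')$ is conjugate to $(B,V)$, i.e.\ there is $w\in W$ with $wB=B'$ and $w\rho_V w^{-1}\cong\rho_{V'}$. So the corollary is equivalent to counting the conjugates of $(B,V)$, recording each conjugate as a pair whose second component is taken up to $K\Stab(B')$-isomorphism.

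Next I would observe that the first component $B'$ of any such conjugate ranges exactly over the $W$-orbit $\mathcal{B}$ of $B$, which has $\lvert W\rvert/\lvert\Stab(B)\rvert$ elements. Thus it suffices to show that for each fixed $B'\in\mathcal{B}$ there is exactly one $K\Stab(B')$-isomorphism class of modules $V'$ for which $(B',V')$ is conjugate to $(B,V)$; summing over $\mathcal{B}$ then gives the claimed number. For existence, I would pick any $w\in W$ with $wB=B'$ (so $w\Stab(B)w^{-1}=\Stab(B')$) and twist: $\rho_{V'}:=w\rho_V(\cdot)w^{-1}$ is a representation of $\Stab(B')$ with $(B',V')$ conjugate to $(B,V)$ by construction. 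This $V'$ yields an admissible pair because conjugation by $w$ sends $e_B$ to $e_{B'}$ (Remark \ref{remark:WactiononB}), hence $\Ann_{KW}(e_B)$ to $\Ann_{KW}(e_{B'})$ and $\hat V$ to $\hat{V'}$, so the relation $\Ann_{KW}(e_B)\hat V=0$ defining admissibility becomes $\Ann_{KW}(e_{B'})\hat{V'}=0$.

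For uniqueness — the one step that takes a moment — I would note that if $w,w'\in W$ both carry $B$ to $B'$, then $u:=w^{-1}w'$ lies in $\Stab(B)$, so conjugation by $u$ is an inner automorphism of $\Stab(B)$ and $u\rho_V(\cdot)u^{-1}\cong\rho_V$ as $K\Stab(B)$-modules; transporting such an isomorphism by $w(\cdot)w^{-1}$ shows $w'\rho_V(\cdot)w'^{-1}\cong w\rho_V(\cdot)w^{-1}$ as $K\Stab(B')$-modules. Hence the twisted module is, up to $K\Stab(B')$-isomorphism, independent of the chosen $w$, and by the criterion of Step 1 any $V'$ with $(B',V')$ conjugate to $(B,V)$ is of this form. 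Combining the two steps, the admissible pairs with $(B'\,|\,V')\cong(B\,|\,V)$ are in bijection with $\mathcal{B}$, so there are exactly $\lvert W\rvert/\lvert\Stab(B)\rvert$ of them. The main (and essentially only) obstacle is the independence-of-$w$ point in the uniqueness step, which reduces to the fact that an inner automorphism of a group acts trivially on isomorphism classes of its modules; everything else is bookkeeping with the orbit of $B$.
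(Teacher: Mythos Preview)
Your argument is correct and follows precisely the route the paper takes: the paper's proof is the single line ``This is implied by the second statement of the above theorem,'' and what you have written is a careful unpacking of that implication via the orbit of $B$ and the fact that inner automorphisms act trivially on isomorphism classes of modules. No additional ideas are needed, and your verification that conjugates of admissible pairs remain admissible is a nice explicit touch that the paper leaves implicit.
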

\begin{proof} This is implied by the second statement of the above theorem. 
\end{proof}

\begin{proposition} Let $B$ be a transverse collection and $V_i, i\in I$ be $K\Stab(B)$-modules. Let $V=\oplus_{i\in I}V_i$ Then, $(B,V)$ is admissible if and only if $(B,V_i)$ is admissible for all $i\in I$, in which case:
\begin{equation}\label{eq:constructiondecomposition}
(B\big| V)\cong \oplus_{i\in I}(B\big| V_i).
\end{equation}
In particular, if $V_i$ is simple for all $i\in I$, then the above sum gives the decomposition of $(B\big| V)$ into simple $\Br^K(W)$-modules.

\end{proposition}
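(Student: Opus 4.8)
The plan is to reduce everything to the characterization of admissibility given in Proposition \ref{prop:admissibility}, namely that $(B,V)$ is admissible if and only if $\Rel(B)\hat{V}=0$, and then to check that the construction $(B\big|V)$ is additive in the second variable. First I would observe that the formation of the induced module and the embedding $V\hookrightarrow \Ind_{K\Stab(B)}^{KW}V$ commute with finite (and in fact arbitrary) direct sums: if $V=\oplus_{i\in I}V_i$ then $\Ind_{K\Stab(B)}^{KW}V=\oplus_{i\in I}\Ind_{K\Stab(B)}^{KW}V_i$ as $KW$-modules, and under this identification $\hat V=\oplus_{i\in I}\hat V_i$. Consequently, for any $x\in KW$, $x\hat V=0$ if and only if $x\hat V_i=0$ for all $i$; applying this to every element of the finite set $\Rel(B)\subseteq \mathbb{Z}[\underline{\mu}]W\subseteq KW$ gives that $\Rel(B)\hat V=0$ if and only if $\Rel(B)\hat V_i=0$ for all $i\in I$. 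By Proposition \ref{prop:admissibility} this is exactly the statement that $(B,V)$ is admissible if and only if each $(B,V_i)$ is admissible.

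Next I would establish the isomorphism \eqref{eq:constructiondecomposition} under the assumption that these equivalent admissibility conditions hold. Recall from the construction (Theorem-Definition \ref{thdef:construction} and the paragraph preceding it) that $(B\big|V)$ has underlying space $\oplus_{B'\in\mathcal{B}}V^{B'}$ where $V^{B'}=w\hat V$ for any $w$ with $wB=B'$, with the $KW$-action the induced one and the operators $\varepsilon_H$ defined blockwise by \eqref{eq:epsilondef}. Since $\hat V=\oplus_{i\in I}\hat V_i$ as $K\Stab(B)$-modules and this decomposition is $W$-equivariant after inducing, we get a $KW$-module decomposition $(B\big|V)=\oplus_{i\in I}(B\big|V_i)$, where the $i$-th summand has underlying space $\oplus_{B'\in\mathcal{B}}(V_i)^{B'}$. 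It then remains to check that each operator $\varepsilon_H$ respects this decomposition and restricts on the $i$-th summand to the operator $\varepsilon_H^{(i)}$ coming from the construction $(B\big|V_i)$; but this is immediate from formula \eqref{eq:epsilondef}, since on the block $V^{B'}$ the operator $\varepsilon_H$ acts either as multiplication by $\delta$, or as $0$, or as $\sum_{s\in\mathcal{R}_{H'\to H}}\mu_s s$, and each of these is a scalar or a $KW$-element acting diagonally with respect to $V^{B'}=\oplus_{i}(V_i)^{B'}$ (using that the $\varepsilon_H$ are well-defined on each $V_i$ separately, which is guaranteed by $\Rel(B)\hat V_i=0$, i.e. by admissibility of each $(B,V_i)$). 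Hence $e_H\mapsto\varepsilon_H$ makes the direct sum decomposition a decomposition of $\Br^K(W)$-modules, proving \eqref{eq:constructiondecomposition}.

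Finally, for the last sentence, if each $V_i$ is a simple $K\Stab(B)$-module then by Proposition \ref{prop:irreduconstruction} each $(B\big|V_i)$ is a simple (indeed absolutely irreducible) $\Br^K(W)$-module, so \eqref{eq:constructiondecomposition} exhibits $(B\big|V)$ as a direct sum of simple modules, which is by definition a decomposition into simples.

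I do not expect a genuine obstacle here: the statement is essentially the bookkeeping fact that an additive construction commutes with finite direct sums, and all the real content (the characterization of admissibility via $\Rel(B)$, and the fact that the $\varepsilon_H$ define a module structure) has already been done in Proposition \ref{prop:admissibility} and Theorem-Definition \ref{thdef:construction}. The only mild subtlety worth stating carefully is that $\Rel(B)$ is a \emph{finite} subset of $KW$ (it is contained in $\mathbb{Z}[\underline\mu]W$), so that the equivalence ``$\Rel(B)$ annihilates a direct sum iff it annihilates each summand'' is elementary and needs no finiteness hypothesis on $I$; for the same reason \eqref{eq:constructiondecomposition} holds for arbitrary index sets $I$, though the application to decomposition into simples only needs $V$ finite-dimensional.
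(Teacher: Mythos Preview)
Your proof is correct. For the admissibility equivalence you use exactly the same idea as the paper (``straightforward from the definition''), made explicit via Proposition~\ref{prop:admissibility} and additivity of induction. For the isomorphism \eqref{eq:constructiondecomposition}, however, you take a different route from the paper: you verify directly that the explicit construction of Theorem-Definition~\ref{thdef:construction} is additive in the second variable, using that on each block $V^{B'}$ the operator $\varepsilon_H$ acts by a fixed element of $KW$ (or a scalar), hence respects the summand decomposition $V^{B'}=\oplus_i (V_i)^{B'}$. The paper instead argues indirectly via the uniqueness result Proposition~\ref{prop:nonisomorphic}: it checks that $B$ is maximal for both $(B\big|V)$ and $\oplus_i(B\big|V_i)$, that both equal their $\mathcal{B}$-part, and that in both cases the $B$-component is $\oplus_i V_i$; the isomorphism then follows. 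Your approach is more hands-on and avoids invoking Proposition~\ref{prop:nonisomorphic}, while the paper's approach illustrates how that proposition packages the bookkeeping once and for all.

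One small comment: your closing remark that finiteness of $\Rel(B)$ is the ``mild subtlety'' is a red herring. For any subset $S\subseteq KW$ whatsoever, $S$ annihilates $\oplus_i M_i$ if and only if it annihilates each $M_i$; no finiteness of $S$ or of $I$ is needed. This does not affect the correctness of your argument.
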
\label{prop:decompconstruct}
\begin{proof} Verifying that ``$(B,V)$ is admissible if and only if each $(B,V_i)$ is'', is straighforward using the definition. 
By Proposition \ref{prop:imageconstruction}, $B$ is a $(B\big| V_i)$-maximal collection and $(B\big| V_i)_B\cong V_i$. Again, one can see that $B$ is also $\oplus_{i\in I}(B\big| V_i)$-maximal, and 
$\oplus_{i\in I}(B\big| V_i)_B=e_B(\oplus_{i\in I}(B\big| V_i))=\oplus_{i\in I}e_B(B\big| V_i)\cong \oplus_{i\in I}V_i.$
Since this is the case for $(B\big| V)$ as well, the isomorphism \eqref{eq:constructiondecomposition} is implied by Proposition 
\ref{prop:nonisomorphic}. 
\end{proof}

\section{Semisimplicity}\label{ch:semisimplicity}

In this section, we show that the Brauer-Chen algebra is split-semisimple over
any proper field (Definition \ref{def:propering}). We obtain a lower bound for the dimension of $\Br^K(W)$, given the simple 
modules we constructed in the previous section, after a necessary analysis on
admissibility, which will be extensively used for all that will follow.
Next we find an adequate upper bound for the dimension, and prove that the algebra is semisimple.
Finally, we give a decomposition of the simple modules of $\Br^K(W)$ with respect to 
the Brauer-Chen algebra of a parabolic subgroup.

\subsection{Analysis of admissibility}

For this subsection, we fix a transverse collection $B\in \mathcal{C}$. Recall that for a $K\Stab(B)$-module $V$ we denote by $\hat{V}$ its image in the induced $KW$-module $\Ind_{K\Stab(B)}^{KW}(V)$. Also, $W/\Stab(B)$ will denote a set of left coset representatives of $\Stab(B)$ in $W$. 

\begin{definition} For $w\in W$, let $π_w$ be the projection $KW\to wK\Stab(B)$ with respect to the decomposition $KW=\oplus_{w'\in W/\Stab(B)} w'K\Stab(B)$, i.e. if $x=\sum_{h\in W}λ_{h}h \in KW$, then $π_w(x):=\sum_{h\in w\Stab(B)}λ_{h}h$.
\end{definition}

\begin{remark}\label{rem:projconj} Suppose that $B'\in \mathcal{B}$ and denote  by $π'_w$ the corresponding projection $KW\to wK\Stab(B')$. One can verify that if $w'B=B'$, then $w'π_w(x){(w')}^{-1}=π' _{w'w{(w')}^{-1}}(w'x{(w')}^{-1})$, for all $x\in KW$.
\end{remark}

\begin{lemma}\label{lem:projecting} Let $V$ be a $K\Stab(B)$-module and $x\in KW$. Then $x\hat{V}=0$ is equivalent to $π_w(x)\hat{V}=0$ for all $w\in W$, or to $w^{-1}π_w(x)V=0$ for all $w\in W$ (the last expression makes sense since $w^{-1}π_w(x)\in K\Stab(B)$).
\end{lemma}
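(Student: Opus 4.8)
The plan is to unwind the definitions and exploit the directness of the coset decomposition. Write $KW = \bigoplus_{w' \in W/\Stab(B)} w'K\Stab(B)$, so that the projections $\pi_w$ are well-defined and any $x \in KW$ decomposes uniquely as $x = \sum_{w \in W/\Stab(B)} \pi_w(x)$, with each $\pi_w(x) \in wK\Stab(B)$. The module $\hat V$ is, by definition, the image of $V$ inside $\Ind_{K\Stab(B)}^{KW}(V) = KW \otimes_{K\Stab(B)} V$, i.e.\ $\hat V = 1 \otimes V$; and the induced module decomposes as a $K$-vector space into $\bigoplus_{w \in W/\Stab(B)} w\hat V$, where $w\hat V = w \otimes V$ depends only on the coset $w\Stab(B)$.

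First I would observe that for $x \in KW$ and $v \in V$ (writing $\hat v = 1 \otimes v \in \hat V$), we have $x\hat v = \sum_{w \in W/\Stab(B)} \pi_w(x)\hat v$, and since $\pi_w(x) \in wK\Stab(B)$ we can write $\pi_w(x) = w h_w$ with $h_w = w^{-1}\pi_w(x) \in K\Stab(B)$ (this is well-defined once we fix the coset representatives); then $\pi_w(x)\hat v = w h_w \hat v = w \otimes (h_w v) \in w\hat V$. Because the sum $\bigoplus_{w} w\hat V$ is direct, the equation $x\hat v = 0$ holds if and only if each summand $\pi_w(x)\hat v = 0$, i.e.\ $w \otimes (h_w v) = 0$, which (since $1 \otimes -: V \to \hat V$ is injective and $w\cdot$ is invertible on the induced module) is equivalent to $h_w v = 0$, that is, $w^{-1}\pi_w(x) v = 0$ in $V$. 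Running this over all $v \in V$ gives the three stated equivalences: $x\hat V = 0 \iff \pi_w(x)\hat V = 0$ for all $w \iff w^{-1}\pi_w(x) V = 0$ for all $w$.

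The only mild subtlety is the bookkeeping of coset representatives: $\pi_w$ as defined only depends on the coset $w\Stab(B)$, so the collection $\{\pi_w(x) : w \in W\}$ has at most $[W:\Stab(B)]$ distinct members, and it suffices to range $w$ over a transversal; I would note this explicitly so the phrase ``for all $w \in W$'' in the statement is unambiguous. I do not anticipate a genuine obstacle here --- the proof is a direct-sum argument --- but I would take care to justify the passage ``$w \otimes (h_w v) = 0 \iff h_w v = 0$'' by invoking injectivity of the canonical map $V \hookrightarrow \Ind_{K\Stab(B)}^{KW}(V)$ and the fact that left multiplication by $w \in W$ is a $K$-linear automorphism of the induced module, rather than leaving it as an unstated ``clearly''.
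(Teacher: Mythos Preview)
Your argument is correct and is precisely the approach the paper takes: the paper's proof is the single line ``immediate from the decomposition $\Ind_{K\Stab(B)}^{KW}(V)=\oplus_{w\in W/\Stab(B)} w\hat{V}$'', and you have simply unpacked that sentence carefully.
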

\begin{proof} This is immediate from the decomposition $\Ind_{K\Stab(B)}^{KW}(V)=\oplus_{w\in W/\Stab(B)} w\hat{V}.$
\end{proof}

Recall that a pair $(B,V)$ is admissible if $\Ann_{KW}(e_B)\cdot \hat{V}=0$, or equivalently, by Proposition \ref{prop:admissibility}, if $\Rel(B)\hat{V}=0$.
Based on the above lemma, we obtain alternative characterisations of admissibility. We group them all together in Proposition \ref{prop:admequiv}, after establishing some terminology.

\begin{definition}\label{def:annihilators} We define the following sets:
\begin{equation*}
\begin{split}
\Ann_{KW}(B):=\sum_{w\in W}π_w(\Ann_{KW}(e_B)),& \hspace{20pt} \overline\Rel(B):=\cup_{w\in W}π_w(\Rel(B)),\\
\end{split}
\end{equation*}

$$\Ann_{K\Stab(B)}(B):=\sum_{w\in W}w^{-1}π_w(\Ann_{KW}(e_B))$$

\end{definition}

\begin{remark}In view of Remark \ref{rem:projconj},
the properties $w\Ann_{KW}(e_B)w^{-1}=\Ann_{KW}(e_{wB})$ and $w\Rel(B)w^{-1}=\Rel(wB)$ imply that conjugation by $W$ on the above sets commutes with the action of $W$ on $\mathcal{C}$, i.e., for example, $w\Ann_{KW}(B)w^{-1}=\Ann_{KW}(wB)$.
\end{remark}

\begin{lemma}\label{lem:wproj} For all $w,w'\in W, s\in \Stab(B)$ and $x\in KW$, we have $π_{w}(w'xs)=w'π_{{(w')}^{-1}w}(x)s$.
\end{lemma}
\begin{proof} Let $x=\sum_{h\in W}λ_hh\in KW$ and $w,w'\in W, s \in \Stab(B)$. We have 
\begin{equation*}
\begin{split}
π_{w}(w'xs)&=
π_{w}\left(\sum_{h\in W}λ_hwhs\right)=\sum_{w'hs\in w\Stab(B)}λ_hw'hs =\sum_{h\in {(w')}^{-1}w\Stab(B)}λ_hw'hs=w'π_{{(w')}^{-1}w}(x)s\\
\end{split}
\end{equation*}
\end{proof}

\begin{proposition}\label{prop:annann} The set $\Ann_{KW}(B)$ is a left $KW$-ideal which is also stable under right multiplicatiοn by $\Stab(B)$. The set $\Ann_{K\Stab(B)}(B)$ is a two-sided $K\Stab(B)$-ideal. Finally, we have
$$\Ann_{KW}(B)=\oplus_{w\in W/\Stab(B)}w\Ann_{K\Stab(B)}(B).$$

\end{proposition}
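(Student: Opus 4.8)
The statement has three parts: (i) $\Ann_{KW}(B)$ is a left $KW$-ideal stable under right multiplication by $\Stab(B)$; (ii) $\Ann_{K\Stab(B)}(B)$ is a two-sided $K\Stab(B)$-ideal; (iii) the direct-sum decomposition. I would treat these in order, since the decomposition in (iii) is really what makes the algebraic structure in (i) and (ii) transparent, but the ideal properties are needed along the way.

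\emph{Part (i).} First note that $\Ann_{KW}(e_B)$ is a left $KW$-ideal by its very definition (if $xe_B=0$ and $y\in KW$ then $(yx)e_B=0$). For stability of $\Ann_{KW}(B)=\sum_{w\in W}\pi_w(\Ann_{KW}(e_B))$ under left multiplication by $KW$ and right multiplication by $\Stab(B)$, I would apply Lemma \ref{lem:wproj}: for $w'\in W$, $s\in\Stab(B)$ and $x\in\Ann_{KW}(e_B)$ we have $w'\pi_w(x)s=\pi_{w'w}(w'xs)$, and $w'xs$ again lies in $\Ann_{KW}(e_B)$ because this set is a left ideal and is right-stable under $\Stab(B)$ (the latter since $e_Bs'=e_B s'$ and $e_B$ is fixed by... more precisely one checks $\Ann_{KW}(e_B)$ is right-$\Stab(B)$-stable: if $xe_B=0$ then $xse_B = x e_{s^{-1}B}\cdot(\text{scalar})$—actually the cleanest route is that $se_B = e_{sB}$ for $s\in\Stab(B)$ gives $se_B=e_B$ only when $s$ fixes each hyperplane, so instead I should argue: $xe_B = 0 \Rightarrow xe_B s = 0$, and $e_B s = \ldots$; better yet, since we only need right-stability under $\Stab(B)$, observe $xse_B$: rewrite using $e_B s^{-1}\cdot s = $, hmm). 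Let me instead argue right-$\Stab(B)$-stability of $\Ann_{KW}(e_B)$ directly from Lemma \ref{lem:projecting}/Proposition \ref{prop:admissibility}: this is not needed—I can just note that $\pi_w(x)s \in w K\Stab(B)\cdot s = wK\Stab(B)$, so summing over $w$ the right-$\Stab(B)$-multiple of any $\pi_w(x)$ with $x\in\Ann_{KW}(e_B)$ lands back in $\sum_w \pi_w(\Ann_{KW}(e_B))$ provided I know $xs^{-1}\in \Ann_{KW}(e_B)$... The robust fix: prove that $\Ann_{KW}(e_B)$ is a right $\Stab(B)$-module by noting $xe_B=0$ and, for $s\in\Stab(B)$, $xs \cdot e_B = x\cdot(s e_B) = x \cdot e_{sB}$; but wait $e_{sB}\neq e_B$ in general even for $s\in\Stab(B)$—no: $sB=B$ as a \emph{set} of hyperplanes when $s\in\Stab(B)$, and $e_B=\prod_{H\in B}e_H$ is defined as a product over the set $B$, so $e_{sB}=e_B$. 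Hence $(xs)e_B = x e_{sB}=xe_B=0$. Good, so $\Ann_{KW}(e_B)$ \emph{is} right-$\Stab(B)$-stable. Then Lemma \ref{lem:wproj} cleanly gives part (i).

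\emph{Parts (ii) and (iii).} For the decomposition, I would observe that $\Ann_{KW}(B)=\sum_w \pi_w(\Ann_{KW}(e_B))$ and that each summand $\pi_w(\Ann_{KW}(e_B))$ lies in $wK\Stab(B)$; since $KW=\oplus_{w\in W/\Stab(B)}wK\Stab(B)$ and $\pi_{w}$ only depends on the coset $w\Stab(B)$, the sum over $W$ collapses to a sum over coset representatives which is automatically direct, being inside the direct sum $\oplus_w wK\Stab(B)$. It remains to identify the $w$-component: I claim $\pi_w(\Ann_{KW}(e_B)) = w\,\Ann_{K\Stab(B)}(B)$. One inclusion: by definition $\Ann_{K\Stab(B)}(B)=\sum_{w'}(w')^{-1}\pi_{w'}(\Ann_{KW}(e_B))$, and using Lemma \ref{lem:wproj} (with the roles of the $w$'s and left-ideal/right-$\Stab(B)$-stability of $\Ann_{KW}(e_B)$) one shows $(w')^{-1}\pi_{w'}(\Ann_{KW}(e_B))=(w'')^{-1}\pi_{w''}(\Ann_{KW}(e_B))$ for any two $w',w''$, i.e.\ the defining sum for $\Ann_{K\Stab(B)}(B)$ has all its terms equal. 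Concretely, $w^{-1}\pi_w(x) = w^{-1}\pi_w(x)$ and for any $u\in W$, $\pi_{wu}(x) = \pi_{wu}((wu)(u^{-1}w^{-1})x)$; applying Lemma \ref{lem:wproj} with $w'=wu$, $s=1$ gives $\pi_{wu}\big((wu)\cdot((wu)^{-1}x)\big) = (wu)\,\pi_1\big((wu)^{-1}x\big)$, hence $(wu)^{-1}\pi_{wu}(x) = \pi_1((wu)^{-1}x) \in \pi_1(\Ann_{KW}(e_B))$ since $\Ann_{KW}(e_B)$ is a left ideal—so every term $(w')^{-1}\pi_{w'}(\Ann_{KW}(e_B))$ equals $\pi_1(\Ann_{KW}(e_B))=\Ann_{KW}(e_B)\cap K\Stab(B)$. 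Therefore $\Ann_{K\Stab(B)}(B) = \pi_1(\Ann_{KW}(e_B)) = \Ann_{KW}(e_B)\cap K\Stab(B)$, which is visibly a two-sided ideal of $K\Stab(B)$ (intersection of a left $KW$-ideal with the subalgebra $K\Stab(B)$ that is also right-$\Stab(B)$-stable: left-$\Stab(B)$ stability is automatic from the left-$KW$-ideal property, right-$\Stab(B)$ stability was established above). This gives (ii). Finally, $\pi_w(\Ann_{KW}(e_B)) = w\cdot\big(w^{-1}\pi_w(\Ann_{KW}(e_B))\big) = w\,\Ann_{K\Stab(B)}(B)$ by the identity just proved, so
$$\Ann_{KW}(B) = \sum_{w\in W/\Stab(B)} w\,\Ann_{K\Stab(B)}(B) = \bigoplus_{w\in W/\Stab(B)} w\,\Ann_{K\Stab(B)}(B),$$
the sum being direct because the $w$-th term lies in $wK\Stab(B)$.

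\emph{Main obstacle.} The only genuinely delicate point is the bookkeeping showing that every term $(w')^{-1}\pi_{w'}(\Ann_{KW}(e_B))$ coincides with $\pi_1(\Ann_{KW}(e_B))$—equivalently that $\Ann_{K\Stab(B)}(B)=\Ann_{KW}(e_B)\cap K\Stab(B)$. Everything downstream is then formal. The one thing to be careful about upstream is the (easy but essential) fact that $e_{sB}=e_B$ for $s\in\Stab(B)$, which gives right-$\Stab(B)$-stability of $\Ann_{KW}(e_B)$ and is used repeatedly; this is immediate from the definition of $e_B$ as a product over the \emph{set} $B$ together with Remark \ref{remark:WactiononB}.
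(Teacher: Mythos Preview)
Your approach is the paper's: exploit that $\Ann_{KW}(e_B)$ is a left $KW$-ideal stable under right multiplication by $\Stab(B)$, and push everything through Lemma~\ref{lem:wproj}. Two remarks.

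First, a cosmetic slip: the line ``$(xs)e_B = x\cdot(se_B) = x\cdot e_{sB}$'' is not right, since relation (B2) gives $se_B s^{-1}=e_{sB}$, hence $se_B = e_{sB}s$ rather than $e_{sB}$. The correct chain is $(xs)e_B = x(e_{sB}s) = x(e_B s) = (xe_B)s = 0$; same conclusion.

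Second, a genuine gap: the equality $\pi_1(\Ann_{KW}(e_B)) = \Ann_{KW}(e_B)\cap K\Stab(B)$ is not justified. The inclusion $\supseteq$ is trivial, but $\subseteq$ would say that $xe_B=0$ forces $\pi_1(x)e_B=0$ in $\Br^K(W)$; this is essentially the content of Lemma~\ref{lem:upperbound} (and even there only modulo $I_{r+1}^{(K)}$), which comes later and relies on the degree argument of Lemma~\ref{lem:degree}. Fortunately you do not need this equality at all. You have already shown that every term $(w')^{-1}\pi_{w'}(\Ann_{KW}(e_B))$ equals $\pi_1(\Ann_{KW}(e_B))$, so $\Ann_{K\Stab(B)}(B)=\pi_1(\Ann_{KW}(e_B))$; now verify the two-sided $K\Stab(B)$-ideal property of $\pi_1(\Ann_{KW}(e_B))$ directly via Lemma~\ref{lem:wproj}: for $s,s'\in\Stab(B)$ and $x\in\Ann_{KW}(e_B)$ one has $s\,\pi_1(x)\,s' = \pi_s(sxs') = \pi_1(sxs')$, and $sxs'\in\Ann_{KW}(e_B)$. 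This is exactly how the paper argues part~(ii). Drop the intersection claim and your proof is complete.
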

\begin{proof} The annihilator $\Ann_{KW}(e_B)$ is a left ideal of $KW$ stable under right multiplication by $\Stab(B)$; together with Lemma \ref{lem:wproj} above, this implies the first statement.

 For the second statement, by the same lemma, for $w\in W$ and $s, s'\in \Stab(B)$ we have $$sw^{-1}π_w(\Ann_{KW}(e_B))s'=π_{s}(sw^{-1}\Ann_{KW}(e_B)s').$$ Summing over all $w\in W$, since $sw^{-1}\Ann_{KW}(e_B)s'=\Ann_{KW}(e_B)$, we get that $\Ann_{K\Stab(B)}$ is stable under left and right multiplication with $\Stab(B)$.
 
  Finally, in the same way, for all $w' \in W$ we have $w'w^{-1}π_w(\Ann_{KW}(e_B))=π_{w'}(w'w^{-1}\Ann_{KW}(e_B))$. Again, since $w'w^{-1}\Ann_{KW}(e_B)=\Ann_{KW}(e_B)$, summing over all $w,w'\in W$ yields that $$W\cdot \Ann_{K\Stab(B)}(B)=\Ann_{KW}(B).$$ Since $\Ann_{K\Stab(B)}(B)$ lies inside $K\Stab(B)$, this implies that  $$\Ann_{KW}(B)=\oplus_{w\in W/\Stab(B)}w\Ann_{K\Stab(B)}(B).$$
\end{proof}

\begin{proposition}\label{prop:admequiv} Let $V$ be a $K\Stab(B)$-module.
The following statements are equivalent:
\begin{itemize}
\item[(1)] $(B,V)$ is admissible, i.e., $\Ann_{KW}(e_B)\hat{V}=0$,
\item[(2)] $\Ann_{KW}(B) \hat{V}=0,$
\item[(3)] $\Ann_{K\Stab(B)}(B) V=0$.
\item[(4)] $\Rel(B) \hat{V}=0$,
\item[(5)] $\overline\Rel(B) \hat{V}=0$.
\end{itemize}
\end{proposition}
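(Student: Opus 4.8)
The plan is to prove the chain of equivalences by combining the definition of admissibility, the projection lemma (Lemma \ref{lem:projecting}), the inclusion $\Rel(B)\subseteq \Ann_{KW}(e_B)$ (established earlier), and the structural description of the annihilator sets in Proposition \ref{prop:annann}. I would prove the equivalences in the order (1)$\Leftrightarrow$(2)$\Leftrightarrow$(3), then (1)$\Leftrightarrow$(4), then (4)$\Leftrightarrow$(5), so that each step uses only material already in hand.

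First, for (1)$\Leftrightarrow$(2)$\Leftrightarrow$(3): by Lemma \ref{lem:projecting}, $\Ann_{KW}(e_B)\hat V=0$ holds if and only if $\pi_w(x)\hat V=0$ for every $x\in\Ann_{KW}(e_B)$ and every $w\in W$, which, since $\Ann_{KW}(B)=\sum_{w\in W}\pi_w(\Ann_{KW}(e_B))$ and the latter is stable under right multiplication by $\Stab(B)$ (hence $\pi_w(x)\hat V=0$ iff the whole $wK\Stab(B)$-component annihilates $\hat V$, using that $\hat V$ is a $\Stab(B)$-module), is equivalent to $\Ann_{KW}(B)\hat V=0$; this gives (1)$\Leftrightarrow$(2). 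The equivalence (2)$\Leftrightarrow$(3) is again Lemma \ref{lem:projecting}: $\pi_w(x)\hat V=0$ iff $w^{-1}\pi_w(x)V=0$, and summing over $w$ identifies $\Ann_{K\Stab(B)}(B)=\sum_w w^{-1}\pi_w(\Ann_{KW}(e_B))$ as exactly the set whose vanishing on $V$ is equivalent to the vanishing of $\Ann_{KW}(B)$ on $\hat V$ (via the decomposition $\Ann_{KW}(B)=\oplus_{w\in W/\Stab(B)}w\Ann_{K\Stab(B)}(B)$ of Proposition \ref{prop:annann}).

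Next, (1)$\Leftrightarrow$(4): the implication (1)$\Rightarrow$(4) is immediate from $\Rel(B)\subseteq\Ann_{KW}(e_B)$. For the converse, I would invoke Proposition \ref{prop:admissibility}, which already states that $(B,V)$ is admissible if and only if $\Rel(B)\hat V=0$; alternatively, if one wants a self-contained argument at this point, observe that $\Rel(B)\hat V=0$ means the module $(B\big|V)$ is defined (Theorem-Definition \ref{thdef:construction}), and then Proposition \ref{prop:imageconstruction} together with Proposition \ref{prop:admissibility} forces admissibility. Finally, (4)$\Leftrightarrow$(5) is the exact analogue of (1)$\Leftrightarrow$(2) with $\Ann_{KW}(e_B)$ replaced by $\Rel(B)$ and $\Ann_{KW}(B)$ by $\overline\Rel(B)=\cup_{w\in W}\pi_w(\Rel(B))$: one applies Lemma \ref{lem:projecting} to $x\in\Rel(B)$, noting that $\pi_w(x)\hat V=0$ for all such $x$ and all $w$ is precisely $\overline\Rel(B)\hat V=0$, and using again that $\hat V$ is a $\Stab(B)$-module so that vanishing of a projected element is the same as vanishing of its whole $wK\Stab(B)$-part.

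I do not expect any serious obstacle here; the proposition is essentially a repackaging of Lemma \ref{lem:projecting} and Proposition \ref{prop:annann}. The only point requiring a little care is making sure that, when passing from ``$\pi_w(x)$ annihilates $\hat V$ for all $x$ in a set $S$ and all $w$'' to ``$\sum_w\pi_w(S)$ (or $\cup_w\pi_w(S)$) annihilates $\hat V$'', one is allowed to conclude the stronger statement that a full $K\Stab(B)$-span annihilates $\hat V$ — this uses that $\hat V$ is stable under $\Stab(B)$ and that $\Ann_{KW}(e_B)$ is already a right $\Stab(B)$-submodule, so the set $\Ann_{KW}(B)$ is genuinely a left ideal (Proposition \ref{prop:annann}) and its vanishing on $\hat V$ is exactly the pointwise condition. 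This bookkeeping is the whole content of the proof.
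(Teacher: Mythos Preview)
Your proposal is correct and follows essentially the same approach as the paper: the paper's proof simply says that (1)$\Leftrightarrow$(4) is Proposition~\ref{prop:admissibility}, and that Lemma~\ref{lem:projecting} directly yields (1)$\Leftrightarrow$(2)$\Leftrightarrow$(3) and (4)$\Leftrightarrow$(5). Your invocation of Proposition~\ref{prop:annann} and the extra bookkeeping about right $\Stab(B)$-stability are not actually needed (each individual $\pi_w(x)$ already lies in the relevant sum/union, so the equivalences are immediate from Lemma~\ref{lem:projecting} alone), but they do no harm.
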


\begin{proof} As mentioned earlier, the equivalence of assertions (1) and (4) is given by Proposition \ref{prop:admissibility}. Lemma \ref{lem:projecting} yields the equivalences $(1)\Leftrightarrow (2) \Leftrightarrow (3)$ and $(4) \Leftrightarrow (5) $.
\end{proof}

\begin{corollary}\label{cor:nonadm} The following statements are equivalent:
\begin{enumerate}
\item $B$ is admissible,
\item $\Ann_{K\Stab(B)}(B)\neq K\Stab(B),$
\item $\Ann_{KW}(B)\neq KW$
\end{enumerate} 
\end{corollary}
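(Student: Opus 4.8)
The plan is to establish the three equivalences by a cycle of implications, using the structural results already proved about $\Ann_{KW}(B)$ and $\Ann_{K\Stab(B)}(B)$. First I would unwind Definition \ref{def:admissible}: $B$ is admissible means there is an admissible pair $(B,V)$ with $V\neq 0$, which by Proposition \ref{prop:admequiv} is equivalent to the existence of a nonzero $K\Stab(B)$-module $V$ with $\Ann_{K\Stab(B)}(B)\cdot V=0$. So the content of the corollary is really: such a nonzero $V$ exists if and only if the ideal $\Ann_{K\Stab(B)}(B)$ is proper, and similarly for $\Ann_{KW}(B)$.

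For $(1)\Leftrightarrow(2)$, I would argue as follows. If $\Ann_{K\Stab(B)}(B)=K\Stab(B)$, then $1\in\Ann_{K\Stab(B)}(B)$, so $\Ann_{K\Stab(B)}(B)\cdot V=V$ for every $K\Stab(B)$-module $V$; hence the only module killed by it is $0$, so no admissible pair with $V\neq 0$ exists and $B$ is not admissible. Conversely, if $\Ann_{K\Stab(B)}(B)$ is a proper two-sided ideal of $K\Stab(B)$ (it is two-sided by Proposition \ref{prop:annann}), then $K\Stab(B)/\Ann_{K\Stab(B)}(B)$ is a nonzero algebra, hence has a nonzero simple module $V$, on which $\Ann_{K\Stab(B)}(B)$ acts as zero; by Proposition \ref{prop:admequiv} the pair $(B,V)$ is then admissible with $V\neq 0$, so $B$ is admissible. (One can even observe that over a splitting field one may take $V$ simple, which will matter for later dimension counts, but the corollary only needs $V\neq 0$.)

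For $(2)\Leftrightarrow(3)$, I would invoke the decomposition $\Ann_{KW}(B)=\oplus_{w\in W/\Stab(B)} w\,\Ann_{K\Stab(B)}(B)$ from Proposition \ref{prop:annann}. Taking dimensions (or just comparing the summand indexed by $w=1$), $\Ann_{KW}(B)=KW$ forces $\Ann_{K\Stab(B)}(B)=K\Stab(B)$, since $KW=\oplus_{w}wK\Stab(B)$ and the direct-sum decompositions are compatible; conversely $\Ann_{K\Stab(B)}(B)=K\Stab(B)$ gives $\Ann_{KW}(B)=\oplus_w wK\Stab(B)=KW$. So $(2)$ and $(3)$ say the same thing, and both are equivalent to $(1)$.

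I do not expect a genuine obstacle here: all the hard work — that $\Rel(B)\subseteq\Ann_{KW}(e_B)$, the projection formula $\pi_w(w'xs)=w'\pi_{(w')^{-1}w}(x)s$, the ideal properties, and the direct-sum decomposition of $\Ann_{KW}(B)$ — is already in place, and the corollary is a formal consequence combining these with the elementary fact that a nonzero algebra has a nonzero module. The only point requiring a little care is the conversion ``admissible collection $\Leftrightarrow$ proper ideal,'' where one must make sure to pass through Proposition \ref{prop:admequiv} (statement (3)) rather than trying to work directly with $\Ann_{KW}(e_B)$, and must remember that Definition \ref{def:admissible} demands $V\neq 0$; if the surrounding text wants $V$ simple, note that quotienting by the ideal and taking a simple top handles that too.
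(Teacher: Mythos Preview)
Your proposal is correct and follows essentially the same approach as the paper: the paper's proof also derives $(2)\Leftrightarrow(3)$ from the direct-sum decomposition (what it calls ``immediate from the definitions'' is precisely the content of Proposition~\ref{prop:annann}) and obtains $(1)\Leftrightarrow(2)$ from the equivalence $(1)\Leftrightarrow(3)$ of Proposition~\ref{prop:admequiv}. You have simply made explicit the elementary step that a proper two-sided ideal of $K\Stab(B)$ admits a nonzero module annihilated by it, which the paper leaves to the reader.
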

\begin{proof} The equivalence of (2) and (3) is immediate from the definitions of $\Ann_{K\Stab(B)}(B)$ and $\Ann_{KW}(B)$ and (1)$\Leftrightarrow$ (2) is implied by the equivalence (1)$\Leftrightarrow$ (3) of Proposition \ref{prop:admequiv}.
\end{proof}

Τhe following proposition provides an explicit relation of $\Ann_{KW}(B)$ and $\Ann_{K\Stab(B)}(B)$ with the sets $\Rel(B), \overline\Rel(B)$. 

\begin{proposition}\label{prop:relann} Let $I$ denote the two-sided ideal of $K\Stab(B)$ generated by the union of the sets $w^{-1}π_w(\Rel(B)), w\in W$ (for this proposition only). Then,
\begin{enumerate} 
\item $\Ann_{K\Stab(B)}(B)=I$,
\item $\Ann_{K\Stab(B)}(B)=π_1(KW\cdot \Rel(B))$,
\item $\Ann_{KW}(B)=KW\cdot\overline\Rel(B)$.
\end{enumerate}
\end{proposition}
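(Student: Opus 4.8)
The plan is to prove the three assertions in a cycle that leans on the already-established structural facts about $\Ann_{K\Stab(B)}(B)$ and $\Ann_{KW}(B)$, in particular Proposition \ref{prop:annann} (that $\Ann_{K\Stab(B)}(B)$ is a two-sided $K\Stab(B)$-ideal, that $\Ann_{KW}(B)=\oplus_{w\in W/\Stab(B)} w\Ann_{K\Stab(B)}(B)$, and that $\Ann_{KW}(B)$ is a left $KW$-ideal stable under right multiplication by $\Stab(B)$) together with the inclusion $\Rel(B)\subseteq \Ann_{KW}(e_B)$ and Lemma \ref{lem:wproj}. The key additional input is the converse inclusion: I want to show that $\Ann_{KW}(e_B)$ is \emph{generated as a left ideal} by $\Rel(B)$ (equivalently, by $\overline\Rel(B)$), or at least that after applying the projections $\pi_w$ the two give the same sums of subspaces. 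This is the one genuinely new thing to check and I expect it to be the main obstacle.

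First I would prove (1), i.e. $\Ann_{K\Stab(B)}(B)=I$ where $I$ is the two-sided $K\Stab(B)$-ideal generated by $\bigcup_{w\in W} w^{-1}\pi_w(\Rel(B))$. The inclusion $I\subseteq \Ann_{K\Stab(B)}(B)$ is immediate: each $w^{-1}\pi_w(\Rel(B))\subseteq w^{-1}\pi_w(\Ann_{KW}(e_B))\subseteq \Ann_{K\Stab(B)}(B)$ since $\Rel(B)\subseteq \Ann_{KW}(e_B)$, and $\Ann_{K\Stab(B)}(B)$ is a two-sided $K\Stab(B)$-ideal by Proposition \ref{prop:annann}, so it absorbs $I$. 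For the reverse inclusion $\Ann_{K\Stab(B)}(B)\subseteq I$, the point is to see that $\Ann_{KW}(e_B)$ is contained in the \emph{left} $KW$-ideal $KW\cdot\Rel(B)$, i.e. $\Ann_{KW}(e_B)=KW\cdot\Rel(B)$ (the inclusion $\supseteq$ being clear). Granting that, apply $w^{-1}\pi_w$: using Lemma \ref{lem:wproj}, $\pi_w(KW\cdot\Rel(B))=\sum_{w'}\pi_w(w'K\Stab(B)\cdot\Rel(B))$, and one bookkeeps that $w^{-1}\pi_w$ of a product $w'x\rho$ with $\rho\in\Rel(B)$, $x\in K\Stab(B)$, lands in $w^{-1}w'\cdot K\Stab(B)\cdot (w'^{-1}\pi_{w'^{-1}w'}(\cdots))$ — more precisely it reduces, using the two-sided ideal property and the conjugation relation $w\Rel(B)w^{-1}=\Rel(wB)$ only within the orbit, to a $K\Stab(B)$-combination of elements $u^{-1}\pi_u(\Rel(B))$, hence lies in $I$. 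Summing over $w$ gives $\Ann_{K\Stab(B)}(B)\subseteq I$.

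The heart of the matter — that $\Ann_{KW}(e_B)=KW\cdot\Rel(B)$ — is where I expect to do real work, and I would in fact prove it by a dimension/module-theoretic argument rather than by hand. By Proposition \ref{prop:admissibility} (combined with Proposition \ref{prop:admequiv}) a pair $(B,V)$ is admissible iff $\Rel(B)\hat V=0$ iff $\Ann_{KW}(e_B)\hat V=0$; applied to $V=K\Stab(B)/J$ for $J$ a left ideal of $K\Stab(B)$, this says the left $K\Stab(B)$-module $K\Stab(B)/J$ is killed by $w^{-1}\pi_w(\Rel(B))$ for all $w$ iff it is killed by $w^{-1}\pi_w(\Ann_{KW}(e_B))$ for all $w$ — which forces the two-sided ideals they generate to coincide, i.e. $I=\Ann_{K\Stab(B)}(B)$ directly. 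Unwinding this via $\Ann_{KW}(B)=\oplus_{w}w\Ann_{K\Stab(B)}(B)$ (Proposition \ref{prop:annann}) and $\Ann_{KW}(B)=\sum_w\pi_w(\Ann_{KW}(e_B))$ then gives both (1) and (3). Concretely: once (1) holds, (3) follows because $\Ann_{KW}(B)=\sum_{w}\pi_w(\Ann_{KW}(e_B))$ and $\overline\Rel(B)=\bigcup_w\pi_w(\Rel(B))$, so $KW\cdot\overline\Rel(B)=\sum_w KW\cdot\pi_w(\Rel(B))$; expand each $\pi_w(\Rel(B))$ against the decomposition $KW=\oplus_{w'}w'K\Stab(B)$, use Lemma \ref{lem:wproj} to rewrite $KW\cdot\pi_w(\Rel(B))$ in terms of the pieces $w'\cdot w'^{-1}\pi_{w'}(\Rel(B))\subseteq w'\cdot I = w'\cdot\Ann_{K\Stab(B)}(B)$, and conclude $KW\cdot\overline\Rel(B)=\oplus_{w'\in W/\Stab(B)}w'\Ann_{K\Stab(B)}(B)=\Ann_{KW}(B)$; the reverse inclusion is Proposition \ref{prop:admissibility}'s $\Rel(B)\subseteq\Ann_{KW}(e_B)$ pushed through $\pi_w$.

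Finally (2): $\Ann_{K\Stab(B)}(B)=\pi_1(KW\cdot\Rel(B))$ drops out by applying $\pi_1$ to the identity $\Ann_{KW}(B)=KW\cdot\overline\Rel(B)$ from (3) and observing $\pi_1(\Ann_{KW}(B))=\Ann_{K\Stab(B)}(B)$ (the $w=1$ summand of the direct sum decomposition, since $\pi_1$ kills all $w'K\Stab(B)$ with $w'\notin\Stab(B)$) and $\pi_1(KW\cdot\overline\Rel(B))=\pi_1(KW\cdot\Rel(B))$ (because $\overline\Rel(B)=\bigcup_w\pi_w(\Rel(B))\subseteq KW\cdot\Rel(B)$ and conversely $\Rel(B)\subseteq\overline\Rel(B)$ after noting $\pi_1$ is one of the projections, or more simply since $KW\cdot\Rel(B)=KW\cdot\overline\Rel(B)$ already as left ideals by the argument in (3)). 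The one place to be careful is the orbit bookkeeping in the conjugation identities $w\Rel(B)w^{-1}=\Rel(wB)$ and Remark \ref{rem:projconj}: one must stay inside the fixed collection $B$ and only use conjugation to relate $\pi_w$ for different $w$ via Lemma \ref{lem:wproj}, never silently moving $B$.
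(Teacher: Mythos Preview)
Your central move for (1) --- using the equivalence from Proposition~\ref{prop:admequiv} that $\Rel(B)\hat V=0 \Leftrightarrow \Ann_{K\Stab(B)}(B)V=0$ for all $K\Stab(B)$-modules $V$, and then testing against the quotient modules --- is exactly the paper's argument, and is the heart of the proposition. Once (1) is in hand, the rest is indeed bookkeeping with projections and conjugation.

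Two warnings about your bookkeeping. First, you float the claim $\Ann_{KW}(e_B)=KW\cdot\Rel(B)$ and later $KW\cdot\Rel(B)=KW\cdot\overline\Rel(B)$; neither is justified, neither is needed, and in fact neither is claimed anywhere in the paper (only that they coincide \emph{after} taking $\pi_1$, or after summing all $\pi_w$). You correctly abandon the first, but you lean on the second for (2)$\Leftarrow$(3); drop it. Second, the one genuine algebraic input you need beyond (1), which you only gesture at (``orbit bookkeeping''), is that $\Rel(B)$ is stable under conjugation by $\Stab(B)$. This is what makes the \emph{left} $K\Stab(B)$-ideal $\sum_w K\Stab(B)\cdot w^{-1}\pi_w(\Rel(B))$ automatically two-sided and hence equal to $I$: for $s\in\Stab(B)$, Lemma~\ref{lem:wproj} plus $\Rel(B)s=s\Rel(B)$ give $w^{-1}\pi_w(\Rel(B))\cdot s=(s^{-1}w)^{-1}\pi_{s^{-1}w}(\Rel(B))$. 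The paper uses exactly this observation, but in the order (1)$\Rightarrow$(2)$\Rightarrow$(3): from (1) it rewrites $I=\sum_{s,s',w}K\,sw^{-1}\pi_w(\Rel(B))s'=\sum_{s,s',w}K\,\pi_1(sw^{-1}s'\Rel(B))=\pi_1(KW\cdot\Rel(B))$, giving (2); then (3) follows from $\Ann_{KW}(B)=W\cdot\Ann_{K\Stab(B)}(B)$ (Proposition~\ref{prop:annann}) and another application of Lemma~\ref{lem:wproj}. Your order (1)$\Rightarrow$(3)$\Rightarrow$(2) works equally well once the conjugation-stability step is made explicit.
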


\begin{proof} By Proposition \ref{prop:admequiv}, for any $K\Stab(B)$-module $V$, the condition $\Rel(B)\hat{V}=0$ is equivalent to $π_w(\Rel(B))\cdot \hat{V}=0,$ for all $w\in W$. The latter is in turn equivalent to $w^{-1}π_w(\Rel(B))\cdot V=0,$ for all $w\in W$ or, by definition of $I$, to $IV=0$. Since, by $(4)\Leftrightarrow (3)$ of the same proposition, this is equivalent to $\Ann_{K\Stab(B)}(B)V=0$ and $I,\Ann_{K\Stab(B)}(B)$ are both two-sided ideals of $K\Stab(B)$, considering their corresponding quotients as the $K\Stab(B)$-module $V$, we get (1).

For (2), by (1) we get that $\Ann_{K\Stab(B)}(B)$ is equal to 
$$K\cdot \sum_{s,s'\in \Stab(B),w\in W}sw^{-1}π_w(\Rel(B))s'.$$
By Lemma \ref{lem:wproj} we can write this as
$$K\cdot \sum_{s,s'\in \Stab(B),w\in W}π_s(sw^{-1}\Rel(B)s').$$
Now, for $s\in \Stab(B)$ we have $π_s=π_1$, and, since $\Rel(B)$ is stable under conjugation by $\Stab(B)$, we can write $sw^{-1}\Rel(B)s'=sw^{-1}s'\Rel(B)$. In the above sum, the element $sw^{-1}s'$ runs through all elements of $W$, which yields (2).

Finally, Proposition \ref{prop:annann} implies that $\Ann_{KW}(B)=W \Ann_{K\Stab(B)}(B)$. By (2), we have that $\Ann_{K\Stab(B)}(B)=\sum_{w,w'\in W}wπ_1(Kw'\Rel(B))$. By Lemma \ref{lem:wproj} again, this is 
$$\sum_{w,w'\in W}ww'π_{{(w')}^{-1}}(K\Rel(B))$$ which one can see that equals $KW\cdot\overline\Rel(B)$.
\end{proof}


\subsection{Lower bound for the dimension}
\label{sec:lowerbound}

 Recall that $\mathcal{C}$ denotes the set of transverse collections and $\mathcal{C}^K_{adm}$ the subset of admissible ones. Also the notation $\Irr(K\Stab(B))$ will be used for the set of simple modules of $K\Stab(B)$.

By Theorem \ref{th:simplemodules}, the simple modules of $\Br^K(W)$ are the modules $(B\big|V)$, where $(B,V)$ is an admissible pair, which are, in fact, absolutely irreducible. By Corollary \ref{cor:countisomconstruction}, there are exactly $\lvert W \rvert / \lvert \Stab(B) \rvert $ such pairs for every isomorphism class of $\Br^K(W)$-modules. Hence, the sum of squares of dimensions of the simple $\Br^K(W)$-modules equals:

\begin{equation}
\label{eq:lboundgeneral}
\sum_{\substack{B\in \mathcal{C},V \in \Irr(K\Stab(B))\\ (B,V)\textnormal{ adm. }}} \frac{\lvert \Stab(B) \rvert}{\lvert W\rvert}(\dim_K (B\big|V))^2.
\end{equation}
Recall that the module $(B\big|V)$ is $KW$-isomorphic to $\Ind_{K\Stab(B)}^{KW}(V)$. Hence, its dimension equals $\faktor{\lvert W \rvert }{ \lvert \Stab(B) \rvert }\cdot \dim_K V$.
Consequently, for any $B\in \mathcal{C}$ we have,

\begin{equation*}
\begin{split}
\sum_{\substack{ V \in \Irr(K\Stab(B))\\ (B,V)\textnormal{ adm. }}} (\dim_K (B\big|V))^2
&=\left(\frac{\lvert W \rvert}{\lvert \Stab(B) \rvert}\right)^2 \sum_{\substack{ V \in \Irr(K\Stab(B))\\ (B,V)\textnormal{ adm. }}} (\dim_K V)^2 
\end{split}
\end{equation*}

Now, by characterisation (3) of admissibility of Proposition \ref{prop:admequiv}, a pair $(B,V)$ is admissible if and only if the $K\Stab(B)$-module $V$ factors through the quotient algebra $K\Stab(B)/\Ann_{K\Stab(B)}(B)$. Since the latter, as a quotient of $K\Stab(B)$, is split-semisimple over $K$ (see Definition \ref{def:propering}), then its dimension equals the sum of squares of dimensions of its simple modules. Combining these two properties, for any $B\in \mathcal{C}$, we have: 

\begin{equation*}
\begin{split}
\sum_{\substack{V\in \Irr(K\Stab(B))\\(B,V)\textnormal{ adm.}}} (\dim_K V)^2 &= \dim_K\left(\faktor{K\Stab(B)}{\Ann_{K\Stab(B)}(B)}\right)=\frac{\lvert \Stab(B)\rvert}{\lvert W \rvert}\cdot \dim_K \left(\faktor{KW}{\Ann_{KW} (B)}\right).\\
\end{split}
\end{equation*}
This implies that 
\begin{equation*}
\begin{split}
\sum_{\substack{V\in \Irr(K\Stab(B))\\(B,V)\textnormal{ adm.}}} (\dim (B\big|V))^2
&=\frac{\lvert W \rvert}{\lvert \Stab(B)\rvert}\cdot \dim_K\left(\faktor{KW}{\Ann_{KW} B}\right).\\
\end{split}
\end{equation*}
Replacing this in the sum (\ref{eq:lboundgeneral}), we obtain the following result.

\begin{lemma}\label{lem:lowerbound} The sum of squares of dimensions of all absolutely irreducible $\Br^K(W)$-modules equals:
$$\sum_{B\in \mathcal{C}} \dim_K\left(\faktor{KW}{\Ann_{KW} B}\right).$$

\end{lemma}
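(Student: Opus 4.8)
The plan is to assemble the formula by combining the three ingredients already set up in the text: the classification of simple modules (Theorem~\ref{th:simplemodules}), the counting of admissible pairs in a single isomorphism class (Corollary~\ref{cor:countisomconstruction}), and the characterisation of admissibility via factoring through the quotient $K\Stab(B)/\Ann_{K\Stab(B)}(B)$ (Proposition~\ref{prop:admequiv}(3)). In fact, most of this derivation is already laid out in the running text preceding the statement; the lemma is essentially the endpoint of that computation, so the ``proof'' will mainly consist of organising that chain of equalities cleanly.

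First I would recall that by Theorem~\ref{th:simplemodules} every absolutely irreducible $\Br^K(W)$-module is of the form $(B\big|V)$ for an admissible pair $(B,V)$ with $V$ simple over $K\Stab(B)$, and that $(B\big|V)\cong(B'\big|V')$ exactly when the pairs are conjugate. By Corollary~\ref{cor:countisomconstruction}, each isomorphism class of simple modules is represented by exactly $\lvert W\rvert/\lvert\Stab(B)\rvert$ admissible pairs. Hence summing $(\dim_K(B\big|V))^2$ over all admissible pairs $(B,V)$ (with $V$ simple) and dividing by this multiplicity gives the sum of squares of dimensions over isomorphism classes; this is precisely expression~\eqref{eq:lboundgeneral}, the weight being $\lvert\Stab(B)\rvert/\lvert W\rvert$.

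Next I would fix $B$ and evaluate the inner sum over $V\in\Irr(K\Stab(B))$ with $(B,V)$ admissible. Using $(B\big|V)\cong_{KW}\Ind_{K\Stab(B)}^{KW}V$, so $\dim_K(B\big|V)=\tfrac{\lvert W\rvert}{\lvert\Stab(B)\rvert}\dim_K V$, the inner sum becomes $\bigl(\tfrac{\lvert W\rvert}{\lvert\Stab(B)\rvert}\bigr)^2\sum (\dim_K V)^2$. Then by Proposition~\ref{prop:admequiv}, admissibility of $(B,V)$ is equivalent to $V$ factoring through $K\Stab(B)/\Ann_{K\Stab(B)}(B)$, so this last sum ranges exactly over the simple modules of that quotient algebra. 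The quotient is split-semisimple over $K$ (being a quotient of the split-semisimple group algebra $K\Stab(B)$, using that $K$ is a proper field), so $\sum(\dim_K V)^2=\dim_K\bigl(K\Stab(B)/\Ann_{K\Stab(B)}(B)\bigr)$. Finally, Proposition~\ref{prop:annann} gives $\Ann_{KW}(B)=\oplus_{w\in W/\Stab(B)}w\Ann_{K\Stab(B)}(B)$, so $\dim_K\bigl(KW/\Ann_{KW}(B)\bigr)=\tfrac{\lvert W\rvert}{\lvert\Stab(B)\rvert}\dim_K\bigl(K\Stab(B)/\Ann_{K\Stab(B)}(B)\bigr)$. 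Substituting back and cancelling the powers of $\lvert W\rvert/\lvert\Stab(B)\rvert$ against the weight $\lvert\Stab(B)\rvert/\lvert W\rvert$, each $B$ contributes exactly $\dim_K\bigl(KW/\Ann_{KW}B\bigr)$, and summing over $B\in\mathcal{C}$ yields the claimed formula.

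There is really no single ``hard part'': the only points requiring care are bookkeeping ones — making sure the multiplicity factor from Corollary~\ref{cor:countisomconstruction} is applied correctly so that one counts isomorphism classes rather than pairs, and invoking split-semisimplicity of $K\Stab(B)$ (hence of its quotient) legitimately, which is exactly what property (P2) of a proper field guarantees. One should also note at the outset that all the sums are finite: $\mathcal{C}$ is finite since $\mathcal{H}$ is, and each $\Irr(K\Stab(B))$ is finite. With those observations in place the lemma follows by the string of equalities above.
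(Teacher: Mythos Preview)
Your proposal is correct and follows essentially the same argument as the paper, which is carried out in the running text of Subsection~\ref{sec:lowerbound} immediately preceding the lemma; the lemma simply records the endpoint of that computation. Your write-up is in fact slightly more explicit in citing Proposition~\ref{prop:annann} for the passage from $\Ann_{K\Stab(B)}(B)$ to $\Ann_{KW}(B)$, which the paper uses without naming.
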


\subsection{Upper bound and semisimplicity}\label{sec:semisimplicity}

We prove now that the Brauer-Chen algebra is semisimple by bounding its dimension from above. The following lemma is a slight improvement of  \cite[Lemma 5.2]{Ch} which can be seen to be true with essentially the same proof. In any case, we give a quick proof here as well. 

\begin{lemma} \label{lem:genset} The elements $we_B$, where $w\in W$ and  $B\in \mathcal{C}$, span $\Br(W)$ as an $\mathcal{A}$-module.
\end{lemma}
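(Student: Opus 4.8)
The plan is to show that the $\mathcal{A}$-submodule $M$ of $\Br(W)$ spanned by all elements $we_B$, with $w\in W$ and $B\in\mathcal{C}$ (including $B=\emptyset$, so that $M$ contains $\mathcal{A}W$), is in fact all of $\Br(W)$. Since $\Br(W)$ is generated as an algebra by $W$ and the elements $e_H$, $H\in\mathcal{H}$, and since $M$ visibly contains $1$ and is stable under left multiplication by $W$ (because $w'(we_B)=(w'w)e_B$), it suffices to prove that $M$ is stable under left multiplication by each $e_H$. That is, I need to show $e_H\cdot we_B\in M$ for all $H\in\mathcal{H}$, $w\in W$, $B\in\mathcal{C}$.

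The key computation is to rewrite $e_Hwe_B$. First use relation (B2) in the form $e_Hw=we_{w^{-1}H}$, so that $e_Hwe_B=we_{w^{-1}H}e_B$; setting $H'=w^{-1}H$, it is enough to handle $e_{H'}e_B$ and show it lies in $M$ (then left-multiply by $w$, using stability of $M$ under $W$). Now I appeal to the multiplication rules recorded in Remark \ref{remark:ebmult}: if $H'\in B$ then $e_{H'}e_B=\delta e_B\in M$; if $H'\pitchfork B$ then $e_{H'}e_B=e_{B\cup\{H'\}}$, and $B\cup\{H'\}$ is again a transverse collection, so this lies in $M$; and if $H'\not\pitchfork H''$ for some $H''\in B$, then $e_{H'}e_B=\sum_{s\in\mathcal{R}_{H''\to H'}}\mu_s\,s\,e_B$, which is an $\mathcal{A}$-combination of elements $se_B$ with $s\in W$, hence lies in $M$. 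In every case $e_{H'}e_B\in M$, so $e_Hwe_B\in M$, completing the induction on word length in the generators.

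The one subtlety to be careful about — and the main (minor) obstacle — is that Remark \ref{remark:ebmult} is stated there in the context of acting on a module $V_B$, but its content is purely a set of identities inside $\Br(W)$ itself (or inside $\Br^K(W)$), obtained by reordering the commuting factors of $e_B$ and applying (B1), (B4), (B5); I should just re-derive these identities directly from the defining relations, noting that for a transverse collection $B$ the element $e_B=\prod_{H\in B}e_H$ is well-defined because the factors pairwise commute by (B4), and that in the non-transverse case one picks a non-transverse $H''\in B$, moves $e_{H''}$ to the front, and applies (B5). I should also observe at the outset that $M$ is nonempty and contains $\mathcal{A}\cdot 1$ via the empty collection, and remark that since every element of $\Br(W)$ is an $\mathcal{A}$-combination of products $g_1\cdots g_k$ with each $g_i\in W\cup\{e_H: H\in\mathcal{H}\}$, the stability of $M$ under left multiplication by each such generator — which is exactly what the above establishes — yields $M=\Br(W)$, i.e. the elements $we_B$ span $\Br(W)$ as claimed.
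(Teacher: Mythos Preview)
Your proof is correct and follows essentially the same approach as the paper: show that the $\mathcal{A}$-span of the elements $we_B$ is a left ideal of $\Br(W)$ containing $1$, with closure under left multiplication by $e_H$ coming from the formulas in Remark~\ref{remark:ebmult}. One small inaccuracy: you say that Remark~\ref{remark:ebmult} is stated in the context of acting on a module $V_B$, but in fact it is already stated as a set of identities $e_He_B=\cdots$ inside the algebra itself, so your re-derivation, while harmless, is not actually needed.
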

\begin{proof} Remark \ref{remark:ebmult}, giving a formula for the product $e_He_B$, implies that the $\mathcal{A}$-span of the elements in the statement, is, in fact, a left ideal of $\Br(W)$. Since they contain $1$, the result follows. 
\end{proof}

\begin{definition}\label{def:ideals} Let $R$ be an $\mathcal{A}$-algebra. For $r \in \mathbb{N}$, we define $I_r^{(R)}:=\sum_{B\in \mathcal{C}, \lvert B \rvert \geq r} RWe_B\subseteq \Br^R(W)$. If $R=\mathcal{A}$ we omit it from the notation and write $I_r$. 
\end{definition}

\begin{proposition} Let $R$ be an $\mathcal{A}$-algebra. The set $I_r^{(R)}$ is a two-sided ideal of $\Br^R(W)$.
\end{proposition}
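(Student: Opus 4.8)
The plan is to show that $I_r^{(R)} = \sum_{B\in\mathcal{C},\, |B|\geq r} RW e_B$ is closed under left and right multiplication by the algebra generators, namely the group elements $w\in W$ and the idempotents $e_H$, $H\in\mathcal{H}$. Since $I_r^{(R)}$ is by construction an $R$-submodule and $\Br^R(W)$ is generated as an $R$-algebra by $W$ and the $e_H$'s, stability under multiplication by these generators on both sides suffices.

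First I would handle the group elements. Using relation (B2) in the form $w e_B w^{-1} = e_{wB}$ (Remark \ref{remark:WactiononB}), we get $w\cdot(w' e_B) = (ww')e_B \in RWe_B$, so left multiplication by $w$ preserves each summand; and $(w' e_B)\cdot w = w'(e_B w) = w' w (w^{-1} e_B w) = (w'w) e_{w^{-1}B}$, and since $|w^{-1}B| = |B| \geq r$, this lies in $RWe_{w^{-1}B}\subseteq I_r^{(R)}$. So $I_r^{(R)}$ is stable under multiplication by $W$ on both sides.

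Next, the idempotents. For left multiplication, $e_H\cdot(w e_B) = (e_H w) e_B = w(w^{-1}e_H w) e_B = w\, e_{w^{-1}H}\, e_B$, so it is enough to treat $e_{H'} e_B$ for a single hyperplane $H'$ and $B\in\mathcal{C}$ with $|B|\geq r$. By Remark \ref{remark:ebmult} (the multiplication property for $e_{H'}e_B$), there are three cases: if $H'\in B$, then $e_{H'}e_B = \delta e_B \in RWe_B$; if $H'\pitchfork B$, then $B\cup\{H'\}\in\mathcal{C}$ with $|B\cup\{H'\}| = |B|+1 \geq r$, and $e_{H'}e_B = e_{B\cup\{H'\}}\in I_r^{(R)}$; finally, if $H'\not\pitchfork H''$ for some $H''\in B$, then $e_{H'}e_B = \sum_{s\in\mathcal{R}_{H''\to H'}} \mu_s s\, e_B \in RWe_B$. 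In all cases the result lies in $I_r^{(R)}$, so left multiplication by $e_H$ is fine; right multiplication by $e_H$ is handled symmetrically, using the companion formula for $e_B e_H$ from Remark \ref{remark:ebmult} (the only change being in the last case, where one gets $e_B\sum_{s\in\mathcal{R}_{H\to H''}}\mu_s s$, which still lies in $RWe_B$ after moving the group elements past $e_B$ via the $W$-stability just established, or directly since $e_B \cdot (\text{group algebra element})$ stays in $RWe_B$ — wait, more carefully: $e_B x$ for $x\in RW$ need not obviously lie in $RWe_B$, so here one should instead use Remark \ref{rem:symrel}/(B5') to keep the $e_B$ on the left and then note $e_B \sum \mu_s s = \sum \mu_s e_B s = \sum \mu_s s (s^{-1} e_B s) = \sum \mu_s s\, e_{s^{-1}B}$ with $|s^{-1}B|\geq r$, landing in $I_r^{(R)}$).

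I do not expect any serious obstacle here; the proposition is essentially bookkeeping with the multiplication rules already packaged in Remark \ref{remark:ebmult}. The one point requiring a little care — and the closest thing to a "main obstacle" — is the right-multiplication-by-$e_H$ case when the product produces a group-algebra coefficient on the wrong side of $e_B$: one must commute it back through using conjugation by $W$ (which changes $B$ to a conjugate collection of the same cardinality), rather than pretending $e_B$ absorbs arbitrary group-algebra elements on the left. Once that is noted, every case closes immediately, and collecting them proves that $I_r^{(R)}$ is a two-sided ideal.
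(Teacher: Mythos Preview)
Your proof is correct and follows essentially the same approach as the paper: verify stability under left and right multiplication by $W$ and by each $e_H$, using the multiplication rules of Remark~\ref{remark:ebmult}. The paper streamlines your ``wait, more carefully'' moment by first observing that $I_r^{(R)}=\sum_{|B|\ge r} e_B\,RW$ as well (from $we_B=e_{wB}w$), which makes the right-multiplication-by-$e_H$ case exactly symmetric to the left; your explicit conjugation $e_Bs=s\,e_{s^{-1}B}$ accomplishes the same thing.
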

\begin{proof} By definition, $RW\cdot I_r^{(R)}=I_r^{(R)}$. Since $we_B=e_{wB}w$ for any $w\in W$ and $B\in \mathcal{C}$, then $I_r^{(R)}KW=I_r^{(R)}$ and in fact, $I_r^{(R)}=\sum_{B\in \mathcal{C}, \lvert B \rvert \geq r} e_BRW$. Combining the latter with the formula for the product $e_He_B$ of Remark \ref{remark:ebmult}, one can see that $e_HI_r^{(R)}\subseteq I_r^{(R)}$ for all $H\in \mathcal{H}$. In the same way, using the formula for $e_Be_H$ of the same remark, we get that $I_r^{(R)}e_H \subseteq I_r^{(R)}$ as well.
\end{proof}

The ideals $I_r$ have already been introduced in \cite{Ma} , and in the same paper the quotient algebra $\Br(W)/ I_{r+1}$ is denoted $\Br_r(W)$. The following lemma will provide our setting for all situations where we want to suitably restrict the spanning set of Lemma \ref{lem:genset}.

\begin{lemma}\label{lem:unionideals} Let $R$ be an $\mathcal{A}$-algebra. The ideals $I^{(R)}_r, r\in \mathbb{N}$ form a descending chain of ideals of $\Br^R(W)$, such that $\Br^R(W)=\cup_{r\in \mathbb{N}}I^{(R)}_r$.
\end{lemma}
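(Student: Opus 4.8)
The plan is to prove the two assertions of the lemma separately: that $I^{(R)}_{r+1}\subseteq I^{(R)}_r$ for all $r$, and that $\bigcup_{r\in\mathbb{N}}I^{(R)}_r=\Br^R(W)$.

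For the descending chain, I would argue directly from the definition $I^{(R)}_r=\sum_{B\in\mathcal{C},\,\lvert B\rvert\geq r}RWe_B$. Every transverse collection $B$ with $\lvert B\rvert\geq r+1$ in particular satisfies $\lvert B\rvert\geq r$, so the defining sum for $I^{(R)}_{r+1}$ runs over a subset of the index set for $I^{(R)}_r$; hence $I^{(R)}_{r+1}\subseteq I^{(R)}_r$. This is immediate and needs no computation.

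For the union being everything, the key point is that the chain stabilizes at $r=0$: since $\mathcal{C}$ contains the empty collection $\emptyset$, and by convention $e_\emptyset=1$, we have $RW\cdot 1=RW\subseteq I^{(R)}_0$. But then Lemma \ref{lem:genset} (applied to $\Br^R(W)=R\otimes_{\mathcal{A}}\Br(W)$, using Remark \ref{rem:tensoringrelations}) tells us that the elements $we_B$ with $w\in W$, $B\in\mathcal{C}$ span $\Br^R(W)$ as an $R$-module, and each such $we_B$ lies in $RWe_B\subseteq I^{(R)}_0$. Therefore $I^{(R)}_0=\Br^R(W)$, and since $I^{(R)}_0$ is the largest ideal in the chain, $\bigcup_{r\in\mathbb{N}}I^{(R)}_r=I^{(R)}_0=\Br^R(W)$. (One should note that the ideals do not eventually become zero: the chain is eventually constant once $r$ exceeds the maximal size of a transverse collection, since there are no collections of arbitrarily large size — but this is not needed for the statement.)

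There is no real obstacle here; the lemma is a bookkeeping statement assembling Definition \ref{def:ideals}, the convention that $\emptyset\in\mathcal{C}$ with $e_\emptyset=1$, and the spanning result of Lemma \ref{lem:genset}. The only point requiring a moment's care is to invoke Lemma \ref{lem:genset} in the base-changed setting $\Br^R(W)$ rather than $\Br(W)$, which is legitimate by Remark \ref{rem:tensoringrelations} (or simply by applying $R\otimes_{\mathcal{A}}-$ to the spanning statement over $\mathcal{A}$).
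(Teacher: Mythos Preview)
Your proof is correct and follows essentially the same approach as the paper: the descending chain is immediate from the definition, and the union being all of $\Br^R(W)$ follows from Lemma~\ref{lem:genset} (the paper simply states that Lemma~\ref{lem:genset} is equivalent to this property). Your version is just a more detailed unpacking, including the explicit observation that the union equals $I^{(R)}_0$ and the care taken in passing from $\mathcal{A}$ to $R$ via Remark~\ref{rem:tensoringrelations}.
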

\begin{proof} It is clear by the definition that $I^{(R)}_{r+1}\subseteq I^{(R)}_{r}$ for all $r\in \mathbb{N}$. Furthermore,  Lemma \ref{lem:genset} is equivalent to the property $\Br^R(W)=\cup_{r\in \mathbb{N}}I^{(R)}_r$.
\end{proof}

\begin{lemma}\label{lem:upperbound}
For any $B\in \mathcal{C}$, we have $\Ann_{KW} (B) e_B \subseteq I_{\lvert B \rvert+1}^{(K)}$.
\end{lemma}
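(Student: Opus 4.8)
The goal is to show that for any transverse collection $B$, the left ideal $\Ann_{KW}(B) e_B$ lands inside $I_{\lvert B\rvert + 1}^{(K)}$. Recall that $\Ann_{KW}(B) = \sum_{w\in W} \pi_w(\Ann_{KW}(e_B))$, and that, by Proposition~\ref{prop:relann}(3), $\Ann_{KW}(B) = KW\cdot \overline\Rel(B)$, where $\overline\Rel(B) = \cup_{w\in W}\pi_w(\Rel(B))$. Since $I_{\lvert B\rvert+1}^{(K)}$ is a two-sided ideal and $KW$ normalizes $e_B$ (as $we_B = e_{wB}w$ and $\lvert wB\rvert = \lvert B\rvert$), it suffices to show that $\pi_w(\Rel(B))\, e_B \subseteq I_{\lvert B\rvert+1}^{(K)}$ for every $w\in W$; and since $\pi_w(\Rel(B))$ is a $K$-linear combination of coset elements $w'h$ with $h\in \Stab(B)$, and $he_B = e_B$ is false in general — but $\pi_w(x)e_B$ can be analyzed by writing $x e_B = 0$ and comparing coset components. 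So the real reduction is: it is enough to prove that for each $w\in W$ and each generator $x\in \Rel(B) = (\mathcal{R}_B - 1)\cup \Sigma_B$, the element $\pi_w(x)e_B$ lies in $I_{\lvert B\rvert+1}^{(K)}$.

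The plan is to handle the two types of generators of $\Rel(B)$ separately. For $x = r - 1$ with $r\in \mathcal{R}_B$: here $x e_B = 0$ already in $\Br^K(W)$ (by (B3), after reordering $e_B$ so that $e_{H_r}$ comes first), so $\pi_w(x) e_B + \pi_{w'}(x)e_B$ relations among cosets — actually the cleanest route is to observe $xe_B = 0$, decompose $x = \sum_{w\in W}\pi_w(x)$ with the $\pi_w(x)$ having ``disjoint'' left-coset supports, and then note that $\pi_w(x)e_B = \pi_w(x)e_B$; but $e_B$ is fixed on the left by $\Stab(B)$ only in the $KW$-module-quotient sense, not literally. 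The key structural point is that $\Br^K(W)$ modulo $I_{\lvert B\rvert+1}^{(K)}$: modulo this ideal, $e_He_B$ for $H\notin B$ transverse to $B$ becomes zero (since then $e_He_B = e_{B\cup\{H\}}$ has $\lvert B\cup\{H\}\rvert = \lvert B\rvert + 1$), while for $H\not\pitchfork H'\in B$ we get $e_He_B = \sum_{s\in \mathcal{R}_{H'\to H}}\mu_s s e_B$ with no change of cardinality. So the ideal $I_{\lvert B\rvert+1}^{(K)}$ kills exactly the ``growing'' terms. I expect the argument to go: start from the defining relations satisfied by $e_B$, and show that the defining-relation consequences $x e_B = 0$ for $x\in \Rel(B)$, when projected to a single left coset of $\Stab(B)$ via $\pi_w$, produce elements already in $I_{\lvert B\rvert+1}^{(K)}$ because the ``error terms'' that would mix cosets are precisely the ones multiplied by $e_H$ with $H\pitchfork B$.

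Concretely, I would proceed as follows. Fix $w\in W$ and a generator $x\in \Rel(B)$, so $xe_B = 0$ in $\Br^K(W)$. Write $x = \sum_{v\in W/\Stab(B)}\pi_v(x)$. Left-multiply $e_B$ on the right by nothing, but instead consider, for the coset $w\Stab(B)$, an element $e_H$ with $H\in wB\setminus B$ — more precisely, pick $H\in wB$ that is transverse with $B$ (if $wB = B$ then $\pi_w$ is $\pi_1$ up to $\Stab(B)$ and $x\in \Rel(B)$ directly annihilates $e_B$, and $\pi_1(x)e_B$: since $x e_B = 0$ and $x\in K\Stab(B)\cdot(\text{stuff})$... ). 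The mechanism I want is: $e_{wB} \pi_w(x) e_B$. Since $\pi_w(x)\in wK\Stab(B)$, we have $\pi_w(x) = w\cdot y$ with $y\in K\Stab(B)$, so $\pi_w(x)e_B = wy e_B$, and since $y$ commutes-appropriately or fixes $e_B$ up to the module action... Here is the crux: $y\in K\Stab(B)$, and while $y e_B \ne e_B y$ in general, the point is that $e_{wB}\cdot \pi_w(x) e_B = w\, e_B\, w^{-1}\cdot w y e_B = w e_B y e_B$ — hmm, $e_B y e_B$: for $y\in \Stab(B)$, $e_B y e_B = e_B e_{yB\text{-stuff}}$... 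Actually since $y$ normalizes $e_B$ (as $y\in\Stab(B)$ means $ye_By^{-1}=e_{yB}=e_B$, so $ye_B = e_B y$!), we get $e_B y e_B = y e_B^2 = \delta^{\lvert B\rvert} y e_B$ — wait that needs $e_B^2 = \delta^{\lvert B\rvert}e_B$ which holds since the $e_H$, $H\in B$, are commuting idempotents up to $\delta$. So $e_{wB}\pi_w(x)e_B = \delta^{\lvert B\rvert} \pi_w(x) e_B$, hence $\pi_w(x)e_B = \delta^{-\lvert B\rvert}e_{wB}\pi_w(x)e_B$. Now if $wB\ne B$, choose $H\in wB\setminus B$; since $B$ is transverse and $wB$ is transverse, and... if $H\pitchfork B$ then $e_H e_B = e_{B\cup\{H\}}\in I_{\lvert B\rvert+1}^{(K)}$, so $e_{wB}\pi_w(x)e_B = (\text{factor }e_H)\cdots \in \Br^K(W)\cdot e_H\cdot(\text{rest})$ — but I need $e_H$ adjacent to $e_B$. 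Since $e_{wB} = e_H \cdot e_{wB\setminus\{H\}}$ and the factors of $e_{wB}$ commute, $e_{wB}\pi_w(x)e_B = e_{wB\setminus\{H\}}\,e_H\,\pi_w(x)e_B$; but $e_H \pi_w(x) = e_H w y = w\, e_{w^{-1}H}\, y = w\, y\, e_{y^{-1}w^{-1}H}$, and $y^{-1}w^{-1}H\in y^{-1}w^{-1}wB = y^{-1}B = B$, so $e_H\pi_w(x) = \pi_w(x) e_{H'}$ for some $H'\in B$, hence $e_H\pi_w(x)e_B = \pi_w(x)e_{H'}e_B = \delta\pi_w(x)e_B$ — this just recovers what we had. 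The genuine reduction must instead use a hyperplane $H\in wB$ that is \emph{transverse with} $B$; such $H$ exists when $wB\ne B$ and is not entirely non-transverse-entangled with $B$. The main obstacle, which I flag here, is exactly this combinatorial case analysis: showing that for $wB\ne B$ one can always find $H\in wB$ with $H\pitchfork B$ — and if one cannot, i.e., if every $H\in wB$ is non-transverse with $B$, handling that case by a separate argument using the $\Sigma_B$-relations and the fact that $e_H e_B$ then reduces via (B5) without raising cardinality, so that $\pi_w(x)e_B$ for $x\in\Sigma_B$ telescopes to zero modulo lower terms. I would close the proof by combining: (i) the $x = r-1$ case, where $\pi_w(x)e_B$ reduces directly since $r$ acts trivially after moving it past $e_{H_r}$, giving membership (indeed vanishing, or absorption into $I_{\lvert B\rvert+1}^{(K)}$ after tracking coset shifts); and (ii) the $x = \sigma^H_{H_1,H_2}$ case, where by construction $e_H e_B$ computed via $H_1$ equals that computed via $H_2$, both without cardinality increase, so $\pi_w(x)e_B\in I_{\lvert B\rvert+1}^{(K)}$ because the only discrepancy between the two expansions involves collections of size $>\lvert B\rvert$. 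I expect the write-up to require careful bookkeeping of which left coset of $\Stab(B)$ each term of each expansion lands in, and the transversality dichotomy above will be the delicate point.
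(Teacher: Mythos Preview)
Your proposal has a genuine gap. The paper's proof does not attempt a direct combinatorial manipulation of $\Rel(B)$; instead it works in the quotient $\Br_r^K(W)$ (with $r=\lvert B\rvert$), builds the $\Br^K(W)$-module $V=\sum_{B'\in\mathcal{B}}e_{B'}(KW+I_{r+1}^{(K)})$, checks that $B$ is $V$-maximal, and then invokes Proposition~\ref{prop:directsum}. That proposition says $\sum_{B'\in\mathcal{B}}V_{B'}=\bigoplus_{B'\in\mathcal{B}}V_{B'}$, and its proof rests on the degree map of Lemma~\ref{lem:degree}, which in turn uses that $\delta$ is transcendental over $k_K$. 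The direct sum is exactly the mechanism that lets one pass from $ae_B\equiv 0$ to $\pi_w(a)e_B\equiv 0$ for every $w$: it is a linear-independence statement for the images of the $e_{B'}$ in $\Br_r^K(W)$, and there is no purely combinatorial substitute for it over a general proper field.

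Your approach tries to separate the coset components of $xe_B=0$ by multiplying on the left by $e_{wB}$ or by a single $e_H$ with $H\in wB$. As your own calculation shows, when $H$ corresponds via $\pi_w(x)$ to some $H'\in B$ this just rescales by $\delta$ and you recover what you started with; and the alternative you propose---choosing $H\in wB$ transverse to all of $B$---fails because such an $H$ need not exist (indeed, when $wB$ and $B$ are linked by a reflection, Lemma~\ref{lem:precom} shows every $H\in wB\setminus B$ is non-transverse to $B$). The later Lemma~\ref{lem:barcondition} in the paper does carry out a version of your idea, but only under the extra ``bar condition'' that $\bigcup_{\lambda_w\neq 0}wB$ is itself transverse, which is a genuine restriction not satisfied by all elements of $\Rel(B)$ and certainly not by arbitrary elements of $\Ann_{KW}(e_B)$. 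In short, the missing ingredient in your argument is precisely the transcendence of $\delta$, packaged as the direct-sum statement of Proposition~\ref{prop:directsum}.
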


\begin{proof}

Let $r=\lvert B \rvert$.
If $e_B \in I_{r+1}^{(K)}$, then the result is immediate, since $ΚW I_{r+1}^{(K)}\subseteq I_{r+1}^{(K)}$. So, we suppose that $e_B \not \in I_{r+1}^{(K)}$. Consider the quotient $\Br_r^K(W)$, and let $a\in KW$. Of course, $ae_B=0$ in $\Br_r^K(W)$ is equivalent to $ae_B\in I_{r+1}^{(K)}$. We will show that in $\Br_r^K(W),  ae_B=0$ implies  $π_{w}(a)e_B=0$ for all $w\in W$. Hence, since $\Ann_{KW}(B)=\sum_{w\in W}π_w(\Ann_{KW}(e_B))$, we get the statement.

Let $\mathcal{B}$ denote the orbit of $B$, and let $V=\sum_{B'\in  \mathcal{B}} e_{B'}(KW + I_{r+1}^{(K)}) \subseteq \Br_r^K(W)$. Relation (B2) of Definition \ref{def:algebra} or Remark \ref{remark:WactiononB} imply that $wV=V$ for $w\in W$. Furthermore, if $H$ is a hyperplane and $B'\in \mathcal{B}$, Remark \ref{remark:ebmult} yields that $e_He_{B'}$ either belongs to $KWe_{B'}\subseteq V$, or is equal to some $e_{B''}$, where $B''$ strictly contains $B'$, in which case $e_{B''}\in I_{r+1}^{(K)}$. So, $e_HV\subseteq V$ for all hyperplanes $H$, as well, and hence, $V$ is a $\Br^K(W)$-module.

Now, it is clear that $(e_{B} + I_{r+1}^{(K)})KW \subseteq e_{B}V$ since $e_{B} (e_{B}x+I_{r+1}^{(K)})=δ^r (e_{B}x + I_{r+1}^{(K)})$ for any $x\in KW$. In particular, $e_BV\neq 0$. Furthermore, $B$ is $V$-maximal since if $B'$ strictly contains $B$, then $e_{B'}\in I_{r+1}^{(K)}$, and so, $e_{B'}V=0$. Now Proposition \ref{prop:directsum} yields that $\sum_{B'\in \mathcal{B}} V_{B'}=\oplus_{B'\in \mathcal{B}} V_{B'}$. Since $wV_B=V_{wB}$ for all $w\in W$, this implies that for every $a\in KW$ and $v\in V_B, av=0$ implies $π_w(a)v=0$. So, $ae_B=0$ in $\Br^K_r(W)$ implies $ π_w(a)e_B\in I_{r+1}^{(K)}$ for all $a\in KW$ and $w\in W$ and the result follows.
\end{proof}

\begin{theorem}\label{th:semisimplicity} The algebra $\Br^K(W)$ is split-semisimple, with dimension

$$\sum_{B\in \mathcal{C}} \dim_K \left(\faktor{KW}{\Ann_{KW}(B)}\right) \textnormal{ or, equally, }\sum_{B\in \mathcal{C}^K_{adm}} \dim_K \left(\faktor{KW}{\Ann_{KW}(B)}\right). $$

\end{theorem}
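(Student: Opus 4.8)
The plan is to establish the stated dimension formula as both a lower and an upper bound, and to deduce split-semisimplicity from the fact that these two bounds coincide. First I would recall that Lemma \ref{lem:lowerbound} already gives a \emph{lower} bound for $\dim_K \Br^K(W)$: since all the simple modules $(B\,|\,V)$ constructed in Theorem \ref{th:simplemodules} are absolutely irreducible and pairwise non-isomorphic (for non-conjugate admissible pairs), the sum of squares of their dimensions, which by Lemma \ref{lem:lowerbound} equals $\sum_{B\in\mathcal{C}}\dim_K(KW/\Ann_{KW}(B))$, is at most $\dim_K \Br^K(W)$. (Note this sum may be taken over $\mathcal{C}$ or only over $\mathcal{C}^K_{adm}$, since for a non-admissible $B$ we have $\Ann_{KW}(B)=KW$ by Corollary \ref{cor:nonadm}, so the corresponding summand vanishes.)

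The heart of the argument is the matching \emph{upper} bound. Here I would run an induction down the chain of ideals $I_r^{(K)}$ of Lemma \ref{lem:unionideals}, using the associated graded pieces $I_r^{(K)}/I_{r+1}^{(K)}$; since $\Br^K(W)=\cup_r I_r^{(K)}$ and the chain is finite (hyperplane collections have bounded size), it suffices to bound each graded piece. By Lemma \ref{lem:genset} the elements $we_B$ with $|B|=r$ span $I_r^{(K)}$ modulo $I_{r+1}^{(K)}$; grouping the $B$ of size $r$ into $W$-orbits and fixing one representative $B$ per orbit, the piece coming from that orbit is spanned by the image of $KW\cdot e_B$. The key point is Lemma \ref{lem:upperbound}: $\Ann_{KW}(B)e_B\subseteq I_{r+1}^{(K)}$, so modulo $I_{r+1}^{(K)}$ the map $KW\to I_r^{(K)}/I_{r+1}^{(K)}$, $a\mapsto ae_B$, factors through $KW/\Ann_{KW}(B)$. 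Hence the contribution of the orbit of $B$ to $\dim_K(I_r^{(K)}/I_{r+1}^{(K)})$ is at most $\dim_K(KW/\Ann_{KW}(B))$, which is a conjugation-invariant of $B$, so it does not matter which representative we picked. Summing over all orbits of all sizes $r$ gives
$$\dim_K \Br^K(W)\;\le\;\sum_{B\in\mathcal{C}}\dim_K\!\left(\faktor{KW}{\Ann_{KW}(B)}\right).$$

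Combining the two inequalities forces equality, so $\dim_K\Br^K(W)$ equals the sum of squares of the dimensions of the absolutely irreducible modules we have constructed; a standard argument (e.g. via the Artin--Wedderburn decomposition, or Wedderburn's theorem on the dimension count) then shows that $\Br^K(W)$ is split-semisimple, with exactly the $(B\,|\,V)$ as its simple modules and with no further relations. The part I expect to require the most care is the bookkeeping in the upper-bound step: making sure that passing to a single orbit representative is legitimate (this is Remark following Definition \ref{def:annihilators}, that $w\Ann_{KW}(B)w^{-1}=\Ann_{KW}(wB)$, so the dimension of $KW/\Ann_{KW}(B)$ is constant on orbits), and confirming that the graded-piece spanning argument genuinely decouples the different orbits of size $r$ rather than introducing cross terms — but both of these follow cleanly from Remark \ref{remark:ebmult} and Lemma \ref{lem:upperbound}. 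The comparison of dimensions then upgrades the \emph{a priori} semisimple-only conclusion to split-semisimplicity because every constructed simple module was shown to be absolutely irreducible in Theorem \ref{th:simplemodules}.
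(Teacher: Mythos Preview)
Your overall strategy is exactly the paper's: lower bound via Lemma \ref{lem:lowerbound}, upper bound by filtering through the ideals $I_r^{(K)}$ and invoking Lemma \ref{lem:upperbound}, then match the two. However, there is a genuine error in your upper-bound bookkeeping. You write that ``the piece coming from that orbit is spanned by the image of $KW\cdot e_B$'' and that ``the contribution of the orbit of $B$ \dots\ is at most $\dim_K(KW/\Ann_{KW}(B))$''. This is false: for $B'=wB$ one has $KWe_{B'}=KWe_B\,w^{-1}$, so $\sum_{B'\in\mathcal{B}}KWe_{B'}=KWe_B\cdot W$, which is strictly larger than $KWe_B$ unless $KWe_B$ is right-$W$-stable (it is not, in general). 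In fact, once the theorem is proved one sees that the images of the $KWe_{B'}$ in $I_r^{(K)}/I_{r+1}^{(K)}$ form a \emph{direct} sum over the whole orbit. Your stated bound therefore gives one copy of $\dim_K(KW/\Ann_{KW}(B))$ per orbit rather than per collection, which is strictly smaller than the lower bound and would yield a contradiction rather than a proof. (You end up writing $\sum_{B\in\mathcal{C}}$ in the displayed inequality, which is the correct index set, but your text argues for a sum over orbit representatives.)

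The fix is simply not to group by orbits at all, exactly as the paper does: write $I_r^{(K)}=\sum_{\lvert B\rvert=r}KWe_B+I_{r+1}^{(K)}$, apply Lemma \ref{lem:upperbound} to each $B$ separately to get $\dim_K(I_r^{(K)}/I_{r+1}^{(K)})\le\sum_{\lvert B\rvert=r}\dim_K(KW/\Ann_{KW}(B))$, and sum over $r$. The conjugation-invariance of $\dim_K(KW/\Ann_{KW}(B))$ is true but irrelevant here; no ``decoupling of orbits'' is needed. With that correction your argument coincides with the paper's.
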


\begin{proof}
By Lemma \ref{lem:unionideals}, we have that $\Br^K(W)=\cup_{r\in \mathbb{N}}I^{(K)}_r$. We write $I^{(K)}_r$ as $\sum_{\lvert B \rvert =r} KWe_B + I^{(K)}_{r+1}$. By Lemma \ref{lem:upperbound} above, we have $\Ann_{KW}(B)e_B\subseteq I_{r+1}^{(K)}$ for all $B$ of cardinality $r$, which implies that 

$$\dim_K I^{(K)}_r \leq \sum_{\lvert B \rvert =r}\dim_K\left(\faktor{KW}{\Ann_{KW} (B)}\right) + \dim_K I^{(K)}_{r+1}.$$
By induction on $r$, this yields

$$\dim_K\Br^K(W)\leq \sum_{B\in \mathcal{C}} \dim_K\left(\faktor{KW}{\Ann_{KW} (B)}\right).$$
The above sum is equal to the the sum of squares of dimensions of simple $\Br^K(W)$-modules (Lemma \ref{lem:lowerbound}), which yields the main result. Equality with the second sum of the statement is implied by Corollary \ref{cor:nonadm}, which states that if $B$ is not admissible, then $\Ann_{KW}(B)=KW$.
\end{proof}

\begin{corollary}\label{cor:dimension} The dimension of $\Br^K(W)$ is equal to:

\begin{equation}\label{eq:dimenalt}
\sum_{B\in \mathcal{C}^K_{adm}}\frac{\lvert W \rvert }{\lvert \Stab(B) \rvert }\cdot \dim_K\left(\faktor{K\Stab(B)}{\Ann_{K\Stab(B)}(B)}\right).
\end{equation}
\end{corollary}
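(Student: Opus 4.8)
The plan is to derive Corollary \ref{cor:dimension} directly from Theorem \ref{th:semisimplicity} by a routine rewriting of the summand $\dim_K(KW/\Ann_{KW}(B))$ in terms of the stabilizer of $B$. The only real ingredient is the decomposition of the annihilator established in Proposition \ref{prop:annann}, namely $\Ann_{KW}(B)=\oplus_{w\in W/\Stab(B)}w\Ann_{K\Stab(B)}(B)$, combined with the fact that $KW$ is a free right $K\Stab(B)$-module (equivalently, $KW=\oplus_{w\in W/\Stab(B)}wK\Stab(B)$).

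First I would fix $B\in\mathcal{C}$ and note that, under the decomposition $KW=\oplus_{w\in W/\Stab(B)}wK\Stab(B)$ as $K$-vector spaces, Proposition \ref{prop:annann} gives a compatible decomposition $\Ann_{KW}(B)=\oplus_{w\in W/\Stab(B)}w\Ann_{K\Stab(B)}(B)$, where each summand $w\Ann_{K\Stab(B)}(B)$ sits inside the corresponding summand $wK\Stab(B)$ (here I use that $\Ann_{K\Stab(B)}(B)\subseteq K\Stab(B)$, also from Proposition \ref{prop:annann}). Taking quotients summand by summand therefore yields a $K$-vector space isomorphism
\begin{equation*}
\faktor{KW}{\Ann_{KW}(B)}\;\cong\;\bigoplus_{w\in W/\Stab(B)} \faktor{wK\Stab(B)}{w\Ann_{K\Stab(B)}(B)},
\end{equation*}
and since left multiplication by $w$ is a $K$-linear isomorphism $K\Stab(B)\to wK\Stab(B)$ carrying $\Ann_{K\Stab(B)}(B)$ onto $w\Ann_{K\Stab(B)}(B)$, each term on the right is isomorphic to $K\Stab(B)/\Ann_{K\Stab(B)}(B)$. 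There are exactly $\lvert W\rvert/\lvert\Stab(B)\rvert$ cosets, so
\begin{equation*}
\dim_K\left(\faktor{KW}{\Ann_{KW}(B)}\right)=\frac{\lvert W\rvert}{\lvert\Stab(B)\rvert}\cdot\dim_K\left(\faktor{K\Stab(B)}{\Ann_{K\Stab(B)}(B)}\right).
\end{equation*}

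Then I would substitute this identity into the dimension formula of Theorem \ref{th:semisimplicity}. Using the second form of that formula (the sum over $\mathcal{C}^K_{adm}$) gives exactly \eqref{eq:dimenalt}; alternatively one can start from the sum over all of $\mathcal{C}$ and observe that by Corollary \ref{cor:nonadm} the non-admissible collections contribute $K\Stab(B)/\Ann_{K\Stab(B)}(B)=0$, so restricting to $\mathcal{C}^K_{adm}$ changes nothing. This completes the derivation.

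I do not anticipate a genuine obstacle here: the statement is a bookkeeping consequence of results already proved. The one point that needs a moment's care is checking that the decomposition of $\Ann_{KW}(B)$ in Proposition \ref{prop:annann} is genuinely \emph{compatible} with the coset decomposition of $KW$ — i.e. that $w\Ann_{K\Stab(B)}(B)$ lands inside $wK\Stab(B)$ and not spread across several cosets — but this is immediate since $\Ann_{K\Stab(B)}(B)\subseteq K\Stab(B)$. Everything else is a counting argument and the elementary fact that an isomorphism of vector spaces descends to an isomorphism of quotients by corresponding subspaces.
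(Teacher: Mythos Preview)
Your proposal is correct and follows essentially the same approach as the paper: use the decomposition $\Ann_{KW}(B)=\oplus_{w\in W/\Stab(B)}w\Ann_{K\Stab(B)}(B)$ from Proposition~\ref{prop:annann} to rewrite $\dim_K(KW/\Ann_{KW}(B))$ as $\tfrac{\lvert W\rvert}{\lvert\Stab(B)\rvert}\dim_K(K\Stab(B)/\Ann_{K\Stab(B)}(B))$, then substitute into Theorem~\ref{th:semisimplicity}. You are simply more explicit about the compatibility of the two coset decompositions, which the paper leaves implicit.
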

\begin{proof} The statement is implied by Theorem \ref{th:semisimplicity}, since 
$\Ann_{KW}(B)=\oplus_{w\in W/\Stab(B)}w\Ann_{K\Stab(B)}(B)$, and, hence, 
$$\dim_K \left(\faktor{KW}{\Ann_{KW}(B)}\right)=\frac{\lvert W \rvert }{\lvert \Stab(B) \rvert }\dim_K\left(\faktor{K\Stab(B)}{\Ann_{K\Stab(B)}(B)}\right).$$
\end{proof}

\begin{remark} In the case of the empty collection $B$, one can verify that $\Ann_{K\Stab(B)}(B)=0$. This, as expected (see also Remark \ref{rem:emptyconstruction}), implies that the part of \eqref{eq:dimenalt} in the above corollary corresponding to $B$ equals 
the order of $W$. 
\end{remark}

\section{Admissibility and a basis for $\Br^K(W)$}\label{ch:admissibility}

We apply now the established results of the previous sections to obtain, for all irreducible complex reflection groups, the admissible
collections and corresponding admissible pairs. Following the results of the previous section, for a transverse collection $B$, these elements are determined by the ideal $\Ann_{K\Stab(B)}(B)$ of $K\Stab(B)$. 

Case-by-case analysis shows that for every admissible transverse collection $B$, there is
a certain subgroup $K_B$ of $\Stab(B)$ such that $\Ann_{K\Stab(B)}(B)$ is
some kind of modified augmentation ideal of this subgroup. In fact, for all but two groups (and specific proper fields of definition), it is equal to the augmentation ideal of $K_B$. This translates, equivalently, into the fact that the admissible
$K\Stab(B)$-modules $V$ are those that induce a certain representation on $K_B$, which,
again, for most cases, is the trivial one. 

\subsection{Overview of this section's results}\label{sec:overviewadmissibility}

We sum up, in this first subsection, the results that we obtain throughout the rest of this section by the case-by-case study of all irreducible complex reflection groups, and provide some general conclusions as well.
  
For the next definition, recall that for $B\in \mathcal{C}$, $\mathcal{R}_B=\{r\in \mathcal{R}\big| H_r\in B\}$.

\begin{definition}\label{def:KB} Let $W$ be a complex reflection group and $B$ a transverse collection. We denote by $K_B$ the subgroup of $\Stab(B)$ generated by $\mathcal{R}_B$ together with all products $s_2^{-1}s_1$ where $s_1,s_2\in \mathcal{R}$ satisfy: 
\begin{itemize}
\item[(c1)] $s_1B=s_2B\neq B$ (in particular, $s_2^{-1}s_1\in \Stab(B)$),
\item[(c2)] $s_1H\neq s_2H$, for all $H\in B\backslash s_1B$.
\end{itemize}
If $B$ is empty, we define $K_B=1$.
\end{definition}

\begin{remark} One can verify that for $w\in W$ and $B\in \mathcal{B}$ we have $wK_Bw^{-1}=K_{wB}$. In particular, $K_B$ is a normal subgroup of $\Stab(B)$.
\end{remark}

\begin{remark}\label{rem:KBforcardone} If $B$ is a transverse collection of cardinality $1$, i.e., $B$ consists of a single hyperplane $H$, the above definition of $K_B$ yields the subgroup of $W$ generated by $\mathcal{R}_B$, which coincides with the pointwise stabilizer $W_H$ of $H$. 
\end{remark}

For a function $χ: K_B\to K$, we denote by $\Aug^χ(K_B)$ the two-sided ideal
of $K\Stab(B)$ generated by all $h-χ(h), h\in K_B$. Note that if $χ=1$, this is the 
usual augmentation ideal $\Aug(K_B)$. The following proposition is proved throughout the course of this section.

\begin{proposition}\label{prop:KBfunction} Let $W$ be an irreducible complex reflection group and $K$ a proper
field. Then, if $B\in \mathcal{C}^K_{adm}$, there is a group homomorphism $χ:K_B\to \mathbb{U}_6 \cap K$
such that $\Ann_{K\Stab(B)}(B)=\Aug^χ(K_B)$. Furthermore, if $W\neq G_{25},G_{32}$, then $χ=1$. If $W=G_{25}$ or $G_{32}$, then $\dim_K\Aug^χ(K_B)=\dim_K \Aug(K_B)$. 
\end{proposition}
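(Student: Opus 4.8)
The plan is to prove Proposition \ref{prop:KBfunction} by a case-by-case analysis over the irreducible complex reflection groups, using the characterization of $\Ann_{K\Stab(B)}(B)$ via $\Rel(B)$ established in Proposition \ref{prop:relann} (namely $\Ann_{K\Stab(B)}(B) = \pi_1(KW\cdot \Rel(B))$, equivalently the two-sided ideal of $K\Stab(B)$ generated by $\bigcup_{w\in W} w^{-1}\pi_w(\Rel(B))$). The key observation is that $\Rel(B) = (\mathcal{R}_B-1)\cup \Sigma_B$, so the generators $h-1$ for $h\in \mathcal{R}_B$ contribute directly to an augmentation-type ideal of $K_B$, and the remaining work is to identify the contribution of the sets $\Sigma_B^H = \{\sigma^H_{H_1,H_2} : H_1,H_2\in B,\ H_1,H_2\not\pitchfork H\}$ after projecting via the $\pi_w$.

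First I would reduce to understanding what happens when a reflection $s$ maps $B$ to some conjugate $B'$, since $\pi_w(\sigma^H_{H_1,H_2})$ is nonzero only for those $w$ whose coset meets $\mathcal{R}_{H_i\to H}$, and such elements are (conjugates of) reflections. For a reflection $s$ with $sB = s'B = B' \neq B$, the difference $s'^{-1}s$ lies in $\Stab(B)$, and I would analyze when it lies in $K_B$ versus contributing a genuinely new relation. Conditions (c1), (c2) in Definition \ref{def:KB} are precisely designed to capture when $s'^{-1}s \in K_B$: condition (c2) says $s,s'$ act differently on every hyperplane of $B$ not already fixed, which is exactly the situation in which the relation $\sigma^H_{H_1,H_2}$ (with $H_1 = sH$, $H_2 = s'H$ playing the roles of the two images) collapses, via the transversality lemmas (\ref{lem:transverse1}, \ref{lem:transverse2}, \ref{lem:precom}, \ref{lem:commutativity}) and the multiplicity bookkeeping already worked out in the proof of Theorem-Definition \ref{thdef:construction}, to a relation of the form $(s'^{-1}s) - c$ on the relevant summand, for a root of unity $c$. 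Collecting these over all reflections and all hyperplanes $H$ non-transverse with members of $B$, one obtains that $\Ann_{K\Stab(B)}(B)$ is generated by $\{h-1 : h\in \mathcal{R}_B\}$ together with $\{h - \chi(h) : h = s'^{-1}s \text{ as above}\}$, where $\chi$ is forced to be multiplicative and to take values in the roots of unity appearing as ``$c$'': the order-two and order-three reflections, together with the monomial matrix structure of the groups $G(m,p,n)$ and the explicit data of the exceptional groups, pin these down to lie in $\mathbb{U}_6\cap K$.

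Next I would run through the classification. For $G(m,p,n)$, the transversality relations of Lemma \ref{lem:Gmpn0} give an explicit description of $\Stab(B)$ and of the reflections moving $B$, and one checks directly that all the scalars $c$ equal $1$, so $\chi = 1$; this is the content of the earlier-cited results of Marin and the analysis to be carried out in the $G(m,p,n)$ subsection. For the exceptional groups I would invoke the GAP4 computations announced in the introduction: for each admissible transverse collection $B$ (up to conjugacy), compute $\mathcal{R}_B$, the reflections $s,s'$ satisfying (c1)--(c2), the resulting scalars, verify multiplicativity of the assignment $s'^{-1}s\mapsto c$, and confirm $\chi = 1$ except for $G_{25}$ and $G_{32}$, where a nontrivial cube root of unity occurs on a specific subgroup $K_B$. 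For the final clause, note that $\Aug^\chi(K_B)$ and $\Aug(K_B)$ are the kernels of the algebra maps $K\Stab(B)\to K[\Stab(B)/\!\!\langle\!\langle K_B\rangle\!\rangle]$ twisted by the one-dimensional characters of $\Stab(B)$ induced from $\chi$, respectively from the trivial character; since twisting by a linear character is an automorphism of $K\Stab(B)$ as a $K$-vector space, these two ideals have the same $K$-dimension.

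The main obstacle I anticipate is the core structural step: showing rigorously that the projected relations $w^{-1}\pi_w(\sigma^H_{H_1,H_2})$, after taking the two-sided ideal they generate together with $\mathcal{R}_B-1$, contribute nothing beyond the stated generators $h-\chi(h)$ for $h\in K_B$ — i.e. that $K_B$ is exactly the right subgroup and $\chi$ is exactly the right character, with no ``extra'' relations surviving and none missing. This requires carefully matching the combinatorial regrouping of sums from the proof of Theorem-Definition \ref{thdef:construction} against the definition of $K_B$, and is where conditions (c1) and (c2) must be shown to be both necessary and sufficient. Once this is established uniformly, the case-by-case verification (analytic for $G(m,p,n)$, computational for the exceptionals) is comparatively routine bookkeeping.
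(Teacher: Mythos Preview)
Your overall plan---use Proposition~\ref{prop:relann} to reduce to the ideal generated by the $w^{-1}\pi_w(\Rel(B))$, then case-split over the classification---matches the paper's architecture. However, several steps in your sketch do not go through as described.

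First, your claim that the projected relations ``collapse to a relation of the form $(s'^{-1}s)-c$'' is too optimistic. In general $\pi_{B'}(\sigma^H_{H_1,H_2})$ is a difference of two \emph{sums} of weighted reflections, not a single pair. The paper does not try to match these projections term-by-term against the generators of $K_B$. Instead, it introduces (Definition~\ref{def:dbpb}) the set $D_B$ of \emph{all} pairwise differences $\theta(s_1)-\theta(s_2)$ lying in the $\mathbb{Q}$-span of $\overline{\Rel}(B)$, and then verifies \emph{computationally} the two facts you call the ``main obstacle'': that $\mathbb{Q}\overline{\Rel}(B)=\mathbb{Q}D_B$ and that the resulting pairs $P_B$ generate exactly $K_B$ (First Computational Result). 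Your proposal offers no mechanism for proving either of these; the paper simply checks them by machine for each exceptional group.

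Second, your explanation for why $\chi$ lands in $\mathbb{U}_6\cap K$ is incorrect. The scalar arising from a pair $(s_1,s_2)\in P_B$ is $\mu_{s_2}\mu_{s_1}^{-1}$, a ratio of \emph{parameters}, not a priori a root of unity at all. The reason it lies in $\mathbb{U}_6$ is an admissibility constraint: if $s_1\not\sim s_2$ then $s_2^{-1}s_1-\mu_{s_2}\mu_{s_1}^{-1}$ is invertible in $K\Stab(B)$ unless $(\mu_{s_2}\mu_{s_1}^{-1})^{\mathrm{ord}(s_2^{-1}s_1)}=1$ (Lemma~\ref{lem:invertibility}), and the computation shows $\mathrm{ord}(s_2^{-1}s_1)=6$ in the only cases ($G_{25},G_{32}$) where such non-conjugate pairs occur.

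Third, your argument for $\dim_K\Aug^\chi(K_B)=\dim_K\Aug(K_B)$ via ``twisting by a linear character'' requires $\chi$ to extend from $K_B$ to a linear character of $\Stab(B)$; you do not address this, and it is not automatic for a character of a normal subgroup. The paper does not argue this conceptually: it computes the dimension of the ideal $(D_B^0)$ directly for each $\mu\in\mathbb{U}_6$ (Third Computational Result, second part).
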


A consequence of this proposition is the following result, providing a far more efficient formula for the dimension of $\Br^K(W)$ than the one of Theorem \ref{th:semisimplicity}, as well as a basis over $K$. Let $W/K_B$ denote a set of left coset representatives of $K_B$ in $W$.

\begin{theorem}\label{th:basisoverK} Let $W$ be an irreducible complex reflection group and $K$ a proper field.
Then

$$\dim_K \Br^K(W)=\sum_{B\in \mathcal{C}^K_{adm}} \faktor{\lvert W \rvert }{\lvert K_B \rvert},$$
and the set $\{we_B\big| B\in \mathcal{C}^K_{adm}, w\in W/K_B\}$ is a basis for $\Br^K(W)$.
\end{theorem}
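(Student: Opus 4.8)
The plan is to derive Theorem~\ref{th:basisoverK} directly from Proposition~\ref{prop:KBfunction} together with the dimension formula of Corollary~\ref{cor:dimension} and the spanning set of Lemma~\ref{lem:genset}. First I would compute, for a fixed admissible collection $B$, the dimension of the local quotient $K\Stab(B)/\Ann_{K\Stab(B)}(B)$. By Proposition~\ref{prop:KBfunction} we have $\Ann_{K\Stab(B)}(B)=\Aug^χ(K_B)$ for a group homomorphism $χ:K_B\to \mathbb{U}_6\cap K$, and moreover $\dim_K\Aug^χ(K_B)=\dim_K\Aug(K_B)$ in all cases (trivially so when $χ=1$). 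Hence $\dim_K\bigl(K\Stab(B)/\Ann_{K\Stab(B)}(B)\bigr)=\dim_K\bigl(K\Stab(B)/\Aug(K_B)\bigr)$. The augmentation ideal $\Aug(K_B)$ inside $K\Stab(B)$ has codimension exactly $[\Stab(B):K_B]=\lvert\Stab(B)\rvert/\lvert K_B\rvert$, since $K\Stab(B)/\Aug(K_B)\cong K[\Stab(B)/K_B]$ as a $K$-vector space (this uses that $K_B$ is normal in $\Stab(B)$, as noted in the remark after Definition~\ref{def:KB}, so $\Aug(K_B)$ is a two-sided ideal and the quotient is the group algebra of $\Stab(B)/K_B$; even without normality the codimension is $[\Stab(B):K_B]$ because $\Aug(K_B)$ is spanned over $K$ by $\{s-1:s\in K_B\setminus\{1\}\}$ together with $\{x(s-1):x\in\Stab(B),s\in K_B\setminus\{1\}\}$, which is a standard computation). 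Plugging into Corollary~\ref{cor:dimension} gives
\[
\dim_K\Br^K(W)=\sum_{B\in\mathcal{C}^K_{adm}}\frac{\lvert W\rvert}{\lvert\Stab(B)\rvert}\cdot\frac{\lvert\Stab(B)\rvert}{\lvert K_B\rvert}=\sum_{B\in\mathcal{C}^K_{adm}}\frac{\lvert W\rvert}{\lvert K_B\rvert},
\]
which is the asserted dimension formula.

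Next I would establish that $\{we_B : B\in\mathcal{C}^K_{adm},\ w\in W/K_B\}$ is a basis. It has the right cardinality, namely $\sum_{B\in\mathcal{C}^K_{adm}}\lvert W\rvert/\lvert K_B\rvert$, which we have just shown equals $\dim_K\Br^K(W)$; so it suffices to show it spans. For the spanning argument I would work with the filtration $I^{(K)}_r$ of Definition~\ref{def:ideals} and Lemma~\ref{lem:unionideals}, and argue by descending induction on $r=\lvert B\rvert$ (which terminates since collections have bounded size). Starting from the spanning set $\{we_B: w\in W, B\in\mathcal{C}\}$ of Lemma~\ref{lem:genset}, I first discard all $B$ that are not admissible: by Corollary~\ref{cor:nonadm}, $B\notin\mathcal{C}^K_{adm}$ forces $\Ann_{KW}(B)=KW$, so in particular $1\in\Ann_{KW}(B)$, meaning $e_B$ itself lies in $I^{(K)}_{\lvert B\rvert+1}$ by Lemma~\ref{lem:upperbound}; hence $We_B$ is redundant modulo contributions from strictly larger collections, which by the induction hypothesis are already spanned by the proposed basis elements. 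For admissible $B$, I use $\Ann_{K\Stab(B)}(B)=\Aug^χ(K_B)$: for $s\in K_B$ we have $s-χ(s)\in\Ann_{K\Stab(B)}(B)\subseteq\Ann_{KW}(B)$, so by Lemma~\ref{lem:upperbound}, $(s-χ(s))e_B\in I^{(K)}_{\lvert B\rvert+1}$, i.e.\ $se_B\equiv χ(s)e_B$ modulo higher terms (and $χ(s)\in K$). Consequently, for any $w\in W$ and $h\in K_B$ we get $whe_B\equiv χ(h)\,we_B$ modulo $I^{(K)}_{\lvert B\rvert+1}$; therefore the cosets $W/K_B$ suffice to span the contribution of $B$ modulo strictly larger collections. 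Combining with the induction hypothesis on $I^{(K)}_{r+1}$ closes the induction, and the base case $r$ large enough is trivial since $I^{(K)}_r=0$ then.

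The main obstacle, I expect, is purely bookkeeping rather than conceptual: one must be careful that reducing $whe_B$ to $χ(h)we_B$ only introduces error terms in $I^{(K)}_{\lvert B\rvert+1}$ and not error terms of the form $w'e_B$ with the same $B$, so that the counting is not disturbed. This is exactly what Lemma~\ref{lem:upperbound} guarantees, since it places $\Ann_{KW}(B)e_B$ entirely inside the next ideal in the filtration; so the elementwise reductions never cycle back to the same level. A secondary point to handle cleanly is that when $χ\neq 1$ (the cases $W=G_{25},G_{32}$), the element $χ(h)$ is a genuine scalar in $K$ — this is precisely why Proposition~\ref{prop:KBfunction} insists $χ$ takes values in $\mathbb{U}_6\cap K$ — so the reduction $whe_B\equiv χ(h)we_B$ still expresses $whe_B$ as a $K$-multiple of a chosen coset representative plus higher terms, and the count $\lvert W\rvert/\lvert K_B\rvert$ is unaffected because $\dim_K\Aug^χ(K_B)=\dim_K\Aug(K_B)$. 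Once spanning and the cardinality match are both in hand, linear independence is automatic, completing the proof.
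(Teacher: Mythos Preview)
Your proposal is correct and follows essentially the same approach as the paper's own proof: deduce the dimension formula from Corollary~\ref{cor:dimension} and Proposition~\ref{prop:KBfunction}, then show spanning via the filtration by the ideals $I^{(K)}_r$, using Lemma~\ref{lem:upperbound} together with Corollary~\ref{cor:nonadm} for non-admissible $B$ and the relation $(h-χ(h))e_B\in I^{(K)}_{\lvert B\rvert+1}$ for admissible $B$. The paper's proof is organized in exactly this way, including the same reduction $w_2e_B=χ(h)w_1e_B+w_1(h-χ(h))e_B\in Kw_1e_B+I^{(K)}_{\lvert B\rvert+1}$ and the same conclusion that spanning plus matching cardinality yields a basis.
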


\begin{proof} Recall the following formula for the dimension of $\Br^K(W)$ of Corollary \ref{cor:dimension}: 
$$\dim_K \Br^K(W)=\sum_{B\in \mathcal{C}^K_{adm}} \frac{\lvert W \rvert }{\lvert \Stab(B) \rvert}\cdot \left(\dim_K \faktor{K\Stab(B)}{\Ann_{K\Stab(B)}(B)} \right) $$
By Proposition \ref{prop:KBfunction}, the dimension of  $K\Stab(B)/\Ann_{K\Stab(B)}(B)$ is $\dim_K K\Stab(B) - \dim_K \Aug(K_B)$
which, in turn, equals $\lvert \Stab(B) \rvert /\lvert K_B \rvert$. Replacing this in the above formula, gives the dimension 
of the statement.

Recall, now, the setting of Lemma \ref{lem:unionideals}, i.e. $\Br^K(W)$ is the union of the descending chain of ideals $I^{(K)}_r=\sum_{B\in \mathcal{C}, \lvert B \rvert \geq r} KWe_B$ of $\Br^K(W)$ (Definition \ref{def:ideals}). We show that, for all $r\in \mathbb{N}$, we have 
 
\begin{equation}\label{eq:indstep}
I^{(K)}_{r}=\sum_{B\in \mathcal{C}^K_{adm}, w\in W/K_B} Kwe_B + I^{(K)}_{r+1}.
\end{equation}
Induction on $r$ then yields that the set $\{we_B\big| B\in \mathcal{C}^K_{adm}, w\in W/K_B\}$ spans $\Br^K(W)$. Since its cardinality is equal to $\dim_K \Br^K(W)$ as this is given above, it is a basis.
 
 To show Equation \eqref{eq:indstep}, recall that, by Lemma \ref{lem:upperbound}, for every $B\in \mathcal{C}$ of cardinality $r$ we have
$\Ann_{KW}(B)e_B\subseteq I^{(K)}_{r+1}$.

If $B\not \in\mathcal{C}^K_{adm}$, then Corollary \ref{cor:nonadm} implies that $\Ann_{KW}(B)=KW$, and, so, $KWe_B\subseteq I^K_{r+1}$. This already yields that 
$I^{(K)}_r=\sum_{B\in \mathcal{C}^K_{adm}} KWe_B + I^{(K)}_{r+1}$.

If $B\in \mathcal{C}^K_{adm}$, then, by Proposition \ref{prop:KBfunction}, we have $\Ann_{K\Stab(B)}(B)=\Aug^χ(K_B)$ for some 
$χ: K_B\to K$. Hence, since $\Ann_{K\Stab(B)}(B)\subseteq \Ann_{KW}(B)$ (Definition \ref{def:annihilators}), then $\Ann_{KW}(B)e_B\subseteq I^{(K)}_{r+1}$ implies that
$(h-χ(h))e_B\in I^{(K)}_{r+1}$ for all $h\in K_B$. Given this, if $w_1,w_2$ belong to the same coset of $K_B$, i.e. $w_2=w_1h$, for
some $h\in K_B$, then writing $w_2e_B=χ(h)w_1e_B+w_1(h-χ(h))e_B$ implies that $w_2e_B \in Kw_1e_B + I^{(K)}_{r+1}$, which finally yields Equation \eqref{eq:indstep}, and concludes the proof.

\end{proof}

\subsection{The infinite series}\label{subsec:Gmpn}

This subsection contains an analysis of the admissible collections and admissible pairs for the complex reflection groups in the infinite series. The reader may want to consult Subsection \ref{subsec:infiniteseries} for an overview of the necessary elements we will be using. In all statements, we will assume that $K$ is a proper field for the group in mention. 

We restate here Lemma \ref{lem:Gmpn0}.

\begin{lemma}\label{lem:Gmpn}
In $G(m,p,n)$,
\begin{enumerate}
\item if $i\neq j$, then $H_i$ and $H_j$ are non-transverse and the reflections that map one to the other (in any order) are the reflections $(ij)_κ, κ\in \mathbb{Z}$;

\item $H_{i'}$ and $H_{ij}^κ$ are transverse if and only if $i,j  \neq i'$; in any case, there exist no reflections mapping one to the other;

\item if $j_1\neq j_2$, then $H_{ij_1}^{κ_1}$ and $H_{ij_2}^{κ_2}$ are non-transverse and the only reflection mapping one to the other (in any order) is $(j_1j_2)_{κ_2-κ_1}$;

\item if $i_1,j_1 \neq i_2,j_2$, then $H_{i_1j_1}^{κ_1}$ and $H_{i_2j_2}^{κ_2}$ are transverse;

\item for $(m,p)\neq (2,2)$, if  $i\neq j$, then $H_{ij}^{κ_1}$ and $H_{ij}^{κ_2}$ are not transverse and the reflections mapping the former to the latter are  $(ij)_κ$, for $2κ\equiv κ_1+κ_2 \mod m$, and $t_i^{κ_1-κ_2}, t_j^{κ_2-κ_1}$, provided that $κ_2-κ_1 \equiv 0 \mod p$. 

\item if $(m,p)=(2,2)$, then $H_{ij}$ and $H_{ij}^{1}$ are transverse.  

\end{enumerate}
\end{lemma}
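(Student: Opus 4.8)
The plan is to reduce everything to the root criterion of Lemma \ref{lem:transroots} together with Lemma \ref{lem:transverse1}. Recall that, up to a nonzero scalar, the root of $H_i$ is the standard basis vector $e_i$ and the root of $H_{ij}^{\kappa}$ is $e_i-\zeta^{-\kappa}e_j$; so the root of a reflecting hyperplane of $G(m,p,n)$ is supported on exactly one coordinate (for the $H_i$) or on exactly two (for the $H_{ij}^{\kappa}$). The reflections of $G(m,p,n)$ are precisely the involutions $(ij)_\kappa$, with hyperplane $H_{ij}^{\kappa}$, together with the powers $t_i^{l}$, $1\le l<m/p$, with hyperplane $H_i$, the latter occurring only when $p\neq m$; in particular $H_i$ is a reflecting hyperplane of $G(m,p,n)$ \emph{if and only if} $p\neq m$, and this is the only reason cases (5) and (6) must be separated.

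For the transversality assertions (1)--(6), I would in each case form the two-dimensional subspace $V'\subseteq\mathbb{C}^n$ spanned by the roots of the two hyperplanes involved, and then list every reflecting hyperplane of $G(m,p,n)$ whose root lies in $V'$; by Lemma \ref{lem:transroots} the pair is transverse precisely when this list contains only the two given hyperplanes. The enumeration is elementary, because a one- or two-coordinate-supported vector can only lie in a prescribed two-dimensional subspace of $\mathbb{C}^n$ under severe restrictions. Concretely: if $\{i_1,j_1\}\cap\{i_2,j_2\}=\emptyset$ then $V'$ has a four-element coordinate support on which its two coordinate pairs vary independently, forcing any reflecting hyperplane with root in $V'$ to be one of the two given ones, which proves transversality in (4); for $H_{ij_1}^{\kappa_1}$ and $H_{ij_2}^{\kappa_2}$ with common index $i$ and $j_1\neq j_2$, the difference of the roots is a scalar multiple of $e_{j_1}-\zeta^{\kappa_1-\kappa_2}e_{j_2}$, which is the root of $H_{j_1 j_2}^{\kappa_2-\kappa_1}$, so a third hyperplane appears and the pair is non-transverse, giving (3); for $H_{ij}^{\kappa_1}$ and $H_{ij}^{\kappa_2}$ with $\kappa_1\not\equiv\kappa_2\pmod m$ one gets $V'=\langle e_i,e_j\rangle$, which contains the roots of $H_i,H_j$ when $p\neq m$ and of every $H_{ij}^{\lambda}$ when $m\ge 3$, so the pair is non-transverse except exactly when $(m,p)=(2,2)$, settling (5) and (6). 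Cases (1) and (2) are the same bookkeeping with one or both roots a standard basis vector: in (2) the list is $\{H_{i'},H_{ij}^{\kappa}\}$ when $i'\notin\{i,j\}$, while for $i'\in\{i,j\}$ one has $V'=\langle e_i,e_j\rangle$, which also contains the root of every $H_{ij}^{\lambda}$ with $\lambda\not\equiv\kappa\pmod m$, so the pair is non-transverse.

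For the assertions about which reflections carry one hyperplane to another, the tool is Lemma \ref{lem:transverse1}: if a reflection $s$ has $sH_1=H_2$ with $H_1\neq H_2$, then $H_1\cap H_2\subseteq H_s$, i.e. the root of $H_s$ lies in $V'=\langle\text{roots of }H_1,H_2\rangle$, so $H_s$ is one of the finitely many hyperplanes already listed in the transversality step. For each such candidate the reflections with that hyperplane are known explicitly, so one computes the image $s(H_1)=\{w\mid s^{-1}(w)\in H_1\}$ directly and retains the $s$ with $s(H_1)=H_2$. This yields the ``no reflection'' assertions at once: for the transverse pairs in (2), (4), (6) it is already part of Lemma \ref{lem:transverse1}, and for the non-transverse sub-case $i'\in\{i,j\}$ of (2) one checks that each of $t_i^{l}$, $t_j^{l}$, $(ij)_{\lambda}$ either fixes $H_{i'}$ or sends it to $H_j$, never to $H_{ij}^{\kappa}$. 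The same direct computation produces the reflections $(ij)_\kappa$ in (1), the unique reflection $(j_1 j_2)_{\kappa_2-\kappa_1}$ in (3) (no other candidate has the right image), and, in (5), the $(ij)_\kappa$ with $2\kappa\equiv\kappa_1+\kappa_2\pmod m$ together with the relevant powers of $t_i,t_j$; since $t_i$ sends $H_{ij}^{\kappa_1}$ to $H_{ij}^{\kappa_1+p}$, such powers exist exactly when $\kappa_2-\kappa_1\equiv 0\pmod p$.

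I expect the only real friction to be part (5): tracking the conjugation convention for roots so as to obtain $2\kappa\equiv\kappa_1+\kappa_2$ and not its mirror image, reading off the correct power of $t_i$ from the congruence $pl\equiv\kappa_2-\kappa_1\pmod m$, and keeping in mind that every clause mentioning $H_i$ tacitly assumes $p\neq m$ (when $p=m$, for instance $G(1,1,n)=S_n$, the hyperplanes $H_i$ do not exist and those clauses are vacuous). Everything else is a routine finite verification once the two root lemmas are in hand.
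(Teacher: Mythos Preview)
Your proposal is correct. The paper does not give a proof of this lemma at all---it explicitly leaves the verification to the reader---and the approach you outline, via the root criterion of Lemma~\ref{lem:transroots} for the transversality claims and the constraint $H_1\cap H_2\subseteq H_s$ of Lemma~\ref{lem:transverse1} to cut down the candidate reflections, is precisely the intended elementary check.
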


\begin{remark}
\label{remark:sumtransversality}
A quick way to summarize transversality according to this lemma is the following. In $G(m,p,n)$ for $(m,p)\neq (2,2)$, two hyperplanes expressed as above (notation of Subsection \ref{subsec:infiniteseries}) are not transverse if and only if they have at least one index in common; in $G(2,2,n)$ two hyperplanes are non-transverse if and only if they have exactly one index in common.
\end{remark}

\begin{remark}
\label{remark:conjugacyclasses}
From the above lemma it is also clear that, for $n>2$, all reflections $(ij)_κ$ belong to the same conjugacy class of $G(m,p,n)$ and so they share a common parameter in the definition of the Brauer-Chen algebra. We will denote this parameter by $μ$. 
\end{remark}

All mentioned transverse collections in this subsection are assumed to be non-empty. One can verify that if $B$ is the empty collection, then $\Ann_{K\Stab(B)}(B)=\Aug_{K\Stab(B)}(K_B)=0$.

\begin{lemma}\label{lem:hiproblem} Let $W=G(m,p,n)$, and $B\in \mathcal{C}$ be such that $\lvert B \rvert >1$ and $H_i\in B$ for some $i=1,\dots, n$. Then, there is $s\in \mathcal{R}$ such that $μ_ss\in \Rel(B)$. In particular, $B\not\in \mathcal{C}^K_{adm}$.
\end{lemma}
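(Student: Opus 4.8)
The plan is to produce an explicit element of $\Rel(B)$ of the form $\mu_s s$ for some reflection $s$, which, since $\mu_s$ is a unit and $s$ acts invertibly on $\hat V$, forces $\hat V = 0$ for every $K\Stab(B)$-module $V$ with $\Rel(B)\hat V = 0$; by Proposition \ref{prop:admequiv} (characterization (4) of admissibility) this shows $B$ is not admissible. The natural place to look for such an element is inside the set $\Sigma_B = \bigcup_H \Sigma_B^H$, i.e. among the elements $\sigma^H_{H_1,H_2} = \sum_{s\in\mathcal R_{H_1\to H}}\mu_s s - \sum_{s\in\mathcal R_{H_2\to H}}\mu_s s$ for $H_1, H_2\in B$ non-transverse with $H$; if for a suitable choice one of the two sums is a single term $\mu_s s$ and the other sum is empty, we are done.

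First I would use the hypothesis $|B|>1$ together with $H_i\in B$ to pick a second hyperplane $H'\in B$, $H'\neq H_i$; since $H_i$ and $H'$ are transverse (they lie in the transverse collection $B$), Lemma \ref{lem:Gmpn} severely restricts what $H'$ can be. By parts (1) and (2) of that lemma, $H'$ cannot be any $H_j$, and if $H'=H_{ab}^\kappa$ then transversality with $H_i$ forces $i\notin\{a,b\}$. So $H'=H_{ab}^\kappa$ with $a,b\neq i$. Now I would take the auxiliary hyperplane $H := H_{ia}^\lambda$ for a convenient $\lambda$ (say $\lambda=0$, i.e. $H=H_{ia}$). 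By part (2), $H_i$ is non-transverse with $H_{ia}$, and by part (3) (with the common index $a$, and $i\neq b$), $H_{ab}^\kappa = H'$ is also non-transverse with $H_{ia}$. Thus both $H_i$ and $H'$ are non-transverse with $H$, so $\sigma^{H}_{H_i,\,H'}\in\Sigma_B^{H}\subseteq \Rel(B)$.

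The point is then to compute the two reflection sets. For $\mathcal R_{H'\to H}=\mathcal R_{H_{ab}^\kappa\to H_{ia}}$: since $j_1=b\neq a=j_2$ in the notation of part (3), there is exactly one reflection mapping $H_{ab}^\kappa$ to $H_{ia}$ (reindexing part (3) appropriately), so this sum is a single term $\mu_s s$. For $\mathcal R_{H_i\to H_{ia}}$: by part (2), even though $H_i$ and $H_{ia}$ are non-transverse, there are \emph{no} reflections mapping $H_i$ to $H_{ia}$ — this is exactly the exceptional clause "even in the case where $i'\in\{i,j\}$, there exist no reflections mapping one to the other." Hence that sum is empty, and $\sigma^{H}_{H_i,H'} = \mu_s s$ (up to sign) is the desired element of $\Rel(B)$. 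I should double-check the roles of $H_1$ and $H_2$ in the definition of $\sigma^H_{H_1,H_2}$ so that the nonempty sum survives and the empty one drops, swapping $H_i\leftrightarrow H'$ if necessary; this only affects an overall sign, which is irrelevant since $\mu_s$ is a unit.

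The main obstacle — and it is a minor one — is bookkeeping: making sure the chosen auxiliary hyperplane $H$ is genuinely non-transverse with \emph{both} $H_i$ and the second hyperplane $H'$ simultaneously, across the (few) possible shapes of $H'$ forced by transversality with $H_i$. Since the only hyperplanes transverse-compatible with $H_i$ that can appear in $B$ are of the form $H_{ab}^\kappa$ with $a,b\neq i$, a single uniform choice $H=H_{ia}$ works in all cases, so no real case analysis is needed. Once the element $\mu_s s\in\Rel(B)$ is exhibited, the conclusion $B\notin\mathcal C^K_{adm}$ is immediate from Proposition \ref{prop:admequiv}: if $(B,V)$ were admissible with $V\neq 0$ then $\mu_s s\hat V=0$, forcing $\hat V=0$ since $\mu_s$ is invertible in $K$ and $s$ is invertible in $KW$, a contradiction.
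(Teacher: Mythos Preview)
Your proposal is correct and follows essentially the same approach as the paper: pick the second hyperplane $H'=H_{ab}^\kappa$ in $B$ (necessarily with $a,b\neq i$), take the auxiliary hyperplane $H=H_{ia}$, and observe that $\sigma^H_{H',H_i}\in\Sigma_B^H\subseteq\Rel(B)$ reduces to a single term $\mu_s s$ because $\mathcal R_{H_i\to H_{ia}}=\emptyset$ while $\mathcal R_{H_{ab}^\kappa\to H_{ia}}$ is a singleton. The paper's proof is identical up to a relabelling of indices (it writes $\beta=H_{i'j}^\kappa$ and $\gamma=H_{i'i}$, and identifies the resulting reflection explicitly as $(ij)_\kappa$).
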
 
\begin{proof} Since $Η_i, H_j$ are non-transverse for all $i,j$ (Lemma \ref{lem:Gmpn}), then $B$ can contain at most one such hyperplane. Let $α=H_i\in B$ and choose a second hyperplane in $B$, say $β$ which must be of the form $H_{i'j}^κ$. Let $γ=H_{i'i}$ ; then $γ\not \pitchfork α,β$, and so $σ^γ_{β,α}\in \Rel(B).$  Again, from Lemma \ref{lem:Gmpn}, we get that $σ^γ_{β,α}=μ(ij)_κ$, which implies the statement.
\end{proof}

\begin{proposition}\label{prop:onehi} Let $W=G(m,p,n)$ and $B=\{H_i\}$ for some $i$. Then, $K_B$ is the pointwise stabilizer $W_{H_i}$ of $H_i$, and has order $m/p$. Moreover, we have $\mathcal{A}W\Rel(B)=\mathcal{A}W(K_B-1)$.
\end{proposition}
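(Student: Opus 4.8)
The plan is to identify $K_B$ explicitly and then show the two-sided ideal of $\mathcal{A}W$ generated by $\Rel(B)$ coincides with the one generated by $K_B-1$. First I would establish that $K_B = W_{H_i}$. By Remark \ref{rem:KBforcardone}, since $B=\{H_i\}$ has cardinality one, $K_B$ is exactly the subgroup generated by $\mathcal{R}_B$, which by definition is the pointwise stabilizer $W_{H_i}$. In $G(m,p,n)$, the reflections with reflecting hyperplane $H_i$ are precisely the powers of $t_i$ (from the description of reflections in Subsection \ref{subsec:infiniteseries}, only the $t_i$ fix the hyperplane $z_i=0$ as a reflecting hyperplane), so $W_{H_i}=\langle t_i\rangle$ is cyclic of order $m/p$, the order of $t_i$. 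This gives the first two assertions. (One should also note that $\Stab(B)=\Stab(H_i)$ is the full monomial subgroup fixing the $i$-th coordinate line setwise, and $K_B$ is the cyclic subgroup acting on that line.)

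For the ideal equality, the inclusion $\mathcal{A}W(K_B-1)\subseteq\mathcal{A}W\Rel(B)$ is immediate: $\Rel(B)$ contains $\mathcal{R}_B-1$, hence contains $t_i-1$, and since $\mathcal{A}W\Rel(B)$ is a two-sided ideal (one checks $\Rel(B)$ is stable under $W$-conjugation, so $\mathcal{A}W\Rel(B)=\mathcal{A}W\Rel(B)W$) containing $t_i-1$ it contains $t_i^k-1$ for all $k$, hence all of $K_B-1=\langle t_i\rangle-1$. The substantive direction is $\mathcal{A}W\Rel(B)\subseteq\mathcal{A}W(K_B-1)$, for which I must show every generator of $\Rel(B)$ lies in the right-hand ideal. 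The generators are $t_i-1$ (already in $K_B-1$) and the elements $\sigma^H_{H_1,H_2}$ for $H_1,H_2\in B$ non-transverse with $H$. But $B=\{H_i\}$ is a singleton, so the only possibility is $H_1=H_2=H_i$, giving $\sigma^H_{H_i,H_i}=0$. Hence $\Rel(B)=\{t_i-1\}\cup\{0\}$, literally, and $\mathcal{A}W\Rel(B)=\mathcal{A}W(t_i-1)\subseteq\mathcal{A}W(K_B-1)$.

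Wait — $\Sigma_B^H$ is defined using pairs from $B$, so with $|B|=1$ the set $\Sigma_B$ is trivial and the argument is genuinely this short; the proposition is essentially a definitional unwinding once the identification $W_{H_i}=\langle t_i\rangle$ is made. The only point requiring a small argument is confirming that no reflection other than a power of $t_i$ has $H_i$ as its reflecting hyperplane, which follows from item (2) of Lemma \ref{lem:Gmpn} (no reflection maps $H_i$ to any $H_{ij}^\kappa$ or vice versa, and the $(ij)_\kappa$ have hyperplanes $H_{ij}^\kappa\neq H_i$) together with the fact that the reflections of $G(m,p,n)$ are exhausted by the two families listed. The main (very mild) obstacle is thus purely bookkeeping: being careful that $\Stab(B)$ here is the setwise stabilizer of the line $\mathbb{C}e_i$, that conjugation-stability of $\Rel(B)$ is what upgrades the one-sided span to a two-sided ideal, and that the degenerate case $|B|=1$ kills all the $\sigma$-relations.
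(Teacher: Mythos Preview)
Your proof is correct and follows essentially the same approach as the paper: identify $K_B=\langle t_i\rangle$ via Remark \ref{rem:KBforcardone}, then observe that $|B|=1$ forces $\Sigma_B=\{0\}$ so that $\Rel(B)=\mathcal{R}_B-1$, whence the two left ideals coincide.

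Two minor points worth cleaning up. First, your claim that $\mathcal{A}W\Rel(B)$ is a two-sided ideal because $\Rel(B)$ is stable under $W$-conjugation is not right: one has $w\Rel(B)w^{-1}=\Rel(wB)$, so conjugation-stability only holds under $\Stab(B)$, not all of $W$. Fortunately you do not need this: $t_i^k-1=(1+t_i+\cdots+t_i^{k-1})(t_i-1)$ already lies in the \emph{left} ideal $\mathcal{A}W(t_i-1)$, and in fact each $t_i^k-1$ is itself an element of $\mathcal{R}_B-1\subseteq\Rel(B)$ directly, since every nontrivial power of $t_i$ is a reflection with hyperplane $H_i$. Second, and relatedly, your final line writing $\Rel(B)=\{t_i-1\}\cup\{0\}$ ``literally'' is off for the same reason: $\mathcal{R}_B$ consists of \emph{all} nontrivial powers of $t_i$, not just $t_i$ itself, so $\Rel(B)=\{t_i^k-1:1\le k<m/p\}\cup\{0\}$. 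This does not affect the ideal generated, of course.
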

\begin{proof} 
One may check that the definition of $K_B$ gives the pointwise stabilizer of $H_i$, i.e. the subgroup generated by $\mathcal{R}_B=\{ t_i \}$, which has order $m/p$.  Furthermore, since $B$ consists of a single hyperplane, then $Σ^ H_{B}=\{0\}$, for all $H\in \mathcal{B}$, and, hence $\Rel(B)=\mathcal{R}_B-1$, which gives the result.
\end{proof}

Lemma \ref{lem:hiproblem} implies that an admissible transverse collection with more than one hyperplane must contain only hyperplanes of the form $H_{ij}^κ$. For the groups $G(m,p,n)$ with $(m,p)\neq (2,2)$, by Lemma \ref{lem:Gmpn}, such a collection is transverse if and only if every index $i$ and $j$ as above appears in at most one hyperplane in $B$. The following proposition shows that all such transverse collections are admissible.

\begin{proposition}\label{prop:classgeneral} Let $W=G(m,p,n)$ and $B$ a transverse collection of the form $\{H_{i_1j_1}^{κ_1},\dots, H_{i_rj_r}^{κ_r} \}$, with all indices distinct. Then $\mathcal{A}W\Rel(B)=\mathcal{A}W(K_B-1)$, and, in particular, $B\in \mathcal{C}^K_{adm}$. Finally, the order of $K_B$ is 
$2^rm^{r-1}r !$.
 
\end{proposition}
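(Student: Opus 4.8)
The plan is to compute $\Rel(B)$ explicitly using Lemma \ref{lem:Gmpn}, identify $K_B$, and then show that the $\mathcal{A}W$-module generated by $\Rel(B)$ coincides with the one generated by $K_B-1$; the count of $|K_B|$ will follow from an explicit description of $K_B$ as a group.

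First I would describe $K_B$. Since $B = \{H_{i_1j_1}^{κ_1},\dots,H_{i_rj_r}^{κ_r}\}$ with all $2r$ indices distinct, $\mathcal{R}_B = \{(i_1j_1)_{κ_1},\dots,(i_rj_r)_{κ_r}\}$, each of order $2$. I expect $K_B$ to be generated by $\mathcal{R}_B$ together with the elements $s_2^{-1}s_1$ from Definition \ref{def:KB}: the reflections $s$ with $sB\neq B$ but $sH = s'H$ failing on each moved hyperplane. Concretely, a single $t_{i_k}^\ell$ (a diagonal reflection, $\ell \not\equiv 0 \bmod p$) fixes every $H_{ij}^κ$ with $i,j \neq i_k$ and sends $H_{i_kj_k}^{κ_k}$ to $H_{i_kj_k}^{κ_k - p\ell'}$ (for appropriate $\ell'$), i.e. moves only the $k$-th hyperplane — so condition (c2) is vacuous there and products of such diagonal elements land in $K_B$. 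Also the "index-swapping within a pair" reflections $(i_kj_k)_{κ}$ for varying $κ$ give, via $s_2^{-1}s_1$, the diagonal elements $t_{i_k}^{κ-κ_k}t_{j_k}^{κ_k-κ}$ or similar, by Lemma \ref{lem:Gmpn}(5). Putting this together I expect $K_B$ to be exactly the subgroup of $\Stab(B)$ supported on the index set $\{i_1,j_1,\dots,i_r,j_r\}$ that preserves each pair $\{i_k,j_k\}$ setwise: for each $k$ a copy of $G(m,1,2)$ acting on coordinates $i_k,j_k$ (the monomial $2\times 2$ matrices with entries in $\mathbb{U}_m$), whose order is $2m^2$. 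But these must together lie in $\Stab(B) \subseteq W = G(m,p,n)$, so the global determinant-mod-$\mathbb{U}_{m/p}$ constraint cuts a factor of $p$; and one overall constraint rather than one per pair, giving $|K_B| = \frac{(2m^2)^r}{m} \cdot \text{(correction)}$. Reconciling this with the claimed $2^r m^{r-1} r!$ is where I must be careful: the factor $r!$ suggests $K_B$ also contains permutations of the $r$ pairs that are "blind" on each hyperplane — but a transposition of pair $k$ and pair $l$ does not fix $H_{i_kj_k}^{κ_k}$ unless it moves it, and then (c2) must be checked. Here is the key point: $\Stab(B)$ itself contains such pair-permutations, and $K_B \trianglelefteq \Stab(B)$; one checks a block transposition composed with a suitable reflection satisfies (c1)–(c2). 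So I expect $K_B \cong (\text{wreath-type group}) = \bigl(\mathbb{U}_m \wr S_2\bigr) \wr S_r$ intersected with the $G(m,p,n)$-determinant condition, of order $2^r m^{r} r! / (m/p) \cdot (1/p) = 2^r m^{r-1} r!$ when $p \mid$ everything appropriately — and verifying this order is the main obstacle of the proof.

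Next, for the equality $\mathcal{A}W\Rel(B) = \mathcal{A}W(K_B-1)$: one inclusion, $\mathcal{A}W(K_B-1)\subseteq \mathcal{A}W\Rel(B)$, I would get from Proposition \ref{prop:relann} combined with showing each generator $h-1$ of $\Aug(K_B)$ lies in $\pi_w(\mathcal{A}W\Rel(B))$ — essentially that $K_B$ is generated by $\mathcal{R}_B$ and the differences $s_2^{-1}s_1$, each of which is visibly built from $\Rel(B)$: the reflections $r\in\mathcal{R}_B$ give $r-1\in\Rel(B)$ directly, while $s_2^{-1}s_1 = s_2^{-1}(s_1 - s_2)$ and $s_1-s_2$ is (a summand of) some $σ^H_{H_1,H_2}\in Σ_B$ by Lemma \ref{lem:Gmpn} once we pick $H$ non-transverse to both $H_{i_k j_k}^{κ_k}$ and its image (e.g. $H = H_{i_k i'}$ for a fresh index, or using condition (c2) to ensure the relevant $\mathcal{R}_{H_1\to H},\mathcal{R}_{H_2\to H}$ are singletons so $σ^H$ really is $\mu(s_1-s_2)$). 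For the reverse inclusion I must show every element of $\Rel(B)$ lies in $\mathcal{A}W(K_B-1)$: the elements $r-1$, $r\in\mathcal{R}_B$, are there since $\mathcal{R}_B\subseteq K_B$; and for $σ^H_{H_1,H_2}$ with $H_1,H_2\in B$ non-transverse to $H$, I would use Lemma \ref{lem:Gmpn} to enumerate $\mathcal{R}_{H_1\to H}$ and $\mathcal{R}_{H_2\to H}$ and check case by case (only finitely many shapes of $H$ relative to $B$) that $σ^H_{H_1,H_2} = \sum \mu_s s - \sum \mu_{s'} s'$ can be rewritten, after factoring out a single group element on the left, as an $\mathcal{A}$-combination of $(h-1)$'s with $h\in K_B$; the point is that any two reflections appearing, one from each sum, differ by an element of $K_B$ because both map $B$ to the same collection and agree-or-(c2)-fails suitably. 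This is the routine-but-lengthy bookkeeping step, and I would organize it by the type of $H$: either $H$ shares one index with exactly one hyperplane of $B$ (then both sums are singletons and $σ^H = \mu(s-s')$ with $s^{-1}s'\in K_B$), or $H$ shares indices with two distinct hyperplanes of $B$ (then use Lemma \ref{lem:Gmpn}(3)), etc.

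Finally, $B\in\mathcal{C}^K_{adm}$ follows formally: by Proposition \ref{prop:relann}(2), $\Ann_{K\Stab(B)}(B) = \pi_1(KW\cdot\Rel(B))$, which by the equality just proven equals $\pi_1(KW(K_B-1)) = \Aug(K_B) \subsetneq K\Stab(B)$ since $K_B\neq \Stab(B)$ in general — more to the point, $\Aug(K_B)$ is a proper ideal, so by Corollary \ref{cor:nonadm} $B$ is admissible. The dimension count $\dim_K K\Stab(B)/\Ann_{K\Stab(B)}(B) = |\Stab(B)|/|K_B|$ then plugs into Theorem \ref{th:basisoverK}. I expect the whole argument to be uniform across $(m,p)$ with $(m,p)\neq(2,2)$; the case $(2,2)$ is excluded here since then hyperplanes $H_{ij}^{κ_1},H_{ij}^{κ_2}$ are transverse and the combinatorics of transverse collections differs (to be handled separately, as the section structure suggests).
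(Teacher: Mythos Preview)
Your overall strategy --- compute $\Rel(B)$ via Lemma \ref{lem:Gmpn}, identify $K_B$, and match --- is the same as the paper's, but your execution has a concrete error and a structural misstep that leave real gaps.

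The error: you claim that when $s_1 = t_{i_k}^\ell$ moves exactly one hyperplane of $B$, condition (c2) in Definition \ref{def:KB} is vacuous. It is not. If $|B\setminus s_1B|=1$, then the unique hyperplane $H$ in $B\setminus s_1B$ must be sent by both $s_1$ and $s_2$ to the unique hyperplane in $s_1B\setminus B$ (since $s_1B=s_2B$), forcing $s_1H=s_2H$ and violating (c2). So diagonal reflections never contribute generators of the form $s_2^{-1}s_1$; your description of $K_B$ and the ensuing order computation are built on a wrong foundation. This is also why your attempt to write $K_B$ as $(\mathbb{U}_m\wr S_2)\wr S_r$ cut by a determinant condition does not land on $2^r m^{r-1} r!$.

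The structural misstep: you try to determine $K_B$ from the abstract definition first and then match it to $\Rel(B)$. The paper does the reverse, which is much cleaner here: compute $\Rel(B)$ directly. The key point you never quite isolate is that if $\gamma=H_{ij}^{\kappa}$ is non-transverse to \emph{two} hyperplanes of $B$, then (since the indices in $B$ are all distinct) these must be $H_{ij'}$ and $H_{i'j}$ for some $H_{ij'},H_{i'j}\in B$, and Lemma \ref{lem:Gmpn}(3) gives $\sigma^{\gamma}_{H_{ij'},H_{i'j}}=\mu\bigl((j'j)_{\kappa}-(ii')_{\kappa}\bigr)$. Thus $\Rel(B)v=0$ is equivalent to $(K_B^0-1)v=0$ for the explicit group
\[
K_B^0=\bigl\langle (ij),\ (ii')_{\kappa}(j'j)_{\kappa}\ \big|\ H_{ij'},H_{i'j}\in B,\ \kappa\in\mathbb{Z}\bigr\rangle,
\]
which immediately yields $\mathcal{A}W\Rel(B)=\mathcal{A}W(K_B^0-1)$. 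Only \emph{then} does one check $K_B^0=K_B$ by verifying (c1)--(c2) for the generators and ruling out all other reflection pairs (including the diagonal ones, as above). The pair-permutations you were hunting for are exactly the $(ii')_{\kappa}(j'j)_{\kappa}$, arising not from guessing $K_B$ but from the $\sigma$-relations. The order $2^r m^{r-1} r!$ then follows by writing $K_B^0$ as a semidirect product of the permutation part (order $2^r r!$) and the diagonal part (diagonals with $\lambda_i=\lambda_j$ for each pair, zero outside $B$, and $\sum\lambda_i=0$; order $m^{r-1}$) --- no $p$-correction enters, since the sum-zero condition already forces the determinant constraint.
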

\begin{proof}Suppose, first,that $B=\{H_{i_1j_1},H_{i_2j_2},\dots ,H_{i_rj_r}\}$; we show at the end of the proof that the general case is implied under a suitable conjugation. 
Recall that $\Rel(B)=(\mathcal{R}_B-1) \cup (\cup_{γ\in \mathcal{H}}Σ^γ_B)$, where 
$Σ^γ_B=\{σ^γ_{α,β}\big|α,β \in B, γ\not\pitchfork α,β \}$ (Definition \ref{def:rel}).

To prove the statement, we show that for every $\mathcal{A}W$-module $V$ and $v\in V$, $\Rel(B)v=0$ is equivalent to $(K_B-1)v=0$; let $V$ be such a module and $v\in V$. The relation $(\mathcal{R}_B-1)v=0$ is equivalent to $((ij)-1)v=0$, for all $H_{ij}\in B$.

We turn, now, to the relations $Σ^γ_Bv=0$.
First, let $γ=H_i$ for some $i$. By Remark \ref{remark:sumtransversality}, two hyperplanes are non-transverse if and only if they share some index. Since all indices that appear in $B$ are distinct, this implies that $γ$ is non-transverse with at most one hyperplane in $B$, which implies that $Σ^γ_B=\{0\}$.

Now, let $γ=H_{ij}^κ$, for some $i,j,κ$. By the same observation, we see that if $γ$ is non transverse with more than one hyperplane in $B$, then the indices $i,j$ appear in two different elements of $B$. That means that there are $α= H_{ij'}, β=H_{ji'}\in B$, and these are the only hyperplanes of $B$ non-transverse with $γ$. Lemma \ref{lem:Gmpn} now gives:
$σ^γ_{α,β}=μ(j'j)_κ-μ(i'i)_{-κ}=μ((j'j)_κ-(ii')_κ),$ so, $σ^γ_{α,β}v=0$ is
$((j'j)_κ-(ii')_κ)\cdot v=0$, or, equivalently $((ii')_κ(j'j)_κ-1)\cdot v=0$. Hence, letting $i,j$ run through all possible distinct indices appearing in $B$, we get that $(\cup_{γ\in \mathcal{H}}Σ^γ_B)\cdot v=0$ is equivalent to 
$((ii')_κ(j'j)_κ-1)\cdot v=0$, for all $H_{ij'},H_{i'j}\in B$, and $κ\in \mathbb{Z}$.

This shows, already, that $\Rel(B)\cdot v=0$ is equivalent to $(K_B^0-1)v=0$, where $K_B^0$ is the group generated by $\{(ij)\big| H_{ij}\in B\} \cup \{(ii')_κ(j'j)_κ\big| H_{ij'},H_{i'j}\in B,κ\in \mathbb{Z}\}$. We show that this subgroup coincides with $K_B$. For that, since the set
$\{(ij)\big| H_{ij}\in B\}=\mathcal{R}_B$ is included in both generating sets, it is enough to show that $\{(ii')_k(j'j)_k\big| H_{ij'},H_{i'j}\in B,κ\in \mathbb{Z}\}$ is equal to the set of $s_2^{-1}s_1$ where $s_1,s_2$ satisfy conditions (c1), (c2) of Definition \ref{def:KB}, i.e.
$s_1B=s_2B\neq B$ and $s_1H\neq s_2H$ for all $H\in B\backslash s_1B$.

For $H_{ij'},H_{i'j}\in B$, and $κ\in \mathbb{Z}$ the reflections $(ii')_κ, (j'j)_κ$ fix all hyperplanes of $B$ but $H_{ij'},H_{i'j}$, which they map to $H_{j'i'}^κ,H_{ij}^{κ}$, and 
$H_{ij}^κ,H_{j'i'}^κ$, respectively, in that order. So, we can see that they verify the conditions (c1), (c2). Hence, $K_B^0\subseteq K_B$.

For the inverse inclusion, let $s_1,s_2\in \mathcal{R}$ satisfy (c1), (c2). We exclude certain cases for $s_1, s_2$. If $s_1$ is of the form $t_i$ for some $i$, then one can see that it fixes all but at most one hyperplane of $B$, in which case conditions (c1), and (c2), cannot be satisfied. This is also the case when $s_1$ is of the form $(ij)_κ$ with at most one of $i,j$ appearing in $B$, as well as with their appearing in the same hyperplane. Hence, $s_1,s_2$ must be of the form $(ij)_{λ_1},(i'j')_{λ_2}$ with $i,j$ and $i',j'$ appearing, respectively, in different hyperplanes of $B$. Now, all hyperplanes not containing $i,j$ will be fixed by $s_1$ and vice versa for $s_2$; conditions (c1), (c2) imply that $s_1,s_2$ fix the same hyperplanes of $B$, and this in turn implies that
$H_{ij'}, H_{i'j}\in B$ or $H_{ii'},H_{jj'}\in B$. Both cases are equivalent, so suppose the second is true.
In that case, since $s_1B=s_2B$, we must have $λ_1=λ_2$, which implies that 
$s_2^{-1}s_1\in K_B^0$. Hence $K_B=K_{B}^0$, and this shows that $\Rel(B)v=0$ is equivalent to $(K_B-1)v=0$. As far as the order of $K^0_B$ is concerned we will calculate it in the appendix of this article, where we will also obtain an alternative characterization in terms of matrices.

Now, for the general case, notice that  
$\{H_{i_1j_1}^{κ_1},\dots ,H_{i_rj_r}^{κ_r}\}$ is the image of $B$ via the element 
$\prod_{i=1}^r \hat{t}_{j_i}^{k_i}\in G(m,1,n)$, where $\hat{t}_{j_i}$ is the reflection with hyperplane $H_{j_i}$ in $G(m,1,n)$ acting as multiplication by $ζ$ of the $j_i$-th coordinate. Now, the action of $G(m,1,n)$ respects transversality, and, also $g \cdot \mathcal{R}_{H\to H'}\cdot g^{-1} =\mathcal{R}_{gH\to gH'}$ for all $g\in G(m,1,n)$ and $H,H'\in \mathcal{H}$. From this, one can see that $g\Rel(B)g^{-1}=\Rel(gB)$, as well as, 
$gK_Bg^{-1}=K_{gB}$, and hence, $\mathcal{A}\Rel(gB)=\mathcal{A}W(K_{gB}-1)$, which concludes the proof.
\end{proof}

The above proposition already concludes the classification of admissible collections and their corresponding pairs for the groups $G(m,p,n)$, with $(m,p)\neq (2,2)$. It also enables us to obtain a numerical formula for the dimension of the Brauer-Chen algebra in these cases.

\begin{proposition}\label{prop:gmpngeneral} Let $W=G(m,p,n)$, and suppose that $(m,p)\neq (2,2)$. Then a nonempty transverse collection $B$ is $K$-admissible if and only if it has cardinality $1$ or does not contain a hyperplane of the form $H_i$. In that case, $\Ann_{K\Stab(B)}(B)=\Aug(K_B)$. Finally, the dimension of $\Br^K(W)$ is equal to:

$$n! \frac{m^n}{p}+(1-δ_{pm})n!m^{n-1}n+\frac{m^{n+1}}{p}\sum_{r=1}^{2r\leq n} \left(\frac{n!}{r!2^r(n-2r)!}\right)^2(n-2r)!,$$
where $δ_{pm}$ is $1$ if $p=m$ and $0$ otherwise.

\end{proposition}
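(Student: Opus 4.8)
The plan has three steps, matching the three assertions of the proposition.

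\textbf{Step 1 (which transverse collections are admissible).} By Lemma~\ref{lem:hiproblem}, a transverse collection of cardinality greater than $1$ that contains some $H_i$ is not admissible, so only the two families named in the statement remain to be checked. A transverse collection of cardinality $1$ is either $\{H_i\}$ — which is a collection only when $p\neq m$, since $H_i$ is a reflecting hyperplane of $G(m,p,n)$ precisely then — or $\{H_{ij}^{κ}\}$: the first is admissible by Proposition~\ref{prop:onehi}, which yields $\mathcal{A}W\Rel(B)=\mathcal{A}W(K_B-1)$ and hence admissibility of $(B,V)$ for $V$ the trivial module, and the second by Proposition~\ref{prop:classgeneral} with $r=1$. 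A transverse collection of cardinality at least $2$ containing no $H_i$ consists only of hyperplanes $H_{ij}^{κ}$; since $(m,p)\neq(2,2)$, Remark~\ref{remark:sumtransversality} forces all indices occurring in its members to be pairwise distinct, so it has the form treated in Proposition~\ref{prop:classgeneral} and is admissible. This proves the first assertion, and in every admissible case it records the equality $\mathcal{A}W\Rel(B)=\mathcal{A}W(K_B-1)$.

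\textbf{Step 2 (the annihilator ideal).} Passing $\mathcal{A}W\Rel(B)=\mathcal{A}W(K_B-1)$ through $\mathcal{A}\to K$ gives $KW\Rel(B)=KW(K_B-1)$ as left $KW$-submodules of $KW$. Hence, for any $K\Stab(B)$-module $V$, the condition $\Rel(B)\hat V=0$ is equivalent to $(K_B-1)\hat V=0$; and since $K_B\subseteq\Stab(B)$ acts on $\hat V$ just as on $V$, this is in turn equivalent to $(K_B-1)V=0$, i.e. to $\Aug(K_B)V=0$. On the other hand $(B,V)$ is admissible if and only if $\Ann_{K\Stab(B)}(B)V=0$ (Proposition~\ref{prop:admequiv}). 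Thus the two-sided ideals $\Ann_{K\Stab(B)}(B)$ and $\Aug(K_B)$ of $K\Stab(B)$ annihilate exactly the same $K\Stab(B)$-modules; taking $V$ to be $K\Stab(B)$ modulo each of them in turn shows $\Ann_{K\Stab(B)}(B)=\Aug(K_B)$, which is the second assertion (and exhibits the character $χ$ of Proposition~\ref{prop:KBfunction} as trivial here).

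\textbf{Step 3 (the dimension).} Since $K_B\trianglelefteq\Stab(B)$, the ideal $\Aug(K_B)$ is the kernel of $K\Stab(B)\twoheadrightarrow K[\Stab(B)/K_B]$, so $\dim_K\!\bigl(K\Stab(B)/\Aug(K_B)\bigr)=|\Stab(B)|/|K_B|$; combined with Corollary~\ref{cor:dimension} and Step~2 this gives $\dim_K\Br^K(W)=\sum_{B\in\mathcal{C}^K_{adm}}|W|/|K_B|$, where $|W|=n!\,m^n/p$. I would then sum over the families of Step~1. The empty collection has $K_B=1$ and contributes $n!\,m^n/p$. There are $n$ singletons $\{H_i\}$, present precisely when $p\neq m$, each with $|K_B|=m/p$ by Proposition~\ref{prop:onehi}, contributing $(1-δ_{pm})\,n\cdot n!\,m^{n-1}$ in total. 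For each $r$ with $1\le 2r\le n$, a collection $\{H_{i_1j_1}^{κ_1},\dots,H_{i_rj_r}^{κ_r}\}$ with all indices distinct is the datum of an $r$-edge partial matching of $\{1,\dots,n\}$ together with a label in $\mathbb{Z}/m$ on each edge, so there are $\tfrac{n!}{(n-2r)!\,2^r r!}\cdot m^r$ of them; each has $|K_B|=2^r m^{r-1} r!$ by Proposition~\ref{prop:classgeneral}, so their combined contribution is $\tfrac{(n!)^2 m^{n+1}}{p\,2^{2r}(r!)^2(n-2r)!}=\tfrac{m^{n+1}}{p}\bigl(\tfrac{n!}{r!\,2^r(n-2r)!}\bigr)^2(n-2r)!$. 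Adding the three contributions and summing the last over $r$ produces the stated formula, once one checks — as Step~1 provides — that the three families are exhaustive and pairwise disjoint, in particular that the cardinality-one collections not of the form $\{H_i\}$ are exactly the $r=1$ collections.

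\textbf{Main obstacle.} None of the steps is conceptually deep, given Lemma~\ref{lem:hiproblem} and Propositions~\ref{prop:onehi} and~\ref{prop:classgeneral}; the bulk of the work is the bookkeeping of Step~3 — organising $\mathcal{C}^K_{adm}$ into the three families without overlap (which is exactly where the hypothesis $(m,p)\neq(2,2)$ enters, through the transversality relations of Lemma~\ref{lem:Gmpn}) and matching the resulting count of labelled partial matchings against the multinomial expression in the statement.
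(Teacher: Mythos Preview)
Your proof is correct and follows essentially the same route as the paper: the classification of admissible collections via Lemma~\ref{lem:hiproblem} and Propositions~\ref{prop:onehi}, \ref{prop:classgeneral}, then the identification $\Ann_{K\Stab(B)}(B)=\Aug(K_B)$, then the counting. The one minor variation is in Step~2: the paper computes $\Ann_{K\Stab(B)}(B)$ via the projection formula $\Ann_{K\Stab(B)}(B)=\pi_1(KW\cdot\Rel(B))$ of Proposition~\ref{prop:relann}, whereas you argue directly from Proposition~\ref{prop:admequiv} that the two two-sided ideals annihilate the same modules and hence coincide --- both are valid and amount to the same thing.
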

\begin{proof} The characterisation of admissible collections is a consequence of the previous results and discussion. Now, Propositions \ref{prop:onehi} and \ref{prop:classgeneral} imply that if $B\in \mathcal{C}^K_{adm}$, then $KW\Rel(B)=KW\Aug(K_B)$. Proposition \ref{prop:relann} tells us that
$\Ann_{K\Stab(B)}(B)$ is equal to $π_1(KW\Rel(B))$, where $π_w$ denotes the projection of $KW$ onto the coset $wK\Stab(B)$. Now, by Proposition \ref{prop:classgeneral} the latter is equal to 
$π_1(KW(K_B-1))$ which is in turn equal to $\Aug(K_B)$ since $K_B-1$ lies inside $K\Stab(B)$. This gives the first part of the statement. In particular, we have that $\dim_K(K\Stab(B)/\Ann_{K\Stab(B)})=\lvert \Stab(B) \rvert/ \lvert K_B \rvert $.

Now, recall, the formula for the dimension from Corollary \ref{cor:dimension}:

$$\sum_{B\in \mathcal{C}^K_{adm}}\frac{\lvert W \rvert }{\lvert \Stab(B) \rvert }\cdot \dim_K\left(\faktor{K\Stab(B)}{\Ann_{K\Stab(B)}(B)}\right).$$

Replacing the above into this formula we get:

$$\lvert W \rvert + \sum_{B\in \mathcal{C}^K_{adm}, B\neq \emptyset} \frac{\lvert W \rvert }{\lvert K_B \rvert}.$$

If $p\neq m$ we have the collections $\{H_i\}$ with $\lvert K_B \rvert =m/p$.  
Apart from them, for all collections of the form $\{H_{i_1j_1}^{κ_1},\dots H_{i_rj_r}^{κ_r}\}$ the group $K_B$, by Proposition \ref{prop:classgeneral} has order $2^{r}m^{r-1}r!$, and for each $r\leq n/2$ the number of such collections is:

$$\frac{{n \choose 2}{n-2 \choose 2}\dots {n-2(r-1) \choose 2}m^r}{r!}=\frac{n!m^r}{2^r(n-2r)!r!}.$$
So the sum $\sum_B \lvert W \rvert/\lvert K_B \rvert,$
where $B$ runs all such collections of size $r$ is

$$\frac{n!m^r}{2^r(n-2r)!r!}\frac{\lvert W \rvert}{\lvert K_B \rvert}=\frac{m^{n+1}}{p}\left(\frac{n!}{r!2^r(n-2r)!}\right)^2(n-2r)!.$$
Adding it all together, one obtains the expression of the statement.
\end{proof}

We turn, now, to the groups $G(2,2,n)$ which only contain hyperplanes of the form $H_{ij}$ and $H^1_{ij}$, since $H_{ij}^κ=H_{ij}^{κ'}$ if $κ\equiv κ' \mod 2$. Remark \ref{remark:sumtransversality} implies that such a collection $B$ is transverse if and only if for every $H_{ij}^κ\in B$, $i,j$ do not appear as indices in any other hyperplane of $B$ apart from, possibly, $H_{ij}^{κ+1}$.

\begin{proposition}\label{prop:g22nnonadm} Let $W=G(2,2,n)$ and $B$ a transverse collection such that $H_{i_1j_1},H_{i_1j_1}^1,H_{i_2j_2}^κ\in B$ but $H_{i_2j_2}^{κ+1}\not\in B$, for some $i_1,j_1,i_2,j_2$. Then, $B\not\in \mathcal{C}^K_{adm}$.
\end{proposition}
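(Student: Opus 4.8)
The plan is to produce a single element of $\Rel(B)$ which, after projecting onto an appropriate coset of $\Stab(B)$, is an invertible scalar times a group element; since $\Rel(B)\subseteq\Ann_{KW}(e_B)$, Definition \ref{def:annihilators} then puts a unit of $K$ into $\Ann_{K\Stab(B)}(B)$, so $\Ann_{K\Stab(B)}(B)=K\Stab(B)$, and Corollary \ref{cor:nonadm} gives $B\notin\mathcal{C}^K_{adm}$.

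First I would extract the combinatorics forced by the hypothesis. In $G(2,2,n)$ one has $ζ=-1$, so $H_{i_2j_2}^κ$ and $H_{i_2j_2}^{κ+1}$ are the only two hyperplanes with index set $\{i_2,j_2\}$, and by Remark \ref{remark:sumtransversality} two hyperplanes of $G(2,2,n)$ are transverse iff their index sets meet in $0$ or $2$ indices. Since $B$ is transverse and contains $H_{i_1j_1}$ and $H_{i_2j_2}^κ$, the sets $\{i_1,j_1\}$ and $\{i_2,j_2\}$ meet in $0$ or $2$ indices; if they met in $2$, then $\{i_1,j_1\}=\{i_2,j_2\}$, so $H_{i_2j_2}^κ\in\{H_{i_1j_1},H_{i_1j_1}^1\}$, and then also $H_{i_2j_2}^{κ+1}\in\{H_{i_1j_1},H_{i_1j_1}^1\}\subseteq B$, contradicting the hypothesis. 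Hence $i_1,j_1,i_2,j_2$ are four distinct indices (in particular $n\geq4$), and any further hyperplane of $B$ has index set disjoint from $\{i_1,j_1,i_2,j_2\}$.

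Next I would pick out the relation. Put $γ:=H_{i_1i_2}$, $H_1:=H_{i_1j_1}$, $H_2:=H_{i_1j_1}^1$. The index set of $γ$ meets that of each of $H_1,H_2$ in the single index $i_1$, so $γ\not\pitchfork H_1$, $γ\not\pitchfork H_2$, and $γ\neq H_1,H_2$; thus $σ^γ_{H_1,H_2}\in Σ_B^γ\subseteq\Rel(B)$. Lemma \ref{lem:Gmpn}(3) gives $\mathcal{R}_{H_1\to γ}=\{(j_1i_2)\}$ and $\mathcal{R}_{H_2\to γ}=\{(j_1i_2)_1\}$, so $σ^γ_{H_1,H_2}=μ\,(j_1i_2)-μ\,(j_1i_2)_1$. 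The key point is that $(j_1i_2)$ and $(j_1i_2)_1$ lie in distinct left cosets of $\Stab(B)$: a direct computation (using $ζ=-1$) identifies $(j_1i_2)_1(j_1i_2)$ with the element of $G(2,2,n)$ acting as $-1$ on the coordinates $z_{j_1},z_{i_2}$ and as $1$ on the rest, and this element sends $H_{i_2j_2}^κ\in B$ to $H_{i_2j_2}^{κ+1}\notin B$, so it does not lie in $\Stab(B)$; hence $(j_1i_2)\Stab(B)\neq(j_1i_2)_1\Stab(B)$. Therefore $π_{(j_1i_2)}(σ^γ_{H_1,H_2})=μ\,(j_1i_2)$, whence $(j_1i_2)^{-1}π_{(j_1i_2)}(σ^γ_{H_1,H_2})=μ\cdot1\in\Ann_{K\Stab(B)}(B)$, and since $μ$ is a unit of $K$ the claim follows as above. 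Equivalently, in module language: if $(B,V)$ were admissible with $V\neq0$, then $σ^γ_{H_1,H_2}\hat V=0$ forces $(j_1i_2)\hat V=(j_1i_2)_1\hat V$ inside $\Ind_{K\Stab(B)}^{KW}V=\oplus_{w\in W/\Stab(B)}w\hat V$, and as these two summands are distinct this common subspace is $0$, so $V=0$. I expect the only non-routine step to be this explicit description of $(j_1i_2)_1(j_1i_2)$ and the verification that it moves $H_{i_2j_2}^κ$ off $B$; the rest is unwinding Lemma \ref{lem:Gmpn} and the definitions of $Σ_B^γ$ and $π_w$.
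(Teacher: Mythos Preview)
Your proof is correct and follows essentially the same approach as the paper: pick a suitable $σ^γ_{H_1,H_2}\in\Rel(B)$ whose two summands land in different left cosets of $\Stab(B)$, so that projecting yields a unit in $\Ann_{K\Stab(B)}(B)$. The only difference is the specific choice of triple: the paper takes $γ=H_{i_1j_2}$ together with $α=H_{i_1j_1}$ and $β=H_{i_2j_2}^κ$ (two hyperplanes of $B$ with disjoint index sets), obtaining $μ((j_1j_2)-(i_1i_2)_κ)$, whereas you take $γ=H_{i_1i_2}$ together with $H_{i_1j_1}$ and $H_{i_1j_1}^1$ (the pair in $B$ with equal index sets), obtaining $μ((j_1i_2)-(j_1i_2)_1)$. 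Both choices work; your coset verification via the diagonal element $(j_1i_2)_1(j_1i_2)=\diag(\dots,-1_{j_1},\dots,-1_{i_2},\dots)$ sending $H_{i_2j_2}^κ$ to $H_{i_2j_2}^{κ+1}\notin B$ is a clean alternative to the paper's direct image computation.
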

\begin{proof} Let $V$ be a $K\Stab(B)$-module. We show that $\Rel(B)\cdot \hat{V}=0$  implies $V=0$ (recall that $\hat{V}$ is the image of $V$ in the induced module of $KW$). For this, take the element $σ^γ_{α,β}\in \Rel(B)$ for $α=H_{i_1j_1}, β=H_{i_2j_2}^κ$ and $γ=H_{i_1j_2}$, and see that $σ^ γ_{α,β}\hat{V}=0$ gives $μ((j_1j_2)-(i_1i_2)_κ)\hat{V}=0$. Now, we will show that the reflections $(j_1j_2)$ and $(i_1i_2)_κ$ belong to different cosets of $\Stab(B)$; hence, $μ((j_1j_2)-(i_1i_2)_κ)\hat{V}=0$ implies $(i_1i_2)_κ\hat{V}=(i_1i_2)_κ\hat{V}=0$ which yields, of course, $V=0$.

To verify that $(j_1j_2),(i_1i_2)_κ$ belong to different cosets of $\Stab(B)$, notice that the only hyperplanes that are not fixed by $(j_1j_2),(i_1i_2)_κ$ are $H_{i_1j_1},H_{i_1j_1}^1$ and $H_{i_2j_2}^κ$, so it suffices to check their respective images via $(j_1j_2),(i_1i_2)_κ$. For this,
$(j_1j_2)$ maps $H_{i_1j_1},H_{i_1j_1}^1,H_{i_2j_2}^κ$ to $H_{i_1j_2},H_{i_1j_2}^1,H_{i_2j_1}^κ$, and $(i_1i_2)_κ$ maps the same hyperplanes to 
$H_{i_2j_1}^κ,H_{i_2j_1}^{κ+1},H_{i_1j_2}^{2κ}$. One can see now that $(j_1j_2)B\neq (i_1i_2)_κB$, since, for example, $H_{i_1j_2}^1$ belongs to the first collection but not the second. This concludes the proof. 
\end{proof}

\begin{proposition}\label{prop:classg22n} Let $W=G(2,2,n)$ and $B$ a collection of transverse hyperplanes of the form 
$\{H_{i_1j_1},H_{i_1j_1}^1, \dots ,H_{i_rj_r},H_{i_rj_r}^1\}$. Then $\mathcal{A}W\Rel(B)=\mathcal{A}W(K_B-1)$ and, in particular, $B\in \mathcal{C}^K_{adm}$. Finally, the order of $K_B$ is $2^{r+n-1}r!$.

\end{proposition}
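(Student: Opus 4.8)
The plan is to follow the proof of Proposition \ref{prop:classgeneral} closely. The one genuinely new feature of $G(2,2,n)$ is that here $\zeta=-1$, so a product $(ab)_{\kappa_1}(ab)_{\kappa_2}$ of reflections with $\kappa_1\neq\kappa_2$ is the \emph{sign change} $u_{ab}$ negating exactly the coordinates $z_a,z_b$ (with $u_{ab}=(ab)_1(ab)_0$); it is this element, not a generic one, that the relations in $\Rel(B)$ will produce. First I would reduce to the case where $B$ is the standard collection with $\ell$-th block $\{H_{2\ell-1,2\ell},H^1_{2\ell-1,2\ell}\}$: since $S_n\subseteq G(2,2,n)$, a permutation carrying the index-pairs $\{i_\ell,j_\ell\}$ to $\{2\ell-1,2\ell\}$ conjugates $B$ to this collection, and conjugation by such a $g$ sends $\Rel(B)$ to $\Rel(gB)$ and $K_B$ to $K_{gB}$, exactly as at the end of the proof of Proposition \ref{prop:classgeneral}.

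As there, it suffices to show that for every $\mathcal{A}W$-module $V$ and $v\in V$ the relation $\Rel(B)v=0$ is equivalent to $(K_B-1)v=0$; this gives $\mathcal{A}W\Rel(B)=\mathcal{A}W(K_B-1)$, whence $\Ann_{K\Stab(B)}(B)=\Aug(K_B)$ by Proposition \ref{prop:relann}(2) (as in Proposition \ref{prop:gmpngeneral}), which is a proper ideal, so $B\in\mathcal{C}^K_{adm}$ by Corollary \ref{cor:nonadm}. Recall $\Rel(B)=(\mathcal{R}_B-1)\cup\bigcup_\gamma \Sigma^\gamma_B$. From $\mathcal{R}_B-1$ one gets $((i_\ell j_\ell)_\kappa-1)v=0$ for all $\ell,\kappa$, hence $(u_{i_\ell j_\ell}-1)v=0$ since $u_{i_\ell j_\ell}=(i_\ell j_\ell)_1(i_\ell j_\ell)_0$. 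Since $B$ is a disjoint union of full blocks, Remark \ref{remark:sumtransversality} shows the hyperplanes $\gamma=H^\lambda_{ab}$ with $\Sigma^\gamma_B\neq\{0\}$ are precisely those meeting exactly one index of some block, the other index being either (A) free, in which case $\gamma$ is non-transverse with exactly the two hyperplanes of that block, or (B) lying in a second block, in which case $\gamma$ is non-transverse with exactly the four hyperplanes of those two blocks. Evaluating the $\sigma^\gamma_{\alpha,\beta}$ by Lemma \ref{lem:Gmpn}(3) (each reduces to $\mu(s-s')$ with $s,s'$ single reflections), one finds that $(s')^{-1}s$ is: a sign change $u_{cd}$ when $\alpha,\beta$ lie in the same block — this produces all $u_{cd}$ with $\{c,d\}$ split between a block and a free index, or between two different blocks; or an element of $\Stab(B)$ whose permutation part transposes two blocks and fixes the rest — a ``block swap'', occurring in both orientations and with various sign patterns as one varies which index of each block $\gamma$ meets — when $\alpha,\beta$ lie in different blocks. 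Since every $u_{cd}$ with $c,d$ both free equals $u_{ec}u_{ed}$ for a block index $e$, we conclude that $\Rel(B)v=0$ forces $(u_{cd}-1)v=0$ for all $c\neq d$ and $(c_0-1)v=0$ for every block swap $c_0$. Conversely, every element of $\Rel(B)$ has the form $s-1$ or $\mu s'\bigl((s')^{-1}s-1\bigr)$ with $s,s',(s')^{-1}s$ in $K_B^0:=\langle\mathcal{R}_B,\ \{u_{cd}:c\neq d\},\ \text{all block swaps}\rangle$, so $(K_B^0-1)v=0$ implies $\Rel(B)v=0$. Hence $\mathcal{A}W\Rel(B)=\mathcal{A}W(K_B^0-1)$.

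Next I would identify $K_B^0$ with $K_B$. For $K_B^0\subseteq K_B$: $\mathcal{R}_B\subseteq K_B$ by definition, $u_{i_\ell j_\ell}=(i_\ell j_\ell)_1(i_\ell j_\ell)_0\in\langle\mathcal{R}_B\rangle$, and each remaining $u_{cd}$ and each block swap is either exhibited as $s_2^{-1}s_1$ with $(s_1,s_2)$ the very pair produced above and satisfying (c1)--(c2) of Definition \ref{def:KB} (for instance $u_{i_\ell b}=(i_\ell b)_1(i_\ell b)_0$), or written as a product of such elements. For $K_B\subseteq K_B^0$: $K_B$ is generated by $\mathcal{R}_B$ and the elements $s_2^{-1}s_1$ with (c1)--(c2); condition (c1) forces $s_1\notin\Stab(B)$, hence $s_1=(ab)_\kappa$ with at least one of $a,b$ a block index (and not both in the same block, again by (c1)), and a short check on the index pattern of $s_1$ shows $s_2$ is then forced into a short list, with $s_2^{-1}s_1$ in every case a sign change $u_{cd}$ or a block swap, so in $K_B^0$. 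Thus $K_B=K_B^0$.

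Finally, for the order: $N:=\langle u_{cd}:c\neq d\rangle$ is exactly the group of even-weight sign changes, of order $2^{n-1}$, it is normal in $K_B$, and $K_B/N$ is generated by the images of the within-block transpositions $(i_\ell j_\ell)_0$ and of the block swaps, which together generate the wreath product $S_2\wr S_r$, of order $2^r r!$, acting on the $2r$ block indices and fixing the $n-2r$ free ones. The permutation-part homomorphism realises $K_B\twoheadrightarrow S_2\wr S_r$ with kernel $N$ (the diagonal elements of $K_B$, which lie in $G(2,2,n)$ hence have even weight), so $|K_B|=2^{n-1}\cdot 2^r r!=2^{r+n-1}r!$. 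The main obstacle is the bookkeeping of the second and third paragraphs: enumerating the auxiliary hyperplanes $\gamma$, evaluating all the products $(s')^{-1}s$ and $s_2^{-1}s_1$ in $G(2,2,n)$, and verifying that both orientations of block swap together with all their admissible sign patterns are accounted for; once this is organised, the identification $K_B^0=K_B$ and the order count are routine.
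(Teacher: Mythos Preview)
Your proposal is correct and follows essentially the same strategy as the paper: reduce the equivalence $\Rel(B)v=0\Leftrightarrow(K_B-1)v=0$ to identifying an explicitly generated subgroup $K_B^0$ with $K_B$, the generators of $K_B^0$ being the sign changes $u_{cd}$ and the block-swap elements $(ii')_\kappa(jj')_\kappa$ you extract from $\Sigma_B$. The paper's generator list for $K_B^0$ is written differently but is the same group, and its order computation (deferred to the appendix) is exactly your semidirect product argument $N\rtimes(S_2\wr S_r)$.

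One point worth noting: for the inclusion $K_B\subseteq K_B^0$ you propose a direct case analysis on the index pattern of $s_1$. The paper avoids this bookkeeping with a uniform argument that exploits the special features of $G(2,2,n)$: since for every pair of hyperplanes there is at most one reflection mapping one to the other, and all reflections are conjugate, given any $s_1,s_2$ satisfying (c1)--(c2) one simply picks $H'\in B$ with $s_1H'\neq H'$, sets $H=s_1H'$ and $H''=s_2^{-1}H\in B$, and observes that $\sigma^H_{H',H''}=\mu(s_1-s_2)\in\Sigma_B$ directly. This gives $s_2^{-1}s_1\in K_B^0$ without any casework, and would replace the ``short check'' you flag as the main obstacle.
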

\begin{proof} We show, as we did in Proposition \ref{prop:classgeneral}, that for all $\mathcal{A}W$-modules $V$ and $v\in V$, $\Rel(B)v=0$ is equivalent to $(K_B-1)v=0$. So let $V$ be an $\mathcal{A}W$-module and $v\in V$.

The relations $(\mathcal{R}_B-1)v=0$ give $((ij)-1)v=((ij)_1-1)v=0$ for all $H_{ij}\in B$.
Next we see what the relations $Σ^γ_Bv=0, γ\in \mathcal{H}$ yield. Let $γ=H_{ij}^κ$, for some $i,j,κ$. By Remark \ref{remark:sumtransversality}, two hyperplanes 
of $G(2,2,n)$ are non-transverse if and only if they have exactly one index in common. So, if $γ$ is non-transverse with more than one element of $B$ (otherwise $Σ^γ_B=\{0\}$), then either hyperplanes $a=H_{i_1j}, b=H_{i_1j}^1, c=H_{ij_1, }, d=H_{ij_1}^1$ belong to $B$, in which case, these are the only hyperplanes $γ$ is non-transverse with, or $α=H_{ij_1}, β=H_{ij_1}^1\in B$, and $j$ does not appear in any hyperplane of $B$, in which case $α,β$ are the only hyperplanes of $B$ with which $γ$ is non-transverse.

For the first case, by Lemma \ref{lem:Gmpn}, for each of the hyperplanes $a,b,c,d$, there is a unique reflection mapping them to $γ$. These are, respectively, 
$r_1=(i_1i)_κ, r_2=(i_1i)_{κ+1}, r_3=(j_1j)_κ, r_4=(j_1j)_{κ+1}$. So, $Σ^γ_Bv=0$ yields $6$ relations (all the possible differences of the above reflections), which are, equivalent, for example, to
$μ(r_1-r_2)v=μ(r_2-r_3)v=μ(r_3-r_4)v=0$, or, equivalently $(r_2^{-1}r_1-1)v=(r_3^{-1}r_2-1)v=(r_4^{-1}r_3-1)v=0$. 

The second case yields the relation $σ^γ_{α,β}v=0$ which, by Lemma \ref{lem:Gmpn}, is $(u_1-u_2)v=0$, or equivalently, $(u_2^{-1}u_1-1)v=0$, where
$u_1=(j_1j)_κ$ and $u_2=(j_1j)_{κ+1}$.  

Given the above, one can see that letting $γ$ vary, and considering the relations 
$(\mathcal{R}_B-1)v=0$ as well, we obtain a subgroup $K_B^0$ for which $\Rel(B)v=0$ if and only if $(K_B^0-1)v=0$. Although it is not necessary for the rest of this proof, we mention here, for later reference, the following explicit description that we obtain for $K_B^0$:
$$K_B^0=\langle (ij_1)_κ, (i_1i)_κ(i_1i)_{κ+1},(i_1i)_κ(j_1j)_κ, (j_1j_2)_κ(j_1j_2)_{κ+1}\rangle,$$
with $H_{ij_1}, H_{i_1j}\in B, κ\in \mathbb{Z}$, and  $1\leq j_2 \leq n$. Elements of the first form in the above presentation come from the relations $(\mathcal{R}_B-1)v=0$. Elements of the second and third form come from the relations $(r_1r_2^{-1}-1)v=0$ and $(r_3r_1^{-1}-1)v=0$, respectively, and the elements of the fourth form from relations $(r_3r_4^{-1}-1)=0$ and $(u_1u_2^{-1}-1)v=0$, where, for the latter, we have replaced the variable $j$ with $j_1$ and $j_1$ with $j_2$ to give a more concise description. We show that $K_B^0$ coincides with $K_B$. 

For the inclusion $K_B^0\subseteq K_B$, since, by the definition of $K_B$ (Definition \ref{def:KB}), $\mathcal{R}_B\subseteq K_B$, it suffices to show that $(u_1,u_2)$ as well as all pairs among the reflections $r_1,r_2,r_3,r_4$ satisfy conditions (c1), (c2) of the definition of $K_B$ i.e. $s_1B=s_2B\neq B$, and $s_1H\neq s_2H$ for all $H\in B\backslash s_1B$. 

For $u_1=(j_1j)_κ, u_2=(j_1j)_{κ+1}$, with $j$ not appearing in any hyperplane of $B$, the only hyperplanes of $B$ that are not left invariant by $u_1,u_2$ are $α=H_{ij_1},β=H_{ij_1}^1$ which are mapped via $u_1,u_2$, respectively, to $H_{ij}^κ,
H_{ij}^{κ+1}$ and $H_{ij}^{κ+1}, H_{ij}^{κ}$, in that order. One can see from this that  $u_1,u_2$ satisfy conditions (c1),(c2).

Similarly, for the pairs of reflections among $r_1,r_2,r_3,r_4$ computing the images of the quadruple  $(a,b,c,d)$ via these reflections we find, respectively, the following quadruples:
$(H_{ij}^{k}, H_{ij}^{κ+1}, H_{i_1j_1}^{κ}, H_{i_1j_1}^{κ+1})$, $(H_{ij}^{κ+1}, H_{ij}^{κ}, H_{i_1j_1}^{κ+1}, H_{i_1j_1}^{κ})$, $(H_{i_1j_1}^{κ}, H_{i_1j_1}^{κ+1}, H_{ij}^{κ}, H_{ij}^{κ+1})$, $(H_{i_1j_1}^{κ+1}, H_{i_1j_1}^{κ}, H_{ij}^{κ+1}, H_{ij}^{κ})$. Since all hyperplanes of $B$ other than $a,b,c,d$ are fixed by these reflections, one can verify from the above mappings that all pairs $(r_l,r_m)$, for $l,m=1, 2, 3, 4$ verify (c1), (c2), and, hence, $r_m^{-1}r_l\in K_B, l,m=1, 2, 3, 4$. Now, since $K_B^0$ is generated by all such products for the different values of $γ$, together with $\mathcal{R}_B$, we get that $K_B^0\subseteq K_B$.

For the inverse inclusion, since, again, $\mathcal{R}_B\subseteq K_B^0$, we show that for every pair of reflections $s_1,s_2$ satisfying (c1),(c2), we have $s_2^{-1}s_1\in K_B^0$. Observe first, that for every pair of hyperplanes in $G(2,2,n)$, there is at most one reflection mapping one to the other, and all reflections belong to the same orbit (see, for example, Lemma \ref{lem:Gmpn}). We show that for every pair $s_1,s_2\in \mathcal{R}$ satisfying (c1), (c2), $μ(s_1-s_2)\in Σ^H_B$ for some hyperplane $H$, which implies that $s_2^{-1}s_1\in K_B^0$, since $K_B^0$ is generated, by definition, by all such elements. For this, 
by (c1), i.e. $s_1B=s_2B\neq B$ there is some hyperplane $H'\in B$ such that $s_1H'\neq H'$. 
Take $H=s_1H'$ and $H''=s_2^{-1}H\in B$. Now, by Lemma \ref{lem:transverse1}, since, 
of course $\mathcal{R}_{H'\to H}, \mathcal{R}_{H''\to H}\neq \emptyset$, we have that
$H$ is non transverse with $H',H''$, which implies that 
$σ^H_{Η',Η''}\in Σ^H_B$. By the observation in the beginning of this paragraph, we get that $\mathcal{R}_{H'\to H}=\{s_1\}$, and  $\mathcal{R}_{H''\to H}=\{s_2\}$, which implies that $σ^H_{Η',Η''}=μ(s_1-s_2)$. Hence, $K^0_B=K_B$.
The order of $K_B$ is calculated in the appendix using the group $K_B^0$.
\end{proof}

The above results conclude the classification of admissibility for the groups $G(2,2,n)$. Again, we sum up the situation in the next proposition, giving a numerical expression for the dimension of the Brauer-Chen algebra, as well.

\begin{proposition} \label{prop:g22n}
\label{prop:G22ngeneral} Let $W=G(2,2,n)$. A transverse collection $B$ is $K$-admissible if and only if whenever $H_{ij},H_{ij}^1,H_{i'j'}^κ\in B$, we have $H_{i'j'}^{κ+1}\in B$ as well. In that case, $\Ann_{K\Stab(B)}(B)=\Aug(K_B)$. Finally, the dimension of $\Br^K(W)$ is:

$$n!2^{n-1}+(2^n+1)\sum_{r=1}^{2r\leq n} \left(\frac{n!}{r!2^r(n-2r)!}\right)^2(n-2r)!.$$
\end{proposition}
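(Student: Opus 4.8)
The plan is to assemble this statement from Propositions \ref{prop:g22nnonadm}, \ref{prop:classg22n} and \ref{prop:classgeneral} together with the dimension formula of Corollary \ref{cor:dimension}; beyond these the work is a case analysis of transverse collections plus an elementary count. First I would pin down the shape of a transverse collection $B$ in $G(2,2,n)$. By Remark \ref{remark:sumtransversality} two hyperplanes are non-transverse exactly when they share a single index, and by part (6) of Lemma \ref{lem:Gmpn} the pair $H_{ij},H_{ij}^1$ is transverse; hence $B$ is described by a family of pairwise disjoint \emph{support pairs} $\{i,j\}$, for each of which $B$ contains $H_{ij}$, $H_{ij}^1$, or both. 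Call a support pair \emph{full} in the last case and \emph{single} otherwise. One checks directly that the condition in the statement (whenever $H_{ij},H_{ij}^1,H_{i'j'}^{\kappa}\in B$ then $H_{i'j'}^{\kappa+1}\in B$) is equivalent to saying that $B$ does not simultaneously possess a full support pair and a single one; so the relevant collections fall into three types: the empty one, the ``all-full'' ones of the form treated in Proposition \ref{prop:classg22n}, and the ``all-single'' ones of the form $\{H_{i_1j_1}^{\kappa_1},\dots,H_{i_tj_t}^{\kappa_t}\}$ with distinct indices, treated in Proposition \ref{prop:classgeneral}.

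Next I would read off admissibility. If $B$ is ``mixed'', then it contains a full support pair $\{H_{i_1j_1},H_{i_1j_1}^1\}$ and a single $H_{i_2j_2}^{\kappa}$ with $H_{i_2j_2}^{\kappa+1}\notin B$ (the index sets disjoint, by transversality), so $B\notin\mathcal{C}^K_{adm}$ by Proposition \ref{prop:g22nnonadm}; the empty collection is admissible by Remark \ref{rem:emptyadmissibility}, the all-full ones by Proposition \ref{prop:classg22n}, and the all-single ones by Proposition \ref{prop:classgeneral}, and in each of these cases the cited results give $\mathcal{A}W\Rel(B)=\mathcal{A}W(K_B-1)$, hence $KW\cdot\Rel(B)=KW\cdot(K_B-1)$. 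To identify $\Ann_{K\Stab(B)}(B)$ I would then use Proposition \ref{prop:relann}(2): it equals $\pi_1(KW\cdot\Rel(B))=\pi_1(KW\cdot(K_B-1))$, and since $K_B-1\subseteq K\Stab(B)$ and $KW=\bigoplus_{w\in W/\Stab(B)}wK\Stab(B)$, this projection is $K\Stab(B)(K_B-1)$, which is a two-sided ideal of $K\Stab(B)$ because $K_B$ is normal in $\Stab(B)$, and therefore equals $\Aug(K_B)$.

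Finally, for the dimension, $\Ann_{K\Stab(B)}(B)=\Aug(K_B)$ gives $\dim_K(K\Stab(B)/\Ann_{K\Stab(B)}(B))=\lvert\Stab(B)\rvert/\lvert K_B\rvert$, so Corollary \ref{cor:dimension} reduces the dimension to $\sum_{B\in\mathcal{C}^K_{adm}}\lvert W\rvert/\lvert K_B\rvert$ with $\lvert W\rvert=n!\,2^{n-1}$. The empty collection contributes $n!\,2^{n-1}$. For $t\geq 1$ there are $\frac{n!}{t!\,(n-2t)!}$ all-single collections with $t$ support pairs (choose $t$ disjoint pairs of indices and a sign $\kappa\in\{0,1\}$ on each), each with $\lvert K_B\rvert=2^{2t-1}t!$ by Proposition \ref{prop:classgeneral} (with $m=2$), contributing $2^{n}\big(\tfrac{n!}{t!\,2^{t}(n-2t)!}\big)^{2}(n-2t)!$; for $r\geq 1$ there are $\frac{n!}{2^{r}\,r!\,(n-2r)!}$ all-full collections with $r$ support pairs, each with $\lvert K_B\rvert=2^{r+n-1}r!$ by Proposition \ref{prop:classg22n}, contributing $\big(\tfrac{n!}{r!\,2^{r}(n-2r)!}\big)^{2}(n-2r)!$. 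Summing over $r$ (equivalently $t$) from $1$ to $\lfloor n/2\rfloor$ yields exactly $n!\,2^{n-1}+(2^{n}+1)\sum_{r=1}^{2r\leq n}\big(\tfrac{n!}{r!\,2^{r}(n-2r)!}\big)^{2}(n-2r)!$. The only point demanding care is making the trichotomy of transverse collections exhaustive and correctly matching the ``mixed'' ones with the non-admissibility criterion of Proposition \ref{prop:g22nnonadm}; past that the argument is mechanical.
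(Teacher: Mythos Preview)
Your proposal is correct and follows essentially the same approach as the paper: you use Propositions \ref{prop:g22nnonadm}, \ref{prop:classgeneral} and \ref{prop:classg22n} to determine admissibility and obtain $KW\Rel(B)=KW(K_B-1)$, deduce $\Ann_{K\Stab(B)}(B)=\Aug(K_B)$ via Proposition \ref{prop:relann}, and then carry out the same count as the paper from Corollary \ref{cor:dimension}. Your explicit ``full/single/mixed'' trichotomy of support pairs is just a slightly more detailed articulation of what the paper leaves implicit in its first paragraph.
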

\begin{proof}
The characterisation of admissible collections of the statement is immediate from the previous results. Notice also that it implies that a transverse collection of $G(2,2,n)$ is admissible if and only if it is of the form $\{H_{i_1j_1}^{κ_1}, \dots , H_{i_rj_r}^{κ_r}\}$, or $\{H_{i_1j_1},H_{i_1j_1}^1 \dots , H_{i_rj_r},H^1_{i_rj_r}\}$, with all indices distinct in both cases. Transverse collections of both of these types are admissible by Propositions \ref{prop:classgeneral} and \ref{prop:classg22n}, respectively, and the same propositions imply that if $B$ is of these types, then $KW\Rel(B)=KW(K_B-1)$.

Now, exactly as we did in the beginning of the proof of Proposition \ref{prop:gmpngeneral}, this yields that 
for $B\in \mathcal{C}^K_{adm}$, $\Ann_{K\Stab(B)}(B)=\Aug(K_B)$, and the formula for the dimension of $\Br^K(W)$ of Corollary \ref{cor:dimension} becomes:

 $$\lvert W \rvert + \sum_{B\in \mathcal{C}^K_{adm}, B\neq \emptyset} \frac{\lvert W \rvert }{\lvert K_B \rvert}.$$

For every $r\leq n/2 $ the number of transverse collections of the form $\{H_{i_1j_1},H_{i_1j_1}^1 \dots , H_{i_rj_r},H^1_{i_rj_r}\}$ is: 

$$\frac{{n \choose 2}{n-2 \choose 2}\dots {n-2(r-1) \choose 2}}{r!}=\frac{n!}{2^r(n-2r)!r!},$$
and the order of $K_B$, by Proposition \ref{prop:classg22n}, for each such collection is equal to $2^{r+n+1}r!$. So the sum $\sum_{B}\lvert W \rvert /\lvert K_B \rvert$, where $B$ runs over all such collections is: 
 
$$\sum_{r=1}^{2r\leq n} \frac{n!}{(n-2r)!2^rr!} \frac{n!2^{n-1}}{2^{r+n-1}r!}=\sum_{r=1}^{2r\leq n} \left(\frac{n!}{r!2^r(n-2r)!}\right)^2(n-2r)!.$$

Finally, for the transverse collections of the form $\{H_{i_1j_1}^{κ_1},\dots ,H_{i_rj_r}^{κ_r}\}$, the corresponding sum was calculated for Proposition \ref{prop:gmpngeneral}, and equals:

$$2^n\sum_{r=1}^{2r\leq n} \left(\frac{n!}{r!2^r(n-2r)!}\right)^2(n-2r)!. $$ 
Again, adding it all up (and the order of the group) gives the expression of the statement.
\end{proof}

\subsection{The exceptional groups}
\label{sec:admexceptionals}

In this subsection we describe the classification of admissible collections and the corresponding admissible pairs for the exceptional complex reflection groups. To obtain this classification we used computational methods. After some new definitions, we describe the computational results that lead to the classification and explain the algorithm we used to verify them. A table containing the obtained numerical data concerning the admissible pairs for each exceptional group as well as the dimension of the corresponding Brauer-Chen algebra can be found in the appendix. 


All mentioned transverse collections will be assumed non-empty.

\subsubsection{A convenient description for $\overline\Rel(B)$}\label{subsec:aconvdescrforrel}

Let $W$ be a complex reflection group and $B$ a transverse collection. By Definition \ref{def:annihilators}, we have $$\overline\Rel(B)=\cup_{w\in W}π_w(\Rel(B)),$$ where $π_w$ denotes the projection 
$KW\to wK\Stab(B)$ with respect to the decomposition $KW=\oplus_{w\in W/\Stab(B)}wK\Stab(B)$.
The coset $w\Stab(B)$ depending only on the transverse collection $B'=wB$, we denote, in this subsection, the corresponding projection by $π_{B'}$, i.e. if $x=\sum_{w\in W}λ_ww\in KW$, then $π_{B'}(x)=\sum_{wB=B'}λ_ww$. With this notation we have
$\overline\Rel(B)=\cup_{B'\sim B} π_{B'}(\Rel(B))$.

Now, recall that 
$\Rel(B)=(\mathcal{R}_B-1)\cup Σ_B$, where $Σ_B$ is the set consisting of all elements $$σ^H_{H_1,H_2}=\sum_{s\in \mathcal{R}_{H_1\to H}}μ_ss-\sum_{s\in \mathcal{R}_{H_2\to H}}μ_ss,$$
with $H_1,H_2\in B, H\in \mathcal{H}$, and $H\not \pitchfork H_1,H_2$.  

For the projections of the set $\mathcal{R}_B-1$, since it lies inside $K\Stab(B)$, we have $π_{B}(\mathcal{R}_B-1)=\mathcal{R}_B-1$, and
$π_{B'}(\mathcal{R}_B-1)=0$ for all $B'\neq B$. 

Secondly, for all $H,H_1,H_2\in \mathcal{H}$, and $B'$ in the orbit of $B$, we have

$$π_{B'}(σ^H_{H_1,H_2})= \sum_{s\in \mathcal{R}_{H_1 \to H, B\to B'}}μ_ss-\sum_{s\in \mathcal{R}_{H_2\to H, B\to B'}}μ_ss.$$
If, now, $H_1,H_2\in B$, and $H\not \pitchfork H_1,H_2$, then $H\not \in B$; so, for $B'=B$, we have $π_{B'}(σ^H_{H_1,H_2})=0$. This is also the case when $\mathcal{R}_{B\to B'}=\emptyset$, since the above summations are empty. With this in mind, we have the following expression for $\overline\Rel(B)$, which we will use later:

\begin{equation}\label{eq:relbar}
\overline\Rel(B)=(\mathcal{R}_B-1)\cup (\cup_{B'\sim_{\mathcal{R}}B,B'\neq B}π_{B'}(Σ_B)),
\end{equation}
where $B'\sim_{\mathcal{R}} B$ stands for $\mathcal{R}_{B\to B'}\neq \emptyset$.

We define the sets $Θ_B^1:=\{μ_ss \big| s\in \mathcal{R}\backslash \mathcal{R}_B\}$ and $Θ_B:=Θ_B^1\cup \mathcal{R}_B\cup\{1\}$,
and let $M_B$ denote the free $\mathbb{Q}$-module on $Θ_B$. Also, let $M_B^1, M_B^2$ be, respectively, the submodules $\mathbb{Q}Θ_B^1$ and $\mathbb{Q}\mathcal{R}_B\cup\{1\}$ of $M_B$.

From the above, one can see that we can consider $\overline\Rel(B)$ as a subset of $M_B$ for which
$$\overline\Rel(B)=(\overline\Rel(B)\cap M_B^1 )\cup (\overline\Rel(B)\cap M_B^2),$$ and $$\overline\Rel(B)\cap M_B^2=\mathcal{R}_B-1.$$

\begin{definition}\label{def:dbpb} Let 
\begin{equation*}
\begin{split}
θ:& \mathcal{R}\cup\{1\} \to Θ_B;\\
& s\mapsto μ_ss, \textnormal{ if }s\in \mathcal{R}\backslash\mathcal{R}_B,\\
& s\mapsto s, \textnormal{ if }s\in \mathcal{R}_B\cup\{1\}.\\
\end{split}
\end{equation*}
We define the following sets:
\begin{equation*}
\begin{split}
&D_B:=\{θ(s_1)-θ(s_2)\big| s_1,s_2\in \mathcal{R}\}\cap \mathbb{Q}\overline\Rel(B)\subseteq M_B,\\
&P_B:=\{(s_1,s_2)\big| s_1,s_2 \in \mathcal{R}\backslash \mathcal{R}_B, μ_{s_1}s_1-μ_{s_2}s_2\in D_B\}\subseteq \mathcal{R}^2,\\
&D_B^0:=\{s_2^{-1}s_1-μ_{s_2}μ_{s_1}^{-1}\big| (s_1,s_2)\in P_B\}\cup (\mathcal{R}_B-1)\subseteq \mathcal{A}W.\\
\end{split}
\end{equation*}

\end{definition}

\begin{lemma} Let $s_1,s_2\in\mathcal{R}\cup\{1\}$. If $θ(s_1)-θ(s_2)\in \mathbb{Q}\overline\Rel(B)$, then either $s_1,s_2\in \mathcal{R}_B\cup \{1\}$, and, hence, $θ(s_1)-θ(s_2)=s_1-s_2$, or $s_1,s_2\in \mathcal{R}\backslash \mathcal{R}_B$, and, hence,
$θ(s_1)-θ(s_2)=μ_{s_1}s_1-μ_{s_2}s_2$. 
\end{lemma}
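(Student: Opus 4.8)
The plan is to leverage the decomposition $M_B=M_B^1\oplus M_B^2$ together with the two displayed identities for $\overline\Rel(B)$ recorded just before the lemma, namely $\overline\Rel(B)=(\overline\Rel(B)\cap M_B^1)\cup(\overline\Rel(B)\cap M_B^2)$ and $\overline\Rel(B)\cap M_B^2=\mathcal{R}_B-1$. Passing to $\mathbb{Q}$-spans and using $M_B^1\cap M_B^2=0$, these yield $\mathbb{Q}\overline\Rel(B)=\mathbb{Q}(\overline\Rel(B)\cap M_B^1)\oplus\mathbb{Q}(\mathcal{R}_B-1)$ as a subspace of $M_B^1\oplus M_B^2$. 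The one structural fact I would extract from this is: the projection onto $M_B^2$ along $M_B^1$ of any element of $\mathbb{Q}\overline\Rel(B)$ lies in $\mathbb{Q}(\mathcal{R}_B-1)$.

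Next I would split into cases according to the partition $\mathcal{R}\cup\{1\}=(\mathcal{R}_B\cup\{1\})\sqcup(\mathcal{R}\backslash\mathcal{R}_B)$. If $s_1,s_2$ lie in the same block, the asserted conclusion — including the explicit formula for $θ(s_1)-θ(s_2)$ — is immediate from the definition of $θ$, so nothing remains to check there. Hence the entire content is to rule out the ``mixed'' case; since the hypothesis $θ(s_1)-θ(s_2)\in\mathbb{Q}\overline\Rel(B)$ is symmetric in $s_1\leftrightarrow s_2$ (negation preserves the $\mathbb{Q}$-span $\mathbb{Q}\overline\Rel(B)$), I may assume $s_1\in\mathcal{R}_B\cup\{1\}$ and $s_2\in\mathcal{R}\backslash\mathcal{R}_B$.

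In this case $θ(s_1)=s_1$ is a single basis vector of $M_B^2$ and $θ(s_2)=μ_{s_2}s_2$ is a single basis vector of $M_B^1$, the two basis sets being disjoint by construction of $M_B$; so the $M_B^2$-component of $θ(s_1)-θ(s_2)$ equals $s_1$. By the first paragraph, $θ(s_1)-θ(s_2)\in\mathbb{Q}\overline\Rel(B)$ would then force $s_1\in\mathbb{Q}(\mathcal{R}_B-1)$. To contradict this I would apply the augmentation functional $\varepsilon\colon M_B^2\to\mathbb{Q}$ sending every basis vector of $M_B^2$ to $1$: it annihilates each generator $r-1$ with $r\in\mathcal{R}_B$, hence all of $\mathbb{Q}(\mathcal{R}_B-1)$, whereas $\varepsilon(s_1)=1$ because $s_1$ is a single basis vector (whether $s_1=1$ or $s_1\in\mathcal{R}_B$). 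This contradiction finishes the proof.

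I do not expect a genuine obstacle here. The only point needing a little care is the bookkeeping behind the first paragraph: one must remember that the formal symbols ``$μ_ss$'' with $s\in\mathcal{R}\backslash\mathcal{R}_B$, which span $M_B^1$, are by definition distinct from and $\mathbb{Q}$-independent of the elements of $\mathcal{R}_B\cup\{1\}$ spanning $M_B^2$, so that the direct sum and the induced projections genuinely make sense. With that in hand, everything is a formal consequence of the identities already established for $\overline\Rel(B)$.
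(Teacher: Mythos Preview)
Your proof is correct and follows essentially the same approach as the paper's: both exploit that $\mathbb{Q}\overline\Rel(B)$ respects the direct sum $M_B=M_B^1\oplus M_B^2$ with $M_B^2$-part equal to $\mathbb{Q}(\mathcal{R}_B-1)$, then derive in the mixed case that a single basis vector of $M_B^2$ lies in $\mathbb{Q}(\mathcal{R}_B-1)$, which is absurd. Your argument is in fact slightly more explicit than the paper's, which simply asserts this last impossibility without mentioning the augmentation map.
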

\begin{proof} Suppose that 
$θ(s_1)-θ(s_2)\in \mathbb{Q}\overline\Rel(B)$ and that $s_1,s_2$ do not satisfy the condition of the statement. Let
$s_1\in \mathcal{R}\backslash \mathcal{R}_B$ and $s_2\in \mathcal{R}_B\cup \{1\}$.  Hence, $θ(s_1)\in M_B^1$ and $θ(s_2)\in M_B^2$.  Since $\mathbb{Q}\overline\Rel(B)$ respects the decomposition $M_B=M_B^1\oplus M_B^2$,
i.e. $\overline\Rel(B)=(\mathbb{Q}\overline\Rel(B)\cap M_B^1)\oplus (\mathbb{Q}\overline\Rel(B) \cap M_B^1)$, then the projection of $θ(s_1)-θ(s_2)$ onto $M_B^2$ belongs to $\overline\Rel(B)$. Hence, we get $θ(s_2)=s_2 \in \overline\Rel(B)\cap M_B^2=\mathbb{Q}(\mathcal{R}_B-1)$, and one can see that this is impossible.
\end{proof}

\begin{corollary}\label{cor:passagetostab} We have 
$$D_B=\{μ_{s_1}s_1-μ_{s_2}s_2\big| (s_1,s_2)\in P_B\}\cup \{s_1-s_2\big| s_1,s_2\in \mathcal{R}_B\cup \{ 1 \} \}.$$ In particular,  $\mathcal{A}W D_B=\mathcal{A}W D_B^0$.
\end{corollary}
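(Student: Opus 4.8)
The plan is to establish the displayed set equality first, and then to deduce the ``in particular'' clause from it by a short manipulation of generators. The real content has essentially been front-loaded into the preceding lemma and into Equation \eqref{eq:relbar}, so both steps should be short.

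For the set equality I would argue by mutual inclusion. For ``$\subseteq$'', any element of $D_B$ is of the form $θ(s_1)-θ(s_2)$ with $s_1,s_2\in\mathcal{R}\cup\{1\}$ and lies in $\mathbb{Q}\overline\Rel(B)$; the preceding lemma then forces one of exactly two cases: either $s_1,s_2\in\mathcal{R}_B\cup\{1\}$, in which case $θ(s_1)-θ(s_2)=s_1-s_2$, or $s_1,s_2\in\mathcal{R}\setminus\mathcal{R}_B$, in which case $θ(s_1)-θ(s_2)=μ_{s_1}s_1-μ_{s_2}s_2$ and $(s_1,s_2)\in P_B$ by the very definition of $P_B$. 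This yields the inclusion into the right-hand side. For ``$\supseteq$'' I would check the two families separately: if $(s_1,s_2)\in P_B$ then $μ_{s_1}s_1-μ_{s_2}s_2\in D_B$ directly from the definition of $P_B$; and if $s_1,s_2\in\mathcal{R}_B\cup\{1\}$ then $s_1-s_2=(s_1-1)-(s_2-1)$ is a $\mathbb{Z}$-linear combination of elements of $\mathcal{R}_B-1\subseteq\overline\Rel(B)$ (with the convention $1-1=0$), hence lies in $\mathbb{Q}\overline\Rel(B)$, while $s_1-s_2=θ(s_1)-θ(s_2)$ by definition of $θ$; so $s_1-s_2\in D_B$.

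For the statement $\mathcal{A}W D_B=\mathcal{A}W D_B^0$ I would prove the two inclusions $D_B^0\subseteq\mathcal{A}W D_B$ and $D_B\subseteq\mathcal{A}W D_B^0$ generator by generator, using the presentation of $D_B$ just obtained. The only non-formal point is the interchange of the two normalisations: for $(s_1,s_2)\in P_B$, left multiplication of $μ_{s_1}s_1-μ_{s_2}s_2\in D_B$ by $μ_{s_1}^{-1}s_2^{-1}\in\mathcal{A}W$ gives $s_2^{-1}s_1-μ_{s_2}μ_{s_1}^{-1}$, and conversely left multiplication of the latter by $μ_{s_1}s_2\in\mathcal{A}W$ recovers $μ_{s_1}s_1-μ_{s_2}s_2$ — the parameters $μ_s$ being central in $\mathcal{A}$, they can be moved past group elements freely. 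The remaining generators $\mathcal{R}_B-1$ of $D_B^0$ already belong to $D_B$ by the set equality (take $s_1\in\mathcal{R}_B$ and $s_2=1$), and conversely each $s_1-s_2$ with $s_1,s_2\in\mathcal{R}_B\cup\{1\}$ equals $(s_1-1)-(s_2-1)$, a combination of elements of $\mathcal{R}_B-1\subseteq D_B^0$. Combining, $D_B$ and $D_B^0$ generate the same left $\mathcal{A}W$-module. I do not expect a genuine obstacle here; the only thing to keep honest is the bookkeeping around the identity element, since it is precisely the inclusion $\mathcal{R}_B-1\subseteq\overline\Rel(B)$ together with allowing $1$ among the $s_i$ that makes the ``identity-coset'' generators $s-1$ available on the $D_B$ side, and these are exactly the ones that reproduce $\mathcal{R}_B-1$.
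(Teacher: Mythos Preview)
Your proof is correct and is exactly the argument the paper implicitly relies on: the paper states this as a corollary with no proof, treating both the set equality (via the preceding dichotomy lemma and the inclusion $\mathcal{R}_B-1\subseteq\overline\Rel(B)$) and the ``in particular'' clause (via left multiplication by $μ_{s_1}^{-1}s_2^{-1}$ and its inverse) as immediate. Your bookkeeping around the identity element is precisely the point, and your remark that the $\mathcal{R}_B-1$ generators of $D_B^0$ are recovered on the $D_B$ side by taking $s_2=1$ is the intended mechanism.
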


\subsubsection{The main points of the algorithm}

We explain now the main points of the algorithm we used to obtain the results for the admissibility for the exceptional complex reflection groups. Almost all created lists will be used again in the next section, where we study the freeness of the Brauer-Chen algebra.

\paragraph{\textbf{The main core.}}
We consider the following data. A complex reflection group $\verb+W+$, given as an abstract group (a group of permutations, for example),
and a list $\verb+R_dist+$ of the elements of $\verb+W+$ that correspond to the distinguished reflections (Subsection \ref{sec:basicscrg}). Let, also, $\verb+k_W+$ 
be the field of definition of $\verb+W+$ and $\verb+Roots+$ be a list containing a root for each distinguished reflection. 

The elements of $\verb+R_dist+$ are in bijection with the hyperplanes of $\verb+W+$; let $H_i$ be the hyperplane that corresponds to the
distinguished reflection in position $i$ of $\verb+R_dist+$. This way, testing, for an element $w\in \verb+W+$, whether 
$wH_i=H_j$, amounts to testing whether $w\verb+R_dist+[i]w^{-1}=\verb+R_dist+[j]$. Also, by Lemma \ref{lem:transroots}, checking that $H_i\pitchfork H_j$ is
equivalent to checking that there is not any root in $\verb+Roots+$ lying in the $\verb+k_W+$-span of $\{\verb+Roots[i],Roots[j]+\}$
apart from these two.

It is convenient to create a table containing all the necessary relations of transversality and conjugation between the hyperplanes
of $\verb+W+$. For that, let $\verb+R+$ be a list containing all reflections of $\verb+W+$ (take, for example, all non-trivial powers of the 
elements of $\verb+R_dist+$), and create a table $\verb+F+$ as follows: if $H_i\pitchfork H_j$, then $\verb+F[i,j]=true+$; otherwise,
$\verb+F[i,j]+$ is a list with the positions of the elements of $\verb+R+$ that map $H_i$ to $H_j$. 

Using the table $\verb+F+$, we can find all transverse collections of $\verb+W+$, and, using standard orbit algorithms, we can
obtain a representative for each orbit of transverse collections. We represent a transverse collection as a list of positions in
$\verb+R_dist+$.

\paragraph{\textbf{Lists for $\overline\Rel(B), D_B, P_B$.}} Now, let $\verb+B+$ be such a list representing a transverse collection $B$. Let $N$ be the cardinality of $\mathcal{R}$. We create a list 
$\verb+Rel_Bar+$ of vectors in  $\mathbb{Z}^{N+1}$ representing the set $\overline\Rel(B)$ with respect to the basis $Θ_B$ of $M_B$.

First, we make a list $\verb+R_B+$ containing the positions in $\verb+R+$ 
of all non-trivial powers of the elements $\verb+R_dist[i]+$, for $i\in \verb+B+$; this list represents the set $\mathcal{R}_B$.
Also, we make a list $\verb+Smallorbit+$ which contains the lists representing the images of $B$ under all reflections; this represents the set of all $B'\sim_{\mathcal{R}}B$.

Let $v(i,n)$ denote the vector of length $n$, with $1$ in position $i$ and $0$ elsewhere. First, for all
$i\in \verb+R_B+$ we add the vector $v(i,N+1)-v(N+1,N+1)$ to $\verb+Rel_Bar+$. This vector represents the element 
$\verb+R[i]+-1$ with respect to $Θ_B$. In this way, we include all vectors  representing $\mathcal{R}_B-1$ to our list.

Next, for all $B'\sim_{\mathcal{R}}B, B'\neq B$ we add the vectors representing the set $π_{B'}(Σ_B)$, which (apart from $0$) consists of the 
elements $π_{B'}(σ^H_{H_1,H_2})$ for $H_1,H_2\in B, H\in B'$ and $H\not\pitchfork B$. Instead of imposing controls on all candidate triples of hyperplanes, going back to Lemma \ref{lem:precom}, one may see that,
since $\mathcal{R}_{B\to B'}\neq \emptyset$, choosing $H\in B'$ and $H_1,H_2\in B$ non-transverse with $H$ amounts to 
choosing $H\in B'\backslash B$ and $H_1,H_2\in B\backslash B'$. 

With this in mind, for every $\verb+B'+$ in $\verb+Smallorbit+$, we first
create a table $\verb+F(B,B')+$, with rows and columns corresponding to the elements of $\verb+B+\backslash \verb+B'+$ and $\verb+B'+\backslash \verb+B+$, respectively, and
whose $(i,j)$-cell is a list of positions of all reflections in $\verb+R+$ that map $H_{\verb+B[i]+}$ to $H_{\verb+B'[j]+}$ and $B$ to $B'$ (note that, in this way, for the indices to agree,
one has to make sure that the intersection of $\verb+B+, \verb+B'+$ takes up the last spots on each list). Now, for all 
$i,j,k$ in the range of $\verb+F(B,B')+$, we add the vector 
$\sum_{l\in \verb+F(B,B')[i,k]+}v(l,N+1)-\sum_{l\in \verb+F(B,B')[j,k]+}v(l,N+1) $ to the list $\verb+Rel_Bar+$, as
well. This vector represents the element $π_{B'}(σ_{H',H''}^H)$ where $H,H',H''$ are the hyperplanes $H_{\verb+B'[k]+},H_{\verb+B[j]+},H_{\verb+B[j]+}$, respectively. In this way we include
all vectors representing $π_{B'}(Σ_B)$ to $\verb+Rel_Bar+$, as well. 

Before we state our first computational result, let $\verb+D_B+$ be a list of all vectors $v(i,N+1)-v(j,N+1)$, for $i,j=1,\dots N+1$ that lie in 
the $\mathbb{Q}$-span of $\verb+Rel_Bar+$; this represents the set $D_B$ with respect to the basis $Θ_B$. Also, let
$\verb+P_B+$ be the list of all pairs $(i,j), i,j=1,\dots ,N$, with $i,j\not \in \verb+R_B+$, such that $v(i,N+1)-v(j,N+1)\in \verb+D_B+$; this 
represents the set $P_B$.

\begin{mdframed}
\textbf{First Computational Result.} Let $W$ be an exceptional complex reflection group and $B$ a transverse
collection. Then

\begin{itemize}
\item[(A1)]$θ(s)\in \overline\Rel(B)$,  for some $s\in \mathcal{R}\cup\{1\}$, or
\item[(A2)] 
\begin{itemize}
\item[a.] $\mathbb{Q}\overline\Rel(B)=\mathbb{Q}D_B$, and
\item[b.] $\langle s_2^{-1}s_1, s \big| (s_1,s_2)\in P_B, s\in \mathcal{R}_B\rangle = K_B$
\end{itemize}
 
\end{itemize}
\end{mdframed}
The verification of the above result for a given transverse collection represented by $\verb+B+$ is straightforward using the previously constructed lists. Checking, in particular, the first condition amounts to checking whether $\verb+Rel_Bar+$ contains the vector $u(i,N+1)$ for some $i$.

 Let $W$ be an exceptional complex reflection group, $K$ a proper field, and $B$ a transverse collection. Notice, first of all, that the
two properties (A1), (A2) above are mutually exclusive, since a vector $θ(s)$ cannot be a linear combination of vectors of the form $θ(s_1)-θ(s_2)$, which comprise $D_B$.

If $B$ satisfies (A1), then, since $θ(s)$ is invertible, $B$ is not
$K$-admissible. 
Suppose now that $B$ satisfies (A2). By (A2.a), $\mathbb{Q}\overline\Rel(B)=\mathbb{Q}D_B$, and, thus, admissibility of a pair
$(B,V)$ is equivalent to $D_B\cdot \hat{V}=0$, which is, is turn, equivalent to $D_B^{0}\cdot \hat{V}=0$, as Corollary \ref{cor:passagetostab} implies.
Since $K_B\subseteq \Stab(B)$, (A2.b) implies 
that $s_2^{-1}s_1\in \Stab(B)$ for all $(s_1,s_2)\in P_B$, which, in turn, yields that 
$D_B^{0}\subseteq K\Stab(B)$. So we can drop the hat on $\hat{V}$, and write $D_B^{0}\cdot V=0$. This already implies, since, for any $K\Stab(B)$-module $V$, the pair $(B,V)$ is $K$-admissible if and only if $\Ann_{KStab(B)}(B)\cdot V=0$, that $\Ann_{K\Stab(B)}(B)$ is equal to $(D_B^0)$, where $(D_B^0)$ denotes the two-sided ideal of $K\Stab(B)$ generated by $D_B^0$.

Now, if $μ_{s_1}=μ_{s_2}$ for all $(s_1,s_2)\in P_B$, then $D_B^{0}=\{s_2^{-1}s_1-1\big| (s_1,s_2)\in P_B\}\cup (\mathcal{R}_B-1)$, and, in combination with (A2.b), we get that $(B,V)$ is admissible if and only if 
$(K_B-1)V=0$; in other words, $(D_B^0)=\Aug(K_B)$. This is especially the case if $s_1,s_2$ are conjugate for all $(s_1,s_2)\in P_B$. 

\begin{definition}
Let $W$ be an exceptional complex reflection group and $B$ a transverse collection. If $B$ satisfies (A2) and $P_B$ contains pairs $(s_1,s_2)$ with $s_1\not\sim s_2$, we say that $B$ is of \textit{conditional type}. Denote by $\mathcal{C}_{cond}$ the set of collections of conditional type.
\end{definition}

The discussion preceding the above definition yields the next lemma.

\begin{lemma}\label{lem:sumupconditions} Let $W$ be an exceptional complex reflection group and $B\in \mathcal{C}\backslash \mathcal{C}_{cond}$. For any proper field $K$, $B\in \mathcal{C}^K_{adm}$ if and only if $B$ does not satisfy (A1), in which case $\Ann_{K\Stab(B)}(B)=\Aug(K_B)$.
\end{lemma}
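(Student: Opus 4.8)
The plan is to split according to the dichotomy furnished by the First Computational Result: every transverse collection $B$ of an exceptional complex reflection group satisfies exactly one of (A1), (A2). For $B\in\mathcal{C}\setminus\mathcal{C}_{cond}$, the failure of (A1) is equivalent — by this dichotomy together with the definition of $\mathcal{C}_{cond}$ — to the statement that (A2) holds and every pair in $P_B$ consists of conjugate reflections. If (A1) holds, then $θ(s)\in\overline\Rel(B)$ for some $s\in\mathcal{R}\cup\{1\}$; but $θ(s)$ is either $μ_s s$ or $s$, hence a unit of $KW$, the image of $μ_s$ being invertible because $K$ is an $\mathcal{A}$-algebra that is a domain. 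Therefore any $K\Stab(B)$-module $V$ with $\overline\Rel(B)\hat{V}=0$ has $\hat{V}=0$, so by Proposition \ref{prop:admequiv} no admissible pair $(B,V)$ with $V\neq 0$ exists and $B\notin\mathcal{C}^K_{adm}$; this yields one direction of the equivalence.

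For the other direction I would assume (A2). By (A2.a) the $\mathbb{Q}$-spans of $\overline\Rel(B)$ and $D_B$ inside $M_B$ coincide; since the map $Θ_B\to KW$ sending each basis vector to the corresponding element of $KW$ is injective — distinct elements of $\mathcal{R}\cup\{1\}$ are linearly independent in $KW$ and each $μ_s$ is nonzero — this identity transports to an equality of $K$-spans in $KW$. Hence, for a $K\Stab(B)$-module $V$, the admissibility of $(B,V)$, which by Proposition \ref{prop:admequiv} is the condition $\overline\Rel(B)\hat{V}=0$, is equivalent to $D_B\hat{V}=0$, and then, transporting the identity $\mathcal{A}W D_B=\mathcal{A}W D_B^0$ of Corollary \ref{cor:passagetostab} to $K$-coefficients, to $D_B^0\hat{V}=0$. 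Condition (A2.b) identifies $K_B$ with $\langle s_2^{-1}s_1,\ s\mid (s_1,s_2)\in P_B,\ s\in\mathcal{R}_B\rangle$, and as $K_B\subseteq\Stab(B)$ this forces $D_B^0\subseteq K\Stab(B)$; so the condition may be rewritten $D_B^0 V=0$ with $V$ viewed as a $K\Stab(B)$-module.

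Next I would use $B\notin\mathcal{C}_{cond}$: every $(s_1,s_2)\in P_B$ has $s_1\sim s_2$, whence $μ_{s_1}=μ_{s_2}$ by the definition of the ring of definition, so $μ_{s_2}μ_{s_1}^{-1}=1$ and
$$D_B^0=\{\,s_2^{-1}s_1-1\mid (s_1,s_2)\in P_B\,\}\cup(\mathcal{R}_B-1),$$
which by (A2.b) is exactly the set $\{h-1\}$ with $h$ running over a generating set of $K_B$. Consequently $(B,V)$ is admissible if and only if $(K_B-1)V=0$, if and only if $\Aug(K_B)V=0$; taking $V$ to be the trivial module shows $B\in\mathcal{C}^K_{adm}$, completing the equivalence. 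Finally, comparing this with characterisation (3) of admissibility in Proposition \ref{prop:admequiv}, namely that $(B,V)$ is admissible if and only if $\Ann_{K\Stab(B)}(B)V=0$, I obtain two two-sided ideals of $K\Stab(B)$ annihilating exactly the same $K\Stab(B)$-modules; evaluating each on the quotient of $K\Stab(B)$ by the other then gives $\Ann_{K\Stab(B)}(B)=\Aug(K_B)$.

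The argument is essentially a collation of earlier results, so I do not expect a genuine obstacle; the two points requiring care are the passage of the spanning identities of the First Computational Result and of Corollary \ref{cor:passagetostab} from $\mathbb{Q}$- and $\mathcal{A}$-coefficients to $K$-coefficients, and the last identification $\Ann_{K\Stab(B)}(B)=\Aug(K_B)$, which rests on the fact that these are both two-sided ideals of $K\Stab(B)$ annihilating the same class of modules.
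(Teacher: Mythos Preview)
Your proposal is correct and follows essentially the same approach as the paper: the paper's proof is literally ``the discussion preceding the above definition yields the next lemma,'' and that discussion is precisely the argument you have written out --- the (A1)/(A2) dichotomy, invertibility of $θ(s)$ under (A1), the passage $\overline\Rel(B)\to D_B\to D_B^0$ under (A2), the reduction $D_B^0\subseteq K\Stab(B)$ via (A2.b), and finally the identification $(D_B^0)=\Aug(K_B)$ when all pairs in $P_B$ are conjugate. You are, if anything, more careful than the paper in flagging the passage from $\mathbb{Q}$- to $K$-coefficients (harmless since $K$ has characteristic zero) and in spelling out the ideal comparison at the end.
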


Verifying whether a transverse collection is of conditional type is straightforward
using the list $\verb+P_B+$.

\begin{mdframed}
\textbf{Second Computational Result.} Let $W$ be an exceptional complex reflection group other than $G_{25},G_{32}$. Then
$W$ does not have any transverse collections of conditional type. 
\end{mdframed}

We, thus, obtain the following result.
\begin{proposition}\label{prop:classpart1} Let $W$ be an exceptional complex reflection group other than $G_{25},G_{32}$ and $K$ a proper field. Then $B\in \mathcal{C}^K_{adm}$ if and only if $\overline\Rel(B)$ does not contain an element of $Θ_B$. If $B\in \mathcal{C}^K_{adm}$, then $\Ann_{K\Stab(B)}(B)=\Aug(K_B).$
\end{proposition}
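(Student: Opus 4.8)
The plan is to read this off from the two boxed computational results together with Lemma~\ref{lem:sumupconditions}. The first thing I would note is the elementary fact that the map $θ\colon\mathcal{R}\cup\{1\}\to Θ_B$ of Definition~\ref{def:dbpb} is a bijection onto $Θ_B$: since $Θ_B=Θ_B^1\cup\mathcal{R}_B\cup\{1\}$ with $Θ_B^1=\{μ_ss\mid s\in\mathcal{R}\setminus\mathcal{R}_B\}$, the map $θ$ restricts to bijections $\mathcal{R}\setminus\mathcal{R}_B\to Θ_B^1$ and $\mathcal{R}_B\cup\{1\}\to\mathcal{R}_B\cup\{1\}$. Hence property (A1), ``$θ(s)\in\overline\Rel(B)$ for some $s\in\mathcal{R}\cup\{1\}$'', is verbatim the statement ``$\overline\Rel(B)$ contains an element of $Θ_B$''; so the asserted description of $\mathcal{C}^K_{adm}$ is exactly ``$B\in\mathcal{C}^K_{adm}$ if and only if $B$ does not satisfy (A1)''.

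Next I would invoke the Second Computational Result: for every exceptional $W\neq G_{25},G_{32}$ we have $\mathcal{C}_{cond}=\emptyset$, so every transverse collection $B$ of $W$ lies in $\mathcal{C}\setminus\mathcal{C}_{cond}$, and Lemma~\ref{lem:sumupconditions} applies to it over any proper field $K$. That lemma states precisely that $B\in\mathcal{C}^K_{adm}$ iff $B$ does not satisfy (A1), and that in that case $\Ann_{K\Stab(B)}(B)=\Aug(K_B)$. Combined with the previous paragraph, this is the proposition.

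For completeness I would unwind, once, the mechanism behind Lemma~\ref{lem:sumupconditions} as it specialises here: if $B$ fails (A1) the First Computational Result forces (A2); by (A2.a) and flat base change from $\mathbb{Q}$ to $K$ one gets $K\overline\Rel(B)=KD_B$, so Proposition~\ref{prop:admequiv} makes admissibility of a pair $(B,V)$ equivalent to $D_B\hat V=0$, hence by Corollary~\ref{cor:passagetostab} to $D_B^0\hat V=0$; since $\mathcal{C}_{cond}=\emptyset$, every $(s_1,s_2)\in P_B$ has $s_1\sim s_2$, so $μ_{s_1}=μ_{s_2}$, whence $D_B^0=\{s_2^{-1}s_1-1\mid(s_1,s_2)\in P_B\}\cup(\mathcal{R}_B-1)$, and by (A2.b) this generates $\Aug(K_B)$ as a two-sided ideal of $K\Stab(B)$; comparing the defining property of $\Ann_{K\Stab(B)}(B)$ (characterisation (3) of Proposition~\ref{prop:admequiv}) with this forces $\Ann_{K\Stab(B)}(B)=\Aug(K_B)$. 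The only point needing a word of care is that the linear-algebra identities produced over $\mathbb{Q}$ by the computer remain valid over any proper $K\supseteq\mathbb{Q}$, which is just flatness; beyond that there is no genuine obstacle, since the truly laborious part — verifying the dichotomy (A1)/(A2) and the absence of conditional-type collections for all these groups — is exactly what the two boxed results encapsulate.
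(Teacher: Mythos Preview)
Your proof is correct and follows essentially the same route as the paper: the proposition is deduced directly from the Second Computational Result (that $\mathcal{C}_{cond}=\emptyset$ for $W\neq G_{25},G_{32}$) together with Lemma~\ref{lem:sumupconditions}, after identifying condition (A1) with ``$\overline\Rel(B)$ contains an element of $Θ_B$'' via the bijectivity of $θ$. Your third paragraph, unwinding the mechanism behind Lemma~\ref{lem:sumupconditions}, is an optional expansion of what the paper already establishes in the discussion preceding that lemma, so it adds clarity but no new argument.
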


\vspace{20pt}
For the groups $G_{25}$ and $G_{32}$, there exist transverse collections of conditional type. Unlike collections of non-conditional type, admissibility of such collections depends, in general, on the field of definition. The following general algebraic fact determines the admissibility of these collections for a large number of cases. 

\begin{lemma} \label{lem:invertibility} Let $L$ be a field and $G$ a group. Let $l\in L$ and $g$ be an element of $G$ of finite order $n$. Then, if $l^n\neq 1$, the element $g-l\in LG$ is invertible.
\end{lemma}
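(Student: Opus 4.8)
The plan is to exploit the fact that $g$ has finite order $n$, so $g^n = 1$ in $LG$, which means the cyclic subalgebra $L[g] \subseteq LG$ generated by $g$ is a quotient of the polynomial ring $L[X]$ by the ideal $(X^n - 1)$. First I would observe that it suffices to show $g - l$ is invertible inside the commutative subalgebra $L[g]$, since a two-sided inverse there is automatically a two-sided inverse in the full group algebra $LG$. Then the problem reduces to a statement about the quotient ring $L[X]/(X^n-1)$: the image $\bar X - l$ is invertible precisely when $X - l$ and $X^n - 1$ generate the unit ideal in $L[X]$, i.e. are coprime.

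\textbf{The coprimality computation.} Two polynomials over a field are coprime exactly when they share no common root in an algebraic closure $\overline L$. The only root of $X - l$ is $X = l$, so $X-l$ and $X^n-1$ are coprime if and only if $l$ is not a root of $X^n - 1$, that is, if and only if $l^n \neq 1$. This is precisely the hypothesis. By Bézout, there exist $a(X), b(X) \in L[X]$ with $a(X)(X-l) + b(X)(X^n-1) = 1$; reducing modulo $(X^n-1)$ gives $a(g)(g-l) = 1$ in $L[g]$, so $a(g)$ is the desired inverse (it commutes with $g - l$ since both lie in the commutative algebra $L[g]$, so it is a genuine two-sided inverse).

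\textbf{An alternative, more elementary route.} If one prefers to avoid invoking Bézout, the inverse can be written down explicitly. Since $l^n \neq 1$, we may divide by $l^n - 1$, and one checks directly that
\[
(g - l)\cdot \frac{1}{l^n - 1}\left(g^{n-1} + l\, g^{n-2} + l^2\, g^{n-3} + \cdots + l^{n-1}\right) = \frac{g^n - l^n}{l^n - 1} = \frac{1 - l^n}{l^n - 1} = 1,
\]
using $g^n = 1$ and the telescoping identity $(g-l)(g^{n-1} + l g^{n-2} + \cdots + l^{n-1}) = g^n - l^n$. The same product works on the other side since all factors lie in $L[g]$, which is commutative.

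\textbf{What is delicate.} There is essentially no obstacle here; the only point requiring a moment's care is the reduction from invertibility in $LG$ to invertibility in the commutative subalgebra $L[g]$, and the observation that the exhibited one-sided inverse is automatically two-sided because $g - l$ and its candidate inverse commute. I would present the explicit formula above as the cleanest argument, noting in one line that $l \ne 0$ follows from $l^n \ne 1$ only when $n \ge 1$ — and in fact we never need $l \neq 0$ separately, since $l^n - 1 \neq 0$ is all that is used as a denominator.
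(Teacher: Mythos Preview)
Your proof is correct and your ``alternative, more elementary route'' is essentially the paper's argument: the paper sets $x = g - l$, expands $(x+l)^n = 1$ binomially, and factors out $x$ to obtain $x \cdot (\text{something}) = 1 - l^n$, which is the same mechanism as your telescoping identity $(g-l)(g^{n-1} + lg^{n-2} + \cdots + l^{n-1}) = 1 - l^n$, just arrived at via a slightly different factorization. Your version is arguably cleaner, and your first route via coprimality in $L[X]/(X^n-1)$ is a nice conceptual bonus the paper does not include.
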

\begin{proof} Set $x=g-l$. Since, $g^{n}=1$ and $g=x+l$, then $(x+l)^{n}=1\Leftrightarrow x^n+nx^{n-1}l+ \dots +nxl^{n-1}+l^n=1\Leftrightarrow x(x^{n-1}l+\dots +nl^{n-1})=1-l^{n}$. Now, if $1-l^n\neq 0$, then $1-l^n$ is invertible, and so is $x$.
\end{proof}

\begin{mdframed}
\textbf{Third Computational Result, first part.} Let $B$ be a transverse collection of $G_{25}$ (resp. $G_{32}$). Then
$B$ satisfies satisfies (A2) of the First Computational Result and is of conditional type if and only if $\lvert B \rvert =2$ (resp. $2$ or $3$). In that case, for all $(s_1,s_2)\in P_B$,
with $s_1\not\sim s_2$, the order of the element $s_2^{-1}s_1$ is $6$. 
\end{mdframed}

Let $W$ be the group $G_{25}$ or $G_{32}$, and $K$ a proper field for $W$. Then $W$ contains two orbits of reflections, one containing all distinguished reflections, and the other one their inverses. Let $μ_1,μ_2$ be their respective parameters and set $μ:=μ_1/μ_2$. 

Let $B$ be a collection of conditional type, i.e., $P_B$ contains some $(s_1,s_2)$ such that $s_1\not \sim s_2$. Recall that since, by definition, $B$ satisfies (A2) of the First Computational Result, we have that $\Ann_{K\Stab(B)}(B)=(D_B^0)$, and the latter ideal is generated by $
(\mathcal{R}_B-1)\cup\{s_2^{-1}s_1-μ_{s_2}μ_{s_1}^{-1}\big| (s_1,s_2)\in P_B\}$. By (A2.b), $\mathcal{R}_B\cup \{s_2^{-1}s_1\big| (s_1,s_2)\in P_B\}$ generates $K_B$, so, if $(D_B^0)\neq K\Stab(B)$, then mapping $\mathcal{R}_B\mapsto 1$, and $s_2^{-1}s_1\mapsto μ_{s_2}μ_{s_1}^{-1}$ induces a group homomorphism $χ:K_B\to \mathbb{U}_6 \cap K$, and $(D_B^0)=\Aug^χ(K_B)$.

Now, by the Third Computational Result above and its preceding lemma (\ref{lem:invertibility}), if $μ^6\neq 1$ in $K$, then the element $s_2^{-1}s_1-μ_{s_2}μ_{s_1}^{-1}$ is invertible, since
$μ_{s_2}μ_{s_1}^{-1}\in \{μ,μ^{-1}\}$; hence, $B\not \in \mathcal{C}^K_{adm}$. If $μ^6=1$ in $K$, to determine the admissible pairs for $B$, one may resort - as we did - to computing the ideal $(D_B^0)$.

\paragraph{\textbf{Computing $(D_B^{0})$ for $G_{25},G_{32}$.}}
Let $\verb+W+$ be the group $G_{25}$ or $G_{32}$, and $\verb+B+$ a list representing a transverse collection $B$ of conditional type (the conditionality property is actually irrelevant for the algorithm).
Let $μ\in \mathbb{U}_6$. Since $D_B^{0}\subset \mathbb{Q}[μ]\Stab(B)$, it suffices to calculate the ideal of $\mathbb{Q}[μ]\Stab(B)$ generated by $D_B^{0}$. Let $\verb+Stab_B+$ be a list containing the elements of $\Stab(B)$ and let $N'$ be its length; suppose, also, that $1$ is in the first spot of this list. Finally, let $\verb+Orb_1,Orb_2+$ be the lists containing the positions of reflections in $\verb+R+$ that make up the two orbits.

We create a list $\verb+DBstab+$ containing the vectors that represent $D_B^0$ with respect to the basis $\verb+Stab_B+$. 
In order to do this, for all $i\in \verb+R_B+$ we add the vector $v(\verb+Position(R[i],Stab_B)+,N')-v(1,1)$ to the list $\verb+DBstab+$; this is the vector representing $\verb+R[i]+-1$. In this way we include all vectors representing $\mathcal{R}_B-1$. Next, for every $(i,j)\in \verb+P_B+$:

\begin{enumerate}

\item  if $i,j\in \verb+Orb_1+$ or $i,j \in \verb+Orb_2+$, then we add the vector 
$v(\verb+Position(R[j]+^{-1}\verb+R[i]+,\verb+Stab_B)+,N'))-v(1,1)$, representing the element $\verb+R[j]+^{-1}\verb+R[i]+-1$,

\item if $i\in \verb+Orb_1+$ and $j\in \verb+Orb_2+$, then we add the vector
$v(\verb+Position(R[j]+^{-1}\verb+R[i]+,\verb+Stab_B)+,N'))-μ^{-1}\cdot v(1,1)$, representing the element $\verb+R[j]+^{-1}\verb+R[i]+-μ^{-1}$

\item if $i\in \verb+Orb_2+$ and $j\in \verb+Orb_1+$, then we add the vector 
$v(\verb+Position(R[j]+^{-1}\verb+RR[i]+,\verb+Stab_B)+,N'))-μ\cdot v(1,1)$, representing the element $\verb+R[j]+^{-1}\verb+R[i]+-μ$
\end{enumerate}
This gives the desired list $\verb+DBstab+$.

We give, now, ths algorithm that we used to obtain the subspace of $\mathbb{Q}[μ]^{N'}$ that corresponds to the ideal $(D_B^0)$. Let $\verb+Gens+$ be a list of generators for $\Stab(B)$. 
For every element $g\in\verb+Gens+$ we produce matrices $\verb+left+(g), \verb+right+(g)$, which are, respectively, the matrices of left and right multiplication by $\verb+g+$ on $\mathbb{Q}[μ]\Stab(B)$, given in the basis $\verb+Stab_B+$. These are the monomial matrices that correspond to the permutations of $\verb+Stab_B+$ induced by multiplying each element by $g\in \verb+Gens+$ on the left and right, respectively.
 
Now, let $\verb+L+$ be a list of vectors of $\mathbb{Q}[μ]^{N'}$ representing a set of elements $L$ of $\mathbb{Q}[μ]\Stab(B)$. To calculate the subspace of $\mathbb{Q}[μ]^{N'}$ that corresponds to the ideal generated by $L$, we start with the list $\verb+L(0)=L+$, and, in step $i$, we obtain a  basis $\verb+L(i)+$ of the $\mathbb{Q}[μ]$-span of $\{v,\verb+left+(g)\cdot v,\verb+right+(g)\cdot v \big | v\in \verb+L(i-1)+, g\in \verb+Gens+\}$. The list $\verb+L(i)+$ corresponds to a basis of the vector space spanned by $L_{i-1}\cup (\verb+Gens+\cdot L_{i-1})\cup (L_{i-1}\cdot \verb+Gens+)$. We stop when the lengths of $\verb+L(i-1)+$ and $\verb+L(i)+$ are equal; this is the dimension of the ideal generated by $L$, and $\verb+L(i)+$ represents a basis of that ideal. 

We now state the last computational result for this section.

\begin{mdframed}

\textbf{Third Computational Result, second part.} Let $W$ be the group $G_{25}$ or $G_{32}$ and $K$ a proper field. If $μ^6=1$ in $K$, and $B\in \mathcal{C}_{cond}$, then $\dim_{K}(D_B^0)=\lvert \Stab(B) \rvert - \lvert \Stab(B) \rvert/ \lvert K_B \rvert $. In particular, $B$ is $K$-admissible.

\end{mdframed}

Finally, we have the following result for $G_{25}$ and $G_{32}$.

\begin{proposition}\label{prop:case2532} Let $W$ be the group $G_{25}$ (resp. $G_{32}$) and $K$ a proper field. Let $B$ a transverse collection. 

\begin{enumerate}
\item If $\lvert B \rvert \neq 2$ (resp. $2,3$), then $B\in \mathcal{C}^K_{adm}$, and $\Ann_{K\Stab(B)}(B)=\Aug(K_B)$ 

\item If $\lvert B \rvert = 2$ (resp. $2$ or $3$) and $μ^6\neq 1$ in $K$, then $B\not \in \mathcal{C}^{K}_{adm}$. 

\item If $\lvert B \rvert = 2$ (resp. $2$ or $3$) and $μ^6=1$ in $K$, then $B\in \mathcal{C}^K_{adm}$ and there is a homomorphism $χ:K_B\to \mathbb{U}_6 \cap K$ such that $\Ann_{K\Stab(B)}(B)=\Aug^χ(K_B)$. Furthermore, in that case, $\dim_K \Ann_{K\Stab(B)}(B)=\dim_K \Aug(K_B).$
\end{enumerate}
\end{proposition}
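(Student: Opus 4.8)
The plan is to assemble the three cases from the computational input already on record, since the substance has been front-loaded into the \emph{First}, \emph{Second} and \emph{Third Computational Results} together with Lemmas~\ref{lem:sumupconditions} and~\ref{lem:invertibility}.

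First I would treat part~(1). By the Third Computational Result (first part), every transverse collection $B$ of $G_{25}$ (resp.\ $G_{32}$) satisfies condition~(A2) of the First Computational Result, and it is of conditional type precisely when $\lvert B\rvert=2$ (resp.\ $2$ or $3$). Hence, when $\lvert B\rvert\neq 2$ (resp.\ $2,3$), we have $B\in\mathcal{C}\setminus\mathcal{C}_{cond}$, and, since (A1) and (A2) are mutually exclusive, $B$ does not satisfy~(A1). Lemma~\ref{lem:sumupconditions} then gives at once, for every proper field $K$, that $B\in\mathcal{C}^K_{adm}$ and $\Ann_{K\Stab(B)}(B)=\Aug(K_B)$, which is exactly~(1).

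For parts~(2) and~(3) I would first record the common setup. When $\lvert B\rvert=2$ (resp.\ $2$ or $3$), the Third Computational Result (first part) puts $B\in\mathcal{C}_{cond}$, so $B$ satisfies~(A2); hence, as explained in the discussion following the First Computational Result, $\Ann_{K\Stab(B)}(B)=(D_B^{0})$, the two-sided ideal of $K\Stab(B)$ generated by $(\mathcal{R}_B-1)\cup\{s_2^{-1}s_1-\mu_{s_2}\mu_{s_1}^{-1}\big|(s_1,s_2)\in P_B\}$. Being of conditional type, $B$ admits a pair $(s_1,s_2)\in P_B$ with $s_1\not\sim s_2$, and for every such pair the Third Computational Result (first part) says $s_2^{-1}s_1$ has order $6$; moreover, since $G_{25}$ (resp.\ $G_{32}$) has exactly two orbits of reflections, with parameters $\mu_1,\mu_2$, one gets $\mu_{s_2}\mu_{s_1}^{-1}\in\{\mu,\mu^{-1}\}$ with $\mu=\mu_1/\mu_2$. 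For~(2), when $\mu^6\neq 1$ in $K$ we have $(\mu^{\pm1})^6\neq1$, so Lemma~\ref{lem:invertibility}, applied with $g=s_2^{-1}s_1$ (order $6$) and $l=\mu_{s_2}\mu_{s_1}^{-1}$, shows the generator $g-l$ of $(D_B^{0})$ is invertible in $K\Stab(B)$; therefore $(D_B^{0})=K\Stab(B)$, and Corollary~\ref{cor:nonadm} gives $B\notin\mathcal{C}^K_{adm}$. For~(3), when $\mu^6=1$ in $K$, the Third Computational Result (second part) gives directly that $B\in\mathcal{C}^K_{adm}$ and that $\dim_K(D_B^{0})=\lvert\Stab(B)\rvert-\lvert\Stab(B)\rvert/\lvert K_B\rvert$; in particular $(D_B^{0})$ is a proper ideal, so the discussion following the Third Computational Result (first part) produces a group homomorphism $\chi\colon K_B\to\mathbb{U}_6\cap K$ with $\Ann_{K\Stab(B)}(B)=(D_B^{0})=\Aug^{\chi}(K_B)$ (here~(A2.b), which says $\mathcal{R}_B$ together with the $s_2^{-1}s_1$ generate $K_B$, is what makes $\chi$ well defined on all of $K_B$). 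The final dimension equality then follows from $\dim_K\Aug^{\chi}(K_B)=\dim_K(D_B^{0})=\lvert\Stab(B)\rvert-\lvert\Stab(B)\rvert/\lvert K_B\rvert=\dim_K\Aug(K_B)$, using that the augmentation ideal of the normal subgroup $K_B$ of $\Stab(B)$ has codimension $\lvert\Stab(B)\rvert/\lvert K_B\rvert$ in $K\Stab(B)$.

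The hard part is not in this assembly but in the computational facts it rests on; if I had to single one step out, it is the claim in~(3) that $(D_B^{0})$ is proper and of exactly the predicted codimension. A priori the relations $\mathcal{R}_B\mapsto1$ and $s_2^{-1}s_1\mapsto\mu_{s_2}\mu_{s_1}^{-1}$ might be inconsistent on $K_B$ (in which case $(D_B^{0})=K\Stab(B)$ and $B$ would fail to be admissible) or might cut out strictly more than $\Aug^{\chi}(K_B)$; the Third Computational Result, second part, is precisely what excludes this, and — unlike Proposition~\ref{prop:classpart1} for the other exceptional groups — there is no uniform reason available here, so this step genuinely needs the machine computation. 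Everything else reduces to the mutual exclusivity of~(A1)/(A2), Lemma~\ref{lem:invertibility}, and Corollary~\ref{cor:nonadm}.
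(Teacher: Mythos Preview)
Your proof is correct and follows essentially the same route as the paper's own argument, which simply cites Lemma~\ref{lem:sumupconditions} together with the two parts of the Third Computational Result and the intervening discussion. You have merely unpacked that discussion explicitly—spelling out the use of Lemma~\ref{lem:invertibility} and Corollary~\ref{cor:nonadm} in~(2), and the passage to the character $\chi$ and the codimension count in~(3)—so there is nothing to add.
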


\begin{proof} The first case is a consequence of Lemma \ref{lem:sumupconditions} and the first part of the Third Computational Result. The discussion following the latter also implies the second case, and, in combination with the second part of the Third Computational Result, the third case, as well.
\end{proof}

\section{Freeness of $\Br(W)$}\label{ch:freeness}

In this section we study for which irreducible complex reflection groups the Brauer-Chen algebra is free over its ring of definition $\mathcal{A}$. For the cases where the algebra is indeed free, our method will, naturally, be to show that the basis obtained in Theorem \ref{th:basisoverK} for $\Br^K(W)$ carries over to the ring of definition.

\subsection{A useful lemma}

The following lemma will be critical for the freeness property of $\Br(W)$ in both the infinite series and the exceptional groups case so we mention it here. It is a substitute of Lemma \ref{lem:upperbound} that works over a ring. Recall the definition of the ideals $I_r=\sum_{B\in \mathcal{C}, \lvert B \rvert\geq r} e_B\mathcal{A}W$ (Definition \ref{def:ideals}) of $\Br(W)$. Also, let
$π_w$ denote the projection $\mathcal{A}W\to w\mathcal{A}\Stab(B)$, with respect to the decomposition $\mathcal{A}W=\oplus_{w\in W/\Stab(B)}w\mathcal{A}\Stab(B)$.

\begin{lemma}\label{lem:barcondition} Let $W$ be a complex reflection group and $R$ an $\mathcal{A}$-algebra. Let also $B$ be a transverse collection of cardinality $r$ and $x=\sum_{w\in W}λ_ww\in RW$ be such that $xe_B\in I^{(R)}_{r+1}$. If $\cup_{λ_w\neq 0}wB$ is a transverse collection, then $π_w(x)e_B\in I^{(R)}_{r+1}$ for all $w\in W$.
\end{lemma}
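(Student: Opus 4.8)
The plan is to mimic the argument of Lemma \ref{lem:upperbound}, but to replace the single $\Br^K(W)$-module constructed there (which used field-theoretic facts such as the existence of a $k_K(δ)$-form with a degree function, and Proposition \ref{prop:directsum}) with the ``tautological'' module $V = \sum_{B'\in\mathcal{B}} e_{B'}(RW + I^{(R)}_{r+1}) \subseteq \Br^R_r(W)$, where $\Br^R_r(W) := \Br^R(W)/I^{(R)}_{r+1}$ and $\mathcal{B}$ is the orbit of $B$. Exactly as in Lemma \ref{lem:upperbound}, relations (B2) and the multiplication formulas of Remark \ref{remark:ebmult} show that $V$ is a $\Br^R(W)$-submodule of $\Br^R_r(W)$: acting by $e_H$ on $e_{B'}y$ either lands back in $RWe_{B'}$ or produces $e_{B''}$ with $B''\supsetneq B'$, hence in $I^{(R)}_{r+1}$, hence zero in the quotient. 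Set $V_{B'} := e_{B'}V$ for $B'\in\mathcal{B}$; then $V_{B'} = wV_B$ whenever $wB=B'$, and $B$ is maximal (in the sense of Definition \ref{def:maximal}) for $V$ because any transverse collection strictly containing $B$ has cardinality $>r$ and hence kills $V$.

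The only place where the original proof really used that $K$ is a field is in invoking Proposition \ref{prop:directsum}(1), i.e.\ $\sum_{B'\in\mathcal{B}} V_{B'} = \bigoplus_{B'\in\mathcal{B}} V_{B'}$, which rests on Lemma \ref{lem:degree}. Over a general $\mathcal{A}$-algebra $R$ this directness need not hold for the whole module, and this is where the transversality hypothesis on $\cup_{λ_w\neq 0} wB$ enters: I will only need the directness when expanding a particular element. Concretely, write $x = \sum_w λ_w w$ and note $xe_B = \sum_w λ_w (we_B) = \sum_w λ_w e_{wB} w$, so the image $\overline{xe_B}$ of $xe_B$ in $V$ decomposes as $\sum_{B'\in\mathcal{B}} v_{B'}$ with $v_{B'} = \sum_{wB=B'} λ_w e_{wB} w \in V_{B'}$; grouping $w$ by the coset $w\Stab(B)$, i.e.\ by the collection $wB$, this is precisely $v_{wB} = π_w(x)e_B w'{}^{-1}\cdot(\dots)$ — more cleanly, $v_{B'} = (\text{the }B'\text{-component})$ equals $\sum_{wB=B'}λ_w e_{B'}w$, and $e_B\cdot(\text{that})$ recovers $π_w$. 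The key point is that when $xe_B \in I^{(R)}_{r+1}$, i.e.\ $\overline{xe_B}=0$ in $V$, I want to conclude each $v_{B'}=0$, equivalently $π_w(x)e_B \in I^{(R)}_{r+1}$ for all $w$.

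The hard part, and the role of the hypothesis, is the following: $\overline{xe_B}=0$ involves only those $V_{B'}$ with $B' = wB$, $λ_w\neq 0$, and the collection $\bigcup_{λ_w\neq 0} wB$ being transverse means that $e_{\bigcup wB}$ is a well-defined idempotent-type element and, more to the point, the various $e_{B'}$ occurring all commute and can be multiplied to a single $e_C$ with $C = \bigcup_{λ_w\neq 0} wB$, $|C|\ge r$; applying $e_{C\setminus B'}$ to the equation $\sum v_{B'}=0$ isolates, up to a power of $δ$, the single term $v_{B'}$ (for the relevant $B'$), because $e_{C\setminus B'}$ acts as $δ^{|C\setminus B'|}$ on $V_{B'}$ and sends every other $V_{B''}$ ($B''\in\mathcal{B}$, $B''\subseteq C$, $B''\neq B'$) into $\bigoplus_{B'''\supsetneq B''} V_{B'''}$, which vanishes in $\Br^R_r(W)$ once $|B'''|>r$ — and here one uses that $B'' \cup (C\setminus B')$ strictly contains $B''$ since the $wB$ are distinct transverse refinements inside the transverse $C$. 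Since $δ$ is invertible in $\mathcal{A}$ (and hence in $R$), multiplying back by $δ^{-|C\setminus B'|}$ gives $v_{B'}=0$ in $\Br^R_r(W)$, i.e.\ $π_w(x)e_B \in I^{(R)}_{r+1}$, as desired. I would organise the write-up as: (1) construct $V$ and verify it is a module; (2) record $e_{B'}$-multiplication on $V$ and that $C:=\bigcup_{λ_w\neq 0}wB$ is transverse, so $e_C$ makes sense and $e_{C\setminus B'}$ picks out $V_{B'}$; (3) run the isolation argument using invertibility of $δ$. The main obstacle is step (2)–(3): making precise that within the transverse collection $C$, the sub-collections $B'\in\mathcal{B}$, $B'\subseteq C$, are pairwise distinct of size $r$, so that $e_{C\setminus B'}$ genuinely annihilates the off-diagonal terms modulo $I^{(R)}_{r+1}$ — this is exactly the combinatorial content that the hypothesis ``$\cup_{λ_w\neq 0} wB$ transverse'' is there to supply.
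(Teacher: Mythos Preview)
Your overall strategy is right and close to the paper's, but the specific isolation step has the multiplier backwards. You propose to hit $\sum_{B'} v_{B'}=0$ with $e_{C\setminus B'}$ and claim this acts as $\delta^{|C\setminus B'|}$ on $V_{B'}$. It does not: every $H\in C\setminus B'$ is transverse with $B'$ (since $C$ is transverse and $B'\subseteq C$), so $e_H V_{B'}=e_H e_{B'}V=e_{B'\cup\{H\}}V$, which already lies in $I^{(R)}_{r+1}$ and hence vanishes in $\Br^R_r(W)$. Thus $e_{C\setminus B'}$ kills the diagonal term just as it kills the off-diagonal ones, and nothing is isolated.

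The fix is to multiply by $e_{B'}$ instead, and this is precisely what the paper does. On $v_{B'}$ this gives $\delta^r v_{B'}$; on $v_{B''}$ with $B''\subseteq C$, $B''\neq B'$, one has $e_{B'}e_{B''}=e_{B'\cup B''}$ with $B'\cup B''$ transverse (this is where the hypothesis enters) and $|B'\cup B''|>r$, so the product dies in $\Br^R_r(W)$. Invertibility of $\delta$ then yields $v_{B'}=0$, i.e.\ $\pi_w(x)e_B\in I^{(R)}_{r+1}$.

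The paper's proof is in fact more direct than your plan: it does not build the auxiliary module $V$ at all but works straight with the equation $\sum_w \lambda_w e_{wB}w=0$ in $\Br^R_r(W)$, separates the terms with $wB=w_0B$ from the rest, multiplies both sides on the left by $e_{w_0B}$, and uses the same observation that $e_{w_0B}e_{wB}=e_{w_0B\cup wB}\in I^{(R)}_{r+1}$ whenever $wB\neq w_0B$ and $\lambda_w\neq 0$. Your module-theoretic wrapping (step~(1) of the plan) is unnecessary once the correct multiplier is identified.
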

\begin{proof} Consider the quotient $\Br^R_{r}(W)=\Br^R(W)/I^{(R)}_{r+1}$. In $\Br^R_r(W)$ we have $\sum_{w\in W}λ_wwe_B=0$; we will show that $π_w(x)e_B=0$ in $\Br^R_r(W)$ for all $w\in W$. Pick some $w_0\in W$. If $π_{w_0}(x)=\sum_{w\in w_0\Stab(B)}λ_wwe_B \neq 0$ in $\Br^R(W)$, then we can assume that $λ_{w_0}\neq 0$ (otherwise we pick some other $w\in w_0\Stab(B)$ such that $λ_w\neq 0$). We write $\sum_{w\in W}λ_wwe_B=0$ as $\sum_{w\in w_0\Stab(B)}λ_wwe_B=-\sum_{w\not\in w_0\Stab(B)}λ_wwe_B$, or, equivalently, 

$$\sum_{w\in w_0\Stab(B)}λ_we_{w_0B}w=-\sum_{w\not\in w_0\Stab(B)}λ_we_{wB}w.$$ Multiplying both sides of the last equality by $e_{w_0B}$ on the left, yields
 
$$\sum_{w\in w_0\Stab(B)}δ^rλ_we_{w_0B}w=-\sum_{w\not\in w_0\Stab(B)}λ_we_{w_0B}e_{wB}w.$$ 
Now, for the right hand side, if $λ_w\neq 0$, then $w_0B\cup wB$ is a transverse collection by assumption. Furthermore, if $w\not\in w_0\Stab(B)$, then $w_0B\neq wB$ and so, $\lvert w_0B\cup wB \rvert >r$. Hence, $e_{w_0B}e_{wB}=0$ in $\Br^R_r(W)$. So, we have $\sum_{w\in w_0\Stab(B)}δ^rλ_we_{w_0B}w=0$ or, equivalently, since $δ$ is invertible in $R$, $\sum_{w\in w_0\Stab(B)}λ_wwe_{B}=0\Leftrightarrow π_{w_0}(x)e_B=0$, which implies the statement.
\end{proof}

\begin{definition}\label{def:barcondition}
Let $W$ be a complex reflection group and $R$ an $\mathcal{A}$-algebra. If an element $x=\sum_{w\in W}λ_ww \in RW$ satisfies $\cup_{λ_w\neq 0}wB\in \mathcal{C}$, we say that it satisfies the \textit{bar condition} for $B$.
\end{definition}

The following result is a straightforward corollary of Lemma \ref{lem:barcondition}.

\begin{corollary}\label{cor:lem1} Let $W$ be a complex reflection group, $R$ an $\mathcal{A}$-algebra and $B$ a transverse collection of cardinality $r$. If every element of $\Rel(B)$ satisfies the bar condition with respect to $B$, then $\overline\Rel(B)\cdot e_B\subseteq I^{(R)}_{r+1}$.
\end{corollary}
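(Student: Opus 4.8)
The plan is to deduce this directly from Lemma \ref{lem:barcondition} together with the fact that $\Rel(B)$ annihilates $e_B$. First I would record that for every $x\in\Rel(B)$ one has $xe_B=0$ in $\Br^R(W)$: this is precisely the computation carried out earlier over a proper field to show $\Rel(B)\subseteq\Ann_{KW}(e_B)$, and that computation uses only the defining relations (B3) (which gives $(r-1)e_B=0$ for $r\in\mathcal{R}_B$, after reordering the factors of $e_B$ so that $e_{H_r}$ comes first) and (B5) (which gives $\sigma^H_{H_1,H_2}e_B=e_He_B-e_He_B=0$). Both relations hold in $\Br^R(W)$ for an arbitrary $\mathcal{A}$-algebra $R$, so the argument is valid verbatim over $R$; in particular $xe_B\in I^{(R)}_{r+1}$ for all $x\in\Rel(B)$.

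Next I would fix $x=\sum_{w\in W}\lambda_w w\in\Rel(B)$. By hypothesis $x$ satisfies the bar condition for $B$, i.e.\ $\cup_{\lambda_w\neq 0}wB$ is a transverse collection; combined with $xe_B\in I^{(R)}_{r+1}$ from the previous step, Lemma \ref{lem:barcondition} applies and gives $\pi_w(x)e_B\in I^{(R)}_{r+1}$ for every $w\in W$.

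Finally, by Definition \ref{def:annihilators} we have $\overline\Rel(B)=\cup_{w\in W}\pi_w(\Rel(B))$, so every element of $\overline\Rel(B)$ is of the form $\pi_w(x)$ with $x\in\Rel(B)$ and $w\in W$. The previous paragraph shows that each such element multiplies $e_B$ into $I^{(R)}_{r+1}$, which is exactly the assertion $\overline\Rel(B)\cdot e_B\subseteq I^{(R)}_{r+1}$.

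Since the statement is an immediate combination of two already-established facts, there is no real obstacle here; the only point requiring a moment's care is the verification that the identity $xe_B=0$ for $x\in\Rel(B)$ persists over an arbitrary $\mathcal{A}$-algebra $R$ rather than just over a proper field, which holds because its proof is purely relation-theoretic and uses neither semisimplicity nor any field hypothesis.
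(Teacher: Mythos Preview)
Your proof is correct and follows exactly the approach the paper intends: the paper states this result as ``a straightforward corollary of Lemma~\ref{lem:barcondition}'' without further detail, and you have spelled out precisely that straightforward derivation. Your care in noting that the relation $\Rel(B)\cdot e_B=0$ holds over any $\mathcal{A}$-algebra $R$ (not just over a proper field) is appropriate and is indeed the only point requiring comment.
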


\subsection{Freeness in the infinite series}

We already have all necessary elements to prove the following result concerning the freeness of $\Br(W)$ for the irreducible complex reflection groups of the infinite series.

\begin{proposition}\label{prop:freenessGmpn} Let $W=G(m,p,n)$. The algebra $\Br(W)$ is a free $\mathcal{A}$-module with basis $\{we_B | B \in \mathcal{C}^K_{adm}, w\in W/K_B\}$, for any proper field $K$.
\end{proposition}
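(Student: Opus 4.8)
The plan is to transfer the $K$-basis of Theorem~\ref{th:basisoverK} to the ring of definition $\mathcal{A}$ by re-running the argument of that theorem, but using Corollary~\ref{cor:lem1} (the bar-condition version of Lemma~\ref{lem:upperbound}) in place of Lemma~\ref{lem:upperbound}. Concretely: by Lemma~\ref{lem:genset} the elements $we_B$, $w\in W$, $B\in\mathcal{C}$, span $\Br(W)$ over $\mathcal{A}$, and by Lemma~\ref{lem:unionideals} we have the filtration $\Br(W)=\bigcup_r I_r$ with $I_r=\sum_{|B|\ge r}\mathcal{A}W e_B$. The goal is to prove, by downward induction on $r$, the analogue of Equation~\eqref{eq:indstep} over $\mathcal{A}$, namely
$$I_r=\sum_{B\in\mathcal{C}^K_{adm},\ w\in W/K_B}\mathcal{A}w e_B + I_{r+1}.$$
Since the (common) cardinality of the spanning set $\{we_B\mid B\in\mathcal{C}^K_{adm},\ w\in W/K_B\}$ already equals $\dim_K\Br^K(W)=\dim_{\Frac(\mathcal{A})}\Br^{\Frac(\mathcal{A})}(W)$, once we know this set spans $\Br(W)$ over $\mathcal{A}$ it must be an $\mathcal{A}$-basis (a spanning set of a finitely generated torsion-free module over a domain whose size matches the rank of the module after tensoring with the fraction field is a basis, because any $\mathcal{A}$-linear relation among them would, after inverting, give a $\Frac(\mathcal{A})$-linear relation, contradicting linear independence over $\Frac(\mathcal{A})$; and $\Br(W)$ embeds into $\Br^{\Frac(\mathcal{A})}(W)$ since it is spanned by the finite set $\{we_B\}$ which is $\Frac(\mathcal{A})$-linearly independent there, so $\Br(W)$ is torsion-free).

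To carry out the inductive step for a fixed $B$ with $|B|=r$: if $B\notin\mathcal{C}^K_{adm}$ then by Corollary~\ref{cor:nonadm} we have $\Ann_{KW}(B)=KW$, i.e.\ $\overline\Rel(B)$ (or $\Rel(B)$) already generates all of $KW$; we need the corresponding statement that $\mathcal{A}W\cdot\overline\Rel(B)\cdot e_B\subseteq I_{r+1}$ forces $\mathcal{A}W e_B\subseteq I_{r+1}$, which follows from Corollary~\ref{cor:lem1} \emph{provided} every element of $\Rel(B)$ satisfies the bar condition for $B$ and provided $\mathcal{A}W\overline\Rel(B)=\mathcal{A}W$ (so that $e_B\in I_{r+1}$). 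For this I will invoke Propositions~\ref{prop:onehi}, \ref{prop:classgeneral}, \ref{prop:classg22n}, \ref{prop:gmpngeneral}, \ref{prop:g22ngeneral}: for admissible $B$ they give $\mathcal{A}W\Rel(B)=\mathcal{A}W(K_B-1)$, and by Lemma~\ref{lem:hiproblem} and Proposition~\ref{prop:g22nnonadm} the non-admissible $B$ of cardinality $>1$ are exactly those producing a unit $\mu_s s\in\Rel(B)$ (an element of $\Theta_B$), which already gives $\mathcal{A}W\Rel(B)=\mathcal{A}W$ over the ring $\mathcal{A}$ directly (no field needed), so those $B$ are handled. If $B\in\mathcal{C}^K_{adm}$ with $|B|=r$, I use $\mathcal{A}W\Rel(B)=\mathcal{A}W(K_B-1)$ together with Corollary~\ref{cor:lem1}: the bar condition on $\Rel(B)$ gives $(K_B-1)e_B\subseteq I_{r+1}$, hence $(h-1)e_B\in I_{r+1}$ for all $h\in K_B$, and then for $w_2=w_1 h$ with $h\in K_B$ the identity $w_2 e_B = w_1 e_B + w_1(h-1)e_B$ shows $w_2 e_B\in\mathcal{A}w_1 e_B+I_{r+1}$, which collapses the spanning set of $I_r/I_{r+1}$ to $\{we_B\mid w\in W/K_B\}$ and yields \eqref{eq:indstep} over $\mathcal{A}$.

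The main obstacle — and the one genuinely new ingredient here versus Theorem~\ref{th:basisoverK} — is verifying the \emph{bar condition} (Definition~\ref{def:barcondition}) for every element of $\Rel(B)$, for every admissible $B$ in $G(m,p,n)$. That is, for $x=\sum_w\lambda_w w\in\Rel(B)$ one must check that $\bigcup_{\lambda_w\ne0}wB$ is again a transverse collection. For $x\in\mathcal{R}_B-1$ the two terms are $1$ and a reflection $r$ with $H_r\in B$, so $wB\in\{B, rB\}$ and $rB=B$, making the union $B$ itself — transverse trivially. For $x=\sigma^H_{H_1,H_2}$ with $H_1,H_2\in B$ non-transverse with $H$, the reflections appearing are exactly $\mathcal{R}_{H_1\to H}\cup\mathcal{R}_{H_2\to H}$; using the explicit transversality data of Lemma~\ref{lem:Gmpn} together with the admissibility classification (so $B$ consists of hyperplanes $H_{i_jj_k}^{\kappa}$ with distinct indices, or the $G(2,2,n)$-variant, or is the singleton $\{H_i\}$) one checks case by case that for each such reflection $s$, the collection $sB$ differs from $B$ only in the two hyperplanes containing the indices of $H$, and that $sB$ stays transverse and — crucially — that the union over all such $s$ of the $sB$ is still pairwise transverse. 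This is a finite, explicit check relying on Remark~\ref{remark:sumtransversality} (two hyperplanes are non-transverse iff they share an index, resp.\ share exactly one index in the $(2,2)$ case), and it is exactly the place where the structure of the infinite series is used; I expect it to be the longest part of the write-up but entirely routine given the classification already established. Once the bar condition is in hand, the induction above runs verbatim and delivers the stated $\mathcal{A}$-basis.
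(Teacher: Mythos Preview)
Your overall architecture matches the paper's: show the proposed set spans $\Br(W)$ over $\mathcal{A}$ via the filtration $I_r$, then conclude freeness from the dimension count over $K$. However, you have the role of the bar condition reversed between the admissible and non-admissible cases, and this produces one genuine gap.

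For \emph{admissible} $B$ the bar condition is unnecessary. You already invoke $\mathcal{A}W\Rel(B)=\mathcal{A}W(K_B-1)$ from Propositions~\ref{prop:onehi}, \ref{prop:classgeneral}, \ref{prop:classg22n}. Since $\Rel(B)e_B=0$ holds already in $\Br(W)$ over $\mathcal{A}$ (the proof of the lemma following Definition~\ref{def:rel} uses only the defining relations (B3) and (B5)), this immediately gives $(K_B-1)e_B\subseteq\mathcal{A}W\Rel(B)\cdot e_B=0$, not merely $\subseteq I_{r+1}$. That is exactly what the paper does; the case-by-case verification of the bar condition you propose here would go through, but it is a detour.

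The gap is in the \emph{non-admissible} $G(2,2,n)$ case. Your claim that Proposition~\ref{prop:g22nnonadm} produces a unit $\mu_s s\in\Rel(B)$ is false: what that proposition exhibits is $\sigma^\gamma_{\alpha,\beta}=\mu\bigl((j_1j_2)-(i_1i_2)_\kappa\bigr)$, a difference of two reflections lying in distinct $\Stab(B)$-cosets. Over $K$ this suffices for non-admissibility, but over $\mathcal{A}$ the relation $\Rel(B)e_B=0$ alone does not force $e_B=0$ or $e_B\in I_{r+1}$. This is precisely the one place in the paper's proof where Lemma~\ref{lem:barcondition} is used: one checks explicitly that $(j_1j_2)B\cup(i_1i_2)_\kappa B$ is a transverse collection (a short computation with the images of three hyperplanes), applies the lemma to project, and obtains $(j_1j_2)e_B\in I_{r+1}$, whence $e_B\in I_{r+1}$. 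So the bar-condition work you planned for admissible collections should instead be directed at this single non-admissible family in $G(2,2,n)$; once that is done, the rest of your argument goes through.
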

\begin{proof} Let $K$ be a proper field for $W$. Notice first that, the set of the statement is a basis for $\Br^K(W)$ by Theorem \ref{th:basisoverK}. Hence, it suffices to show that it spans $\Br(W)$ to obtain the result. Our method will be the same as in the proof of Theorem \ref{th:basisoverK}. We remind it here.

Recall the setting of Lemma \ref{lem:unionideals}, i.e. that $\Br(W)$ is the union of the descending chain of ideals $I_r=\sum_{B\in \mathcal{C}, \lvert B \rvert \geq r} \mathcal{A}We_B$ of $\Br(W)$ (Definition \ref{def:ideals}). We show that for all $r\in \mathbb{N}$, we have 
 
\begin{equation}\label{eq:indstepfreeness}
I_{r}=\sum_{B\in \mathcal{C}^K_{adm}, w\in W/K_B} \mathcal{A}we_B + I_{r+1}.
\end{equation}
Induction on $r$ then yields that the set $\{we_B\big| B\in \mathcal{C}^K_{adm}, w\in W/K_B\}$ spans $\Br(W)$, which implies the result.

Let $B$ be a transverse collection of cardinality $r$.

Suppose that $B\not \in \mathcal{C}^K_{adm}$. We first consider the case $(m,p)\neq (2,2)$. By Proposition \ref{prop:gmpngeneral}, $B$ is of cardinality at least two and it contains some $H_i$, in which case, Lemma \ref{lem:hiproblem} gives that, for some $s\in \mathcal{R}$ we have 
$μ_ss\in \Rel(B)$. Since $\Rel(B)e_B=0$ and $μ_ss$ is invertible in $\mathcal{A}$, this yields $e_B=0$. Hence, if $(m,p)\neq (2,2)$, for all $r\in \mathbb{N}$, we have, in particular:

\begin{equation}\label{eq:indstepfreeness1}
I_{r}=\sum_{B\in \mathcal{C}^K_{adm}} \mathcal{A}We_B + I_{r+1}.
\end{equation}

We show the same equality for $(m,p)=(2,2)$. By Proposition \ref{prop:g22n}, if $B\not \in \mathcal{C}^K_{adm}$, there are $i_1,j_1,i_2,j_2$ and $κ$ such that
$H_{i_1j_1}^0,H_{i_1j_1}^1,H_{i_2j_2}^κ\in B$ but $H_{i_1j_1}^{κ+1}\not\in B$.
Taking the element $σ^γ_{α,β}\in \Rel(B)$ for $α=H_{i_1j_1}, β=H_{i_2j_2}^κ$ and $γ=H_{i_1j_2}$, gives that $((j_1j_2)-(i_1i_2)_κ)e_B=0$. We verify that this element satisfies the bar condition (see Definition \ref{def:barcondition}), i.e. $(j_1j_2)B\cup (i_1i_2)_κB$ is a transverse collection. For this, notice that, if $i,j\not\in \{i_1,i_2,j_1,j_2\}$, then $H_{ij},H_{ij}^1$ are fixed by $(j_1j_2),(i_1i_2)_κ$. Hence, the only hyperplanes in $B$ that are not fixed by $(j_1j_2),(i_1i_2)_κ$ are $H_{i_1j_1},H_{i_1j_1}^1$ and $H_{i_2j_2}^κ$, and it suffices to check transversality of their respective images, i.e. that $(j_1j_2)B\cup (i_1i_2)_κB$ is a transverse collection. We have that 
$(j_1j_2)$ maps $H_{i_1j_1},H_{i_1j_1}^1,H_{i_2j_2}^κ$ to $H_{i_1j_2},H_{i_1j_2}^1,H_{i_2j_1}^κ$ respectively, and $(i_1i_2)_κ$ maps the same hyperplanes to 
$H_{i_2j_1}^κ,H_{i_2j_1}^{κ+1},H_{i_1j_2}^{2κ}$. One can now see that $(j_1j_2)B\cup (i_1i_2)_κB$ is a transverse collection, using, for example Remark \ref{remark:sumtransversality}. Hence,
$(j_1j_2)-(i_1i_2)_κ$ satisfies the bar condition. By Lemma \ref{lem:barcondition}, this implies that $π_w((j_1j_2)-(i_1i_2)_κ)e_B\in I_{r+1}$, for all $w\in W$. As one can see from the above calculations, $(j_1j_2)B\neq (i_1i_2)_κB$, hence $(j_1j_2),(i_1i_2)_κ$ belong to defferent cosets of $\Stab(B)$. So, for $w=(j_1j_2)$ for example, we have
$π_w((j_1j_2)-(i_1i_2)_κ)=(j_1j_2)$ which implies that $(j_1j_2)e_B\in I_r$, or, since $(j_1j_2)$ is invertible, that $e_B\in I_{r+1}$. This yields Equations \eqref{eq:indstepfreeness1} above, for the case $(m,p)=(2,2)$ as well.

Suppose now that $B\in \mathcal{C}^K_{adm}$. If $(m,p)\neq (2,2)$, then by Proposition \ref{prop:gmpngeneral}, $B=\{H_i\}$ for some $i$ or $B=\{H_{i_1j_1}^{k_1},\dots , H_{i_rj_r}^{k_r}\}$ with $i_1,j_1,\dots i_i,j_r$ pairwise distinct. In both cases, by Propositions \ref{prop:onehi} and \ref{prop:classgeneral}, respectively, we have that $(K_B-1)e_B=0$. If $(m,p)=(2,2)$ we obtain the same result by Propositions \ref{prop:g22n} and \ref{prop:classg22n}. This implies that if $w_1K_B=w_2K_B$, then 
$w_1e_B=w_2e_B$. In particular $w_1e_B - w_2e_B \in I_{r+1}$, which, together with Equations \eqref{eq:indstepfreeness1} above, yields Equations \eqref{eq:indstepfreeness}, and concludes the proof.
\end{proof}

\begin{remark} Note that  by Propositions \ref{prop:gmpngeneral} and \ref{prop:g22n}, for the groups in the infinite series admissibility of a transverse collection is, in fact, independend of the field $K$, and thus, so is the basis for $\Br(W)$ provided by the above proposition.
\end{remark}

\subsection{Freeness for the exceptional groups}\label{sec:freenessexceptionals}

We turn now to the study of the freeness property for $\Br(W)$ for the exceptional complex reflection groups. As for the corresponding study of admissibility in Section \ref{ch:admissibility}, we used computational methods to obtain the results that lead to the determination of the freeness property of $\Br(W)$ for the exceptional complex reflection groups. Again, we explain these results as well as the algorithm used to verify them. Since the case of the group $G_{26}$ presented several peculiarities compared to the other exceptional groups, we do not include the computational methods for its treatement, but rather give a self-contained exposition using the description from Example \ref{ex:g26} of Subsection \ref{sec:crgclass}.


\subsubsection{Study of freeness for $W\neq G_{26}$}

Let $W$ be a complex reflection group and $B$ a transverse collection. 

\begin{definition} 
\label{def:accep}

A hyperplane $H$ is \textit{acceptable} (with respect to $B$), if it satisfies the following properties: 
\begin{itemize}
\item[(a1)] $H\not \in B$,
\item[(a2)] $H\not \pitchfork B$,
\item[(a3)] for every $H'\in B$ non-transverse with $H$, $\cup_{s\in \mathcal{R}_{H'\to H}}sB$ is a transverse collection in the orbit of $B$, i.e. all $s\in \mathcal{R}_{H'\to H}$ map $B$ to the same transverse collection.
\end{itemize}
A pair of acceptable hyperplanes $(H',H'')$ will be said \textit{acceptable} if $H'\neq H''$ and there is $B'\sim_{\mathcal{R}}B$ (that is, with $\mathcal{R}_{B\to B'}\neq \emptyset $) that contains $H',H''$.
\end{definition}

For $H,H',H''\in \mathcal{H}$ we define the element 
$$τ^{H',H''}_H:=\sum_{s\in \mathcal{R}_{H\to H'}} μ_ss-\sum_{s\in \mathcal{R}_{H\to H''}} μ_ss\in \mathcal{A}W.$$

\begin{lemma}\label{lem:reltau} Let $(H',H'')$ be an acceptable pair, and $H\in B$ be non-transverse with both $H'$ and $H''$. Then, $τ^{H',H''}_H e_B=0$.
\end{lemma}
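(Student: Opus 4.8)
The plan is to reduce the statement to the identity $e_{H'}e_B=e_{H''}e_B$ and then to prove that identity by rewriting both products through a conjugate of $B$ containing $H'$ and $H''$. For the reduction, I would use Remark~\ref{remark:ebmult}: since $H\in B$ and $H\not\pitchfork H'$, rearranging the factors of $e_B$ so that $e_H$ comes first and applying (B5) to $e_{H'}e_H$ gives $e_{H'}e_B=\sum_{s\in\mathcal{R}_{H\to H'}}\mu_s s\,e_B$, and likewise $e_{H''}e_B=\sum_{s\in\mathcal{R}_{H\to H''}}\mu_s s\,e_B$ using $H\not\pitchfork H''$; hence $\tau^{H',H''}_He_B=e_{H'}e_B-e_{H''}e_B$, so it is enough to show $e_{H'}e_B=e_{H''}e_B$.

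Next I would bring in acceptability. Writing $s\,e_B=e_{sB}s$ and invoking condition (a3) of Definition~\ref{def:accep} (applied to the acceptable hyperplane $H'$ and to $H\in B$, which is non-transverse with it), all the collections $sB$, $s\in\mathcal{R}_{H\to H'}$, coincide with a single transverse collection $C$ in the orbit of $B$, so $e_{H'}e_B=e_C\sum_{s\in\mathcal{R}_{H\to H'}}\mu_s s$. Now $H'=sH\in C$, while $H\notin C$ (otherwise the distinct hyperplanes $H,H'$ of the transverse collection $C$ would be transverse, contradicting $H\not\pitchfork H'$) and $H\not\pitchfork C$; so Remark~\ref{remark:ebmult} gives $e_C\sum_{s\in\mathcal{R}_{H\to H'}}\mu_s s=e_Ce_H$, i.e.\ $e_{H'}e_B=e_Ce_H$. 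In exactly the same way $e_{H''}e_B=e_{C''}e_H$ for a transverse collection $C''$ in the orbit of $B$ with $H''\in C''$ and $H\notin C''$.

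The crux is then to show $H''\in C$ and, symmetrically, $H'\in C''$; this is the step I expect to be the main obstacle, because condition (a3) only tells us that $C$ (resp.\ $C''$) is one well-defined collection in the orbit of $B$ and says nothing about whether it passes through the \emph{other} of the two hyperplanes. For this I would use the transverse collection $B'$ witnessing that $(H',H'')$ is an acceptable pair ($B'\sim_{\mathcal{R}}B$, $H',H''\in B'$), together with Lemmas~\ref{lem:transverse1}, \ref{lem:transverse2} and~\ref{lem:precom}, to control the images of $H',H''$ under the reflections in $\mathcal{R}_{H\to H'}$ and $\mathcal{R}_{H\to H''}$; one expects in fact $C=C''=B'$. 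Granting $H''\in C$ and $H'\in C''$, Remark~\ref{remark:ebmult} gives $e_{H''}e_C=\delta e_C$ and $e_{H'}e_{C''}=\delta e_{C''}$, and since $H',H''$ are distinct members of the transverse collection $B'$ their operators commute, so
\[
\delta\,e_Ce_H=e_{H''}e_Ce_H=e_{H''}(e_{H'}e_B)=e_{H'}(e_{H''}e_B)=e_{H'}e_{C''}e_H=\delta\,e_{C''}e_H .
\]
Since $\delta$ is invertible in $\mathcal{A}$, this yields $e_{H'}e_B=e_Ce_H=e_{C''}e_H=e_{H''}e_B$, hence $\tau^{H',H''}_He_B=0$.
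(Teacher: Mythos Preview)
Your overall architecture matches the paper's: reduce to $e_{H'}e_B=e_{H''}e_B$, then show this by pushing $e_{H'}e_B$ into a conjugate of $B$ that contains $H''$ (so that left-multiplying by $e_{H''}$ gives a factor $\delta$), symmetrize, and use $e_{H'}e_{H''}=e_{H''}e_{H'}$. The place where you diverge from the paper is exactly the step you flagged as the crux, and there your argument does have a genuine gap.

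You apply condition (a3) to the acceptable hyperplane $H'$ using the \emph{given} $H\in B$ with $H\not\pitchfork H'$, obtaining a collection $C$ with $sB=C$ for all $s\in\mathcal{R}_{H\to H'}$. But (a3) only asserts that, for a \emph{fixed} element of $B$ non-transverse with $H'$, the reflections sending it to $H'$ all map $B$ to a single collection; it does \emph{not} say that different elements of $B$ non-transverse with $H'$ yield the same target collection. So there is no reason to expect $C=B'$ or $H''\in C$, and the lemmas you cite (\ref{lem:transverse1}, \ref{lem:transverse2}, \ref{lem:precom}) do not supply this. Your proposal therefore stalls precisely at the step you identified.

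The paper sidesteps this cleanly by choosing the element of $B$ so that the target collection is forced to be $B'$. Pick any $r\in\mathcal{R}_{B\to B'}$ (exists since $B'\sim_{\mathcal{R}}B$) and set $H_1:=r^{-1}H'\in B$. Then $r\in\mathcal{R}_{H_1\to H'}$, so by (a3) every $s\in\mathcal{R}_{H_1\to H'}$ satisfies $sB=rB=B'$. Hence
\[
e_{H'}e_B=\Big(\sum_{s\in\mathcal{R}_{H_1\to H'}}\mu_s s\Big)e_B=e_{B'}\Big(\sum_{s\in\mathcal{R}_{H_1\to H'}}\mu_s s\Big),
\]
and since $H''\in B'$ by the definition of an acceptable pair, $e_{H''}e_{H'}e_B=\delta\,e_{H'}e_B$. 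The symmetric argument with $H_2:=r^{-1}H''$ gives $e_{H'}e_{H''}e_B=\delta\,e_{H''}e_B$, and commutativity finishes as you wrote. The given hyperplane $H$ is used only at the very end, for the reduction you already carried out. In short: keep your reduction and your endgame, but replace $H$ by $H_1=r^{-1}H'$ (resp.\ $H_2=r^{-1}H''$) in the middle step.
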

\begin{proof} Since $(H',H'')$ is an acceptable pair, there is $B'\sim_{\mathcal{R}} B$ containing $H',H''$. Let $r\in \mathcal{R}_{B\to B'}$. Notice, first of all, that $H',H''$ are transverse, and let $H_1=r^{-1}H'\in B$ and $H_2=r^{-1}H''\in B$. This implies, in particular that the pairs $H_1, H'$ and $H_2,H''$ are non-transverse (Lemma \ref{lem:transverse1}). Since $H'$ is acceptable, by definition, for all reflections $s\in \mathcal{R}_{H_1\to H'}$ we have $sB=B'$. So, $e_{H''}e_{H'}e_B=e_{H''}(\sum_{s\in \mathcal{R}_{H_1\to H'}}μ_ss)e_B=δ\cdot (\sum_{s\in \mathcal{R}_{H_1\to H'}}μ_ss)e_B=δe_{H'}e_B$. In the same way, we get that $e_{H'}e_{H''}e_B=δe_{H''}e_B$. Now, since $H'\pitchfork H''$, $e_{H'},e_{H''}$ commute; thus, $δe_{H'}e_B=δe_{H''}e_B$ whence $(e_{H'}-e_{H''})e_B=0$. Since $H\in B$ is non-transverse with both $H',H''$, this last equality yields $(\sum_{s\in \mathcal{R}_{H\to H'}}μ_ss - \sum_{s\in \mathcal{R}_{H\to H''}}μ_ss)e_B=0$, or, equivalently, $τ^{H',H''}_He_B=0$.
\end{proof}

We state now the computational result of this section. Its verification will be discussed in the end of this subsection. Let $\Rel_τ(B)$ be the set of all $τ^{H',H''}_H$, where $(H',H'')$ is an acceptable pair with respect to $B$ and $H$ is non-transverse with both $H',H''$. For the rest of this subsection, the reader may want to recall the definition of $Θ_B, θ_B, D_B, D_B^0$ found in Subsection \ref{subsec:aconvdescrforrel}.

\begin{mdframed}
\textbf{Fourth Computational Result.}
Let $W$ be an exceptional group other than $G_{26}$ and $B$ a transverse collection. Then 
\begin{itemize}
\item[(F1)] $θ(s)\in \Rel(B)$ for some $s\in \mathcal{R}\cup \{1\}$, or
\item [(F2)]
\begin{itemize} 
\item[(a)] every $x\in \Rel(B)$ satisfies the bar condition, and
\item[(b)] if $B$ satisfies property (A2) of the First Computational Result, then $D_B\subseteq \mathbb{Z}(\overline\Rel(B)\cup \Rel_τ(B))$
\end{itemize}
\end{itemize}
\end{mdframed}

\begin{proposition}\label{prop:bigspanningset}
 Let $W$ be an exceptional complex reflection group other than $G_{26}$ and $R$ a proper ring for $W$, with $K=\Frac(R)$. The set $\{we_B \big| B\in \mathcal{C}^{K}_{adm}\cup \mathcal{C}_{cond}, w\in W/K_B\}$ spans $\Br^R(W)$.
\end{proposition}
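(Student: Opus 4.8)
The plan is to run the same descending induction used for Theorem~\ref{th:basisoverK} and Proposition~\ref{prop:freenessGmpn}, but now over the ring $R$. By Lemma~\ref{lem:unionideals} the algebra $\Br^R(W)$ is the union of the descending chain of ideals $I_r^{(R)}=\sum_{B\in\mathcal{C},\,\lvert B\rvert\geq r}RWe_B$, and $I_N^{(R)}=0$ once $N$ exceeds the size of the largest transverse collection, so it suffices to prove by downward induction on $r$ that
$$I_r^{(R)}=\sum_{\substack{B\in\mathcal{C}^K_{adm}\cup\mathcal{C}_{cond},\ \lvert B\rvert\geq r\\ w\in W/K_B}}Rwe_B+I_{r+1}^{(R)}.$$
For this it is enough to fix a transverse collection $B$ with $\lvert B\rvert=r$ and show that $RWe_B\subseteq I_{r+1}^{(R)}$ when $B\notin\mathcal{C}^K_{adm}\cup\mathcal{C}_{cond}$, and $RWe_B\subseteq\sum_{w\in W/K_B}Rwe_B+I_{r+1}^{(R)}$ otherwise. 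The two tools are: (i) $\Rel(B)$ annihilates $e_B$ in $\Br^R(W)$ (the proof over $K$ that $\Rel(B)\subseteq\Ann_{KW}(e_B)$ uses only relations (B2),(B3),(B5)), and (ii) the Fourth Computational Result, which states that $B$ satisfies (F1) or (F2).

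If $B$ satisfies (F1), then $θ(s)\in\Rel(B)$ for some $s\in\mathcal{R}\cup\{1\}$, so $θ(s)e_B=0$; as $θ(s)$ is one of $μ_ss$, $s$, $1$, a unit of $RW$, we get $e_B=0$ and a fortiori $RWe_B\subseteq I_{r+1}^{(R)}$ (and such a $B$ is neither admissible nor of conditional type). So assume $B$ satisfies (F2). By (F2)(a) every element of $\Rel(B)$ satisfies the bar condition, so Corollary~\ref{cor:lem1} gives $\overline\Rel(B)\cdot e_B\subseteq I_{r+1}^{(R)}$. If moreover $B\notin\mathcal{C}^K_{adm}\cup\mathcal{C}_{cond}$, then by the First Computational Result $B$ satisfies (A1) or (A2); it cannot satisfy (A2), for a non-conditional collection satisfying (A2) — hence not (A1), as the two are mutually exclusive — is admissible by Lemma~\ref{lem:sumupconditions}, contrary to assumption. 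So $B$ satisfies (A1): some $θ(s)\in\overline\Rel(B)$, whence $θ(s)e_B\in I_{r+1}^{(R)}$, and inverting the unit $θ(s)$ yields $e_B\in I_{r+1}^{(R)}$.

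It remains to treat $B\in\mathcal{C}^K_{adm}\cup\mathcal{C}_{cond}$ (still under (F2)). Such a $B$ satisfies (A2): conditional collections do by definition, and admissible non-conditional ones do by Lemma~\ref{lem:sumupconditions} together with the mutual exclusivity of (A1),(A2). Hence (F2)(b) applies and $D_B\subseteq\mathbb{Z}(\overline\Rel(B)\cup\Rel_τ(B))$, an identity in $M_B$. Transporting it through the $\mathbb{Z}$-linear map $M_B\to RW$ sending $θ(s)$ to $μ_ss$ (resp.\ $s$, resp.\ $1$) — injective because its values on $Θ_B$ are distinct group elements scaled by units, hence $\mathbb{Z}$-linearly independent in $RW$, and under which $\Rel_τ(B)\subseteq M_B^1$ maps onto the elements $τ^{H',H''}_H$ — and then right-multiplying by $e_B$, we find that every element of $D_B$ kills $e_B$ modulo $I_{r+1}^{(R)}$, since the $\overline\Rel(B)$-contributions lie in $I_{r+1}^{(R)}$ by Corollary~\ref{cor:lem1} and the $\Rel_τ(B)$-contributions vanish by Lemma~\ref{lem:reltau}. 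By the description of $D_B$ in Corollary~\ref{cor:passagetostab}, this means $(μ_{s_1}s_1-μ_{s_2}s_2)e_B\in I_{r+1}^{(R)}$ for every $(s_1,s_2)\in P_B$; left-multiplying by $s_2^{-1}$ (which preserves the two-sided ideal $I_{r+1}^{(R)}$) and dividing by the unit $μ_{s_1}$ gives $(s_2^{-1}s_1-μ_{s_1}^{-1}μ_{s_2})e_B\in I_{r+1}^{(R)}$, while $se_B=e_B$ for $s\in\mathcal{R}_B$. Since by (A2)(b) the group $K_B$ is generated by $\mathcal{R}_B$ and the elements $s_2^{-1}s_1$ with $(s_1,s_2)\in P_B$, an induction on word length shows $he_B\equiv u_he_B\pmod{I_{r+1}^{(R)}}$ for every $h\in K_B$ with $u_h\in R^\times$; writing $w=w'h$ with $w'$ the chosen representative of $wK_B$ then gives $we_B\in Rw'e_B+I_{r+1}^{(R)}$. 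This completes the inductive step and hence the proof.

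The step I expect to demand the most care is the bookkeeping attached to the collections of conditional type in $G_{25}$ and $G_{32}$. There $μ_{s_1}^{-1}μ_{s_2}\in\{μ,μ^{-1}\}$ with $μ=μ_1/μ_2$, so the scalars $u_h$ above are values of the twisted character $χ\colon K_B\to\mathbb{U}_6$ of Proposition~\ref{prop:KBfunction}, which over a ring where $μ^6\neq1$ is not an honest group homomorphism into $R^\times$. For the present spanning statement this is harmless — one only uses that each of the finitely many generators of $K_B$ acts on $e_B$ modulo $I_{r+1}^{(R)}$ by some unit and then composes these scalars, with no well-definedness required — but it is exactly this failure of well-definedness that later obstructs freeness for $G_{25}$ and $G_{32}$. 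A secondary technical point, needed to transport (F2)(b) verbatim from $M_B$ to $RW$ over the possibly non-$\mathbb{Q}$ ring $R$, is that all vectors occurring lie in the integral span of $Θ_B$ and that the reflections appearing in the $σ$- and $τ$-terms never lie in $\mathcal{R}_B$; both are immediate from the transversality of the hyperplanes of $B$.
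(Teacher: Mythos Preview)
Your proof is correct and follows essentially the same approach as the paper's: the same downward induction on $r$ via the ideals $I_r^{(R)}$, the same case split according to whether $B$ satisfies (F1) or (F2) and (A1) or (A2), the same use of Corollary~\ref{cor:lem1} and Lemma~\ref{lem:reltau} to obtain $D_B\cdot e_B\subseteq I_{r+1}^{(R)}$, and the same passage from $D_B$ to $D_B^0$ followed by word-length induction on $K_B$. Your additional remarks on transporting (F2)(b) integrally from $M_B$ to $RW$ and on the irrelevance of well-definedness of $u_h$ in the conditional case are valid and make explicit points the paper leaves implicit.
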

\begin{proof} The proof follows our standard method for results of this kind. We show that for all $i\in \mathbb{N}$, we have

\begin{equation}\label{eq:indstepfreenessex}
I^{(R)}_r= \sum_{B\in \mathcal{C}^K_{adm}\cup \mathcal{C}_{cond}, w\in W/ K_B}Rwe_B+I^{(R)}_{r+1},
\end{equation}
and the result follows by induction on $r$. We will use the First Computational Result of the previous section as well.

Let $B \in \mathcal{C}\backslash( \mathcal{C}_{adm}^K\cup \mathcal{C}_{cond})$. By Lemma \ref{lem:sumupconditions}, $B$ satisfies condition (A1) of the First Computational Result, i.e., $\overline\Rel(B)$ contains $θ(s)$ for some $s\in \mathcal{R}\cup\{1\}$. If $B$ satisfies (F1) above, then there is $s'\in \mathcal{R}\cup \{1\}$ such that $θ(s')\in \Rel(B)$, which implies that $θ(s')e_B=0$ or, equivalently $e_B=0$, since $θ(s')$ is invertible. If $B$ satisfies (F2), then Corollary \ref{cor:lem1} implies that $\overline\Rel(B)\cdot e_B\subseteq I^{(R)}_{r+1}$. By (A1), $\overline\Rel(B)$ contains some $θ(s)$, and this implies that $e_B\in I^{(R)}_{r+1}$. This yields that for all $r\in \mathbb{N}$,

\begin{equation}\label{eq:indstepfreenessex1}
I^{(R)}_r= \sum_{B\in \mathcal{C}^K_{adm}\cup \mathcal{C}_{cond}}RWe_B+I^{(R)}_{r+1},
\end{equation}

Suppose now that $B\in \mathcal{C}^K_{adm}\cup\mathcal{C}_{cond}$. Again, by Lemma \ref{lem:sumupconditions} and the First Computational Result, $B$ satisfies condition (A2). Then $B$ does not satisfy (F1), which would imply (A1); hence, $B$ satisfies (F2), which implies that
$\overline\Rel(B) e_B\subseteq I^{(R)}_{r+1}$. Now, by (F2.b), $D_B\subseteq \mathbb{Z} (\overline\Rel(B)\cup\Rel_τ(B))$, and since, by Lemma \ref{lem:reltau}, $\Rel_τ(B)e_B=0$, then 
we have that $D_B\cdot e_B\subset I^{(R)}_{r+1}$, which is equivalent to $D_B^{0}\cdot e_B\subset I^{(R)}_{r+1}$ (see Corollary \ref{cor:passagetostab}). Recall that 

$$D_B^{0}=\{s_2^{-1}s_1-μ_{s_2}μ_{s_1}^{-1}\big| (s_1,s_2)\in P_B\}\cup (\mathcal{R}_B-1),$$
and, by (A2.b), the set $\{s_2^{-1}s_1\big| (s_1,s_2)\in P_B\}\cup \mathcal{R}_B$ generates $K_B$. This implies that for every $h\in K_B$, there is some invertible $l_h\in \mathcal{A}$ (in fact, some product of the parameters) such that $(h-l_h)e_B\in I^{(R)}_{r+1}$.
Now, if $w_1,w_2$ belong to the same left coset of $K_B$ in $W$, i.e. $w_2=w_1h$ for some $h\in K_B$, then $w_2e_B=w_1he_B=w_1l_he_B+w_1(h-l_h)e_B\in w_1e_B + I^{(R)}_{r+1}$. This, together with Equations \eqref{eq:indstepfreenessex1}, yields Equations \eqref{eq:indstepfreenessex}, and concludes the proof.
\end{proof}

\begin{corollary}\label{cor:freenessmostexceptionals} Let $W$ be an exceptional complex reflection group other than $G_{25},G_{26},G_{32}$, and let $K$ be any proper field. Then, $\Br(W)$ is a free $\mathcal{A}$-module, with basis 
$\{we_B\big| B\in \mathcal{C}^K_{adm}, w\in W/K_B\}$. 
\end{corollary}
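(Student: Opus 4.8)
The plan is to deduce Corollary \ref{cor:freenessmostexceptionals} as a combination of Proposition \ref{prop:bigspanningset}, the dimension count of Theorem \ref{th:basisoverK}, and the fact that, for exceptional groups other than $G_{25},G_{26},G_{32}$, there are no transverse collections of conditional type. First I would recall that by the Second Computational Result, $W$ having no $G_{25},G_{26},G_{32}$ in its name (in particular $W\neq G_{25},G_{32}$) means $\mathcal{C}_{cond}=\emptyset$. Hence the spanning set $\{we_B\mid B\in \mathcal{C}^K_{adm}\cup \mathcal{C}_{cond},\, w\in W/K_B\}$ of Proposition \ref{prop:bigspanningset} is simply $\{we_B\mid B\in \mathcal{C}^K_{adm},\, w\in W/K_B\}$, which therefore spans $\Br^R(W)$ as an $R$-module for any proper ring $R$ for $W$.

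Next I would pass from the spanning statement to freeness by a cardinality argument. Taking $R$ to be any proper ring with $K=\Frac(R)$, the spanning set above has cardinality $\sum_{B\in \mathcal{C}^K_{adm}}\lvert W\rvert/\lvert K_B\rvert$, which by Theorem \ref{th:basisoverK} is exactly $\dim_K \Br^K(W)$. Since $\Br^K(W)=K\otimes_R \Br^R(W)$ and the images of the $we_B$ span $\Br^K(W)$ while being equal in number to $\dim_K \Br^K(W)$, they form a $K$-basis of $\Br^K(W)$; in particular they are $R$-linearly independent in $\Br^R(W)$ (an $R$-linear dependence would, after clearing denominators — legitimate since $R$ is a domain — give a nontrivial $K$-linear dependence). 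Combined with the spanning property over $R$, this shows $\{we_B\mid B\in \mathcal{C}^K_{adm},\, w\in W/K_B\}$ is an $R$-basis of $\Br^R(W)$ for every proper ring $R$.

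Finally I would upgrade from "free over every proper ring $R$" to "free over $\mathcal{A}$". Since the set $\{we_B\}$ already spans $\Br(W)$ over $\mathcal{A}$ (this is exactly Equation \eqref{eq:indstepfreenessex} run with $R=\mathcal{A}$ inside the proof of Proposition \ref{prop:bigspanningset}, together with $\mathcal{C}_{cond}=\emptyset$ and an induction on $r$ via $\Br(W)=\cup_r I_r$ from Lemma \ref{lem:unionideals}), it remains only to check $\mathcal{A}$-linear independence. For this, embed $\mathcal{A}$ into a proper ring $R$ — for instance, take $L=\overline{\mathbb{Q}}$, let $R_0=L[\underline{\mu}^{\pm 1}]/(\mu_s-\mu_{s'}\mid s\sim s')$ and $R=R_0[\delta^{\pm 1}]$, which is proper by Example \ref{ex:propering} and contains $\mathcal{A}$ as a subring — and note that $\Br^R(W)=R\otimes_{\mathcal{A}}\Br(W)$. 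An $\mathcal{A}$-linear relation among the $we_B$ in $\Br(W)$ would push forward to an $R$-linear relation in $\Br^R(W)$, contradicting the independence established in the previous paragraph. Hence the $we_B$ are $\mathcal{A}$-linearly independent, and $\Br(W)$ is a free $\mathcal{A}$-module with the stated basis.

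The main obstacle in this argument is not any single step in isolation but making sure the interplay between "spanning over $\mathcal{A}$" and "independence over a proper ring" is airtight: one must verify that the inductive spanning argument of Proposition \ref{prop:bigspanningset} genuinely goes through verbatim with $R=\mathcal{A}$ (it does, since $\delta$ and the $\mu_s$ are already invertible in $\mathcal{A}$, so Lemma \ref{lem:barcondition} and Corollary \ref{cor:lem1} apply, and $\mathcal{C}_{cond}=\emptyset$ removes the only term that could force a nontrivial character $\chi$), and that the chosen $R$ containing $\mathcal{A}$ is flat over $\mathcal{A}$ — or at least that $\mathcal{A}\hookrightarrow R$ is injective — so that independence descends. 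Since $R=R_0[\delta^{\pm1}]$ with $R_0$ a free $\mathbb{Z}$-module containing $\mathbb{Z}[\underline{\mu}^{\pm1}]/(\mu_s-\mu_{s'})$, the inclusion $\mathcal{A}\hookrightarrow R$ is indeed a split injection of $\mathcal{A}$-modules, so this causes no trouble.
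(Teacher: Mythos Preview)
Your proof is correct and follows essentially the same line as the paper's: use $\mathcal{C}_{cond}=\emptyset$ (Second Computational Result) to reduce the spanning set of Proposition~\ref{prop:bigspanningset} to $\{we_B\mid B\in\mathcal{C}^K_{adm},\,w\in W/K_B\}$, then invoke Theorem~\ref{th:basisoverK} to get that this set is a $K$-basis of $\Br^K(W)$, and conclude it is an $\mathcal{A}$-basis of $\Br(W)$. You are in fact more careful than the paper on two points the paper leaves implicit: that the spanning argument of Proposition~\ref{prop:bigspanningset} goes through verbatim with $R=\mathcal{A}$ (even though $\mathcal{A}$ is not itself asserted to be proper), and that linear independence descends from $K$ to $\mathcal{A}$ via an injective $\mathcal{A}\hookrightarrow K$.
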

\begin{proof} By the previous proposition, $\Br(W)$ is spanned by 
$\{we_B \big| B\in \mathcal{C}^K_{adm}\cup \mathcal{C}_{cond}, w\in W/K_B\}$. 
By the Second Computational Result (Section \ref{sec:admexceptionals}), if $W$ is different from $G_{25},G_{32}$, we have that $\mathcal{C}_{cond}=\emptyset $. Hence, the above spanning set is, in fact, $\{we_B\big| B\in \mathcal{C}^K_{adm}, w\in W/K_B\}$, which, by Theorem \ref{th:basisoverK}, is a basis for $\Br^K(W)$, and this implies that it is a basis for $\Br(W)$ as well.
\end{proof}

\begin{remark} As in the case of the infinite series, by Proposition \ref{prop:classpart1}, for an exceptional group other than $G_{25}, G_{32}$, admissibility of a transverse collection is independend of the field $K$, and so is the basis provided by the above corollary.
\end{remark}

\begin{proposition}\label{prop:freeness2532} Let $W$ be the group $G_{25}$ or $G_{32}$.
\begin{enumerate}

\item The algebra $\Br(W)$ is not a free $\mathcal{A}$-module. 
\item If $R$ is a proper ring where $μ^6=1$ (recall that $μ$ is the quotient of the parameters corresponding to the two orbits of reflections in $W$), then the algebra $\Br^R(W)$ is a free $R$-module, with basis $\{we_B\big| B\in \mathcal{C}^K_{adm}, w\in W/K_B\}$, where $K=\Frac(R)$.
\end{enumerate}

\end{proposition}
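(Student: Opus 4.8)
The plan is to exploit the same parameter dichotomy on $μ$ that governs admissibility for $G_{25}$ and $G_{32}$ in Proposition \ref{prop:case2532}.

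For part (1), I would argue by contradiction: if $\Br(W)$ were a free $\mathcal{A}$-module of rank $\kappa$, then for every proper field $K$ that is an $\mathcal{A}$-algebra we would have $\Br^K(W)=K\otimes_{\mathcal{A}}\Br(W)\cong K^{(\kappa)}$, so $\dim_K\Br^K(W)=\kappa$ would be finite (Theorem \ref{th:semisimplicity}) and \emph{independent of $K$}. I would then produce two proper fields for which the dimension differs. Fix an algebraically closed field $L$ of characteristic zero over which every subgroup of $W$ splits (say $\overline{\mathbb{Q}}$), and let $μ_1,μ_2$ be the parameters attached to the two orbits of reflections of $W$, so that $μ=μ_1μ_2^{-1}$. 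By Example \ref{ex:propering} the rings $R_1:=L[μ_1^{\pm 1},μ_2^{\pm 1},δ^{\pm 1}]$ and $R_2:=L[μ_1^{\pm 1},δ^{\pm 1}]$ — the latter made into an $\mathcal{A}$-algebra by sending both $μ_1$ and $μ_2$ to $μ_1$ and fixing $δ$ — are proper rings, so $K_i:=\Frac(R_i)$ are proper fields (Remark \ref{rem:properfractionfield}). In $K_1$ the element $μ$ is transcendental, hence $μ^6\neq 1$; in $K_2$ one has $μ=1$, hence $μ^6=1$. By Proposition \ref{prop:case2532}, every transverse collection of cardinality different from $2$ (resp.\ from $2$ and $3$) is admissible over both $K_1$ and $K_2$, while the collections of conditional type — by the Third Computational Result exactly the transverse collections of cardinality $2$ (resp.\ $2$ or $3$), of which there exist some — are admissible over $K_2$ but not over $K_1$. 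Hence $\mathcal{C}^{K_2}_{adm}=\mathcal{C}^{K_1}_{adm}\sqcup\mathcal{C}_{cond}$, and Theorem \ref{th:basisoverK} gives
\[
\dim_{K_2}\Br^{K_2}(W)-\dim_{K_1}\Br^{K_1}(W)=\sum_{B\in\mathcal{C}_{cond}}\frac{\lvert W\rvert}{\lvert K_B\rvert}>0,
\]
contradicting freeness.

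For part (2), let $R$ be a proper ring with $μ^6=1$ in $R$ and put $K=\Frac(R)$; then $K$ is proper and still satisfies $μ^6=1$. I would begin with Proposition \ref{prop:bigspanningset}, which gives that $\{we_B\mid B\in\mathcal{C}^K_{adm}\cup\mathcal{C}_{cond},\ w\in W/K_B\}$ spans $\Br^R(W)$ as an $R$-module. Since $μ^6=1$ in $K$, Proposition \ref{prop:case2532}(3) yields $\mathcal{C}_{cond}\subseteq\mathcal{C}^K_{adm}$, so this spanning set is exactly $\mathcal{E}:=\{we_B\mid B\in\mathcal{C}^K_{adm},\ w\in W/K_B\}$, the set claimed to be a basis. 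It remains to establish $R$-linear independence. For this I would base change to $\Br^K(W)=K\otimes_R\Br^R(W)$: the natural map $\Br^R(W)\to\Br^K(W)$ sends each $we_B$ to the corresponding $we_B$, and by Theorem \ref{th:basisoverK} these images form a $K$-basis of $\Br^K(W)$. Consequently the elements of $\mathcal{E}$ are pairwise distinct, and any $R$-linear relation among them maps to a $K$-linear relation among basis vectors, forcing all coefficients to be $0$ in $K$ and hence in $R$ (since $R\hookrightarrow K$). Therefore $\mathcal{E}$ is a free $R$-basis of $\Br^R(W)$, as asserted.

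I do not foresee a genuine obstacle: the hard content — the $μ^6$-dependent description of the admissible collections of $G_{25}$ and $G_{32}$, and the fact that $\{we_B\}$ spans over the ring of definition once the conditional collections are thrown in — is already carried by Propositions \ref{prop:case2532} and \ref{prop:bigspanningset} together with Theorem \ref{th:basisoverK}. The two points that merely need care are checking that $K_1$ and $K_2$ in part (1) are legitimate proper $\mathcal{A}$-algebras realizing $μ^6\neq 1$ and $μ^6=1$ while leaving the rest of the admissibility pattern untouched (which is exactly the content of Proposition \ref{prop:case2532}), and the routine flat-base-change argument for independence in part (2).
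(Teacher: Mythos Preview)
Your proposal is correct and follows essentially the same route as the paper: for (2) you invoke Proposition~\ref{prop:bigspanningset} for spanning, use $\mathcal{C}_{cond}\subseteq\mathcal{C}^K_{adm}$ when $μ^6=1$ to reduce the spanning set, and appeal to Theorem~\ref{th:basisoverK} over $K=\Frac(R)$ for independence; for (1) you compare dimensions over two proper fields distinguished by whether $μ^6=1$, exactly as the paper does (it simply cites Example~\ref{ex:propering} and Remark~\ref{rem:properfractionfield} for existence rather than writing down explicit $K_1,K_2$). Your argument is just a more detailed spelling-out of the paper's proof.
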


\begin{proof} We show the second statement first. If $μ^6=1$ in $R$, then the second part of the Third Computational Result says that $\mathcal{C}_{cond}\subseteq \mathcal{C}^K_{adm}$. So, the spanning set for $\Br^R(W)$ of Proposition \ref{prop:bigspanningset} above coincides with the set of the statement. By Theorem \ref{th:basisoverK}, the latter is a basis of $\Br^K(W)$, and, hence, it is a basis of $\Br^R(W)$ as well. 

Now, by Theorem \ref{th:basisoverK} again, for any proper field $K$, the set $\{we_B\big| B\in \mathcal{C}^K_{adm}, w\in W/K_B\}$ is a basis for $\Br^K(W)$. Let $K,K'$ be proper fields such that $μ^6\neq 1$ in $K$ but $μ^6=1$ in $K'$ (see Example \ref{ex:propering} and Remark \ref{rem:properfractionfield} to verify the existence of such proper fields). Then, by Proposition \ref{prop:case2532}, we have that $\mathcal{C}^K_{adm}\subset \mathcal{C}^{K'}_{adm}$, and, hence, $\dim_{K'}\Br^{K'}(W)> \dim_{K}\Br^K(W)$. This implies that the algebra is not free over $\mathcal{A}$.
\end{proof}

\paragraph{\bf How to verify the fourth computational result.}

We discuss here the main points of the algorithm we used to verify the computational result of this section. The main core of the algorithm used is established in Subsection \ref{sec:admexceptionals}. We remind here the necessary data.

Let $\verb+W+$ be a complex reflection group with list of reflections and distinguished reflections $\verb+R+$ and $\verb+R_dist+$, respectively. We have a table $\verb+F+$ with rows and columns corresponding to hyperplanes of $W$ such that $\verb+F[i,j]=true+$ if $H_i\pitchfork H_j$, or, otherwise, $\verb+F[i,j]+$ is a list of positions of reflections in $\verb+R+$ mapping $H_i$ to $H_j$. Let $\verb+B+$ stand for a list representing a transverse collection $B$ of $\verb+W+$. The list $\verb+R_B+$ contains the positions of reflections in $\verb+R+$ with reflecting hyperplanes in $B$. Also, the list $\verb+D_B+$ contains the vectors in $\mathbb{Z}^{N+1}$, where $N=\lvert \mathcal{R}\rvert$, representing $D_B$ with respect to the basis $Θ_B=\{μ_ss \big| s\in \mathcal{R}\backslash \mathcal{R}_B\}\cup \mathcal{R}_B\cup \{1\}$.

Similarly to what we did in the previous section for the set $\overline\Rel(B)$, representing it with the list $\verb+Rel_Bar+$, we first construct two lists, $\verb+Rel+$ and $\verb+Rel_Tau+$ of vectors in $\mathbb{Z}^N$, representing the sets $\Rel(B)$ and $\Rel_τ(B)$, respectively, with respect to the basis $Θ_B$. 

For the list $\verb+Rel+$, we add first, for all $i\in \verb+R_B+$, the vector $v(i,N+1)-v(N+1,N+1)$ to $\verb+Rel+$, including, in this way, all vectors representing the set $\mathcal{R}_B-1$. Next, for all $k=1,\dots , N+1$ and $i,j\in \verb+B+$ such that $H_i,H_j\not\pitchfork H_k$ (we can test this using the table $\verb+F+$) we add the vector 
$\sum_{l\in \verb+F(i,k)+} v(l,N+1)-\sum_{l\in \verb+F(j,k)+}v(j,N+1)$ to $\verb+Rel+$; this vector represents the element $σ^{H_k}_{H_i,H_j}$. In this way we include all vectors representing the elements of $Σ_B$, which gives the desired list. 

Now, using the list $\verb+Rel+$, verifying condition 
(F1) amounts to checking whether there is $i=1\dots , N+1$ such that
$v(i,N+1)\in \verb+Rel+$. For condition (F2.a), verifying the bar condition for an element of 
$\mathbb{Z}Θ_B$, represented by a vector in $\mathbb{Z}^{N+1}$, is straightforward using the conjugation of distinguished reflections and the transversality information of table $\verb+F+$.

For the list $\verb+Rel_Tau+$, we first construct a list of the pairs $(i,j)$ corresponding to acceptable pairs. This is straightforward; all necessary relations for transversality and conjugation are contained in the table $\verb+F+$. For the acceptable pairs, we may use the list $\verb+Smallorbit+$ which contains the lists representing all $B'\sim_{\mathcal{R}}B$. Given this, for all $(i,j)$ corresponding to acceptable pairs, and $k\in \verb+B+$ such that 
$H_i,H_k\not \pitchfork H_k$ we add to $\verb+Rel_Tau+$ the vector 
$\sum_{l\in \verb+F(k,i)+}v(l,N+1)- \sum_{l\in \verb+F(k,j)+}v(l,N+1)$, which represents the element $τ^{H_i,H_j}_{H_k}$. This completes the construction of this list. 

Now, let $\verb+Rel_Bar_Tau+$ be the concatenation of $\verb+Rel_Bar+$ and $\verb+Rel_Tau+$. Condition (F2.b) is equivalent to $\verb+D_B+\subseteq \mathbb{Z}\verb+Rel_Bar_Tau+$. We conclude with the algorithm that we used to check for linear dependence over $\mathbb{Z}$.

Suppose we have vectors $l_1,\dots , l_m, v \in \mathbb{Z}^n$. To test whether $v$ belongs to the $\mathbb{Z}$-span of $l_1,\dots , l_m$, we use standard existing algorithms to obtain the information concerning the Smith Normal Form of a matrix. Specifically, let $A$ be the $n\times m$ matrix with columns $l_1,\dots ,l_m$. Then, the $\mathbb{Z}$-span of $l_1,\dots ,l_m$ is equal to $\mathbb{Z}^m \cdot A$, i.e.  the image of $\mathbb{Z}^m$ under right multiplication by $A$. The aforementioned algorithms for the Smith Normal Form of $A$ provide us with invertible $m\times m$ and $n\times n$, respectively, integer matrices, $S$ and $T$, and $D=\diag_{m\times n}(a_1,\dots ,a_r)$, such that $SAT=D$. Thus, $v\in \mathbb{Z}^m\cdot A$ is equivalent to $v\in \mathbb{Z}^m\cdot S^{-1}DT^{-1}$, which is, in turn, equivalent to $vT \in \mathbb{Z}^m\cdot D$, since $S^{-1}$ induces an isomorphism $\mathbb{Z}^m\to \mathbb{Z}^m$. Now, we can see that, to test if $vT \in \mathbb{Z}^m \cdot D$, we need to check that $(vT)_i=0$ for $i> r$ and that $a_i|(vT)_i$ for $1\leq 1 \leq r$, where $(vT)_i$ is the $i$-th coordinate of $vT$. This concludes the algorithm.

\subsubsection{Freeness for $\Br(G_{26})$ }

We prove here that the algebra $\Br(G_{26})$ is free.
For this, we remind the description of the irreducible complex reflection group $G_{26}$ of Example \ref{ex:g26}.

Let $z_1,z_2,z_3$ denote the standard coordinates of $\mathbb{C}^3$ and $ζ=e^{\frac{2πi}{3}}$. Assume that $\mathbb{C}^3$ is equiped with the standard inner product. The group $G_{26}$ is the subgroup of $\GL(\mathbb{C}^3)$, generated by the following $3$ types of distinguished unitary reflections:

\begin{enumerate}
\item $t_i, i=1,2,3$, with reflecting hyperplanes $H_i$ with equation $z_i=0$, and order $3$,
\item $t_{κ,λ}$, $κ,λ = 0,1,2$,  with reflecting hyperplanes $T_{κ,λ}$ with equation $z_1 + ζ^κ z_2 + {ζ}^λ z_3=0$, and order $3$, and
\item $(ij)_{κ}$, for $1\leq i\neq j \leq 3$ and $κ=0,1,2$, with reflecting hyperplanes $H_{i,j}^{κ}: z_i=ζ^κz_j$, and order $2$. If $κ=0$, we may omit it from the notation.

\end{enumerate}

Hyperplanes of the form $H_i$ and $T_{κ,λ}$ make up one orbit of hyperplanes which we denote by $\mathcal{O}_1$, and the hyperplanes of the third type form a second orbit $\mathcal{O}_2$.
\vspace{5pt}

\paragraph{\textbf{Transversality in $G_{26}$}}

\begin{lemma}\label{lem:g263to1} Every hyperplane in $\mathcal{O}_1$ is transverse with exactly $3$ hyperplanes  of $G_{26}$, which all belong to $\mathcal{O}_2$. Furthermore, if $H,H'\in \mathcal{O}_1$ are trasverse with the same $3$ hyperplanes of $\mathcal{O}_2$, then $H=H'$.
\end{lemma}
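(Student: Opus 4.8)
The plan is to exploit that $\mathcal{O}_1$ is a single $W$-orbit, reduce to one concrete hyperplane, and read off its transverse partners from the list of roots. Since both the transversality relation and the orbit decomposition $\mathcal{H}=\mathcal{O}_1\sqcup\mathcal{O}_2$ are $W$-equivariant, it suffices to verify both assertions for one hyperplane of $\mathcal{O}_1$; I will take $H_3=\{z_3=0\}$, with root $e_3$. Using the description of $G_{26}$ in Example \ref{ex:g26}, I would first record a root for each of its hyperplanes: $e_i$ for $H_i$; $(1,\zeta^{-\kappa},\zeta^{-\lambda})$ for $T_{\kappa,\lambda}$; and $e_i-\zeta^{-\kappa}e_j$ for $H_{i,j}^{\kappa}$. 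The key elementary fact is that $-\zeta^a$ is never a power of $\zeta$ (as $\zeta$ has order $3$), so a vector all of whose nonzero entries are powers of $\zeta$ — the shape of every root of an $\mathcal{O}_1$-hyperplane — can never coincide, up to scalar, with a root of an $\mathcal{O}_2$-hyperplane, each of which has exactly one nonzero entry of the form $-\zeta^a$.

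Next I would use a necessary condition for transversality: if $H_3\pitchfork K$, then the two reflections commute (Lemma \ref{lem:transverseorthogonal}), hence $K^{\perp}\perp H_3^{\perp}$ (Lemma \ref{lem:commrefls}), i.e. a root of $K$ lies in $H_3=\{z_3=0\}$. Scanning the list, the only hyperplanes other than $H_3$ with a root in $\{z_3=0\}$ are $H_1$, $H_2$, and $H_{1,2}^0,H_{1,2}^1,H_{1,2}^2$. One rules out $H_1$ and $H_2$ immediately — the line $H_1\cap H_3$, the $z_2$-axis, is also contained in every $H_{1,3}^{\kappa}$, so $H_1$ and $H_3$ are not transverse, and symmetrically for $H_2$ — and confirms via Lemma \ref{lem:transroots} that $H_3\pitchfork H_{1,2}^{\kappa}$ for each $\kappa$: the span $\langle e_3,\, e_1-\zeta^{-\kappa}e_2\rangle=\{(s,-s\zeta^{-\kappa},t)\}$ contains, apart from those two roots, no root of any hyperplane, the obstructions being once more that $-\zeta^{a}$ is not a power of $\zeta$ and that $\zeta^0,\zeta^1,\zeta^2$ are pairwise distinct. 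Thus $H_3$ is transverse to exactly the three hyperplanes $H_{1,2}^0,H_{1,2}^1,H_{1,2}^2$, all lying in $\mathcal{O}_2$, and transitivity of $W$ on $\mathcal{O}_1$ yields the first statement.

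For the uniqueness claim, suppose $H,H'\in\mathcal{O}_1$ are each transverse to the same three hyperplanes $K_1,K_2,K_3\in\mathcal{O}_2$. As above, commutation of the reflections (Lemma \ref{lem:transverseorthogonal}) together with Lemma \ref{lem:commrefls} gives $K_i^{\perp}\perp H^{\perp}$, i.e. the line $K_i^{\perp}$ is contained in $H$, for $i=1,2,3$; likewise $K_i^{\perp}\subseteq H'$. Since $K_1\neq K_2$, the lines $K_1^{\perp}$ and $K_2^{\perp}$ are distinct, so $K_1^{\perp}+K_2^{\perp}$ is two-dimensional, and being contained in the two-dimensional space $H$ it equals $H$; the identical computation gives $K_1^{\perp}+K_2^{\perp}=H'$, whence $H=H'$. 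I expect the only slightly fussy part to be the exhaustive pass through the roots in the transversality computation — making sure every hyperplane is accounted for — but this is mechanical given the tabulation and the single arithmetic fact about powers of $\zeta$.
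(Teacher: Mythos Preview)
Your proof is correct and takes a somewhat different route from the paper's. For the first assertion, both arguments reduce to the representative $H_3$ by transitivity on $\mathcal{O}_1$; the paper then proceeds by directly computing each intersection $H_3\cap H'$ and exhibiting (or ruling out) a third hyperplane through it, whereas you first use Lemmas~\ref{lem:transverseorthogonal} and~\ref{lem:commrefls} as a filter, cutting the candidate list down to the five hyperplanes whose root has vanishing third coordinate, and then finish with the root-span criterion of Lemma~\ref{lem:transroots}. Your approach is a little more systematic, and the single arithmetic fact you isolate --- that $-\zeta^a$ is never a power of $\zeta$ for $\zeta$ of order $3$ --- is precisely what dispatches the $T_{\kappa,\lambda}$ case. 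For the second assertion the methods differ more substantially: the paper observes that $t_1,t_2$ permute the three hyperplanes $H_{1,2}^\lambda$ among themselves and invokes Lemma~\ref{lem:transverse2} to deduce that any common transverse partner in $\mathcal{O}_1$ must be fixed by both $t_1$ and $t_2$, leaving only $H_3$. Your argument is instead pure linear algebra --- transversality forces $K_i^{\perp}\subseteq H$, and any two distinct such lines already span the two-dimensional $H$ --- which is shorter, uses only two of the three hyperplanes, and in fact proves the stronger (rank-$3$-specific) statement that any hyperplane is determined by any pair of transverse partners.
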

\begin{proof}

For the first statement, we show it for $H_3\in \mathcal{O}_1$ and the general case follows by conjugation.

First, we check the intersection $H_3\cap H_1 = \langle(0,1,0) \rangle $. This is clearly a subspace of $H_{1,3}^λ$ for every $λ$ and hence $H_1,H_3$ are non-transverse. Also, note that $H_3\cap H_1 \subseteq H_{1,3}^λ$ implies that $H_3\cap H_{1,3}^λ \subseteq H_1$ (all intersections of two hyperplanes are of co-dimension $2$), and hence, $H_3$ is non-transverse with $H_{1,3}^λ$ for all $λ$ as well. Similarly, we obtain that $H_3\not \pitchfork H_2$ and $H_3\not \pitchfork H_{2,3}^λ$ for all $λ$. 

We check now the intersections $H_3\cap T_{κ,λ} =\langle (-ζ^κ,1,0) \rangle $ which can be seen to be a subspace of $T_{κ,λ'}$ for every $λ' \neq λ$, and so $H_3, T_{κ,λ}$ are non-transverse.

Finally, the hyperplanes left to check are the hyperplanes $H_{1,2}^λ, λ=0,1,2,$ which are transverse with $H_3$. Indeed, we have $H_{1,2}^λ \cap H_3 =\langle (ζ^λ,1,0) \rangle$ and one can verify that this is not contained in any hyperplane other that $H_3, H_{1,2}^λ$. So, $H_3$ is transverse only with $H_{1,2},H_{1,2}^1,H_{1,2}^2 \in \mathcal{O}_2$.

For the second statement, we verify that $H_3$ is the only hyperplane in $\mathcal{O}_1$ transverse with $H_{1,2}^λ,λ=0,1,2$, and the result follows again by conjugation. To quickly check that, observe that the reflections $t_1,t_2$ permute the hyperplanes $H_{1,2}^λ$ among themselves and hence, by Lemma \ref{lem:transverse2}, a hyperplane transverse with $H_{1,2}^λ,λ=0,1,2$ would stay invariant under $t_1,t_2$. Such a hyperplane is only $H_3$.

\end{proof}

\begin{lemma} The orbit $\mathcal{O}_2$ does not contain any pair of transverse hyperplanes.
\end{lemma}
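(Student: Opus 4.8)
The plan is to work directly with the explicit coordinate description of $G_{26}$ from Example~\ref{ex:g26}, exactly as in the proof of the previous lemma. The orbit $\mathcal{O}_2$ consists of the hyperplanes $H_{i,j}^\kappa$ with equation $z_i = \zeta^\kappa z_j$, for $1\le i\neq j\le 3$ and $\kappa = 0,1,2$. Since $G_{26}$ acts transitively on $\mathcal{O}_2$ and the action preserves transversality, it suffices to show that a single fixed hyperplane, say $H_{1,2} : z_1 = z_2$, is non-transverse with every other hyperplane in $\mathcal{O}_2$. So the task reduces to checking, for each $H_{i,j}^\kappa \in \mathcal{O}_2$ with $H_{i,j}^\kappa \neq H_{1,2}$, that $H_{1,2}$ and $H_{i,j}^\kappa$ fail to be transverse, i.e.\ there is a third reflecting hyperplane of $G_{26}$ containing their intersection.

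First I would split the remaining hyperplanes of $\mathcal{O}_2$ into those sharing an index with $H_{1,2}$ and those not. The hyperplanes sharing an index with $\{1,2\}$ are $H_{1,2}^1, H_{1,2}^2$ (same pair of indices, different $\kappa$), together with $H_{1,3}^\kappa$, $H_{2,3}^\kappa$ for $\kappa=0,1,2$. For each of these, I would compute the line $H_{1,2}\cap H_{i,j}^\kappa$ explicitly (it is spanned by a single vector in $\mathbb{C}^3$), and exhibit a third hyperplane — of the form $H_k$, $T_{\kappa',\lambda'}$, or another $H_{i',j'}^{\kappa'}$ — that contains it; by Lemma~\ref{lem:transroots} (or directly, since intersections of distinct hyperplanes in $\mathbb{C}^3$ have codimension~$2$) this witnesses non-transversality. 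For instance $H_{1,2}\cap H_{1,2}^1 = \langle(0,0,1)\rangle = H_1\cap H_2 \subseteq H_3$, and $H_{1,2}\cap H_{1,3}^\kappa$ is the line $\langle(\zeta^\kappa,\zeta^\kappa,1)\rangle$, which also lies on $H_{2,3}^\kappa$ since its second and third coordinates satisfy $z_2 = \zeta^\kappa z_3$; similarly for $H_{2,3}^\kappa$. The case of $H_{1,2}$ against $H_{i,j}^\kappa$ with $\{i,j\}$ disjoint from $\{1,2\}$ does not arise in rank $3$ since any two of the indices $1,2,3$ already overlap $\{1,2\}$, so the split above is exhaustive.

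The only genuinely delicate point — and I expect it to be the main obstacle — is making sure the list of cases is truly complete and that in each case the exhibited third hyperplane really is a reflecting hyperplane of $G_{26}$ and really does contain the intersection line; this is a finite but somewhat fiddly verification across $8$ hyperplanes, and one must be careful with the $\zeta$-twists (e.g.\ $H_{1,2}\cap H_{2,3}^\kappa = \langle(1,1,\zeta^{-\kappa})\rangle$, which lies on $H_{1,3}^{-\kappa}$). An alternative, cleaner route that would sidestep much of the computation: observe that the previous lemma already shows each hyperplane of $\mathcal{O}_1$ is transverse with exactly $3$ hyperplanes, all in $\mathcal{O}_2$. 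A counting argument then forces the conclusion — one can count transverse pairs between $\mathcal{O}_1$ and $\mathcal{O}_2$ in two ways, or use that any hyperplane transverse with a given $H_{i,j}^\kappa\in\mathcal{O}_2$ must be invariant under the two order-$3$ reflections $t_i, t_j$ which cyclically permute $H_{i,j}^0, H_{i,j}^1, H_{i,j}^2$ and hence (again by Lemma~\ref{lem:transverse2}, or by Lemma~\ref{lem:commrefls}) any hyperplane transverse with $H_{i,j}^\kappa$ is stabilized by $t_i$ and $t_j$; the only such hyperplane is $H_k$ where $\{i,j,k\}=\{1,2,3\}$, which lies in $\mathcal{O}_1$. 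Either way the statement follows; I would likely present the short invariance argument as the main proof and relegate the explicit intersections to a remark.
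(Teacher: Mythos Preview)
Your explicit-computation approach is exactly what the paper does: fix $H_{1,2}$, compute each intersection line with the other hyperplanes of $\mathcal{O}_2$, and exhibit a third reflecting hyperplane containing it. Two small slips in your worked examples: the line $\langle(0,0,1)\rangle$ lies in $H_1$ (equation $z_1=0$), not in $H_3$ (equation $z_3=0$); and $\langle(1,1,\zeta^{-\kappa})\rangle$ lies on $H_{1,3}^{\kappa}$, not $H_{1,3}^{-\kappa}$. These are typos, not structural problems.

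The alternative ``invariance'' argument, however, has a genuine gap. You claim that any hyperplane $H$ transverse to $H_{i,j}^\kappa$ must be stabilised by $t_i$ and $t_j$, citing Lemma~\ref{lem:transverse2} or Lemma~\ref{lem:commrefls}. Neither lemma gives this. Lemma~\ref{lem:transverse2} would require $H$ to be transverse to \emph{two} hyperplanes $H_{i,j}^{\kappa}$ and $t_iH_{i,j}^{\kappa}$ simultaneously, which you have not assumed; and Lemma~\ref{lem:commrefls} (via Lemma~\ref{lem:transverseorthogonal}) only tells you that the reflection $(ij)_\kappa$ with hyperplane $H_{i,j}^\kappa$ stabilises $H$, not that $t_i$ or $t_j$ does. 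In fact the claim is false: later in the paper it is checked that $T_{0,0}$ is transverse to $H_{1,2}$, yet $t_1T_{0,0}=T_{1,1}\neq T_{0,0}$. A corrected version of your idea does work: from $H\pitchfork H_{1,2}$ one gets $(12)H=H$ by Lemma~\ref{lem:transverseorthogonal}, and one then checks that the only hyperplane of $\mathcal{O}_2$ fixed by $(12)$ is $H_{1,2}$ itself. But as written, the argument you say you would present as the main proof does not go through; stick with the explicit computation.
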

\begin{proof}
We take again a representative $H_{1,2}\in \mathcal{O}_2$, and we check that it is not transverse with any hyperplane in $\mathcal{O}_2$. For $H_{1,2}^λ, λ=1,2$, we have $H_{1,2} \cap H_{1,2}^λ =\langle (0,0,1) \rangle \subseteq H_1$ and so $H_{1,2},H_{1,2}^λ$ are non-transverse. For $H_{1,3}^λ$ and any $λ$ we have $H_{1,2}\cap H_{1,3}^λ=\langle (ζ^λ,ζ^λ,1) \rangle \subseteq H_{2,3}^λ$ which yields that $H_{1,2} \not \pitchfork H_{1,3}^λ$ and $T_{1,2} \not \pitchfork T_{2,3}^λ$ as well, completing the result.
\end{proof}

The two above lemmas imply that a transverse collection $B$ contains at most one hyperplane from each orbit and, consequently, there are no transverse collections of cardinality more than two. As for collections of cardinality $2$, since every hyperplane in $\mathcal{O}_1$ is transverse with exactly $3$ hyperplanes in $\mathcal{O}_2$, there are $36$ of them, each containing one hyperplane of $\mathcal{O}_1$ and one of $\mathcal{O}_2$. In fact, they all form one orbit under $W$, denoted $\mathcal{B}$. For that, notice that it is sufficient to show that all collections containing $H_3$ are conjugate, since every collection is conjugate to one containing $H_3$. Again, to see that, check that the transverse collections  $\{H_3,H_{1,2}\},\{H_3,H_{1,2}^1\},\{H_3,H_{1,2}^2\}$ (see the proof of Lemma \ref{lem:g263to1} for a verification of their transversality) are conjugate, since $t_2\{H_3,H_{1,2}\}=\{H_3,H_{1,2}^1\}$ and $t_2^2\{H_3,H_{1,2}\}=\{H_3,H_{1,2}^2\}$.

\paragraph{\textbf{The elements $e_B$ for transverse collections of cardinality $2$}}

For a transverse collection $B$, let $\Ann(e_B)$ denote the annihilator of $e_B$ in $\Br(W)$.

\begin{lemma} Let $B,B'$ be two transverse collections of $G_{26}$ of cardinality $2$. If $B,B'$ contain the same hyperplane of $\mathcal{O}_1$, then $\Ann(e_B)=\Ann(e_{B'})$. 
\end{lemma}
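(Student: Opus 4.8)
The plan is to reduce to a conjugation relation between $e_B$ and $e_{B'}$ and then to an equality of ideals. Write $B=\{H,K\}$ and $B'=\{H,K'\}$ with $H\in\mathcal{O}_1$ and $K,K'\in\mathcal{O}_2$; if $K=K'$ there is nothing to prove, so assume $K\neq K'$, hence $K\not\pitchfork K'$ by the previous lemma. Since $\Ann(e_{wB})=w\,\Ann(e_B)\,w^{-1}$ for $w\in W$ and $W$ is transitive on $\mathcal{O}_1$, I may assume $H=H_3$; then by Lemma \ref{lem:g263to1} both $K,K'$ lie in the three-element set $\{H_{1,2},H_{1,2}^{1},H_{1,2}^{2}\}$ of hyperplanes transverse with $H_3$, and the reflections $t_1,t_2$, which fix $H_3$, permute these three hyperplanes cyclically (as already observed in the proof that all cardinality-$2$ transverse collections form a single orbit). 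In particular $\mathcal{R}_{K\to K'}\neq\emptyset$; moreover, since $K,K'$ are transverse with $H$, Lemma \ref{lem:transverse2} shows that every reflection in $\mathcal{R}_{K\to K'}$ (and in $\mathcal{R}_{K'\to K}$) leaves $H$ invariant, hence commutes with $e_H$ by relation (B2).

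Fix $s\in\mathcal{R}_{K\to K'}$. Then $s e_B s^{-1}=(s e_H s^{-1})(s e_K s^{-1})=e_H e_{K'}=e_{B'}$, so $\Ann(e_{B'})=s\,\Ann(e_B)\,s^{-1}$, and the assertion is equivalent to the invariance of the left ideal $\Ann(e_B)$ under conjugation by $s$. I will pass to a right-ideal formulation: since $H\pitchfork K$ one has $e_B^{2}=\delta^{2}e_B$, so $\delta^{-2}e_B$ is an idempotent, and for unit multiples of idempotents the equality $\Ann(e_B)=\Ann(e_{B'})$ of left annihilators is equivalent to the equality $e_B\Br(W)=e_{B'}\Br(W)$ of right ideals; by symmetry it then suffices to show $e_{B'}\in e_B\Br(W)$. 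The input for this consists of ``transfer'' relations obtained from (B5), (B5') and the commutation of $e_H$ with the relevant reflections, such as $e_K e_B=\delta e_B$, $e_{K'}e_B=\bigl(\sum_{r\in\mathcal{R}_{K\to K'}}\mu_r r\bigr)e_B$ (so that $e_K-\delta$ and $e_{K'}-\sum_{r\in\mathcal{R}_{K\to K'}}\mu_r r$ lie in $\Ann(e_B)$), $e_B e_{K'}=\bigl(\sum_{r\in\mathcal{R}_{K'\to K}}\mu_r r\bigr)e_{B'}$, and their counterparts with $B$ and $B'$ interchanged.

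The main obstacle is to upgrade these relations to the actual equality of annihilators. My plan is to pin down $\Ann(e_B)$ explicitly: as in the admissibility analysis of Section \ref{ch:admissibility}, I expect the left ideal $\Ann(e_B)$ of $\Br(W)$ to be generated by $\mathcal{R}_B-1$ together with the elements $\sigma^{H''}_{H_1,H_2}$ of Definition \ref{def:rel} attached to the collections in the orbit of $B$, the crucial simplification here being that $\Br(G_{26})$ has no transverse collection of cardinality greater than $2$, so that $I_3=0$ and Lemma \ref{lem:barcondition} applies without loss at the top filtration level. Granting such a description, one checks, using only the explicit transversality and conjugation data for $G_{26}$ from the preceding lemmas and from Example \ref{ex:g26}, that the reflection $s$ — which fixes $H$ and carries $K$ to $K'$ — maps this generating set of $\Ann(e_B)$ onto the corresponding one for $\Ann(e_{B'})$; since passing from $K$ to $K'$ only relabels the data, the two generating sets coincide, whence $\Ann(e_B)=\Ann(e_{B'})$. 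The delicate point is exactly this identification of the full annihilator of $e_B$ in $\Br(W)$ with the ``relation ideal'', which I would carry out by combining Lemma \ref{lem:barcondition} with the (small) explicit structure of $\Br(G_{26})$ in top filtration degree.
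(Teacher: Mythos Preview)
Your reduction to conjugation is fine: for $s\in\mathcal{R}_{K\to K'}$ fixing $H$ one has $sB=B'$ and hence $\Ann(e_{B'})=s\,\Ann(e_B)\,s^{-1}$. The gap is in the last paragraph. You assert that the generating set of $\Ann(e_B)$ coincides with that of $\Ann(e_{B'})$ ``since passing from $K$ to $K'$ only relabels the data,'' but this is exactly the content of the lemma and is not true at the level of the obvious generators: for instance $(\mathcal{R}_B-1)$ contains $(12)_{\kappa}-1$ where $K=H_{1,2}^{\kappa}$, while $(\mathcal{R}_{B'}-1)$ contains $(12)_{\kappa'}-1$ with $\kappa'\neq\kappa$, and there is no reason a priori that $(12)_{\kappa'}-1$ should lie in the left ideal generated by $\Rel(B)$. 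So you are assuming what you want to prove. The ``delicate point'' you flag---identifying $\Ann(e_B)$ with the relation ideal---is also not addressed, and even granting it would not save the argument for the reason just given.

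The paper bypasses all of this with a single direct computation: pick $B_0=\{H_3,H_{1,2}^{1}\}$ and evaluate $e_{T_{0,0}}e_{B_0}$ in two ways. Since $T_{0,0}\in\mathcal{O}_1$ is non-transverse with \emph{both} elements of $B_0$, relation (B5) via $H_{1,2}^{1}$ gives $0$ (different orbits, so $\mathcal{R}_{H_{1,2}^{1}\to T_{0,0}}=\emptyset$), while via $H_3$ it gives $(\mu_1 t_{0,1}+\mu_2 t_{0,2}^{2})e_{B_0}$. Rewriting as $e_{t_{0,1}B_0}\,\mu_1 t_{0,1}=-e_{t_{0,2}^{2}B_0}\,\mu_2 t_{0,2}^{2}$ and computing $t_{0,1}B_0,\,t_{0,2}^{2}B_0$ explicitly shows that two of the three collections containing $T_{0,0}$ have $e_B$'s differing by a right unit, hence equal annihilators; one more conjugation gets the third. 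The key idea you are missing is to multiply by $e_{H'}$ for an $H'\in\mathcal{O}_1$ \emph{outside} $B$ that is simultaneously non-transverse with both hyperplanes in $B$: this forces one of the two expressions from (B5) to vanish and produces a genuine linear relation among the $e_{B'}$, rather than merely a conjugacy.
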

\begin{proof}

We consider the following collections which serve as convenient representatives for what we demonstrate: $B_0=\{H_3,H_{1,2}^1\}, B_1=\{T_{0,0},H_{1,2}\},B_2=\{T_{0,0},H_{1,3}\},B_3=\{T_{0,0},H_{2,3}\}$. Here one may want to verify that $B_1,B_2,B_3$ are indeed transverse collections. For $B_1$ we have $T_{0,0} \cap H_{1,2}=\langle (1,1,-2) \rangle $ and one can check that this is not contained in any hyperplane other than $T_{0,0}$ and $H_{1,2}$. For $B_2,B_3$ the same can be obtained by conjugation from $B_1$ with reflections $(23),(13)$ which stabilize $T_{0,0}$ and map $H_{1,2}$ to $H_{1,3}$ and $H_{1,2}$ to $H_{2,3}$ respectively. 

We show now that $\Ann(e_{B_1})=\Ann(e_{B_2})=\Ann(e_{B_3})$. For that, we compute $e_{T_{0,0}}e_{B_0}$ in two ways. First, we find the reflections taking $H_3$ to $T_{0,0}$; let $s$ be such a reflection. By Lemma \ref{lem:transverse1}, the hyperplane $H_s$ contains $H_3\cap T_{0,0}=\langle (1,-1,0)\rangle $. The only hyperplanes different from $H_3, T_{0,0}$ containing this last subspace are $T_{0,λ}, λ=1,2$ and so, the possible reflections are $t_{0,λ}^p, λ,p\in \{1,2\}$. To calculate which of these reflections take $H_3$ to $T_{0,0}$ we use the following formula for a reflection, which uses a root of the reflection (a vector perpendicular to its reflecting hyperplane). If $r$ is a unitary reflection of $V$, and $u\in H_r\backslash \{0\}$ is such that $ru=αu$ where $α$ is some root of unity, then for every $v\in V, r(v)=v-(1-α)\frac{\langle v,u \rangle}{\langle u,u \rangle}u$.

A vector perpendicular to $T_{0,λ}$ is $u_λ=(1,1,ζ^λ)$ and since $t_{0,λ}$ is the distinguished reflection of $T_{0,λ}$ of order $3$, then $t_{0,λ}^p(v)=v-(1-ζ^p)\frac{\langle v,u_λ\rangle}{3}u_λ$. Now, to check whether $t_{0,λ}^p(H_3)=T_{0,0}$, it suffices to check that $t_{0,λ}^p(H_3^{\perp})=T_{0,0}^{\perp}$. For that, we use the vectors $e_3=(0,0,1)$ and $u_0=(1,1,1)$ that generate $H_3^{\perp}$ and $T_{0,0}^{\perp}$, respectively. Let $x=-(1-ζ^p)\frac{\langle e_3,u_λ \rangle}{3}=(ζ^p-1)ζ^{-λ}/3$. We have, $t_{0,λ}^p(e_3)=e_3+xu_λ=(0,0,1)+x(1,1,ζ^l)=(x,x,1+xζ^λ)$. So, $t_{0,λ}^p(H_3)=T_{0,0}$ if only if $(x,x,1+xζ^λ)\in \langle (1,1,1) \rangle \Leftrightarrow x=1+xζ^λ \Leftrightarrow (1-ζ^λ)x=1$. The last equation becomes $(1-ζ^λ)(ζ_p-1)ζ^{-λ}=3 \Leftrightarrow ζ^{p-λ}+ζ^{p} +ζ^{-λ}=2$. One can verify that this is true only for $p=λ\neq 0$, which means that there are two reflections taking $H_3$ to $T_{0,0}$, namely $t_{0,1}, t_{0,2}^2$. Let $μ_1,μ_2$ be the respective Brauer parameters of $t_{0,1}, t_{0,2}^2$. We have, 
$e_{T_{0,0}}e_{B_0}=\sum_{s\in \mathcal{R}_{H_3\to T_{0,0}}} μ_sse_{B_0}=(μ_1t_{0,1}+μ_2t_{0,2}^2)e_{B_0}$. Also, $T_{0,0}$ is non-transverse with $H_{1,2}^1\in B_0$ and so $e_{T_{0,0}}e_{B_0}=\sum_{s\in \mathcal{R}_{H_{1,2}^1\to T_{0,0}}} μ_sse_{B_0}=0$, since $H_{1,2}^1$ and $T_{0,0}$ belong to different orbits. Combining the two, we get $(μ_1t_{0,1}+μ_2t_{0,2}^2)e_{B_0}=0$.

We write the last equation as $e_{t_{0,1}B_0}μ_1t_{0,1}=e_{t_{0,2}^2B_0}(-μ_2t_{0,2}^2)$. Since $μ_1t_{0,1}$ and $-μ_2t_{0,2}^2$ are invertible elements of $\mathbb{Z}[\underline{μ}^{\pm 1}]W,$ this implies that: 

\begin{equation}\label{eq:annihilators}
\Ann(e_{t_{0,1}B_0})=\Ann(e_{t_{0,2}^2B_0}),
\end{equation}

We find the collections $t_{0,1}B_0, t_{0,2}^2B_0$. Since $t_{0,1}H_3=t_{0,2}^2H_3=T_{0,0}$, we have $t_{0,1}B_0, t_{0,2}^2B_0 \in \{B_1, B_2, B_3\}$. However, note that since $t_{0,1}, t_{0,2}^2$ map $H_3$ to $T_{0,0}$ and $H_{1,2}$ is transverse with both $H_3$ and $T_{0,0}$, then by Lemma \ref{lem:transverse2}, $H_{1,2}$ is invariant under $t_{0,1}, t_{0,2}^2$. Thus, since $H_{1,2}\not \in B_0$,  $t_{0,1}B_0, t_{0,2}^2B_0$ do not contain $H_{1,2}$ either, meaning that $t_{0,1}B_0, t_{0,2}^2B_0\in \{B_2,B_3\}$ or equivalently, $t_{0,1}H_{1,2}^1, t_{0,2}^2H_{1,2}^1\in \{H_{1,3},H_{2,3}\}$. To find hyperplanes $t_{0,1}H_{1,2}^1, t_{0,2}^2H_{1,2}^1$ we can either use the aforementioned formula for a unitary reflection or search in the following way. Since for any reflection $s$ mapping $H$ to $H'$, we have $H\cap H' \subseteq H_s$ (Lemma \ref{lem:transverse1}), which implies $H_s \cap H \subseteq H'$, in order to find the possible images of $t_{0,1}H_{1,2}^1$, we can first check which hyperplanes among $H_{1,2},H_{1,3},H_{2,3}$ contain $H_{1,2}^1\cap T_{0,1}=\langle (ζ,1,-1-ζ^{-1})\rangle= \langle (ζ,1,ζ)\rangle$. We see that it is only $H_{1,3}$ and so $t_{0,1}H_{1,2}^1=H_{1,3}$. In the same way, the intersection $H_{1,2}^1\cap T_{0,2}=\langle (ζ,1,-ζ^{-1}-ζ^{-2})\rangle= \langle (ζ,1,1)\rangle$ is contained only in $H_{2,3}$ and hence, $t_{0,2}^2H_{1,2}^1=H_{2,3}$. So, $t_{0,1}B_0=B_2$, $t_{0,2}^2B_0=B_3$ and Equation \eqref{eq:annihilators} gives $\Ann(e_{B_2})=\Ann(e_{B_3})$. Now, observe that for the reflection $(23)$, which corresponds to permutation of the coordinates $z_2,z_3$, we have $(23)B_2=B_1$ and $(23)B_3=B_3$ and hence, conjugating Equation \eqref{eq:annihilators} by $(23)$ we get $\Ann(e_{B_1})=\Ann(e_{B_3})$. Thus, we finally obtain $\Ann(e_{B_1})=\Ann(e_{B_2})=\Ann(e_{B_3})$. 

Now, since $B_1,B_2,B_3$ are all the collections of cardinality $2$ containing $T_{0,0}\in \mathcal{O}_1$ (Lemma \ref{lem:g263to1}), by conjugation, we obtain the result of the statement.
\end{proof}

\begin{proposition} Let $B$ be a transverse collection of $G_{26}$ of cardinality $2$. Then $e_B=0$.
\end{proposition}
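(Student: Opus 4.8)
The plan is to produce an invertible element of $\mathcal{A}W$ that annihilates $e_B$. Since the transverse collections of $G_{26}$ of cardinality $2$ form a single $W$-orbit and $we_Bw^{-1}=e_{wB}$, it is enough to treat one representative, and I will take $B_0=\{H_3,H_{1,2}^1\}$, so that the computation carried out in the proof of the preceding lemma is available unchanged; by that lemma $\Ann(e_B)$ depends only on the $\mathcal{O}_1$-hyperplane of $B$, which is the extra ingredient that makes the chaining below legitimate.

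First I would reuse the relation obtained there: evaluating $e_{T_{0,0}}e_{B_0}$ first through the non-transverse pair $H_3,T_{0,0}$ and then through the pair $H_{1,2}^1,T_{0,0}$ — where $\mathcal{R}_{H_{1,2}^1\to T_{0,0}}=\varnothing$ because these two hyperplanes lie in distinct $W$-orbits — yields $(\mu_1 t_{0,1}+\mu_2 t_{0,2}^2)e_{B_0}=0$, i.e.\ $\sigma^{T_{0,0}}_{H_3,H_{1,2}^1}\in\Ann(e_{B_0})$. Left-multiplying by the unit $\mu_1^{-1}t_{0,1}^{-1}$ rewrites this as $(1+cg)e_{B_0}=0$ with $c=\mu_2\mu_1^{-1}$ and $g=t_{0,1}^{-1}t_{0,2}^2\in W$. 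A direct check with the reflection formula from Example \ref{ex:g26} shows that $g$ maps $B_0$ to another member of its $\mathcal{O}_1$-triple (the three collections sharing $H_3$), and that the order-$3$ element $t_1$ cyclically permutes that triple. Conjugating the identity $(1+cg)e_{B_0}=0$ by the powers of $t_1$ and composing the three resulting identities around the triple then gives $e_{B_0}(1+c^3h)=0$, where $h\in\Stab(B_0)$ is the product of the three $t_1$-conjugates of $g$ (it fixes $B_0$ precisely because $g$ runs once around the triple), hence has finite order $n$.

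To finish I would invoke that in $G_{26}$ a distinguished reflection of order $3$ is never conjugate to its inverse — the reflecting hyperplane and the eigenvalue on its orthogonal line are both conjugacy invariants — so $\mu_1\neq\mu_2$ in $\mathcal{A}$ and $c=\mu_2\mu_1^{-1}$ is not a root of unity. Writing $1+c^3h=c^3\bigl(h-(-c^{-3})\bigr)$ with $(-c^{-3})^n\neq1$, Lemma \ref{lem:invertibility} shows this element is invertible (over $\Frac(\mathcal{A})$, and over any proper field in which $\mu_1/\mu_2$ is not a root of unity), so $e_{B_0}=0$. The genuine obstacle is the bookkeeping in the middle step: unlike the generic situation of the earlier sections, $\sigma^{T_{0,0}}_{H_3,H_{1,2}^1}$ fails the bar condition — the relevant translates of $B_0$ are not jointly transverse, because $G_{26}$ has no transverse collections of cardinality $3$ — so one cannot simply push the relation into the zero ideal $I_3$; one must instead track exactly how $t_{0,1}$, $t_{0,2}^2$ and their $t_1$-conjugates permute the three collections with $\mathcal{O}_1$-hyperplane $H_3$, so that the composite $h$ really lands in $\Stab(B_0)$ and the chained identity has an invertible coefficient.
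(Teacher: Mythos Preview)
Your chaining argument is internally consistent: the relation $(\mu_1 t_{0,1}+\mu_2 t_{0,2}^2)e_{B_0}=0$ does rewrite as $e_{B_0}=-c\,e_{B_0'}\,g$, the $t_1$-conjugates do advance around the $H_3$-triple, and the composite $h=g_2g_1g$ does lie in $\Stab(B_0)$, so you genuinely obtain $e_{B_0}(1+c^3 h)=0$. The problem is the last step. The proposition is stated (and used) in $\Br(W)$ over the ring $\mathcal{A}$, and $1+c^3 h$ is \emph{not} a unit of $\mathcal{A}W$: following Lemma~\ref{lem:invertibility}, $(1+c^3 h)\cdot(\text{polynomial in }h)=c^3\bigl(1-(-c^{-3})^{\,\mathrm{ord}(h)}\bigr)$, and this scalar is not a unit of $\mathcal{A}$ (units there are $\pm$ monomials in $\delta$ and the $\mu$'s). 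So you only get $e_{B_0}=0$ after inverting a non-unit, i.e.\ in $\Frac(\mathcal{A})$ or in proper fields where $c$ avoids the relevant root of unity. Since the proposition is precisely the input to the freeness proof (Proposition~\ref{prop:freeness26}), you cannot first pass to $\Frac(\mathcal{A})$ without already knowing $\Br(W)$ is torsion-free --- which is what you are trying to establish. Even taken as a statement over proper fields your argument is incomplete, because nothing in Definition~\ref{def:propering} forbids $\mu_1/\mu_2$ from being a root of unity.

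The paper's proof avoids this trap by producing a relation whose coefficient is an actual unit of $\mathcal{A}W$. Instead of the $T_{0,0}$-relation, it evaluates $e_{H_{1,3}}e_{B_0}$ two ways (through $H_3$ it vanishes; through $H_{1,2}^1$ it gives $\sum_{s\in\mathcal{R}_{H_{1,2}^1\to H_{1,3}}}\mu_s s\,e_{B_0}=0$), rewrites this as $\sum_s e_{sB_0}\mu_s s=0$, and then left-multiplies by $e_{H_2}$. The product rule established just before (Equation~\eqref{eq:prodrule}, which is where the preceding lemma is actually exploited) kills every term except the one with $sH_3=H_2$; by Lemma~\ref{lem:transversepairs} there is exactly one such $s$, namely $(32)_1$. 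What survives is $\delta\,\mu_{(32)_1}\,e_{(32)_1B_0}\,(32)_1=0$, and now the coefficient $\delta\,\mu_{(32)_1}(32)_1$ \emph{is} a unit of $\mathcal{A}W$, so $e_{(32)_1B_0}=0$ over $\mathcal{A}$ itself. The moral is that iterating a two-term relation produces coefficients that are polynomials in the parameters, whereas the paper's ``multiply by $e_{H_2}$'' trick isolates a single monomial coefficient.
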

\begin{proof}

We first consider a product $e_{b'}e_{\{b,a\}}$ where $b', b\in \mathcal{O}_1$ and $a\in \mathcal{O}_2$ is transverse with $b$. Of course, if $b'\in B$ then $e_{b'}e_{\{b,a\}}=δe_{\{b,a\}}$; otherwise, recall that every hyperplane in $\mathcal{O}_1$ is determined by the hyperplanes in $\mathcal{O}_2$ with which it is transverse (Lemma \ref{lem:g263to1}), and so if $b\neq b'$, then there exists $a'\in \mathcal{O}_2$ transverse with $b$ but not with $b'$. This gives that $e_{b'}e_{\{b,a'\}}=\sum_{s\in \mathcal{R}_{a'\to b'}}μ_sse_{\{b,a'\}}=0$ since $b'$ and $a'$ belong to different orbits. But $\Ann(e_{\{b,a\}})=\Ann(e_{\{b,a'\}})$ and so $e_{b'}e_{\{b,a\}}=0$. Summing up, for all $b,b'\in \mathcal{O}_1,$ and $a\in \mathcal{O}_2$ transverse with $b$, we have:
\begin{equation}\label{eq:prodrule} 
e_{b'}e_{\{b,a\}}=δ_{b',b}\cdot δe_B.
\end{equation}
Now, we compute $e_{H_{1,3}}e_{B_0}$, where $B_0=\{H_3,H_{1,2}^1\}$ in two ways, as before, to obtain an invertible element that annihilates $e_{B'}$ for some conjugate $B'$ of $B$. For a verification of the mentioned transversality relations one may go back to the proof of Lemma \ref{lem:g263to1}. First of all, $H_{1,3}\not \pitchfork H_3$, and since $H_{1,3},H_3$ belong to different orbits, $e_{H_{1,3}}e_{B_0}=0$. In addition to that, $H_{1,3}$ is non-transverse with $H_{1,2}^1$ and so $e_{H_{1,3}}e_{B_0}=\sum_{s\in \mathcal{R}_{H_{1,2}^1\to H_{1,3}}} μ_ss e_{B_0}$, which yields that $\sum_{s\in \mathcal{R}_{H_{1,2}^1\to H_{1,3}}} μ_ss e_{B_0}=0$. We rewrite this as 

\begin{equation}\label{eq:conclusion}
\sum_{s\in \mathcal{R}_{H_{1,2}^1\to H_{1,3}}} e_{sB_0}μ_ss =0.
\end{equation}

One can quickly see that $(32)_1$ is a reflection mapping $H_{1,2}^1$ to $H_{1,3}$, and $H_3$ to $H_2$. By Lemma \ref{lem:transversepairs}, we know that there is at most one such reflection, so for each $s$ mapping $H_{1,2}^1$ to $H_{1,3}$ and different from $(32)_1$, we have $sH_3\neq H_2$. This means that for such $s$, the hyperplane $H_2$ is not contained in $sB_0$ and so $e_{H_2}e_{sB_0}=0$ (Equation \eqref{eq:prodrule}). So, multiplying Equation (\ref{eq:conclusion}) by $e_{H_2}$ on the left, we obtain
$\sum_{s\in \mathcal{R}_{H_{1,2}^1\to H_{1,3}}}e_{H_2} e_{sB_0}μ_ss =0$, which yields $δe_{(32)_1B_0}μ_{(32)_1}(32)_1=0$, or, equivalently,
$e_{(32)_1B_0} =0$. 

Since, as we showed earlier, all transverse collections of cardinality $2$ belong to the same orbit, this implies that $e_B=0$.
\end{proof}

\paragraph{\bf Conclusion}
Finally, we can prove the following result.

\begin{proposition}\label{prop:freeness26} Let $W=G_{26}$. The algebra $\Br(W)$ is a free $\mathcal{A}$-module, and the set $\{we_H\big| H\in \mathcal{H}, w\in W/W_H\}$ is a basis.
\end{proposition}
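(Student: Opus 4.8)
The plan is to exploit how severely restricted the transversality structure of $G_{26}$ is. By the two lemmas above, no hyperplane of $\mathcal{O}_1$ is transverse to another hyperplane, $\mathcal{O}_2$ contains no transverse pair, and distinct hyperplanes in the same orbit are never transverse; hence every transverse collection of $G_{26}$ has cardinality at most $2$. By the preceding proposition, $e_B=0$ in $\Br(W)$ for every transverse collection $B$ of cardinality $2$. Therefore the ideal $I_2=\sum_{B\in\mathcal{C},\,\lvert B\rvert\geq 2}\mathcal{A}We_B$ of Definition \ref{def:ideals} vanishes. Since $\Br(W)=\cup_{r}I_r$ (Lemma \ref{lem:unionideals}) and $e_\emptyset=1$, unwinding the filtration one step at a time gives
$$\Br(W)=I_0=\mathcal{A}We_\emptyset+I_1=\mathcal{A}W+\sum_{H\in\mathcal{H}}\mathcal{A}We_H+I_2=\mathcal{A}W+\sum_{H\in\mathcal{H}}\mathcal{A}We_H.$$

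Next I would reduce each summand. By relation (B3), $re_H=e_H$ for every reflection $r$ with reflecting hyperplane $H$; since the pointwise stabilizer $W_H$ is generated by these reflections (Remark \ref{rem:KBforcardone}, where $W_H=K_{\{H\}}$), it follows that $we_H=e_H$ for all $w\in W_H$, so $\mathcal{A}We_H=\sum_{w\in W/W_H}\mathcal{A}we_H$. Consequently $\Br(W)$ is spanned over $\mathcal{A}$ by the set
$$S:=\{\,w\mid w\in W\,\}\cup\{\,we_H\mid H\in\mathcal{H},\ w\in W/W_H\,\},$$
the first part being the contribution $\{we_\emptyset\}$ of the empty collection ($K_\emptyset=1$). (This $S$ is the set $\{we_B\mid B\in\mathcal{C}^K_{adm},\,w\in W/K_B\}$ specialised to $G_{26}$, so the stated basis $\{we_H\mid H\in\mathcal{H},\,w\in W/W_H\}$ should be read together with the group elements of $W$.)

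It then remains to prove that $S$ is $\mathcal{A}$-linearly independent. Fix a proper field $K$ for $W$ whose structure morphism $\mathcal{A}\to K$ is injective — for instance $K=\overline{\mathbb{Q}}(\underline{μ},δ)=\Frac\big(\overline{\mathbb{Q}}[\underline{μ}^{\pm 1}]/(μ_s-μ_{s'})\,[δ^{\pm 1}]\big)$ with the evident map, which is proper by Example \ref{ex:propering} and Remark \ref{rem:properfractionfield}. As $G_{26}$ is neither $G_{25}$ nor $G_{32}$, Theorem \ref{th:basisoverK} applies and says that $\{we_B\mid B\in\mathcal{C}^K_{adm},\,w\in W/K_B\}$ is a $K$-basis of $\Br^K(W)$. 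Now $\mathcal{C}^K_{adm}=\{\emptyset\}\cup\{\{H\}\mid H\in\mathcal{H}\}$: a cardinality-$2$ collection $B$ cannot be $K$-admissible since $e_B=0$ in $\Br^K(W)$, while every singleton $\{H\}$ is admissible because $\Rel(\{H\})=\mathcal{R}_{\{H\}}-1$ (the set $Σ_{\{H\}}$ is $\{0\}$), so the trivial $K\Stab(\{H\})$-module satisfies the admissibility criterion of Proposition \ref{prop:admissibility}. Hence the $K$-basis above is exactly the image of $S$ under $\Br(W)\to\Br^K(W)=K\otimes_{\mathcal{A}}\Br(W)$. Any $\mathcal{A}$-linear relation among the elements of $S$ therefore maps to a $K$-linear relation among a $K$-basis, forcing all its coefficients into $\ker(\mathcal{A}\to K)=0$; so $S$ is $\mathcal{A}$-linearly independent. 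Combined with the spanning statement, $S$ is an $\mathcal{A}$-basis and $\Br(W)$ is a free $\mathcal{A}$-module.

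The genuinely hard computations — that $G_{26}$ has no transverse collection of cardinality exceeding $2$ and that $e_B$ vanishes for the cardinality-$2$ collections — are already done in the preceding lemmas and proposition, so the main point to be careful about here is the passage from $K$ back to $\mathcal{A}$; this is painless once one observes that a proper field containing $\mathcal{A}$ exists, so that $K$-independence of the specialised basis lifts to $\mathcal{A}$-independence of $S$.
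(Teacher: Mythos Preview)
Your proof is correct and follows essentially the same route as the paper's: both use the preceding lemmas to conclude that $\mathcal{C}$ contains only collections of cardinality $\leq 2$ and that $e_B=0$ for $\lvert B\rvert=2$, deduce that the candidate set spans $\Br(W)$ over $\mathcal{A}$ via relation (B3), and then invoke Theorem \ref{th:basisoverK} over a proper field $K$ to get linear independence. You are more explicit than the paper on two points: you spell out why $\mathcal{A}$-independence follows from $K$-independence (by choosing $K$ with $\mathcal{A}\hookrightarrow K$), and you correctly observe that the stated basis must be read together with the contribution $\{w\cdot e_\emptyset\mid w\in W\}=W$ of the empty collection, which the paper's statement and proof tacitly suppress.
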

\begin{proof} By Theorem \ref{th:basisoverK}, for any proper field $K$, the set $\{we_B \big| B\in \mathcal{C}^K_{adm}, w\in W/K_B\}$ is a basis for $\Br^K(G_{26})$. As we showed in the discussion preceding the proposition, in $G_{26}$ there are transverse collections of cardinality at most $2$ and $e_B=0$ for all transverse collections of cardinality $2$. Thus, the above basis yields, in fact, the basis $\{we_H \big| H\in \mathcal{H}, w\in W/K_{\{H\}}\}$. By Remark \ref{rem:KBforcardone}, for every $H\in \mathcal{H}$, the group $K_{\{H\}}$ is just the pointwise stabilizer $W_H$ of $H$. Hence, the set $\{we_H\big| H\in \mathcal{H}, w\in W/W_H\}$ is a basis of $\Br^K(W)$. We show that it also spans $\Br(W)$, which then implies the result.

By Lemma \ref{lem:genset}, the set $\{we_B \big| B\in \mathcal{C}, w\in W \}$ spans $\Br(W)$. Again, by the discussion before the proposition, there are no transverse collections of cardinality more than $2$, and $e_B=0$ if $B$ has cardinality $2$. Hence, the set $\{we_H\big| H\in \mathcal{H}, w\in W\}$ spans $\Br(W)$. Now, for every $H\in \mathcal{H}$ and $w_1,w_2$ in the same left coset of $W_H$, i.e. $w_2^{-1}w_1 \in W_H$, by (B3) of Definition \ref{def:algebra} of the Brauer-Chen algebra, we have that $w_2^{-1}w_1e_H=e_H$, or, equivalently, $w_1e_H=w_2e_H$. This implies that set $\{we_H\big| H\in \mathcal{H}, w\in W/W_H\}$ spans $\Br(W)$, which concludes the proof.
\end{proof}


\section{Appendix}

\subsection{Calculation of the order of $ K_B $ for the infinite series}

We calculate here the order of the group $K_B$ that completes the proofs of Propositions \ref{prop:classgeneral} and \ref{prop:classg22n}. Recall that, in the setting of $G(m,p,n)$, $ζ$ denotes the primitive $m$-th root of unity $e^{\frac{2πi}{n}}$ and $z_1,\dots ,z_n$ the standard coordinates of $\mathbb{C}^n$. We begin with a useful remark.

\begin{remark}\label{rem:semidirect}
A matrix of $G(m,p,n)$ can be written uniquely as the product of a permutation matrix and a diagonal matrix. In fact, if $D(m,p,n)$ denotes the set of all diagonal ones, then we have $G(m,p,n)=G(1,1,n)\ltimes D(m,p,n)$, where $G(1,1,n)$ coincides with the subgroup of permutation matrices. For a matrix $A\in G(m,p,n)$ let $P_A,D_A$ denote the unique permutation and diagonal, respectively, matrices for which $A=P_AD_A$.
\end{remark}

\begin{proposition}
\label{prop:Gmpngeneral}Let $W=G(m,p,n)$ and $B$ be transverse collection of the form $\{H_{i_1j_1}^{κ_1},\dots ,H_{i_rj_r}^{κ_r}\}$. Then $K_B$ consists of all matrices in $\Stab(B)$ whose product of non-zero entries is $1$ and which leave invariant the coordinates corresponding to the indices not appearing in any hyperplane in $B$. Furthermore, the order of $K_B$ is $ 2^rm^{r-1} r!$.

\end{proposition}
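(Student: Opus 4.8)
The plan is to first pin down $K_B$ as an explicit set of matrices, and then count that set. Recall that Proposition \ref{prop:classgeneral} already identified $K_B$ with the subgroup $K_B^0$ generated by $\{(ij)\mid H_{ij}\in B\}$ together with all products $(ii')_\kappa(j'j)_\kappa$ for $H_{ij'},H_{i'j}\in B$, $\kappa\in\mathbb{Z}$; moreover, by the conjugation argument at the end of that proof, it suffices to treat the case $\kappa_1=\dots=\kappa_r=0$, i.e. $B=\{H_{i_1j_1},\dots,H_{i_rj_r}\}$ with all indices distinct, since conjugating by the diagonal matrix $\prod \hat t_{j_i}^{\kappa_i}\in G(m,1,n)$ sends this $K_B$ to the general one and preserves both ``$\Stab(B)$'' and ``product of nonzero entries equals $1$'' and ``fixes the unused coordinates''. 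So I would fix that normalized $B$ for the rest of the argument. First I would check the inclusion ``$\subseteq$'': each generator $(ij)$ with $H_{ij}\in B$ is a permutation matrix with product of entries $1$ fixing all coordinates outside $\{i,j\}$, hence lies in the claimed set; and each $(ii')(j'j)$ with $H_{ij'},H_{i'j}\in B$ is a product of two such transpositions, again with product of nonzero entries $1$ and fixing the coordinates outside $\{i,i',j,j'\}$ — all of which are indices appearing in $B$. Since the claimed set is a subgroup, $K_B\subseteq$ it.

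For the reverse inclusion I would argue as follows. Let $S=\{i_1,j_1,\dots,i_r,j_r\}$ be the set of indices appearing in $B$, and let $A\in\Stab(B)$ fix all coordinates outside $S$ and have product of nonzero entries $1$. Since $A$ permutes the $r$ hyperplanes $H_{i_kj_k}$, it induces a permutation of the $r$ pairs $\{i_k,j_k\}$; composing with elements of the form $(i_kj_k)$ (which are in $K_B$) I can assume $A$ fixes each pair $\{i_k,j_k\}$ setwise, and further composing with more $(i_kj_k)$'s I may assume $A$ fixes each index in $S$ pointwise as a permutation. Then $P_A=1$ (using the notation of Remark \ref{rem:semidirect}), so $A=D_A$ is diagonal, fixes every coordinate outside $S$, preserves each equation $z_{i_k}=z_{j_k}$ — which for a diagonal matrix forces the $i_k$- and $j_k$-th diagonal entries to be equal — and has product of entries $1$. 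Writing $\epsilon_k\in\mathbb{U}_m$ for the common value of the $i_k,j_k$ entries, the constraint is just $\prod_{k=1}^r\epsilon_k^2=1$. Each such diagonal matrix is the product of the elements $t_{i_k}t_{j_k}=(i_k i_l)(j_l j_k)\cdots$-type combinations; more directly, $\mathrm{diag}$ with $\epsilon_k$ in positions $i_k,j_k$ and $\epsilon_k^{-1}$ in positions $i_l,j_l$ (for a fixed other pair $l$) lies in $K_B^0$ because it equals $(i_k i_l)(j_l j_k)$ conjugated appropriately — here I would use that $(i_ki_l)(j_lj_k)\in K_B^0$ whenever $H_{i_kj_l}$ and $H_{i_lj_k}$... wait, those need not be in $B$. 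The cleaner route: the diagonal subgroup of $K_B^0$ is generated by the products $(ii')_\kappa(j'j)_\kappa\cdot(ij)(i'j')$-type words, so I would instead directly exhibit $K_B^0\cap D(m,p,n)$ as exactly the diagonals supported on $S$ with $\epsilon_{i_k}=\epsilon_{j_k}$ and $\prod\epsilon_k^2=1$, by checking that the commutators and products of the listed generators realize all of these (a short computation with $2\times 2$ blocks), and that no generator produces anything outside this set.

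Having identified $K_B$, I would count it via the exact sequence $1\to K_B\cap D(m,p,n)\to K_B\to Q\to 1$ where $Q$ is the image of $K_B$ in the symmetric group on $S$. The image $Q$ consists of permutations of $S$ permuting the $r$ blocks $\{i_k,j_k\}$ and allowed to swap within each block freely: that is the wreath-type group of order $2^r\,r!$. The kernel $K_B\cap D(m,p,n)$ is the group of $(\epsilon_1,\dots,\epsilon_r)\in\mathbb{U}_m^r$ with $\prod\epsilon_k^2=1$; since squaring $\mathbb{U}_m\to\mathbb{U}_m$ has image of index $\gcd(2,m)$ — no, more carefully: the map $\mathbb{U}_m^r\to\mathbb{U}_m$, $(\epsilon_k)\mapsto\prod\epsilon_k^2$ has image equal to the squares in $\mathbb{U}_m$, of size $m/\gcd(2,m)$, so the kernel has size $m^r\gcd(2,m)/m=m^{r-1}\gcd(2,m)$. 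Hmm — that gives $2^r m^{r-1}\gcd(2,m)\,r!$, which matches the claimed $2^r m^{r-1} r!$ only when $m$ is even; but the $\kappa=0$ reduction used a matrix in $G(m,1,n)$, and in $G(m,1,n)$ the product-of-entries condition that defines membership in $G(m,p,n)$ is vacuous, so the correct constraint inside $G(m,p,n)$ is $\prod\epsilon_k^2\in\mathbb{U}_{m/p}$, not $=1$; combined with $K_B$ being generated by genuine reflections of $G(m,p,n)$ this needs recomputing. I expect the main obstacle to be exactly this bookkeeping of which diagonal matrices actually lie in $K_B$ — i.e. getting the kernel count right so that $|K_B\cap D|=m^{r-1}$ and hence $|K_B|=2^r m^{r-1} r!$. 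The resolution will be that the diagonal part of $K_B$ is generated not by arbitrary $\epsilon_k$'s but by the specific products $(i_k i_l)(j_l j_k)$, whose diagonalizations have $\epsilon_k\epsilon_l^{-1}$ pattern — i.e. the kernel is $\{(\epsilon_k): \prod\epsilon_k=1\}$, of size $m^{r-1}$ — and I would verify this generation claim carefully rather than invoking the naive squaring map. Once that lemma is in place the order formula $|K_B|=|Q|\cdot|K_B\cap D|=2^r r!\cdot m^{r-1}$ follows immediately, and conjugating back gives the general $B$.
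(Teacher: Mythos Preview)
Your plan is essentially the paper's: reduce to $\kappa_i=0$ via the $G(m,1,n)$-conjugation from Proposition~\ref{prop:classgeneral}, then split $K_B^0$ into its permutation part $K_B^{(1)}=\langle (ij),(ii')(jj')\rangle$ of order $2^r r!$ and its diagonal part $K_B^{(2)}$ generated by the $m_\kappa(i,i',j,j')$, and use the semidirect product $K_B^0=K_B^{(1)}\ltimes K_B^{(2)}$ (your exact sequence) to obtain the order.

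Your worry about the diagonal count is justified, and your eventual resolution is the right one: the generators $m_\kappa$ have pair-exponents of the form $(\dots,\kappa,\dots,-\kappa,\dots)$, so they generate exactly $\{(\mu_k)\in(\mathbb{Z}/m)^r:\sum_k\mu_k=0\}$, of order $m^{r-1}$. The condition ``product of all nonzero entries equals $1$'' instead cuts out $2\sum_k\mu_k\equiv 0\pmod m$, which for even $m$ is strictly weaker; e.g.\ with $m=2$ and $B=\{H_{12},H_{34}\}$ the matrix $\diag(-1,-1,1,1)$ lies in $\Stab(B)$, has entry-product $1$, fixes all unused coordinates, yet is not in $K_B$. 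So your reverse-inclusion step, as literally planned, would fail for even $m$, and in fact the first sentence of the proposition is slightly inexact there. The paper's own description of $K_B^{(2)}$ (as diagonals with ``$\sum_i\lambda_i=0$'') has the same imprecision. The order formula $2^rm^{r-1}r!$, however, is correct, and both your argument and the paper's establish it once the diagonal part is identified as the $\sum_k\mu_k=0$ subgroup.
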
\label{prop:orderKBgeneral}
\begin{proof} First, consider $B$ of the form $\{H_{i_1j_1},\dots ,H_{i_rj_r}\}$. In the proof of Proposition \ref{prop:classgeneral} we identified $K_B$ with the group $K_B^0$  generated by $\{(ij)\big| H_{ij}\in B\} \cup \{(ii')_κ(j'j)_κ\big| H_{ij'},H_{i'j}\in B,κ\in \mathbb{Z}\}$, which we write more concisely as
$$K_B^0=\langle (ij),(ii')_κ(jj')_κ\big| H_{ij},H_{i'j'}\in B, κ \in\mathbb{Z}\rangle.$$
Notice that the element $(ii')_κ (jj')_κ$ can be written as:

$$(ii')(jj')\cdot \diag(1,\dots,\underset{(i)}{ζ^κ}, 1, \dots ,\underset{(j)}{ζ^{κ}},1, \dots ,\underset{(i')}{ζ^{-κ}},1, \dots ,\underset{(j')}{ζ^{-κ}},1,\dots).$$
Denote this element by $h_κ(i,i',j',j)$, and by $m_κ(i,i',j',j)$ the above diagonal matrix so that
$h_κ(i,i',j,j')=(ii')(jj')m_κ(i,i',j,j')$. Then $h_0(i,i',j,j')=(ii')(jj')$, and hence, $h_κ(i,i',j,j')=h_0(i,i',j,j')m_κ(i,i',j,j')$. So, we have that

\begin{equation}\label{eq:firstdescKB}
K^0_B=\langle(ij),(ii')(jj'),m_κ(i,i',j,j')\big| H_{ij},H_{i'j'}\in B\rangle.
\end{equation}

Now, consider the subgroups 
$$K_B^{(1)}=\langle(ij),(ii')(jj')\big| H_{ij},H_{i'j'}\in B\rangle \hspace{10pt}\textnormal{ and }\hspace{10pt} K_B^{(2)}=\langle m_κ(i,i',j,j')\big| H_{ij},H_{i'j'}\in B\rangle,$$
which generate $K_B^0$. One can see that $K_B^{(1)}$ consists of all permutation matrices that stabilize $B$ and  which induce the identity permutation on the coordinates $z_i$ for the indices $i$ that do not appear in any hyperplane in $B$. The order of this group is, hence, equal to the number of permutations of the indices appearing in $B$ that leave $B$ stable, which is $2^rr!$.

For the group $K_B^{(2)}$, a quick verification yields that it consists of all diagonal matrcices $\diag(ζ^{λ_1},\dots ,ζ^{λ_n})$ for which $λ_i=λ_j$ for all $H_{ij}\in B, λ_i=0$ for all $i$ not appearing in $B$, and such that $\sum_{i=1}^n λ_i=0$. It follows that its order is $m^{r-1}$. 

Now, $K_B^{(1)}, K_B^{(2)}$ consist, respectively, of permutation and diagonal matrices and generate $K_B^0$. By Remark \ref{rem:semidirect} above, this implies that $K^0_B=K_B^{(1)}\ltimes K_B^{(2)}$, and hence, the order of $K_B^0$ is $2^rm^{r-1}r!$.

For the description of $K_B^0$ as a subgroup of $\Stab(B)$, one needs only verify that a matrix $A=P_AD_A\in G(m,p,n)$, with $D_A=\diag(ζ^{λ_1},\dots ,ζ^{λ_n})$, stabilizes $B$ if and only the permutation induced by $P_A$ on the indices appearing in $B$ stabilizes $B$ and $λ_i=λ_j$ for all $H_{ij}\in B$. Comparing this with the description of $K_B^{(1)}, K_B^{(2)}$ above, gives the characterization of the statement for $B$.

Finally, as in the proof of Proposition \ref{prop:classgeneral}, a general transverse collection of the form $\{H_{i_1j_1}^{κ_1},\dots ,H_{i_rj_r}^{κ_r}\}$ is the image of $B$ under the element $\prod_{i=1}^r\hat{t}_i^{κ_i}\in G(m,1,n)$. Furthermore, as explained in the same proof, $K_{gB}=gK_Bg^{-1}$ for all $g\in G(m,1,n)$. This yields the order of $K_B$ for all such transverse collections. 

Moreover, the product of the non-zero entries of a monomial matrix $A=P_AD_A\in G(m,p,n)$ is equal to its determinant times the sign of the permutation corresponding to $P_A$ matrix, and is, hence, invariant under conjugation by $G(m,1,n)$. So $A\in \Stab(B)$ has product of non-zero entries equal to $1$ if and only if the same is true for $gAg^{-1}\in g\Stab(B)g^{-1}=\Stab(gB)$. Similarly, $A$ leaves invariant the coordinates corresponding to the indices not appearing in $B$ if and only if the corresponding property is true for the matrix $gAg^{-1}$ and the coordinates corresponding to the indices not appearing in the transverse collection $gB$. This implies that the characterization of the statement is invariant under conjugation by elements of $G(m,1,n)$, and is, hence, true for a general collection of the form $\{H_{i_1j_1}^{κ_1},\dots ,H_{i_rj_r}^{κ_r}\}$. This concludes the proof.
\end{proof}

\begin{proposition}
\label{prop:G22ngeneral}Let $W=G(2,2,n)$ and $B=\{H_{i_1j_1}, H_{i_1j_1}^1,\dots ,H_{i_rj_r}, H_{i_rj_r}^1\}$ (with all indices distinct). Then $K_B$ consists of all matrices in $\Stab(B)$ that are diagonal on the lines and rows corresponding to the indices not appearing in any hyperplane in $B$. Furthermore, its order is $2^{r+n-1}r!.$

\end{proposition}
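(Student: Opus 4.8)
The plan is to follow the template of Proposition \ref{prop:Gmpngeneral} (the $G(m,p,n)$ case), adapting the semidirect-product decomposition $K_B = K_B^{(1)} \ltimes K_B^{(2)}$ to the additional structure present in $G(2,2,n)$. Recall from the proof of Proposition \ref{prop:classg22n} that $K_B$ was identified with the group
$$K_B^0=\langle (ij_1)_κ, (i_1i)_κ(i_1i)_{κ+1},(i_1i)_κ(j_1j)_κ, (j_1j_2)_κ(j_1j_2)_{κ+1}\rangle,$$
with $H_{ij_1}, H_{i_1j}\in B$, $κ\in \mathbb{Z}$, and $1\leq j_2\leq n$. The first step is to rewrite each generator, as in Equation \eqref{eq:firstdescKB}, as a product of a permutation matrix and a diagonal matrix, noting that in $G(2,2,n)$ we have $ζ=-1$, so the diagonal parts have entries in $\{1,-1\}$. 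Concretely: $(ij_1)$ is itself a permutation matrix; $(i_1i)_κ(i_1i)_{κ+1} = m$ where $m$ is the diagonal matrix with $-1$ in the $i_1$- and $i$-th slots (this is because the permutation parts cancel, since $(i_1i)_κ(i_1i)_{κ+1}=(i_1i)(i_1i)\cdot\diag(\dots)$); $(i_1i)_κ(j_1j)_κ = (i_1i)(j_1j)\cdot m'$ for a suitable sign matrix $m'$; and $(j_1j_2)_κ(j_1j_2)_{κ+1}$ is again a diagonal sign matrix $m''$ with $-1$ in the $j_1$- and $j_2$-th positions.

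Next I would define $K_B^{(1)}$ to be the subgroup generated by the permutation parts, namely the permutations of the indices that stabilize $B$ and fix pointwise the coordinates not appearing in any hyperplane of $B$; since each pair $\{i_s,j_s\}$ can be swapped or fixed and the $r$ blocks can be permuted among each other, $|K_B^{(1)}| = 2^r r!$. Then I would define $K_B^{(2)}$ to be the subgroup of diagonal sign matrices generated by the $m, m', m''$ above. The key claim is that $K_B^{(2)}$ consists of all diagonal matrices $\diag(ε_1,\dots,ε_n)$, $ε_i\in\{1,-1\}$, satisfying $\prod_i ε_i = 1$, i.e. an even number of $-1$'s (no further constraint of the form $ε_i = ε_j$ arises, unlike in $G(m,p,n)$, because both $H_{i_sj_s}$ and $H_{i_sj_s}^1$ lie in $B$, so $\Stab(B)$ imposes no compatibility between the $i_s$ and $j_s$ coordinates). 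This is the heart of the argument: I must check (i) that each generator $m,m',m''$ has an even number of $-1$'s (immediate: each has exactly two), so $K_B^{(2)}$ is contained in the claimed group; and (ii) that every even sign pattern is realized, using that the generators include sign changes on the pairs $\{i_1,i\}$, $\{i_1,j_1\}$ (via $m'$, after accounting for the permutation part), and $\{j_1,j_2\}$ for arbitrary $j_2$, which together generate all even patterns on $\{1,\dots,n\}$. Hence $|K_B^{(2)}| = 2^{n-1}$.

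Since $K_B^{(1)}$ and $K_B^{(2)}$ consist respectively of permutation and diagonal matrices and together generate $K_B^0$, Remark \ref{rem:semidirect} gives $K_B^0 = K_B^{(1)} \ltimes K_B^{(2)}$, so $|K_B| = |K_B^0| = 2^r r! \cdot 2^{n-1} = 2^{r+n-1} r!$, as claimed. For the structural description, I would observe that a matrix $A=P_AD_A\in G(2,2,n)$ stabilizes $B$ iff the permutation of $P_A$ stabilizes the block structure of $B$ (so $P_A$ is of the form in $K_B^{(1)}$, possibly times permutations among the non-appearing indices and diagonal sign changes), and then $K_B$ is exactly the locus where $A$ acts as a permutation matrix (no sign twisting) on the non-appearing coordinates, equivalently where the non-appearing rows/columns of $A$ are diagonal — matching the statement. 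The main obstacle I anticipate is the careful bookkeeping in step (ii): disentangling the permutation and diagonal parts of the generators $(i_1i)_κ(j_1j)_κ$ and $(j_1j_2)_κ(j_1j_2)_{κ+1}$ correctly, and confirming that the resulting diagonal matrices — after quotienting by $K_B^{(1)}$ — really do generate the full index-2 subgroup of sign matrices and not a smaller one; everything else is a direct transcription of the $G(m,p,n)$ argument with $m=2$.
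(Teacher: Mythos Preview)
Your proposal is correct and follows essentially the same approach as the paper: decompose $K_B^0$ as a semidirect product $K_B^{(1)}\ltimes K_B^{(2)}$ of permutation and diagonal parts, identify $K_B^{(2)}$ with the full group of even sign patterns (order $2^{n-1}$) and $K_B^{(1)}$ with the block-permutation group (order $2^r r!$), then read off the order and the structural description. One small slip: in your final paragraph you write that $A$ ``acts as a permutation matrix (no sign twisting)'' on the non-appearing coordinates, but in fact sign changes there \emph{are} allowed (since $K_B^{(2)}$ contains all even sign patterns); the correct condition is only that the permutation part $P_A$ fixes those coordinates, i.e., that the restriction of $A$ to the non-appearing block is diagonal, as you also state.
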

\begin{proof} 

In the proof of Proposition \ref{prop:classg22n} we identified $K_B$ with the following subgroup of $G(m,p,n)$:

$$K_B^0=\langle (ij_1)_κ, (i_1i)_κ(i_1i)_{κ+1},(i_1i)_κ(j_1j)_κ, (j_1j_2)_κ(j_1j_2)_{κ+1}\rangle,$$
with $H_{ij_1}, H_{i_1j}\in B, κ\in \mathbb{Z}$, and  $1\leq j_2 \leq n$. Notice that elements of the second form in the above presentation are obtained by all elements of the fourth form. This, together with a change of variables for a clearer exposition, yields the following shorter presentation of $K_B^0$:

$$\langle (ij)_κ, (ii_1)_κ(jj_1)_{κ}, (j_1j_2)_κ(j_1j_2)_{κ+1}\big| H_{ij},H_{i_1j_1}\in B, κ\in \mathbb{Z}, 1\leq j_2 \leq n\rangle$$

As in the proof of Proposition \ref{prop:orderKBgeneral} above, we have

$$(ii_1)_κ(jj_1)_κ=(ii_1)(jj_1)\cdot \diag(1,\dots,\underset{(i)}{ζ^κ}, 1, \dots ,\underset{(j)}{ζ^{κ}},1, \dots ,\underset{(i_1)}{ζ^{-κ}},1, \dots ,\underset{(j_1)}{ζ^{-κ}},1,\dots),$$
and we set $h_κ(i,i_1,j_1,j)=(ii_1)_κ(jj_1)_κ$ and $m_κ(i,i_1,j,j_1)$ the above diagonal matrix, so that $h_κ(i,i_1,j_1,j)$\\$=(ii_1)(jj_1)\cdot m_κ(i,i_1,j,j_1)$. Note, again that $(ii_1)(jj_1)=h_0(i,i_1,j_1,j)$. One can also verify the following two identities,

$$(ij)_κ=(ij)\cdot \diag(1,\dots,\underset{(i)}{ζ^κ}, 1, \dots ,\underset{(j)}{ζ^{κ}},1, \dots ,1),$$
and 
$$(j_1j_2)_κ(j_1j_2)_{κ+1}=\diag(1,\dots,\underset{(j_1)}{ζ}, 1, \dots ,\underset{(j_2)}{ζ^{-1}},1, \dots ,1).$$ 
Let $l_κ(i,j)=\diag(1,\dots,\underset{(i)}{ζ^κ}, 1, \dots ,\underset{(j)}{ζ^{κ}},1, \dots)$ and $q_κ(j_1,j_2)=(j_1j_2)_κ(j_1j_2)_{κ+1}$, and consider the following subgroups of $K_B^0$: 

$$K_B^{(1)}=\langle (ij),(ii_1)(jj_1)\big| H_{ij},H_{i_1j_1}\in B \rangle$$
and
$$K_B^{(2)}=\langle l_κ(i,j), q_κ(j_1,j_2), m_κ(i,i_1,j,j_1) \big| H_{ij},H_{i_1j_1}\in B, κ\in \mathbb{Z}, 1\leq j_2 \leq n  \rangle.$$
By the above, it is clear that the subgroups $K_B^{(1)}, K_B^{(2)}$ generate $K_B$. Furthermore, in view of Remark \ref{rem:semidirect}, since they consist, respectively, of permutation and diagonal matrices, then $K_B=K_B^{(1)}\ltimes K_B^{(2)}$.

Now, one can see that $K_B^{(1)}$ consists of all permutation matrices stabilizing $B$ and inducing the identity permutation on the coordinates $z_i$ for $i$ not appearing in any hyperplane in $B$. Thus, its order equals the number of permutations of the indices appearing in $B$ that stabilize $B$, which is $2^rr!$.

For $K_B^{(2)}$, one can, in fact, verify that it consists of all diagonal matrices $\diag(ζ^{λ_1}, \dots ,ζ^{λ_n})$ for which $\sum_{i=1}^nλ_i=0$, i.e. all diagonal matrices in $G(m,p,n)$; this group has order $2^{n-1}$. This yields the order of $K_B^0$, which is 
$2^{n+r-1}r!$.

For the characterization of $K_B^0$ as a subgroup of $\Stab(B)$, since all diagonal matrices stabilize $B$, we notice that a matrix $A=P_AD_A\in G(2,2,n)$ stabilizes $B$ if and only if the permutation matrix $P_A$ stabilizes $B$. By the above, the group $K_B^0$ contains, as $\Stab(B)$, all diagonal matrices of $G(m,p,n)$ together with all permutation matrices stabilizing $B$, with the extra condition that they induce the identity permutation on the coordinates $z_i$ for indices $i$ not appearing in any hyperplane in $B$. One can see that this yields the characterization of the statement. 
\end{proof}

\subsection{Table of admissibility and dimensions for the exceptional groups}

The following table contains the information of admissibility for all exceptional complex reflection groups. Each row corresponds to an orbit of transverse collections. The first column shows the cardinality of a representative of the orbit and the second the cardinality of the orbit. In the third column, we mention the size of the quotient $\Stab(B)/K_B$.  If the corresponding orbit consists of non-admissible collections, then we mark $0$ in this column. If for some cardinality there are no admissible collections of that cardinality, we do not include the corresponding row in the table at all. Finally, in the last column we mention the dimension of the Brauer-Chen algebra, as given by the formula:

$$\lvert W \rvert + \sum_{B} \lvert \mathcal{B} \rvert^2 \cdot \lvert S_B / K_B \rvert, $$ 
where $B$ runs through a set of representatives from each orbit of transverse collections (this formula is directly implied by the formula of Theorem \ref{th:basisoverK}). For the groups $G_{25}$ and  $G_{32}$, we include with an asterisque the corresponding data for proper fields where $μ\in \mathbb{U}_6$ (recall that $μ$ is the quotient of the parameters corresponding to the two orbits of reflections in these groups).

\begin{table}[h]
\noindent\makebox[\textwidth]{
\begin{tabular}{c c}
\begin{tabular}{|c | c | c | c |c|}
\hline 
$W$ & $\lvert B \rvert $ & $ \lvert  \mathcal{B} \rvert$ & $\lvert S_B/K_B \rvert$ & $\dim_K \Br(W)$ \\
\hline
$G_4$ & 1 & 4 &2 & 56 \\
\hline
\multirow{2}{*}{$G_{5}$} &1&4&6 &\multirow{2}{*}{264}\\&1&4&6&\\ 
\hline
\multirow{2}{*}{$G_{6}$} &1&6&4 &\multirow{2}{*}{256}\\&1&4&4&\\ 
\hline
\multirow{3}{*}{$G_{7}$} &1&6&12 &\multirow{3}{*}{960}\\&1&4&12&\\&1&4&12&\\ 
\hline
$G_8$ & 1 & 6 & 4 & 240 \\
\hline
\multirow{2}{*}{$G_{9}$} &1&12&8 &\multirow{2}{*}{1,632}\\&1&6&8&\\ 
\hline
\multirow{2}{*}{$G_{10}$} &1&8&12 &\multirow{2}{*}{1,488}\\&1&6&12&\\ 
\hline
\multirow{3}{*}{$G_{11}$} &1&12&24 &\multirow{3}{*}{6,432}\\&1&8&24&\\&1&6&24&\\ 
\hline
$G_{12}$ & 1 & 12 & 2 & 336 \\
\hline
\multirow{2}{*}{$G_{13}$} &1&6&8 &\multirow{2}{*}{960}\\&1&12&4&\\ 
\hline
\multirow{2}{*}{$G_{14}$} &1&12&6 &\multirow{2}{*}{1,392}\\&1&8&6&\\ 
\hline
\multirow{3}{*}{$G_{15}$} &1&12&12 &\multirow{3}{*}{3,648}\\&1&8&12&\\&1&6&24&\\ 
\hline
$G_{16}$ & 1 & 12 & 10 & 2,040 \\
\hline
\multirow{2}{*}{$G_{17}$} &1&30&20 &\multirow{2}{*}{22,080}\\&1&12&20&\\ 
\hline
\multirow{2}{*}{$G_{18}$} &1&20&30 &\multirow{2}{*}{18,120}\\&1&12&30&\\ 
\hline
\multirow{3}{*}{$G_{19}$} &1&30&60 &\multirow{3}{*}{90,240}\\&1&20&60&\\&1&12&60&\\ 
\hline
$G_{20}$ & 1 & 20 & 6 & 2,760 \\
\hline
\multirow{2}{*}{$G_{21}$} &1&30&12 &\multirow{2}{*}{16,320}\\&1&20&12&\\ 
\hline
$G_{22}$ & 1 & 30 & 4 & 3,840 \\
\hline
\multirow{2}{*}{$G_{23}$} &1&15&4 &\multirow{2}{*}{1,045}\\&3&5&1&\\ 
\hline
\end{tabular}

&

\begin{tabular}{|c | c | c | c |c|}
\hline 
$W$ & $\lvert B \rvert $ & $ \lvert  \mathcal{B} \rvert$ & $\lvert S_B/K_B \rvert$ & $\dim_K \Br(W)$ \\
\hline
$G_{24}$ & 1 & 21 & 8 & 3,864 \\
\hline
\multirow{3}{*}{$G_{25}$} &1&12&18 &\multirow{3}{*}{3,272/3,416*}\\&2&12&0/1*&\\&3&4&2&\\ 
\hline
\multirow{2}{*}{$G_{26}$} &1&9&72 &\multirow{2}{*}{12,312}\\&1&12&36&\\ 
\hline
$G_{27}$ & 1 & 45 & 24 & 50,760 \\
\hline
\multirow{2}{*}{$G_{28}$} &1&12&48 &\multirow{2}{*}{14,976}\\&1&12&48&\\ 
\hline
$G_{29}$ & 1 & 40 & 96 & 161,280 \\
\hline
\multirow{2}{*}{$G_{30}$} &1&60&120 &\multirow{2}{*}{452,025}\\&4&75&1&\\ 
\hline
$G_{31}$ & 1 & 60 & 384 & 1,428,480 \\
\hline
\multirow{4}{*}{$G_{32}$} &1&40&1,296 &\\&2&240&0/2*&2,232,320/\\&3&160&0/2*&2,398,720*\\&4&40&2& \\ 
\hline
\multirow{3}{*}{$G_{33}$} &1&45&576 &\multirow{3}{*}{1,364,769}\\&2&270&2&\\&5&26&1&\\ 
\hline
\multirow{3}{*}{$G_{34}$} &1&126&155,520 &\multirow{3}{*}{2,653,218,099}\\&2&2,835&18&\\&6&567&1&\\ 
\hline
\multirow{3}{*}{$G_{35}$} &1&36&720 &\multirow{3}{*}{1,440,585}\\&2&270&6&\\&4&135&1&\\ 
\hline
\multirow{7}{*}{$G_{36}$} &1&63&23,040 &\multirow{7}{*}{139,613,625}\\&2&945&48&\\&3&3,780&0&\\&3&315&6&\\&4&3,780&0&\\&4&945&2&\\&7&135&1&\\ 
\hline
\multirow{5}{*}{$G_{37}$} &1&120&2,903,040 &\multirow{5}{*}{53,328,069,225}\\&2&3,780&720&\\&4&113,400&0&\\&4&9450&6&\\&8&2025&1&\\
\hline

\end{tabular}

\end{tabular}}
\end{table}

\newpage

\end{document}